\tikzset{%
    symbol/.style={%
        draw=none,
        every to/.append style={%
            edge node={node [sloped, allow upside down, auto=false]{$#1$}}}
    }
}
\newcommand{\clim}{\operatorname*{lim}}
\newcommand{\colim}{\operatorname*{colim}}
\newcommand{\holim}{\operatorname*{holim}}
\newcommand{\coker}{\operatorname{coker}}
\newcommand{\mcl}{\mathcal}
\newcommand{\uhom}{\underline{Hom}}
\newcommand{\lra}{\longrightarrow}
\newcommand{\scr}{\mathscr}
\newcommand{\ot}{\otimes}
\newcommand{\ubb}{\underline{\mathbb{R}}}
\newcommand{\rdh}{\mathbb{R}\underline{Hom}}
\newcommand{\wot}{\widehat{\otimes}}
\newtheorem{theorem}{Theorem}[section]
\newtheorem{lemma}[theorem]{Lemma}
\newtheorem*{theorem*}{Theorem}
\newtheorem{proposition}[theorem]{Proposition}
\newtheorem{corollary}[theorem]{Corollary}
\theoremstyle{definition}
\newtheorem{definition}[theorem]{Definition}
\newtheorem{remark}[theorem]{Remark}
\newtheorem{example}[theorem]{Example}
\numberwithin{equation}{subsection}
\numberwithin{theorem}{subsection}
\title{Model Categories and the Higher Riemann-Hilbert Correspondence}
\author{Callum Galvin}
\begin{document}

\maketitle{}

\begin{abstract}

We construct a new model structure on the category of dg presheaves over a topological space $X$, obtained through the right Bousfield localization of the local projective model structure. The motivation for this construction arises from the study of the homotopy theory underlying higher Riemann-Hilbert correspondence theorems, as developed by Chuang, Holstein, and Lazarev.

Let $X$ be a smooth manifold. We prove the existence of a zig-zag of Quillen equivalences between the category of dg modules over the de Rham algebra and the category of dg presheaves of vector spaces over $X$. In the case where $X$ is a complex manifold, we obtain an analogous result, where the de Rham algebra is replaced by the Dolbeault algebra. In both settings, we equip the categories of modules with model structures of the second kind, whose homotopy categories are, in general, finer invariants than those given by quasi-isomorphisms.

Finally, we introduce a singular analogue of this equivalence, stating it as a zig-zag of Quillen equivalences between the category of dg contramodules over the singular cochain algebra $C^{*}(X)$ and dg presheaves. At the level of homotopy categories, this establishes an equivalence between the contraderived category of  $C^{*}(X)$-contramodules and the homotopy category of dg presheaves.

\end{abstract}

\tableofcontents
\section{Introduction}

In its most basic form, the Riemann-Hilbert correspondence establishes an equivalence between three categories associated to a smooth manifold $X$: the category of flat vector bundles, the category of local systems, and the category of representations of the fundamental group. Several generalizations of this correspondence have been developed to account for the higher homotopy type of $X$ by introducing various notions of \textit{infinity local systems}.

 In particular, Block and Smith \cite{block2014higher} establish an $A_{\infty}$-quasi-equivalence between the category of infinity local systems on $X$ and the category of differential graded (dg) vector bundles on $X$ equipped with a flat $\mathbb{Z}$-graded connection.  They define an infinity local system as an infinity representation of the groupoid $Sing(X)$ of smooth simplices. In other words, a representation of $Sing(X)$ taking values in the category of dg vector spaces, up to homotopy coherence.

A less technical characterization of an infinity local system is as a \textit{cohomologically locally constant (clc) sheaf}—a complex of sheaves whose cohomology sheaves are locally constant. Recently, Chuang, Holstein, and Lazarev \cite{chuang2021maurer} provided a generalization of Block and Smith’s result using the simpler language of clc sheaves. Specifically, they proved that the derived category of perfect clc sheaves is equivalent to the category of \textit{perfect twisted modules} over the de Rham algebra. Perfect twisted modules over the de Rham algebra correspond to dg vector bundles equipped with a flat graded $\mathbb{Z}$-connection.

In the present paper, we enhance both sides of this generalized correspondence by endowing them with model category structures and show that these model categories are equivalent up to a zig-zag of Quillen equivalences. Let $\Omega(X)$ denote the de Rham algebra $\Omega(X)$ on a smooth manifold $X$.  The dg category of perfect twisted modules corresponds to the compact objects in the \textit{compactly generated derived category of the second kind}. This derived category is the homotopy category of a model structure on the category of dg $\Omega(X)$-modules, where weak equivalences are not quasi-isomorphisms but typically a finer invariant.  This model structure provides a natural choice for the model structure on the $\Omega(X)$-module side of the equivalence. 

The global sections functor from sheaves of dg $\Omega$-modules to dg  $\Omega(X)$-modules admits a fully faithful left adjoint: the inverse image functor. It is shown in \cite{chuang2021maurer} this left adjoint quasi-fully faithful when restricted to perfect twisted modules, meaning that its quasi-essential image forms a co-reflective subcategory of the derived category of sheaves of $\Omega$-modules. Since the derived category of sheaves of $\Omega$-modules is equivalent to the derived category of sheaves of dg vector spaces over over the constant sheaf $\ubb$ we can view this co-reflective subcategory as a subcategory of sheaves dg $\ubb$-modules. Motivated by this simple observation we show that this co-reflective subcategory inclusion corresponds to a right Bousfield localisation of model categories. More precisely, we prove the existence of a model category structure on the category of presheaves dg $\ubb$-modules which is equivalent to the model structure of second kind on $\Omega(X)$, up to a zig-zag of Quillen equivalences. 

Moreover, Chuang, Holstein, and Lazarev observed that their construction could be extended beyond smooth manifolds, resulting in a singular analogue of the Riemann-Hilbert correspondence. Specifically, they established an equivalence between the homotopy category of twisted modules over the singular cochain algebra of a topological space and the derived category of clc sheaves. Following their approach, we also extend our results to the singular setting. 

To develop this singular analogue, several subtleties must be addressed. Central to our approach is the use of \textit{contramodules}, which naturally arise from a cohomology theory with coefficients. For instance, the singular cochain algebra on a topological space with coefficients in a vector space inherently possesses the structure of a contramodule. We establish a model structure on the category of presheaves of dg vector spaces the homotopy category of which is model structure is to be equivalent to the derived category of clc sheaves. Our result is then expressed as a zig-zag of Quillen equivalences between the category of contramodules over this singular cochain algebra and the category of presheaves of dg vector spaces. 

Contramodules were originally introduced by Eilenberg and Moore in the 1960s, alongside comodules (see \cite[Chapter III.5]{eilenberg1965foundations}). In contrast to comodules, however, contramodules have received relatively little attention. A contramodule over a coalgebra $C$ is a vector space equipped with a \textit{contraaction map} $Hom(C, V) \lra V$ satisfying unitality and associativity conditions.  Although contramodules were largely forgotten for decades, they resurfaced in the early 2000s thanks to Positselski in his formulation of  \text{Koszul triality}  (see \cite{positselski2011two}). Since then, contramodules have garnered increasing interest, primarily due to Positselski’s work, and a detailed overview of the theory of contramodule theory and their applications can be found in \cite{positselski2011two}.

Before proceeding, we briefly outline our main results. First, when $X$ is a smooth manifold, we equip $\Omega(X)-Mod$ with the model structure of second kind.  Furthermore, there exists a zig-zag of adjunctions between  $\Omega(X)-Mod $ and the dg category of  presheaves of real dg vector spaces, denoted $ PMod(\ubb)$.  We endow the category $ PMod(\ubb)$ with the local model structure, whose homotopy category is the usual derived category of sheaves of dg vector spaces.  Let $L$ denote the set of presheaves in the image of finitely generated twisted modules. Up to stalk-wise quasi-isomorphism, these presheaves are bounded clc sheaves with finite-dimensional fibers. 
\begin{theorem*}
Let $X$ be a smooth connected manifold. Then there exists a zig-zag of Quillen equivalences between $\Omega(X)-Mod $ and the right Bousfield localization of  $ PMod(\ubb)$ at the set $L$.
   
\end{theorem*}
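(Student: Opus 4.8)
The plan is to factor the comparison through dg presheaves of $\Omega$-modules, using the de Rham complex $\Omega$ as the bridge between the structure sheaf $\Omega$ and the constant sheaf $\ubb$. Write $PMod(\Omega)$ for the category of dg presheaves of $\Omega$-modules with the local model structure (homotopy category $D(\mathrm{Sh}_{\Omega}(X))$), let $\pi^{*}\colon \Omega(X)-Mod\to PMod(\Omega)$ be the functor $M\mapsto (U\mapsto \Omega(U)\ot_{\Omega(X)}M)$, and let $L_{\Omega}$ be the image under $\pi^{*}$ of the finitely generated twisted $\Omega(X)$-modules. Since $X$ is terminal in the poset of opens, $\pi^{*}$ is left adjoint to $\Gamma=\mathrm{ev}_{X}$, and $\Gamma\pi^{*}=\mathrm{id}$, so $\pi^{*}$ is fully faithful. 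I would then construct the zig-zag
\[
\Omega(X)-Mod \;\xrightarrow{\ \pi^{*}\ }\; R_{L_{\Omega}}PMod(\Omega) \;\xleftarrow{\ -\ot_{\ubb}\Omega\ }\; R_{L}PMod(\ubb),
\]
with both arrows left Quillen, and prove each is a Quillen equivalence. Here $R_{L}$ and $R_{L_{\Omega}}$ are the right Bousfield localizations, which exist because $PMod(\ubb)$ and $PMod(\Omega)$ are combinatorial and the local model structures are right proper (stalks commute with pullback and the model category of dg $\mathbb{R}$-vector spaces is proper).

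The right-hand arrow is the routine half. As $\mathbb{R}$ is a field, $-\ot_{\ubb}\Omega$ is exact and left Quillen for the local structures, with right adjoint the restriction functor $\mathrm{res}$, which is the identity on underlying presheaves of complexes and hence detects local weak equivalences. By the Poincaré lemma the stalk map $\ubb_{x}\to\Omega_{x}$ is a quasi-isomorphism, so $\ubb\to\Omega$ is a local weak equivalence; together with the previous sentence this makes $(-\ot_{\ubb}\Omega,\mathrm{res})$ a Quillen equivalence between the local model structures. Being exact, $-\ot_{\ubb}\Omega$ preserves homotopy colimits and carries $L$-cellular objects to $L_{\Omega}$-cellular objects (it sends $L$ to $L_{\Omega}$ up to weak equivalence), so the Quillen equivalence descends to one between the right Bousfield localizations.

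The left-hand arrow is the heart of the matter and where I expect the main difficulty. The strategy has two parts. First, identify homotopy categories: $\mathrm{Ho}(\Omega(X)-Mod)$ for the second-kind model structure is compactly generated with compact objects the perfect twisted modules (the defining feature of that structure, recalled in the introduction), while $\mathrm{Ho}(R_{L_{\Omega}}PMod(\Omega))$ is the smallest localizing subcategory of $D(\mathrm{Sh}_{\Omega}(X))$ containing $L_{\Omega}$, which is compactly generated with compact objects the thick closure of $L_{\Omega}$. Second, compare via $\pi^{*}$: one must check that $(\pi^{*},\Gamma)$ is a Quillen adjunction into $R_{L_{\Omega}}PMod(\Omega)$ — the delicate point, since cofibrations in a right Bousfield localization form a smaller class, so one verifies directly that $\pi^{*}$ carries the generating cofibrations of the second-kind structure (of twisted/semifree type) into the $L_{\Omega}$-cellular cofibrations, equivalently that it takes cofibrant objects to $L_{\Omega}$-cellular objects. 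Granting this, $\mathbb{L}\pi^{*}$ is a coproduct-preserving triangulated functor sending the perfect twisted modules, a set of compact generators, into $L_{\Omega}$; by the quasi-full-faithfulness of $\pi^{*}$ on perfect twisted modules proved in \cite{chuang2021maurer} it is fully faithful on these generators; a coproduct-preserving triangulated functor fully faithful on a set of compact generators and sending it to a set of generators is an equivalence, so $\mathbb{L}\pi^{*}$ is an equivalence of homotopy categories, whence $(\pi^{*},\Gamma)$ is a Quillen equivalence.

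Two further points need care. One must reconcile the second-kind weak equivalences on $\Omega(X)-Mod$ — strictly finer than quasi-isomorphisms — with the stalk-wise quasi-isomorphisms on the presheaf side; this is exactly why the correct target is the right Bousfield localization $R_{L}PMod(\ubb)$ rather than the full local model structure, and the matching is forced by insisting on the same compact generators. One must also check that the $\mathrm{RHom}$ between perfect twisted modules computed in the second-kind homotopy category is the Hom-complex to which \cite{chuang2021maurer} applies. If the direct verification that $(\pi^{*},\Gamma)$ is Quillen into $R_{L_{\Omega}}PMod(\Omega)$ proves awkward, a safer alternative is to insert an additional intermediate model category — for instance sheaves of $\Omega$-modules with a projective-type structure, or a framed replacement — so that every link of the zig-zag becomes manifestly Quillen, at the cost of lengthening the chain.
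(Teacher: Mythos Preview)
Your zig-zag is exactly the paper's, and your treatment of the right arrow $(-\ot_{\ubb}\Omega,\mathrm{res})$ matches the paper's argument (Proposition~\ref{proposition gen eq}). The difference is in the left arrow, where you work harder than needed and introduce an unnecessary dependence on \cite{chuang2021maurer}.

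You try to verify that $\pi^{*}$ is left Quillen by checking it preserves cofibrations, correctly flagging this as delicate since cofibrations shrink under right Bousfield localisation; you then propose a compact-generation argument together with the quasi-full-faithfulness of $\pi^{*}$ from \cite{chuang2021maurer} to conclude the equivalence. The paper's route is much simpler: it checks the \emph{right} adjoint $p_{*}=\Gamma$ instead. Fibrations are \emph{unchanged} under right Bousfield localisation, so $p_{*}$ trivially preserves them (sectionwise surjections, and $p_{*}$ is evaluation at $X$). The point is that $K$ is \emph{defined} to be $p^{*}(TW_{fg}(\Omega(X)))$, so by adjunction a map $f$ of presheaves is a $K$-coequivalence if and only if $\uhom(T,p_{*}f)$ is a quasi-isomorphism for every finitely generated twisted module $T$, i.e.\ if and only if $p_{*}f$ is a weak equivalence in the second-kind model structure. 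Thus $p_{*}$ preserves and reflects weak equivalences \emph{by construction}; since the unit $M\to p_{*}p^{*}M$ is the identity, the Quillen equivalence is immediate from the criterion of \cite[Lemma~3.3]{erdal2019model}. No compact-generation argument and no input from \cite{chuang2021maurer} is required. What your approach buys is a conceptual identification of $Ho(R_{L}PMod(\ubb))$ with the localising subcategory generated by $L$ inside $D(\ubb^{+})$; the paper obtains this separately (Corollary~\ref{quotient cor}) as a consequence rather than as the engine of the proof.

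Your suggested fallback---inserting an extra intermediate model category---is in fact exactly what the paper does: it routes the $(\pi^{*},p_{*})$ step through the \emph{global} projective model structure $R_{K}PMod^{G}(\Omega)$ (where every presheaf is fibrant, so the $K$-coequivalence check above is literally $\uhom$ rather than $\rdh$), and then shows via a Godement-resolution argument (Lemma~\ref{silly lem}, Proposition~\ref{silly prop}) that the identity gives a Quillen equivalence $R_{K}PMod^{G}(\Omega)\simeq R_{K}PMod(\Omega)$.
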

We obtain a similar result in the case where $X$ is a holomorphic manifold. In this situation, we replace $\Omega(X)$ with the $A^{0*}(X)$ Dolbeault algebra of holomorphic forms on X, and we replace $ PMod(\ubb)$ with the dg category of dg presheaves modules over the sheaf of holomorphic functions on $X$, denoted $PMod(\scr{O})$. 

\begin{theorem*}
Let $X$ be a complex manifold. Then there exists a zig-zag of Quillen equivalences between $A^{0*}(X)-Mod $ and the right Bousfield localization of  $ PMod(\scr{O})$ at the set $L$.
\end{theorem*}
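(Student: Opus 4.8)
The plan is to transcribe the proof of the de Rham version above, replacing the de Rham resolution $\ubb \hookrightarrow \Omega^{\bullet}$ by the Dolbeault resolution $\scr{O}\hookrightarrow\scr{A}^{0\bullet}$, where $\scr{A}^{0\bullet}$ denotes the sheaf of dg algebras of $(0,\bullet)$-forms on $X$ equipped with the $\bar\partial$-differential, so that $\Gamma(X,\scr{A}^{0\bullet})=A^{0*}(X)$. Two geometric facts make the transcription legitimate: the Dolbeault--Grothendieck ($\bar\partial$-Poincar\'e) lemma, which says that $\scr{O}\hookrightarrow\scr{A}^{0\bullet}$ is a quasi-isomorphism of sheaves of dg algebras, and the fact that $\scr{A}^{0\bullet}$ is a module over $\scr{C}^{\infty}_{X}$ and hence fine, so soft, so $\Gamma$-acyclic on the paracompact manifold $X$. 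These are precisely the two properties of $\Omega^{\bullet}$ exploited in the de Rham argument. With them in hand one builds the zig-zag: $A^{0*}(X)-Mod$ with the model structure of the second kind; sheaves of dg $\scr{A}^{0\bullet}$-modules, linked to the former by the inverse-image/global-sections adjunction $f^{*}\dashv\Gamma$; sheaves of dg $\scr{O}$-modules, linked to the former by extension and restriction of scalars along $\scr{O}\hookrightarrow\scr{A}^{0\bullet}$; and $PMod(\scr{O})$ with the local projective model structure, linked to the former by sheafification and inclusion. Right Bousfield localizing $PMod(\scr{O})$ at $L$ then upgrades the entire zig-zag to a zig-zag of Quillen equivalences.

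\textbf{Steps.} (1) The model structure of the second kind on $A^{0*}(X)-Mod$ is produced exactly as for $\Omega(X)$, since $A^{0*}(X)$ is again a dg algebra over a field, and the compact objects of its homotopy category are the perfect twisted $A^{0*}(X)$-modules. (2) The pair $f^{*}\dashv\Gamma$ is a Quillen pair, and by the Chuang--Holstein--Lazarev theorem \cite{chuang2021maurer} in its Dolbeault form, $f^{*}$ is quasi-fully faithful on perfect twisted modules with quasi-essential image a co-reflective subcategory; this identifies $L$ as the set of images of the finitely generated twisted $A^{0*}(X)$-modules, which up to stalkwise quasi-isomorphism are bounded complexes of finite-rank locally free $\scr{O}$-modules. (3) Since $\scr{O}\hookrightarrow\scr{A}^{0\bullet}$ is a quasi-isomorphism of sheaves of dg algebras, restriction of scalars is exact and both reflects and preserves weak equivalences while the derived unit is a weak equivalence; hence extension/restriction of scalars is a Quillen equivalence between the associated local module model structures --- the sheaf-theoretic analogue of the Schwede--Shipley criterion --- with softness of $\scr{A}^{0\bullet}$ ensuring that global sections and the two local structures are compatible. (4) Sheafification $\dashv$ inclusion is a Quillen pair, $PMod(\scr{O})$ with the local projective structure is left proper and cellular, so the right Bousfield localization $R_{L}\,PMod(\scr{O})$ exists; tracing through the composite total derived functor one then checks that it identifies the homotopy category of $A^{0*}(X)-Mod$ with the colocalizing subcategory of $D(Sh(\scr{O}))$ generated by $L$, which is exactly the homotopy category of $R_{L}\,PMod(\scr{O})$ and also exactly the co-reflective subcategory of $D(Sh(\scr{O}))$ cut out by $L$. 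Assembling (1)--(4) produces the asserted zig-zag of Quillen equivalences.

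\textbf{Main obstacle.} The genuinely new difficulty, relative to the de Rham case, is that $\scr{O}$ is not the constant sheaf: its stalks are local rings of convergent power series rather than the field $\mathbb{R}$. Two steps that were routine before now need care. First, the description of $L$ and of the colocalizing subcategory it generates must be phrased in terms of finite-rank locally free (equivalently perfect) $\scr{O}$-modules and of the local acyclicity of $\scr{O}$ on polydiscs (Cartan's Theorem B), in place of locally constant sheaves and the triviality of $\ubb$. Second, and more seriously, Step (3) can no longer lean on any special structure of the base: one must verify directly that $\scr{O}\hookrightarrow\scr{A}^{0\bullet}$ induces a Quillen equivalence of the local module model structures, and that the category of sheaves of dg $\scr{A}^{0\bullet}$-modules carries a suitably well-behaved (cofibrantly generated, left proper) local model structure over a non-discrete sheaf of dg rings. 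I expect Step (3), together with verifying the left-properness and cellularity hypotheses needed to form $R_{L}\,PMod(\scr{O})$, to be where the real work lies; the remainder is a faithful transcription of the de Rham argument.
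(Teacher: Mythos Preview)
Your strategy is correct and matches the paper's: both deduce the Dolbeault case by substituting the resolution $\scr{O} \hookrightarrow \scr{A}^{0*}$ for $\ubb \hookrightarrow \Omega$ in the same zig-zag. The paper in fact proves a single general result (Theorem~\ref{manin theorem final}) valid for \emph{any} stalkwise quasi-isomorphism $\scr{R} \to \scr{A}$ of presheaves of dg algebras over a paracompact Hausdorff space, so that the Dolbeault theorem becomes a one-line corollary; your proposal is effectively a direct transcription of that general argument in the special case $\scr{R}=\scr{O}$.

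Your ``main obstacle'' is accordingly a phantom: nothing in the zig-zag argument depends on $\scr{R}$ being constant or on its stalks being fields. The proofs of the constituent Quillen equivalences (Propositions~\ref{proposition gen eq}, \ref{silly prop}, \ref{quillen eq 1}) use only the stalkwise quasi-isomorphism $\scr{R}\to\scr{A}$ together with general features of the local projective model structure on presheaves; Cartan's Theorem~B, softness of $\scr{A}^{0*}$, and any special acyclicity of $\scr{O}$ play no role. The description of $L$ in terms of perfect $\scr{O}$-complexes is an output of the equivalence, not an input. Two small slips to flag: the existence of right Bousfield localization requires \emph{right} properness (plus combinatoriality in the paper's setup, or cellularity in Hirschhorn's), not left properness; and the paper works with presheaves plus the local model structure throughout, obtaining fibrant replacement via Godement resolutions (Lemma~\ref{silly lem}, Proposition~\ref{silly prop}) rather than via an explicit sheafification/inclusion step as in your (4).
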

Finally, assume that $X$ is a connected, locally contractible topological space and $k$ is a field of characteristic $0$. We denote the dg category of dg contramodules over the pseudo-compact dg algebra of singular cochains on $X$ by $C^{*}(X)-Ctmod$.   We endow $C^{*}(X)-Ctmod$  with Positselski's model structure of second kind the homotopy category of this model category is the contraderived category of $C^{*}(X)$-contramodules, denoted $D^{ct}(C^{*}(X))$.  Let $\mcl{C}$ denote the set of compact generators for the contraderived category $D^{ct}(C^{*}(X))$  and we denote their image in the dg category of presheaves of dg $k$-vector spaces on $X$ by $L$.

\begin{theorem*}
    Let $X$ be a connected and locally contractible topological space and $k$ a field of characteristic $0$. Then there exists a zig-zag of Quillen equivalences between $C^{*}(X)-Ctmod$ and the right Bousfield localization of $PMod(\underline{k})$ at $L$. 
\end{theorem*}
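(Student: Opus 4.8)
The plan is to run the argument of the smooth and complex cases with the dictionary $\{\text{de Rham (Dolbeault) algebra}\leadsto\text{singular cochains }C^{*}(X);\ \text{dg modules}\leadsto\text{dg contramodules};\ \text{partitions of unity}\leadsto\text{local contractibility}\}$. Concretely I would work with the presheaf of pseudo-compact dg algebras $\mathcal{C}^{*}\colon U\mapsto C^{*}(U;k)$ on $X$, with its augmentation $\underline{k}\to\mathcal{C}^{*}$ (a map of presheaves of dg algebras), and with the category $PMod(\mathcal{C}^{*})$ of presheaves of dg contramodules over $\mathcal{C}^{*}$. Evaluation at $X$ gives a ``global contrasections'' functor $\Gamma\colon PMod(\mathcal{C}^{*})\to C^{*}(X)\text{-}Ctmod$ possessing a fully faithful left adjoint --- the contramodule analogue of the inverse image functor --- and restriction/coextension along $\underline{k}\to\mathcal{C}^{*}$ relates $PMod(\mathcal{C}^{*})$ to $PMod(\underline{k})$ (using contratensor and contra-$\Hom$ over $\mathcal{C}^{*}$ in place of the usual tensor--$\Hom$). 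Composing these produces the zig-zag
\[
C^{*}(X)\text{-}Ctmod \;\rightleftarrows\; PMod(\mathcal{C}^{*}) \;\rightleftarrows\; PMod(\underline{k}),
\]
and the task is to promote each leg to a Quillen equivalence once the endpoints carry the model structures of the statement.

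The first step is model-categorical input: on $C^{*}(X)\text{-}Ctmod$ we put Positselski's model structure of the second kind, whose homotopy category is $D^{ct}(C^{*}(X))$, and on $PMod(\mathcal{C}^{*})$, $PMod(\underline{k})$ the local projective model structures together with their right Bousfield localizations at $L$ (the image of the compact generators $\mcl{C}$). One checks that restriction/coextension along $\underline{k}\to\mathcal{C}^{*}$ is a Quillen pair for the local model structures which becomes a Quillen equivalence after $L$-localization: on a contractible open $U$ the augmentation $k\to C^{*}(U;k)$ is a quasi-isomorphism of pseudo-compact dg algebras, so $\underline{k}$ and $\mathcal{C}^{*}$ are locally weakly equivalent, and local contractibility of $X$ upgrades this to a stalkwise statement. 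The second step is the contramodule analogue of the quasi-full-faithfulness result of Chuang--Holstein--Lazarev: the inverse image functor $C^{*}(X)\text{-}Ctmod\to PMod(\mathcal{C}^{*})$ is homotopically fully faithful on the generators $\mcl{C}$, so its essential image generates, under homotopy colimits, precisely the coreflective (cellular) subcategory of $\operatorname{Ho}(PMod(\mathcal{C}^{*}))\simeq\operatorname{Ho}(PMod(\underline{k}))$ that the right Bousfield localization at $L$ carves out; this is exactly the identification of $D^{ct}(C^{*}(X))$ with the homotopy category of $PMod(\underline{k})_{L}$ via the universal property of cellularization.

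Then I would assemble the pieces. By the two-out-of-three property for Quillen equivalences in a composable chain, it suffices to verify that the derived unit and counit of the contrasections adjunction are weak equivalences after restriction to the relevant cellular subcategories --- which, after the first step, reduces to the smooth-case computation run verbatim with contramodules --- and that the comparison with $PMod(\underline{k})_{L}$ is a Quillen equivalence. Along the way one records, as in the de Rham case, that the objects of $L$ are, up to stalkwise quasi-isomorphism, bounded clc sheaves with finite-dimensional stalks, so that $\operatorname{Ho}(PMod(\underline{k})_{L})$ is the derived category of perfect clc sheaves; this recovers the singular Riemann--Hilbert correspondence of Chuang--Holstein--Lazarev at the level of homotopy categories.

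I expect the main obstacle to be the contramodule bookkeeping in the first step. Unlike ordinary modules, dg contramodules over the pseudo-compact algebra $\mathcal{C}^{*}(-)$ do not form a setting where extension of scalars is exact or the contratensor product behaves well without completeness hypotheses, so checking that restriction/coextension along $\underline{k}\to\mathcal{C}^{*}$ is Quillen for the model structures of the second kind, and that it interacts correctly with the passage to stalkwise weak equivalences, requires genuine care. A secondary difficulty is the absence of partitions of unity in the topological setting: the fineness arguments that made the sheaf-theoretic comparison transparent for $\Omega$ must be replaced by one using only local contractibility --- e.g.\ \v{C}ech resolutions by contractible opens, or the descent input already established earlier --- and one must ensure the generators $\mcl{C}$ of $D^{ct}(C^{*}(X))$ are chosen so that their images are compact enough in $\operatorname{Ho}(PMod(\underline{k}))$ for the right Bousfield localization at the set $L$ to exist and have the expected homotopy category.
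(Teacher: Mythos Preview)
Your strategy matches the paper's: the same zig-zag through presheaves of dg contramodules over the presheaf $\scr{C}^{*}$ of singular cochains, the stalkwise quasi-isomorphism $\underline{k}\to\scr{C}^{*}$ coming from local contractibility, and right Bousfield localization at the images of the compact generators of $D^{ct}(C^{*}(X))$.

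The paper's execution is, however, more direct than you anticipate. Neither the quasi-full-faithfulness result of Chuang--Holstein--Lazarev nor any \v{C}ech/descent argument in lieu of partitions of unity is invoked. For the first leg, once one localizes $PCtmod^{G}(\scr{C}^{*})$ at $K=\{p^{*}(C):C\in\mcl{C}\}$, a map $f$ of presheaves is a $K$-colocal equivalence iff each $\uhom(p^{*}C,f)\cong\uhom(C,p_{*}f)$ is a quasi-isomorphism, i.e.\ iff $p_{*}f$ is a weak equivalence in $C^{*}(X)\text{-}Ctmod$; combined with the tautology $p_{*}p^{*}\cong\mathrm{Id}$ this makes $p^{*}\dashv p_{*}$ a Quillen equivalence by a standard reflection criterion. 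The second leg is the purely formal fact that a stalkwise quasi-isomorphism of presheaves of (pseudo-compact) dg algebras induces a Quillen equivalence of local model structures, which then descends to the Bousfield localizations exactly as in the de Rham case. The contramodule bookkeeping you flag is handled once, abstractly, by setting up the tensor/cotensor on $PCtmod(\scr{A})$ and transferring the projective model structure along $-\ot_{\underline{k}}\scr{A}$; no further exactness or completeness hypotheses enter.

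One small correction: in the singular setting the resulting homotopy category $\mathrm{Ho}(R_{L}PMod(\underline{k}))$ is identified with $D_{lf}(\underline{k})$, the derived category of \emph{all} clc sheaves (equivalently $H^{0}(Loc_{\infty}(X))$), not only the perfect ones.
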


\subsection{Notations and conventions}

We work in the category of $\mathbb{Z}$-graded differential graded (dg) vector spaces over a fixed field $k$ of characteristic $0$. An object in this category is a pair $(V, d_{V})$ where $V$ is a graded k-vector space and $d_{V}$ is a differential, assume all differentials to be of cohomological type. Unmarked tensor products and Homs will be understood to be taken over $k$. We denote the category of dg $k$-vector spaces by $k-Mod$.

A dg algebra is an associative monoid in the category of dg $k$-vector spaces. We work with the category of right dg modules over a dg $k$-algebra $A$, that is, a dg vector space $V$ equipped with a morphism of dg $k$-vector spaces $V \ot A \lra V$ satisfying the usual unitality and associativity conditions. Similarly, a dg coalgebra is a coassociative comonoid in the category of dg $k$-vector spaces and we will work with the category of right dg comodules over a dg coalgebra.  

A dg category in this context is a category enriched over dg $k$-vector spaces. The dg $k$-vector space of morphisms in a dg category $\mcl{T}$ will be denoted $\uhom_{\mcl{T}}(-, -)$ or by $\uhom(-, -)$ when it is clear which ambient category we are working in.  A \textit{dg functor} is a functor enriched over $k-Mod$ and we will refer to $k-Mod$-enriched adjunctions between dg functors as \textit{dg adjunctions}. Moreover, we denote the homotopy category of $\mathcal{M}$ by $H^{0}(\mcl{M})$ that is the category with the same objects as $\mcl{M}$ but with morphisms between two objects $x, y $ given by $\uhom_{H^{0}(\mcl{T})} (m, n) = H^{0}(\uhom_{\mathcal{T}}(x,y))$.

By model category, we mean a complete and cocomplete category with a model structure, as defined in \cite[Definition 1.1.4]{hovey2007model}. For a $\mcl{M}$ be a model category its homotopy category is denoted by $Ho(M)$. If $X$ is an object in $\mcl{M}$, we write $X^{fib}$ and $X^{cof}$ to denote the fibrant and cofibrant replacements of $X$, respectively.

\section{Model Structures and Bousfield localization}

\subsection{Model structures of second kind}

\begin{definition}
    Let $(A, d)$ be a dg algebra. We say that an element $x\in A^{1} $ is \textit{Maurer-Cartan} or \textit{MC} if it satisfies the Maurer-Cartan equation: 

\begin{equation*}
dx+x^2=0.
\end{equation*}
We denote the set of MC elements of $A$ by $MC(A)$.
\end{definition}

\begin{definition}

Let $(A,d_A)$ be a dg algebra and $x\in MC(A)$.
\begin{enumerate}[label=(\roman*)]
\item The \textit{twisting} of $A$ by $x$, denoted $A^x = (A^{x}, d^{x})$, is the dg algebra with the
same underlying graded algebra as $A$ and differential $d^{x}
(a) = d_A(a) + [x, a].$
\item Let $(M, d_{M} )$ be a right dg $A$-module. The \textit{twisting} of $M$ by $x$, denoted $M^{[x]} =
(M, d^{[x]})$, is the right dg $A^{x}$
-module with the same underlying module structure as $M$
and differential $d^{[x]}
(m) = d_{M}(m) + mx$.
\end{enumerate}
\end{definition}

\begin{definition}
A \textit{twisted A-module} over a dg algebra $A$ is a dg $A$-module that is free as an $A$-module after forgetting the differential, that is, it is isomorphic as an $A$-module to a module of the form $V \otimes A $ for some graded
vector space $V$. A \textit{finitely generated twisted} $A$-module is a twisted $A$-module of the form $V \otimes A$ with $V$
finite-dimensional. We say that a twisted module is a \textit{perfect twisted module} if it is a homotopy retract of a finitely generated twisted module. 
\end{definition}

\begin{remark}
\label{twist}
Given any graded vector space $V$, the $A$-module $V \otimes A $ equipped with the differential $1 \ot d_{A}$ is a twisted $A$-module. By considering $V \ot A$ as a $(\underline{End}(V) \ot A)$-$A$-bimodule, any MC element $x \in MC(End(V) \otimes A) $ gives a differential $D= 1 \ot d_{A} + x$ on $V \ot A$ that is compatible with the $A$-module structure. Moreover, any such differential on $V \ot A$ must be of this form (see, \cite[Remark 3.2]{chuang2021maurer}). We denote the dg categories of twisted, finitely generated twisted, and perfect twisted $A$-modules by $TW(A), TW_{fg}(A)$ and $TW_{perf}(A)$, respectively. 

\end{remark}

\begin{theorem} \cite[Theorem 3.7]{guan2021koszul} \label{theorem GL} Let $A$ be an augmented dg algebra. There exists a cofibrantly generated model category structure on the category of dg $A$-modules. Let $f:M \longrightarrow N$ be a morphism in  $A$-Mod. Then, 
\begin{enumerate}[label=(\roman*)]
\item $f$ is a weak equivalence if and only if the induced map 
  \begin{equation*}
  \underline{Hom}((V\otimes A)^{[x]}, M ) \longrightarrow \underline{Hom}((V\otimes A)^{[x]}, N )
  \end{equation*}
  is a quasi-isomorphism  for all finitely generated twisted $A$-modules $(V\otimes A)^{[x]}$, 
  \item $f$ is a fibration if and only if it is  a degree-wise surjection.
\end{enumerate}
We will refer to this model structure as the model structure of the second kind on the category dg $A$-modules. We denote the homotopy category of this model category by $D^{II}(A)$  and we call $D^{II}(A)$ the compactly generated derived category of second kind.  
\end{theorem}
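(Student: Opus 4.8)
The plan is to obtain the model structure from Kan's recognition theorem for cofibrantly generated model categories (\cite[Theorem 2.1.19]{hovey2007model}). The category of dg $A$-modules is bicomplete and in fact locally presentable — it is the category of modules over the monoid $A$ in the locally presentable category $k$-Mod — so every object is small and the smallness clauses of the recognition theorem hold automatically. What remains is to produce a class $W$ of weak equivalences that is closed under retracts and satisfies two-out-of-three, sets $I$ and $J$ of generating (trivial) cofibrations, and to verify the compatibility conditions. I would define $W$ to be exactly the class in (i): $f\in W$ iff $\uhom_A((V\ot A)^{[x]}, f)$ is a quasi-isomorphism for every finitely generated twisted module $(V\ot A)^{[x]}$. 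Since $W$ is the joint preimage of the quasi-isomorphisms under the functors $\uhom_A((V\ot A)^{[x]}, -)$, it automatically has two-out-of-three and is closed under retracts. Note already that $A$ itself is a finitely generated twisted module (take $V=k$, $x=0$) and $\uhom_A(A, -)\cong\mathrm{id}$, so every $W$-equivalence is in particular a quasi-isomorphism.

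Write $S^{n}$ for the dg vector space $k$ in degree $n$ and $D^{n}$ for the contractible complex with $k$ in degrees $n-1$ and $n$ joined by the identity differential. I would take
\[
J \;=\; \bigl\{\, 0 \lra D^{n}\ot A \,\bigr\}_{n\in\mathbb{Z}}, \qquad
I \;=\; \bigl\{\, S^{n-1}\ot T \lra D^{n}\ot T \,\bigr\},
\]
where in $I$ the object $T$ runs over a set of representatives of the finitely generated twisted $A$-modules and $n$ over $\mathbb{Z}$; by Remark \ref{twist} each $S^{n-1}\ot T$ and $D^{n}\ot T$ is again a finitely generated twisted module, so $I$ is honestly a set. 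A map has the right lifting property against $J$ precisely when it is degreewise surjective, so $J\text{-}\mathrm{inj}$ is the class of fibrations claimed in (ii). For $I$: using the adjunction $\uhom_A\bigl((-)\ot T, M\bigr)\cong\uhom_k\bigl(-,\uhom_A(T,M)\bigr)$ together with the exactness of $\uhom_k(V,-)\cong V^{*}\ot(-)$ for finite-dimensional $V$, one checks that $f$ has the right lifting property against $I$ iff $\uhom_A(T,f)$ is a surjective quasi-isomorphism for every finitely generated twisted $T$; since $\uhom_A(T,-)$ preserves surjections and $T=A$ is among the test objects, this is equivalent to $f$ being a degreewise surjection lying in $W$. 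Hence $I\text{-}\mathrm{inj}=W\cap(J\text{-}\mathrm{inj})$, which at one stroke gives $I\text{-}\mathrm{inj}\subseteq W\cap(J\text{-}\mathrm{inj})$ and the alternative condition $W\cap(J\text{-}\mathrm{inj})\subseteq I\text{-}\mathrm{inj}$ of the recognition theorem.

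The substantive clause to check is that every relative $J$-cell complex lies in $W\cap(I\text{-}\mathrm{cof})$. The choice of $J$ makes this almost formal: as $0$ is initial, a pushout of $0\to D^{n}\ot A$ is a coproduct, so a relative $J$-cell complex has the form $X\hookrightarrow X\oplus\bigl(\bigoplus_{i}D^{n_i}\ot A\bigr)$. Each $D^{n}\ot A$ is contractible as a dg $A$-module — tensor a contracting homotopy of $D^{n}$ with $\mathrm{id}_A$ to get an $A$-linear one — and these homotopies assemble over a direct sum, while $\uhom_A(T,-)$ commutes with direct sums because $T$ is finitely generated and free as a graded module. Thus $\uhom_A(T,X)\to\uhom_A(T,Y)$ is the inclusion of a summand with contractible complement, hence a quasi-isomorphism, so $X\to Y\in W$; and $X\to Y\in I\text{-}\mathrm{cof}$ since $J\subseteq I\text{-}\mathrm{cof}$ (each $0\to D^{n}\ot A$ lifts against degreewise surjections, a fortiori against $I\text{-}\mathrm{inj}$) and $I\text{-}\mathrm{cof}$ is closed under cellular constructions. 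The recognition theorem then yields the cofibrantly generated model structure, with weak equivalences $W$ (assertion (i)) and fibrations the degreewise surjections (assertion (ii)). The part I expect to demand the most care is the identification of $I\text{-}\mathrm{inj}$ with the surjective $W$-equivalences — i.e. verifying that finitely generated twisted modules genuinely detect the trivial fibrations — together with the bookkeeping that these modules form a set; if one additionally wanted a concrete description of the cofibrations, one would have to recognise them as retracts of $I$-cell complexes, i.e. as the modules built semifreely from finitely generated twisted cells, which is a more delicate point not needed for the statement itself.
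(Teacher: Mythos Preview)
Your argument is correct, but it takes a genuinely different route from the one the paper recalls. The paper obtains the model structure of second kind by \emph{right transfer} along the adjunction
\[
G=-\ot^{\tau}A:\widehat{B}A\text{-}Comod \;\rightleftarrows\; A\text{-}Mod:\,F=-\ot^{\tau}\widehat{B}A,
\]
using Positselski's model structure on $\widehat{B}A$-comodules (Theorem~\ref{comod model structure}) and the Transfer Principle (Theorem~\ref{right transfer}); the identification of weak equivalences and fibrations with the classes in (i)--(ii) is then read off from what $F$ does. Your approach instead builds the structure \emph{ab initio} via Kan's recognition theorem, choosing $J=\{0\to D^{n}\ot A\}$ and enlarging $I$ to $\{S^{n-1}\ot T\to D^{n}\ot T\}$ with $T$ ranging over finitely generated twisted modules, and verifying $I\text{-inj}=W\cap J\text{-inj}$ directly through the tensor--hom adjunction. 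This is more elementary and self-contained: it avoids the extended bar construction, twisting cochains, and the comodule model structure altogether. What the paper's transfer approach buys, however, is exactly what the surrounding section needs: the Quillen equivalence $G\dashv F$ comes out for free, and with it the compact generation of $D^{II}(A)$ by perfect twisted modules and, via Lemma~\ref{Tranferlem} and Proposition~\ref{comod is dg}, the fact that the model structure is a dg model category. Your direct proof gives the model structure cleanly but would require separate arguments for these consequences.
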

   
\begin{definition}
Let $C$ be a dg coalgebra. We denote the category of dg $C$-comodules by $C-Comod$. Let $X \in C-Comod $ then we say that $X$ is \textit{coacyclic} if it is in the
minimal triangulated subcategory of the homotopy category of dg $C$-comodules containing
the total dg $C$-comodules of exact triples of dg $C$-comodules and closed under infinite direct sums. We say that $X$ is \textit{absolutely acyclic} if it belongs to the minimal thick subcategory of the homotopy category containing the total dg  $C$-comodules of exact triples of dg $C$-comodules.
\end{definition}

\begin{theorem}\label{comod model structure}\cite[Theorem 8.2 (a)]{positselski2011two}
    Let $C$ be a dg coalgebra. There is a cofibrantly generated model category structure on the category of the category of dg $C$-comodules. Let $f: X\lra Y $ be a morphism in $C-Comod$. Then, 

\begin{enumerate}[label=(\roman*)]
  \item f is a weak equivalence if and only if its cone is a coacyclic $C$-comodule, 
  \item f is a fibration if and only if  it is a degree-wise surjection with fibrant kernel,
  \item f is a cofibration if and only if it is injective. 
\end{enumerate}
A dg $C$-comodule is fibrant if and only if it is injective as a graded $C$-comodule. Moreover, the generating set of cofibrations is given by injective maps between finite dimensional comodules and the generating set of acyclic cofibrations is given by injective maps between finite dimensional comodules with absolutely acyclic cokernel and we denote these sets by $I$ and $J$, respectively.  We will refer to the homotopy category of this model structure as the coderived category of dg $C$-comodules and denote it by $D^{co}(C)$.

\end{theorem}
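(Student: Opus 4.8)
The plan is to realize the asserted model structure as a cofibrantly generated one by verifying the hypotheses of Jeff Smith's recognition theorem for locally presentable categories. The first step is to record that $C$-Comod is a Grothendieck abelian category: colimits are computed on underlying graded vector spaces, filtered colimits are exact, and — because $C$ is a coalgebra over a field — every dg comodule is the filtered union of its finite-dimensional dg subcomodules. In particular $C$-Comod is locally finitely presentable, hence complete, cocomplete and locally presentable, and the finite-dimensional dg comodules form a set of generators. I take $\mcl{W}$ to be the class of morphisms whose cone is coacyclic and $I$ to be the set of injective maps between finite-dimensional dg comodules. Since the cone of a monomorphism is quasi-represented by its cokernel, a mono lies in $\mcl{W}$ precisely when its cokernel is coacyclic.

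Second, I would identify the weak equivalences and the cofibrations. The homotopy category $H^{0}(C\text{-Comod})$ is triangulated, and by definition the coacyclic comodules form a triangulated subcategory closed under infinite direct sums; hence $\mcl{W}$ is exactly the class of maps inverted by the Verdier localization $H^{0}(C\text{-Comod}) \to D^{co}(C)$, so it has the two-out-of-three property and is closed under retracts. For the cofibrations, I claim that $I$-cof is precisely the class of monomorphisms (equivalently, degreewise injective maps). The inclusion $I\text{-cof} \subseteq \{\text{monos}\}$ is formal. Conversely, given a mono $A \hookrightarrow B$, local finiteness lets me well-order a family of finite-dimensional dg subcomodules of $B$ whose union together with $A$ exhausts $B$; building $B$ from $A$ one finite-dimensional piece at a time exhibits $A \hookrightarrow B$ as a transfinite composition of pushouts of maps in $I$. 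Thus every mono lies in $I$-cof and every object is cofibrant.

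Third — and this is the technical core — I must verify Smith's remaining hypotheses: that $\mcl{W}$ is an accessible subcategory of the arrow category of $C$-Comod (equivalently, satisfies the solution-set condition at $I$), that every map with the right lifting property against $I$ lies in $\mcl{W}$, and that $\mcl{W} \cap I\text{-cof}$ is closed under pushout and transfinite composition. Accessibility is the main obstacle. I would establish it by showing that the class of coacyclic comodules is \emph{deconstructible}, i.e.\ generated under transfinite extensions by a set of objects — namely the totalizations of exact triples of finite-dimensional dg comodules — using local finiteness to present any coacyclic object as a continuous union of such small pieces. Deconstructibility yields accessibility of $\mcl{W}$ and, because coacyclic comodules are closed under extensions and under filtered colimits along monomorphisms (cokernels being preserved by pushout, and coacyclics being closed under coproducts), also the closure of $\mcl{W}\cap\{\text{monos}\}$ under pushouts and transfinite composition. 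The containment $I\text{-inj}\subseteq\mcl{W}$ follows once I show that a map with the right lifting property against $I$ is a degreewise split surjection whose kernel is a contractible graded-injective comodule, hence a weak equivalence.

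Finally, Smith's theorem then produces a cofibrantly generated model structure with cofibrations $I$-cof and fibrations the maps having the right lifting property against $\mcl{W}\cap I\text{-cof}$; it remains to match these with the stated descriptions. The fibrant objects are the objects right-orthogonal to $\mcl{W}\cap I\text{-cof}$, which I identify with the graded-injective dg comodules by proving the two companion facts underlying the coderived category: every dg comodule admits a weak equivalence into a graded-injective dg comodule (a functorial graded-injective resolution supplying fibrant replacement), and every coacyclic graded-injective dg comodule is contractible. The latter is the key lemma — it forces weak equivalences between fibrant objects to be chain homotopy equivalences and pins down $D^{co}(C)$ as $H^{0}$ of the graded-injective comodules. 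With fibrant objects identified, a general fibration is a degreewise surjection with fibrant kernel, a cofibration is exactly a monomorphism, and the generating acyclic cofibrations $J$ are the injections between finite-dimensional comodules with absolutely acyclic cokernel (the finite-dimensional, thick-subcategory analogue of coacyclicity appropriate to the small generators), completing the identification.
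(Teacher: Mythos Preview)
The paper does not supply a proof of this statement; it is quoted from Positselski \cite[Theorem 8.2(a)]{positselski2011two} and used thereafter as a black box. Your proposal is therefore a reconstruction of Positselski's result rather than something to compare against an in-paper argument.

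Your route through Smith's recognition theorem is a legitimate alternative to Positselski's original proof. Positselski verifies the model axioms more directly: he first establishes the two companion lemmas you mention in your last paragraph---that every dg $C$-comodule admits a monomorphism into a graded-injective one with coacyclic cokernel, and that every coacyclic graded-injective dg $C$-comodule is contractible---and from these derives the factorizations and liftings by hand; cofibrant generation is then checked separately using local finiteness. You invoke the same two lemmas, but only \emph{after} Smith's theorem has already produced the model structure, so their organizational role is reversed. Both approaches ultimately rest on the same ingredients; yours is tidier in outline but front-loads the difficulty into the accessibility hypothesis.

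That hypothesis is where your sketch is thin. You reduce accessibility of $\mcl{W}$ to deconstructibility of the coacyclic class and assert that local finiteness lets you ``present any coacyclic object as a continuous union of such small pieces.'' This does not follow immediately: coacyclics are defined as a \emph{localizing triangulated} subcategory (closed under coproducts and triangles), not as an extension-closed class, and passing from one closure to the other is itself a theorem requiring real work---it is essentially the content of the abelian-model-structure or cotorsion-pair approaches to the coderived category. The same gap recurs when you identify $J$: to conclude that monomorphisms between finite-dimensional comodules with absolutely acyclic cokernel generate all acyclic cofibrations you need precisely that every coacyclic comodule is filtered by finite-dimensional absolutely acyclic ones. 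Once that deconstructibility is in hand, the remainder of your outline---that $I$-cof is the class of monomorphisms, that $I$-inj consists of surjections with contractible graded-injective kernel, and the identification of fibrant objects---is correct.
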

The main objective of the rest of this section is to prove that the model structure of second kind on the category of dg $A$-modules is a $k-Mod$-enriched model category. We begin by briefly recalling the proof of the Theorem \ref{theorem GL}.  Let $DGCOA^{*}$ denote the category of coaugmented coassocitive counital dg $k$-coalgebras and let $DAG^{*}$ denote the of category augmented associative unital dg $k$-algebras by $DGA^{*}$. Then there exists an adjunction  $\Omega: DGCOA^{*} \leftrightarrows DGA^{*}:\widehat{B}$ \cite[Proposition 2.6]{guan2021koszul} . The functor is $\Omega$  the usual cobar construction (see, e.g., \cite[Chapter 2]{loday2012algebraic}). The definition of the functor $\widehat{B}$ can be found in \cite[Definition 2.5]{guan2021koszul}, where it is called the \textit{extended bar} construction. The critical difference between $\widehat{B}$ and the usual bar construction is that $\widehat{B}$ takes an associative unital augmented algebra to a coassociative coaugmented counital coalgebra which is not necessarily conilpotent. The construction of $\widehat{B}$ is given as a functor from $DGA^{*}$ to the opposite category of augmented pseudocompact dg algebras and the latter category is anti-equivalent to the category of coaugmented dg coalgebras via taking duals.   Let $A$ be a dg algebra in $DGA^{*}$, and $\tau: \Omega \widehat{B} A \lra A $ be the \textit{twisting cochain } corresponding to the unit of the adjunction $\Omega \dashv \widehat{B} $. Let $F= - \ot^{\tau} A$  and $G= - \ot^{\tau} \widehat{B}A$, then

\begin{equation*}
    G: \widehat{B}A-Comod \rightleftarrows A-Mod: F
\end{equation*}

is a Quillen equivalence \cite[Theorem 3.10]{guan2021koszul}.   Since the coderived category $D^{co}(\widehat{B}A)$  is compactly generated so too is $D^{II}(A)$.  The compact objects in $D^{II}(A)$ are precisely the perfect twisted $A$-modules and they compactly generate  $D^{II}(A)$  \cite[see, Theorem 4.12.]{guan2023hochschild}.  The model structure on $A-Mod$ is transferred across the adjunction $G \dashv F$ using the following Theorem.

\begin{theorem}[Transfer principle] \cite[Theorem 3.6]{goerss2007model}\label{right transfer}
Let $\mcl{M}$ be a cofibrantly generated model category with $\bar{I}$  being the set of generating cofibrations and $\bar{J}$  being the set of generating acyclic cofibrations. Let $\mcl{C}$ be another category with small colimits and finite limits and suppose that there exists a pair of adjoint functors 

\begin{equation*}
L: \mcl{M} \leftrightarrows \mcl{C}:R.
\end{equation*}
Define a map $f$ in $\mcl{C}$ to be a fibration if $R(f)$ is a fibration in $\mcl{M}$ and a weak equivalence if $R(f)$ is a weak equivalence in $\mcl{M}$. These two classes determine a model structure on $\mcl{C}$ provided the following conditions are satisfied: 

\begin{enumerate}[label=(\roman*)]
\item $R$ preserves compact objects,
\item $\mcl{C}$ has functorial fibrant replacement and functorial path objects for fibrant objects. 
\end{enumerate}
In addition, $\mcl{C}$ will be cofibrantly generated with $L(\bar{I})$ (respectively $L(\bar{J})$) generating cofibrations (respectively acyclic cofibrations) and the adjunction $L \dashv R$ will be Quillen.

\end{theorem}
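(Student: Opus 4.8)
The plan is to construct the right-induced (transferred) structure on $\mcl{C}$ by declaring $\mcl{W}$ and $\mcl{F}$ to be the maps $f$ with $R(f)$ a weak equivalence, respectively a fibration, in $\mcl{M}$, and the cofibrations to be the maps with the left lifting property against $\mcl{W}\cap\mcl{F}$. First I would dispatch the formal axioms: since $R$ is a functor and $\mcl{M}$ satisfies the two-out-of-three and retract axioms, the classes $\mcl{W}$ and $\mcl{F}$ inherit them at once, while the existence of the needed small colimits and finite limits is part of the hypotheses. The content is therefore concentrated entirely in producing the two functorial weak factorization systems.

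For this I would run the small object argument on the sets $L\bar{I}$ and $L\bar{J}$; hypothesis (i) is exactly what makes this legitimate. Indeed, by the adjunction isomorphism $\Hom_{\mcl{C}}(L(c),-)\cong\Hom_{\mcl{M}}(c,R(-))$, the preservation of compact objects by $R$ (equivalently, preservation of the filtered colimits against which smallness is tested) turns the smallness of the domains of $\bar{I},\bar{J}$ in the cofibrantly generated category $\mcl{M}$ into smallness of the domains of $L\bar{I},L\bar{J}$ in $\mcl{C}$. The same adjunction identifies the injectives: a map $g$ has the right lifting property against $L\bar{I}$ iff $R(g)$ does against $\bar{I}$, i.e. iff $R(g)$ is an acyclic fibration, i.e. iff $g\in\mcl{W}\cap\mcl{F}$; and likewise $g$ is $L\bar{J}$-injective iff $g\in\mcl{F}$. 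Hence the small object argument yields both a (cofibration, acyclic fibration) factorization and an $(L\bar{J}\text{-cell},\text{ fibration})$ factorization, with $L\bar{I}$ and $L\bar{J}$ the candidate generating (acyclic) cofibrations.

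The one remaining, and genuinely hard, point is the acyclicity condition: every relative $L\bar{J}$-cell complex must lie in $\mcl{W}$, so that the second factorization is really an (acyclic cofibration, fibration) factorization. I expect this to be the main obstacle, and it is where hypothesis (ii) enters via Quillen's path-object argument. Let $j\colon A\lra B$ be a relative $L\bar{J}$-cell complex; by the injectivity identification above it has the left lifting property against every fibration of $\mcl{C}$. Writing $r_X\colon X\lra\Phi X$ for the functorial fibrant replacement (a weak equivalence with $\Phi X$ fibrant) and $\Phi X\xrightarrow{\sigma}P\Phi X\xrightarrow{(d_0,d_1)}\Phi X\times\Phi X$ for the functorial path object (with $\sigma\in\mcl{W}$ and $(d_0,d_1)\in\mcl{F}$), I would first lift $r_A$ against the fibration $\Phi A\lra *$ to obtain $s\colon B\lra\Phi A$ with $sj=r_A$, and then lift against $(d_0,d_1)$ to obtain a right homotopy $H\colon B\lra P\Phi B$ with $d_0H=r_B$ and $d_1H=\Phi(j)\,s$; the relevant square commutes because $r_Bj=\Phi(j)r_A=\Phi(j)sj$.

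Finally I would transport this picture across $R$. Since a right adjoint preserves products and fibrations, and since $R(r_A),R(r_B),R(\sigma)$ are weak equivalences by the very definition of $\mcl{W}$, the object $R(P\Phi B)$ is a genuine path object for the fibrant object $R(\Phi B)$ in $\mcl{M}$, and $R(H)$ exhibits $R(r_B)$ and $R(\Phi j)\,R(s)$ as right homotopic, hence equal in $Ho(\mcl{M})$. A short diagram chase in $Ho(\mcl{M})$ using $R(s)R(j)=R(r_A)$, the naturality relation $R(r_B)R(j)=R(\Phi j)R(r_A)$, and this homotopy relation then shows that $[R(j)]$ is invertible; since the weak equivalences of any model category are saturated, this forces $R(j)\in\mcl{W}_{\mcl{M}}$, i.e. $j\in\mcl{W}$. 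With acyclicity established, the two factorization systems assemble into the asserted cofibrantly generated model structure, and $L\dashv R$ is Quillen by construction because $R$ preserves fibrations and acyclic fibrations.
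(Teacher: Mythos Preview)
The paper does not supply its own proof of this theorem: it is quoted verbatim as \cite[Theorem 3.6]{goerss2007model} and used as a black box thereafter (e.g.\ in the construction of the model structure on $A$-modules of the second kind and in Proposition~\ref{pshctmodmodel}). There is therefore nothing to compare against. Your argument is the standard one found in the cited reference and in Hirschhorn: verify the formal axioms, run the small object argument on $L\bar I$ and $L\bar J$ using the adjunction to identify the injectives, and then use the path-object argument to establish the acyclicity condition for relative $L\bar J$-cell complexes.

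One small point worth tightening: you gloss condition~(i) as ``preservation of compact objects by $R$ (equivalently, preservation of the filtered colimits against which smallness is tested)''. These are not the same thing. What the small object argument actually requires is that the domains of $L\bar I$ and $L\bar J$ be small in $\mcl{C}$, and the adjunction computation you give shows this follows if $R$ \emph{commutes with the relevant filtered colimits}, which is equivalent to $L$ preserving compact objects, not $R$. The paper itself is inconsistent on this point: the statement here says ``$R$ preserves compact objects'', but when the theorem is applied in Proposition~\ref{pshctmodmodel} the hypothesis that is actually verified (Lemma~\ref{compact objects}) is that the \emph{left} adjoint preserves compact objects. Your proof is fine once you read condition~(i) in the latter sense.
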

We will now show that if $\mcl{M}$ is an enriched model category and $L \dashv R$ is an enriched adjunction then $\mcl{C}$ is also model category.  

\begin{definition}
Let $\mathcal{V}$ be a closed symmetric monoidal category and let $\mathcal{C}$ be a $\mathcal{V}$-enriched category, then we say that: 

\begin{enumerate}[label=(\roman*)]
    \item $\mathcal{C}$ is  \textit{cotensored} over $\mathcal{V}$ if there exists a functor $[-, -]_{\mcl{C}}: \mcl{V}^{op} \times \mcl{C} \lra \mcl{C}$ such that for each $v \in \mcl{V}$ and $c_{1}, c_{2} \in \mcl{C}$ there is a natural isomorphism in $\mcl{V}$

    \begin{equation*}
        \mcl{V}(v, \mcl{C}(c_{1}, c_{2})) \simeq \mcl{C}(c_{1},[v, c_{2}]_{\mcl{C}}),
    \end{equation*}
    \item $\mcl{C}$ is \textit{tensored} over $\mathcal{V}$ if there exists a functor $-\ot^{\mcl{C}}- :V \times C \lra C$ such that for each $v \in \mcl{V} $ and $c_{1}, c_{2} \in \mcl{C}$ there is a natural isomorphism in \textit{V}
    \begin{equation*}
        \mcl{C}(v \ot^{\mcl{C}} c_{1}, c_{2}) \simeq \mcl{V}(v,\mcl{C}(c_{1}, c_{2})).
    \end{equation*}
    
\end{enumerate}
We refer to these functors as the \textit{cotensor} and \textit{tensor}, respectively.
\end{definition}
\begin{proposition}
    Let $C$ be a dg coalgebra. Then the category $C$-Comod is tensored and cotensored over $k-Mod$. 
\end{proposition}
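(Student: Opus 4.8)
The strategy is to construct the tensor and cotensor functors explicitly and check the required adjunction isomorphisms. Given a dg $k$-vector space $W$ and a dg $C$-comodule $(M, \rho_M)$ with coaction $\rho_M : M \to M \ot C$, the natural candidate for the tensor $W \ot^{\mcl{C}} M$ is the dg vector space $W \ot M$ (tensor over $k$, with the usual differential $d_W \ot 1 + 1 \ot d_M$ up to sign) equipped with the coaction $1_W \ot \rho_M : W \ot M \to W \ot M \ot C$. One checks coassociativity and counitality of this coaction directly — they are inherited from those of $\rho_M$ since $W$ is just "carried along." For the cotensor $[W, M]_{\mcl{C}}$, the candidate is $\uhom_k(W, M)$, the internal hom of dg vector spaces, but here one must be slightly careful: the coaction should be the composite
\begin{equation*}
\uhom_k(W, M) \xrightarrow{\uhom_k(W, \rho_M)} \uhom_k(W, M \ot C) \to \uhom_k(W, M) \ot C,
\end{equation*}
where the second map uses the canonical map $\uhom_k(W, N) \ot C \to \uhom_k(W, N \ot C)$ — which is an isomorphism when $W$ is finite-dimensional, but in general only a map in the wrong direction. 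This is the main obstacle (see below).

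Assuming the cotensor is defined, the bulk of the remaining work is verifying the two natural isomorphisms. For the tensor side, I would exhibit a natural bijection
\begin{equation*}
C\text{-}Comod(W \ot M, N) \simeq k\text{-}Mod(W, \underline{Hom}_{C\text{-}Comod}(M, N))
\end{equation*}
by the usual tensor–hom adjunction at the level of $k$-modules, then check that a $k$-linear map $f : W \ot M \to N$ is a comodule morphism if and only if its adjunct $W \to \uhom_k(M,N)$ lands in the sub-dg-vector-space $\uhom_{C\text{-}Comod}(M,N)$ of comodule morphisms. This is a diagram chase with the coaction squares. The cotensor isomorphism $C\text{-}Comod(M, [W,N]_{\mcl{C}}) \simeq k\text{-}Mod(W, \uhom_{C\text{-}Comod}(M,N))$ is handled symmetrically. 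Since $C\text{-}Comod$ is already known to be complete and cocomplete (it is a model category by Theorem \ref{comod model structure}) and enriched over $k\text{-}Mod$, once the tensor and cotensor are constructed and the adjunctions verified, the proposition follows.

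The genuine difficulty is the construction of the cotensor when $W$ is \emph{not} finite-dimensional, because $\uhom_k(W, M)$ need not carry a natural $C$-comodule structure — comodules, unlike modules, do not behave well under arbitrary internal homs. The resolution I would pursue is one of the following: either (a) define $[W, M]_{\mcl{C}}$ as an equalizer/limit in $C\text{-}Comod$ — writing $W$ as a filtered colimit of its finite-dimensional subspaces $W_\alpha$ and setting $[W, M]_{\mcl{C}} = \lim_\alpha [W_\alpha, M]_{\mcl{C}}$ with $[W_\alpha, M]_{\mcl{C}} = \uhom_k(W_\alpha, M)$ with the (now legitimate) comodule structure, using cocompleteness of $C\text{-}Comod$ to form the limit; or (b) invoke the general fact that a complete, cocomplete $\mcl{V}$-enriched category with tensors automatically has cotensors, constructed via the tensor and the enriched representability — i.e. define $[W,M]_{\mcl{C}}$ to be the object representing $c \mapsto k\text{-}Mod(W, \uhom_{C\text{-}Comod}(c, M))$ and check this functor is representable using local presentability of $C\text{-}Comod$. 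Approach (b) is cleaner and I would favour it: the tensor is elementary, and cotensors then come for free from the adjoint functor theorem applied to the tensoring functor $W \ot^{\mcl{C}} (-)$, which preserves colimits by construction. I would flag that verifying $W \ot^{\mcl{C}}(-)$ preserves all colimits (not just the filtered ones) reduces to the fact that $W \ot_k (-)$ is exact and commutes with colimits of $k$-modules, and colimits in $C\text{-}Comod$ are computed in $k\text{-}Mod$.
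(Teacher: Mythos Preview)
Your proposal is correct, and your option (a) is precisely the route the paper takes: the tensor is $V\ot_k X$ with coaction $1\ot\rho$, the cotensor is defined as $\underline{hom}(W,X)=\uhom_k(W,X)$ with the obvious coaction when $W$ is finite-dimensional, and for general $W$ one sets $\underline{hom}(W,X):=\lim_{U}\underline{hom}(U,X)$ over finite-dimensional $U\subset W$, then verifies the cotensor adjunction by commuting $\uhom_C$ and $\uhom_k$ past the limit/colimit.

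Your preferred option (b) is a genuinely different, more abstract argument. It avoids the explicit limit formula by observing that $W\ot^{\mcl{C}}(-)$ preserves colimits (colimits in $C$-$Comod$ being created in $k$-$Mod$) and invoking local presentability of $C$-$Comod$ to produce the right adjoint. This is cleaner in that it requires no hands-on manipulation of coactions for the cotensor, but it buys existence without an explicit description; the paper's approach, by contrast, yields a concrete formula for $[W,X]_{\mcl{C}}$ that makes later computations (e.g.\ checking the pushout-product axiom in Proposition~\ref{comod is dg}) more transparent. Either route is fine here; the only caution for (b) is that the adjoint functor theorem gives an unenriched right adjoint, so you would still owe a line explaining why the resulting adjunction upgrades to a $k$-$Mod$-enriched one (e.g.\ via Proposition~\ref{enriching prop}, since the tensor visibly satisfies $V\ot^{\mcl{C}}(W\ot^{\mcl{C}} X)\cong (V\ot_k W)\ot^{\mcl{C}} X$).
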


\begin{proof}

Let $V$ be a dg vector space and let $X$ be a dg $C$-comodule.  We define the tensor $V \ot X$ of $V$ and $X$ to be the vector space $V \ot_{k} X$ equipped with the following coaction induced by the coaction of $C$ on $X$:
    \begin{align*}
        V \ot X &\lra (V \ot X) \ot C \\
        v \ot x &\longmapsto v \ot \rho (x),
    \end{align*}
where $\rho (x)$ is the coaction of $C$ on $X$. 

We will denote the cotensor by $\underline{hom}(-, -)$. In the case that $V$ is a finite dimensional dg vector space then we define $\underline{hom}(V, X)$ to be the usual homomorphism complex of vector spaces, with coaction defined by the composition 

\[
\begin{tikzcd}
\underline{hom}(V, X)  \arrow[r] & V^{*} \ot X \arrow[r] &  V^{*} \ot X \ot C  \arrow[r] & \underline{hom}(V, X) \ot C 
\end{tikzcd}
\]
where the first and last maps are isomorphisms and the other map is the map induced by the coaction of $C$ on $X$. In the case that $V$ is infinite dimensional define 
\begin{equation*}
    \underline{hom}(V, X) := \clim_{U}   \underline{hom}(U, X) .
    \end{equation*}
Here the limit runs over all the finite dimensional subspaces $U$ of $V$. Let $X$ and $Y$ and be dg $C$-comodules and $V$ a dg vector space. The following calculation shows that the cotensor satisfies the required natural isomorphisms 
\begin{align*}
      \underline{Hom}_{C}(X, \underline{hom}(V, Y))  & \cong \underline{Hom}_{C}(X, \clim_{U}  \underline{hom}(U, Y))\\ &\cong \clim_{U} \underline{Hom}_{C}(X, \underline{hom}(U, Y))\\
                                                  &\cong {\clim_{U}}   \underline{Hom}_{k}(X \ot U, Y)\\
                                                  &\cong  \underline{Hom}_{k}( \colim_{U}  X \ot U, Y )\\
                                                  &\cong \underline{Hom}_{k}(X \ot V, Y)\\
                                                  &\cong \underline{Hom}_{k}(V, \underline{Hom}_{C}(X, Y)).  
\end{align*}

\end{proof}
\begin{definition}
Let $\mathcal{V}$ be a symmetric monoidal model category and $\mathcal{C}$ be a model category. We say that $\mathcal{C}$ is a $\mathcal{V}$-\textit{model category} if $\mathcal{C}$ is a $\mathcal{V}$-enriched category that is tensored and cotensored over $\mcl{V}$ and the following conditions are satisfied: \begin{enumerate}[label=(\roman*)]
    \item For a cofibrant replacement $1_{\mathcal{V}}^{cof} \longrightarrow 1_{\mcl{V}}$ of the unit of $\mcl{V}$, the map 
    \begin{equation*}
        1_{\mcl{V}}^{cof} \ot C \longrightarrow 1_{\mcl{V}} C \cong C
    \end{equation*}
    is a weak equivalence in $\mcl{C}$ for all $C$ in $\cal{C}$
    \item For $i: V \lra V' $ a cofibration in $\cal{V}$ and $j: C \lra C'$ a cofibration in $\mcl{D}$ the pushout product $i \square j$:

    \[
\begin{tikzcd}
V \ot C \arrow[d, "i \ot C"] \arrow[r, "V \ot j"] &
V \ot C' \arrow[d] \arrow[ddr,bend left, "i \ot C'"] \\
V' \ot C \arrow[r] \arrow[drr,bend right, "V' \ot j"] &
V' \ot C \displaystyle{\bigsqcup_{V \ot C }} V \ot C' \arrow[dr,"i \square j"] \\
&& V' \ot C'
\end{tikzcd}
\]
is a cofibration in $\mcl{C}$, moreover, if $i$ or $j$ is a weak equivalence then so is $i \square j$. We call this condition the \textit{pushout product axiom}. 
\end{enumerate}
\end{definition}
In this paper we will only consider the case where  $\mcl{V}$ is the category of differential graded vector spaces, $k-Mod$. We will refer to $k-Mod$-enriched adjunctions as dg adjunctions. Unless otherwise specified, we assume that $k-Mod$ is equipped with the projective model structure. With this model structure, $k-Mod$ is a symmetric monoidal model category (see, e.g., \cite[Proposition 4.2.13]{hovey2007model}). We will refer to $k-Mod$-model categories as dg model categories. 

\begin{remark}\label{derived hom}

Let $\mathcal{M}$ be a dg model category.  The homotopy category of $\mcl{M}$ is naturally enriched over the derived category dg vector spaces $D(k)$. In particular, the enriched Hom-functor $\uhom(-,-): \mcl{M} \times  \mcl{M}  \lra k-Mod$  can be lifted to a right derived functor $ \rdh(-,-):Ho(\mcl{M}) \times Ho(\mcl{M}) \lra D(k)$ defined by 
\begin{equation*}
    \mathbb{R}\uhom(X, Y) : = \uhom(X^{cof}, Y^{fib}), 
\end{equation*}
for all $X, Y \in \mcl{M}$. Here $X^{cof}$ and $Y^{fib}$ denote the cofibrant and fibrant replacement, respectively. As convenient consequence, we can explicitly compute 
\begin{equation*}
     \underline{Hom}_{Ho(\mcl{M})}(X, Y) \cong H^{0}(\rdh(X, Y)).
\end{equation*}
\end{remark}

    \begin{proposition}\label{comod is dg}
With the model structure described in Theorem \ref{comod model structure}, $C$-Comod is a dg model category.
\end{proposition}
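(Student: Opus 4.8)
The plan is to lean on the preceding proposition, which already shows that $C\text{-Comod}$ is tensored and cotensored over $k\text{-Mod}$; it is also complete and cocomplete (being a model category) and $k\text{-Mod}$-enriched via the Hom-complex of comodule morphisms, so the only remaining content is to verify the two axioms of a dg model category for the model structure of Theorem~\ref{comod model structure}. The unit axiom is immediate: with the projective model structure on $k\text{-Mod}$ every object is cofibrant — every acyclic fibration is a split epimorphism of complexes (its kernel is acyclic over the field $k$, hence contractible, so the short exact sequence splits as complexes), whence $0\to A$ has the left lifting property against all acyclic fibrations. In particular the unit $k$ is cofibrant, so one may take $1_{k\text{-Mod}}^{cof}\to 1_{k\text{-Mod}}$ to be the identity and the comparison map $1_{k\text{-Mod}}^{cof}\ot X \to X$ is an isomorphism of comodules, a fortiori a weak equivalence.

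The real content is the pushout–product axiom, and I would first record the shapes of the relevant cofibrations: by Theorem~\ref{comod model structure} the cofibrations of $C\text{-Comod}$ are exactly the degreewise injective morphisms, while in $k\text{-Mod}$ the cofibrations are the degreewise injective maps and the acyclic cofibrations are the injections with acyclic — hence, over $k$, contractible — cokernel. Given a cofibration $i\colon V\hookrightarrow V'$ in $k\text{-Mod}$ and a cofibration $j\colon X\hookrightarrow X'$ in $C\text{-Comod}$, exactness of $(-)\ot_k(-)$ in each variable lets me identify the domain of the pushout–product with the sub-comodule $V'\ot X + V\ot X'$ of $V'\ot X'$ and $i\,\square\,j$ itself with the inclusion of this sub-comodule; thus $i\,\square\,j$ is injective, i.e. a cofibration, and the same bookkeeping identifies its cokernel as
\begin{equation*}
\coker(i\,\square\,j)\ \cong\ \coker(i)\ot\coker(j).
\end{equation*}

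For the acyclicity clause it then suffices to see that this cokernel is coacyclic whenever $i$ or $j$ is a weak equivalence, for which I would isolate three elementary facts. First: if $W$ is a contractible complex of $k$-vector spaces and $Z$ any $C$-comodule, then $W\ot Z$ is contractible as a complex of comodules — the contracting homotopy $h\ot 1_Z$ is a comodule morphism, since $h$ acts on the tensor factor untouched by the coaction — hence a zero object of $H^0(C\text{-Comod})$ and in particular coacyclic. Second: if $W$ is an arbitrary complex of $k$-vector spaces and $Z$ is coacyclic, then $W\ot Z$ is coacyclic: the functor $W\ot(-)$ is exact, preserves coproducts, descends to a triangulated endofunctor of $H^0(C\text{-Comod})$, and sends total complexes of exact triples to total complexes of exact triples, so $\{Z : W\ot Z\text{ is coacyclic}\}$ is a coproduct-closed triangulated subcategory containing the generators of the coacyclic class and therefore contains it. Third: for a monomorphism $f\colon M\hookrightarrow N$ of comodules with cokernel $Q$, $\mathrm{Cone}(f)$ is coacyclic if and only if $Q$ is — the total complex of the exact triple $0\to M\to N\to Q\to 0$ is coacyclic by definition and fits, via a degreewise split short exact sequence, into a distinguished triangle of $H^0(C\text{-Comod})$ together with $\mathrm{Cone}(f)$ and $Q$. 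Combining: if $i$ is an acyclic cofibration then $\coker(i)$ is contractible, so the first and third facts make $\mathrm{Cone}(i\,\square\,j)$ coacyclic, i.e. $i\,\square\,j$ is a weak equivalence; symmetrically, if $j$ is an acyclic cofibration then $\coker(j)$ is coacyclic, so the second and third facts give the same conclusion. This completes the verification of the pushout–product axiom.

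The step I expect to be the main obstacle is the second fact — that tensoring with an arbitrary, possibly unbounded, complex of vector spaces preserves coacyclicity. It cannot be obtained by producing a contracting homotopy, since coacyclicity is strictly weaker than contractibility; it has to be deduced from the characterization of the coacyclic objects as the minimal coproduct-closed triangulated subcategory generated by the total complexes of exact triples, which in turn forces one to check with some care that $W\ot(-)$ is genuinely triangulated on $H^0(C\text{-Comod})$ and commutes with the totalization of exact triples. A secondary point needing attention is the third fact, i.e. justifying that the weak equivalences — defined by coacyclicity of the mapping cone — may legitimately be detected on cokernels of monomorphisms.
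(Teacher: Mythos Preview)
Your argument is correct, but it takes a different route from the paper's. The paper invokes \cite[Corollary 4.2.5]{hovey2007model} to reduce the pushout--product axiom to the generating sets: with $I'=\{S^{n-1}\to D^n\}$, $J'=\{0\to D^n\}$ in $k\text{-Mod}$ and $I$, $J$ the generating (acyclic) cofibrations of Theorem~\ref{comod model structure} (injections of finite-dimensional comodules, with absolutely acyclic cokernel for $J$), it checks directly that $I\,\square\,I'$ consists of injections, that $J\,\square\,I'$ and $I\,\square\,J'$ consist of acyclic cofibrations. Because the $k\text{-Mod}$ side is reduced to the maps $S^{n-1}\to D^n$ and $0\to D^n$, the cokernels arising are just shifts $S^{n-1}\ot\coker(f)$ or cones $D^n\ot\coker(f)$, and one never needs your ``second fact'' that tensoring with an arbitrary unbounded complex preserves coacyclicity: shifts visibly preserve absolute acyclicity, and $D^n\ot(-)$ lands in contractible objects.

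By contrast, you work with arbitrary cofibrations and identify $\coker(i\,\square\,j)\cong\coker(i)\ot\coker(j)$ once and for all, which is cleaner and more conceptual but forces you to establish the general preservation-of-coacyclicity statement you flag as the main obstacle. Your argument for that statement (that $W\ot(-)$ is an exact, coproduct-preserving, triangulated endofunctor of $H^0(C\text{-Comod})$ which sends totalizations of exact triples to totalizations of exact triples) is sound. The trade-off: the paper's reduction to generators avoids this lemma entirely at the cost of a few explicit computations, while your approach proves a reusable statement at the cost of a slightly more delicate triangulated argument.
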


\begin{proof}

Since the unit in $k-Mod$ is cofibrant, we only have to check the pushout product axiom. Let $I$ and $J$ be as in Theorem \ref{comod model structure}. Recall the projective model structure on $k-Mod$ is cofibrantly generated with the set of inclusions $I'=\{S^{n-1} \longrightarrow D^{n}\}$ generating cofibrations and $J'= \{0 \longrightarrow D^{n}\}$ generating acyclic cofibrations. Here $S^{n}$ denotes the complex with $k$ in degree $n$ and zero elsewhere. Similarly, $D^{n}$ denotes the complex with $k$ in degree $n$ and $n-1$ and $0$ elsewhere.

By \cite[Corollary 4.2.5.]{hovey2007model} it is sufficient to check the pushout product axiom on the generating sets of cofibrations and acyclic cofibrations. More precisely, it is sufficient to check that $I \square I'$ consists of cofibrations, $J \square I'$ consists of acyclic cofibrations and  $I \square J'$ consists of acyclic cofibrations. Let $i: S^{n-1} \lra D^{n} \in I'$ and $f:X \lra Y  \in I$. The pushout product of $i$ and $f$ is described by the following diagram

\[
\begin{tikzcd}
S^{n-1} \ot X \arrow[d, "i \ot X"] \arrow[r, "S^{n-1} \ot f"] &
S^{n-1} \ot Y \arrow[d, "\beta"] \arrow[ddr,bend left, "i \ot Y"] \\
D^{n} \ot X \arrow[r, "\alpha"] \arrow[drr,bend right, "D^{n} \ot f"] &
P \arrow[dr,"i \square f"] \\
&& D^{n} \ot Y.
\end{tikzcd}
\]
 The maps $S^{n-1} \ot f$ and $ i \ot X$ are injective and, since pushouts preserve injective maps in any abelian category, $\alpha$ and $\beta$ are also injective. It is then straightforward to check that $i\square f$ is injective.

Now assume that $f \in J$, then it can be seen that $\alpha, \ D^{n} \ot f \in J$ by the following reasoning. Observe that the cokernel of $\alpha$ is isomorphic to $\coker(S^{n-1} \ot f) \cong S^{n-1} \ot \coker(f) $, since pushouts preserve cokernels in any abelian category and $\coker(D^{n} \ot f) \cong D^{n} \ot \coker(f)$. It follows by the two out of three axiom that $i\square f $ is a weak equivalence and therefore an acyclic cofibration.

Let $j: 0 \lra D^{n} $ be a map in $J'$ then the pushout product is given by $j \square f = D^{n} \ot f: D^{n} \ot X \lra D^{n} \ot Y$. This map is an injective map between finite dimensional comodules and its cokernel is given by $D^{n} \ot \coker(f)$ which is homotopy equivalent to $0$ and therefore coacyclic.  
\end{proof}

\begin{proposition}\label{enriching prop}\cite[Proposition 3.7.10]{riehl2014categorical}
Let $\mcl{V}$ be a closed symmetric monoidal category and $\mcl{C}$ and $\mcl{D}$ be $\mcl{V}$-enriched categories which are tensored and cotensored over $\mcl{V}$. Suppose that there exists an adjunction $L:\mcl{C} \rightleftarrows \mcl{D}: R$ between the underlying categories. Then the following are equivalent:

    \begin{enumerate}[label=(\roman*)]
    \item The adjunction $L\dashv R$ is a $\mcl{V}$-adjunction.
       \item The functor $R$ is a $\mcl{V}$-functor and there exits a natural isomorphism \\  $R([v, d]_{\mcl{D}}) \cong [v, R(d) ]_{\mcl{C}} $ for all $v \in \mcl{V}$ and $d\in \mcl{D}$.
              \item The functor $L$ is a $\mcl{V}$-functor and there exists a natural isomorphism \\ $L(v \ot^{\mcl{C}} c) \cong v \ot^{\mcl{D}}L(c)$ for all $v \in \mcl{V}$ and $c \in \mcl{C}$.
    \end{enumerate}

\end{proposition}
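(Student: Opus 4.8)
The plan is to prove the statement of Proposition \ref{enriching prop} by showing that (i) $\Rightarrow$ (ii), (ii) $\Rightarrow$ (iii), and (iii) $\Rightarrow$ (i), exploiting the symmetry of the $\mcl{V}$-enriched situation. First I would assume (i), that $L \dashv R$ is a $\mcl{V}$-adjunction, meaning the adjunction isomorphism $\mcl{D}(L c, d) \cong \mcl{C}(c, R d)$ is an isomorphism in $\mcl{V}$ (not merely in $\mathrm{Set}$, i.e. on underlying hom-objects), natural in both variables. In particular $R$ is automatically a $\mcl{V}$-functor, since a $\mcl{V}$-adjoint of a $\mcl{V}$-functor is again a $\mcl{V}$-functor (the enrichment of $R$ on hom-objects is built from that of $L$ via the adjunction isomorphism together with unit and counit, all of which are $\mcl{V}$-morphisms). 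To obtain the natural isomorphism $R([v,d]_{\mcl{D}}) \cong [v, Rd]_{\mcl{C}}$, I would use the defining universal property of the cotensor: for any $c \in \mcl{C}$, we have a chain of isomorphisms in $\mcl{V}$,
\begin{align*}
\mcl{C}(c, R([v,d]_{\mcl{D}})) &\cong \mcl{D}(Lc, [v,d]_{\mcl{D}}) \\
&\cong \mcl{V}(v, \mcl{D}(Lc, d)) \\
&\cong \mcl{V}(v, \mcl{C}(c, Rd)) \\
&\cong \mcl{C}(c, [v, Rd]_{\mcl{C}}),
\end{align*}
where the first and third use the $\mcl{V}$-adjunction, the second and fourth the cotensor universal property. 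Each step is natural in $c$, so by the Yoneda lemma (enriched over $\mcl{V}$) we get the desired isomorphism $R([v,d]_{\mcl{D}}) \cong [v, Rd]_{\mcl{C}}$, and it is automatically natural in $v$ and $d$ because all the intermediate isomorphisms are.

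Next, for (ii) $\Rightarrow$ (iii), I would observe that this is the "same" statement read through the opposite categories: the tensor $v \ot^{\mcl{C}} c$ is the cotensor in $\mcl{C}^{op}$, the functor $L: \mcl{C} \to \mcl{D}$ becomes a right adjoint $L^{op}: \mcl{D}^{op} \to \mcl{C}^{op}$ with left adjoint $R^{op}$, and the $\mcl{V}$-functoriality notions transfer. So rather than repeating the argument it suffices to note that applying the implication (i) $\Rightarrow$ (ii) already established — but now it is cleaner to argue directly and symmetrically: assuming (ii), we want to show $L$ is a $\mcl{V}$-functor with $L(v \ot^{\mcl{C}} c) \cong v \ot^{\mcl{D}} L(c)$. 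For the natural isomorphism, run the dual chain using the tensor universal property: for any $d \in \mcl{D}$,
\begin{align*}
\mcl{D}(L(v \ot^{\mcl{C}} c), d) &\cong \mcl{C}(v \ot^{\mcl{C}} c, Rd) \\
&\cong \mcl{V}(v, \mcl{C}(c, Rd)) \\
&\cong \mcl{V}(v, \mcl{D}(Lc, d)) \\
&\cong \mcl{D}(v \ot^{\mcl{D}} Lc, d),
\end{align*}
using in the third step the natural isomorphism $\mcl{C}(c, Rd) \cong \mcl{D}(Lc,d)$ furnished by (ii) (since $R$ being a $\mcl{V}$-functor compatible with cotensors makes the underlying adjunction a $\mcl{V}$-adjunction — or, more carefully, one first upgrades (ii) to the full $\mcl{V}$-adjunction and then proceeds; I will handle this bookkeeping in the details). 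The enriched Yoneda lemma then yields the claimed isomorphism. That $L$ is a $\mcl{V}$-functor follows once we know the adjunction is $\mcl{V}$-enriched.

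Finally, for (iii) $\Rightarrow$ (i), I would build the $\mcl{V}$-enrichment of the adjunction isomorphism directly. Given that $L$ is a $\mcl{V}$-functor and $L(v \ot^{\mcl{C}} c) \cong v \ot^{\mcl{D}} Lc$ naturally, define for each $c, d$ the candidate isomorphism $\mcl{D}(Lc, d) \cong \mcl{C}(c, Rd)$ in $\mcl{V}$ by the composite
\begin{align*}
\mcl{D}(Lc, d) &\cong \mcl{V}(1_{\mcl{V}}, \mcl{D}(Lc,d)) \cong \mcl{D}(1_{\mcl{V}} \ot^{\mcl{D}} Lc, d) \\
&\cong \mcl{D}(L(1_{\mcl{V}} \ot^{\mcl{C}} c), d) \cong \cdots
\end{align*}
but the cleaner route is: the functor $c \mapsto \mcl{D}(Lc, d)$, as a $\mcl{V}$-valued functor $\mcl{C}^{op} \to \mcl{V}$, is representable if and only if it takes tensors in $\mcl{C}$ to cotensors in $\mcl{V}$ appropriately — more precisely, one checks the $\mcl{V}$-natural isomorphism $\mcl{V}(v, \mcl{D}(Lc,d)) \cong \mcl{V}(v, \mcl{C}(c, Rd))$ for all $v$ by reversing the chain in the previous paragraph (using $\mcl{V}$-functoriality of $L$ to identify $\mcl{D}(L(v\ot^{\mcl{C}}c), d)$ with $\mcl{V}(v,\mcl{D}(Lc,d))$ via the hypothesis, and the ordinary set-level adjunction to identify the underlying sets), and concludes by enriched Yoneda that the underlying-set adjunction isomorphism is in fact a $\mcl{V}$-isomorphism, naturally in $c$ and $d$. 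I expect the main obstacle to be purely bookkeeping rather than conceptual: one must be careful that "the underlying adjunction exists" plus "$L$ (or $R$) is a $\mcl{V}$-functor satisfying the tensor (or cotensor) compatibility" together genuinely force the $\mcl{V}$-naturality of the adjunction isomorphism — i.e. that the canonically induced map is the enriched one — and this requires tracking units, counits and the triangle identities through the enriched Yoneda argument, invoking that $\mcl{V}$ is closed symmetric monoidal so that enriched representability and Yoneda are available. Since this is a citation to \cite[Proposition 3.7.10]{riehl2014categorical}, I would keep the write-up brief and refer the reader there for the remaining diagram chases.
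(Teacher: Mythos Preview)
The paper does not supply its own proof of this proposition: it is stated with a bare citation to \cite[Proposition~3.7.10]{riehl2014categorical} and no proof environment follows. So there is nothing in the paper to compare your argument against; the author's ``proof'' is the reference.

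Your sketch is a correct outline of the standard argument, and you identify the right ingredients (the enriched Yoneda lemma, the defining universal properties of tensors and cotensors, and the fact that a $\mcl{V}$-adjoint of a $\mcl{V}$-functor is again a $\mcl{V}$-functor). One organisational remark: the cycle (i) $\Rightarrow$ (ii) $\Rightarrow$ (iii) $\Rightarrow$ (i) is slightly awkward at the middle step, as you yourself note---your chain for (ii) $\Rightarrow$ (iii) tacitly uses the $\mcl{V}$-enriched adjunction isomorphism, which is (i), not (ii). The cleaner scheme (and the one Riehl uses) is to prove (ii) $\Rightarrow$ (i) and (iii) $\Rightarrow$ (i) directly, and then (i) $\Rightarrow$ (ii) and (i) $\Rightarrow$ (iii) by the Yoneda computations you wrote down; the equivalence of (ii) and (iii) then passes through (i). Since you already flag this and propose to defer the diagram chases to the cited source, your write-up is in line with what the paper does.
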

     
\begin{lemma}\label{Tranferlem}
    Let $\mcl{C}$ be a model category and 

    \begin{equation*}
    L: \mcl{C} \rightleftarrows \mcl{D}: R
\end{equation*}
be an adjunction such that the model structure on $\mcl{C}$ may be transferred to $\mcl{D}$ via right transfer.  Suppose further that $\mcl{C}$ is a $\mcl{V}$-model category for some symmetric monoidal model category $\mcl{V}$, and  $\mcl{D}$ is a $\mcl{V}$-enriched category that is tensored and cotensored over $\mcl{V}$. If the adjunction satisfies any of the equivalent conditions in Proposition \ref{enriching prop}, then $\mcl{D}$ is $\mcl{V}$-model category. 

\end{lemma}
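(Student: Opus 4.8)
The plan is to verify the two axioms in the definition of a $\mathcal{V}$-model category for $\mcl{D}$, namely the unit axiom and the pushout product axiom, by transporting them across the adjunction from $\mcl{C}$, where they already hold. Since $\mcl{D}$ is assumed to be $\mathcal{V}$-enriched, tensored and cotensored, the only thing left to check is the compatibility of this $\mathcal{V}$-structure with the transferred model structure. Throughout I will use the equivalent condition (iii) of Proposition \ref{enriching prop}: the left adjoint $L$ is a $\mathcal{V}$-functor and there is a natural isomorphism $L(v \ot^{\mcl{C}} c) \cong v \ot^{\mcl{D}} L(c)$ for all $v \in \mcl{V}$, $c \in \mcl{C}$. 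This is precisely the hypothesis that lets tensors commute with $L$, which is the crux of the whole argument.

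First I would reduce the pushout product axiom to generating (acyclic) cofibrations. By the transfer principle (Theorem \ref{right transfer}), $\mcl{D}$ is cofibrantly generated with generating cofibrations $L(\bar I)$ and generating acyclic cofibrations $L(\bar J)$, where $\bar I, \bar J$ are those of $\mcl{C}$. By \cite[Corollary 4.2.5]{hovey2007model} it suffices to check the pushout product axiom on pairs of generators: one generator $i \colon v \to v'$ from the cofibrantly generated structure on $\mcl{V} = k\text{-}Mod$ (i.e. $i \in I'$ or $i \in J'$), and one generator $j \colon L(c) \to L(c')$ from $L(\bar I)$ or $L(\bar J)$, i.e. $j = L(\bar\jmath)$ for $\bar\jmath \colon c \to c'$ in $\bar I$ or $\bar J$. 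Now the key move: because $L$ is a left adjoint it preserves pushouts, and because $L(v \ot^{\mcl{C}} c) \cong v \ot^{\mcl{D}} L(c)$ naturally, the pushout product $i \,\square\, L(\bar\jmath)$ in $\mcl{D}$ is naturally isomorphic to $L(i \,\square\, \bar\jmath)$, the image under $L$ of the pushout product formed in $\mcl{C}$. Since $\mcl{C}$ is a $\mathcal{V}$-model category, $i \,\square\, \bar\jmath$ is a cofibration in $\mcl{C}$ (acyclic if $i$ or $\bar\jmath$ is), and $L$ sends (acyclic) cofibrations to (acyclic) cofibrations because it is a left Quillen functor by the transfer principle. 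Hence $i \,\square\, L(\bar\jmath)$ is an (acyclic) cofibration in $\mcl{D}$, as required.

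For the unit axiom, let $1_{\mcl{V}}^{cof} \to 1_{\mcl{V}}$ be a cofibrant replacement of the monoidal unit. For an arbitrary object $D \in \mcl{D}$ I want to show $1_{\mcl{V}}^{cof} \ot^{\mcl{D}} D \to 1_{\mcl{V}} \ot^{\mcl{D}} D \cong D$ is a weak equivalence. The map $R$ detects weak equivalences by definition of the transferred structure, so it is enough to show that applying $R$ yields a weak equivalence in $\mcl{C}$. Using condition (ii) of Proposition \ref{enriching prop} — equivalent to (iii) — one sees that the cotensor, and dually the tensor, interacts with $R$ appropriately; more directly, since the tensoring is adjoint to cotensoring and $R$ commutes with cotensors ($R([v,d]_{\mcl{D}}) \cong [v, R(d)]_{\mcl{C}}$), a diagram chase shows $R(v \ot^{\mcl{D}} D)$ receives a comparison map from $v \ot^{\mcl{C}} R(D)$; in the cases that matter (the generators $v = D^n$ of $k\text{-}Mod$ where the cotensor is just a finite-dimensional Hom-complex, and $v = 1_{\mcl{V}}$), this comparison is an isomorphism. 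Since $\mcl{C}$ satisfies the unit axiom, $1_{\mcl{V}}^{cof} \ot^{\mcl{C}} R(D) \to R(D)$ is a weak equivalence in $\mcl{C}$, and transporting back gives the claim. Alternatively, and perhaps more cleanly, one notes that in our applications $1_{\mcl{V}} = k$ is already cofibrant in $k\text{-}Mod$, so the unit axiom is automatic and only the pushout product axiom requires the argument above.

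The main obstacle is the bookkeeping in the pushout product step — specifically making rigorous the natural isomorphism $i \,\square\, L(\bar\jmath) \cong L(i \,\square\, \bar\jmath)$. This requires that $L$ preserves the relevant colimit (a pushout, which it does as a left adjoint) \emph{and} that the tensor $-\ot^{\mcl{D}}-$ is itself cocontinuous in each variable so that the pushout-product construction is compatible with $L$; the latter holds because the tensoring, being a left adjoint (to the cotensoring) in each variable by definition of tensored category, preserves colimits. Once these two facts are assembled, the commutation $L(v \ot^{\mcl{C}} c) \cong v \ot^{\mcl{D}} L(c)$ propagates from objects to the pushout-product diagrams formally, and the rest is a routine application of the fact that a left Quillen functor preserves cofibrations and acyclic cofibrations.
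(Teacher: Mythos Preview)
Your argument is correct but proceeds dually to the paper's. The paper checks the \emph{pullback--cotensor} reformulation of the pushout product axiom (citing \cite[Remark A.3.1.6]{lurie2009higher}): for a cofibration $v_{1}\to v_{2}$ in $\mcl{V}$ and a fibration $d_{1}\to d_{2}$ in $\mcl{D}$, the map $[v_{2},d_{1}]_{\mcl{D}}\to [v_{1},d_{1}]_{\mcl{D}}\times_{[v_{1},d_{2}]_{\mcl{D}}}[v_{2},d_{2}]_{\mcl{D}}$ must be an (acyclic) fibration. Applying $R$, which commutes with cotensors (condition (ii) of Proposition~\ref{enriching prop}) and with pullbacks, reduces this to the identical statement in $\mcl{C}$ for the map $R(d_{1})\to R(d_{2})$; this holds since (acyclic) fibrations in $\mcl{D}$ are \emph{by definition} created by $R$. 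Your route instead works on the primal side: reduce to generating (acyclic) cofibrations $L(\bar\jmath)$ in $\mcl{D}$, use $L(v\ot^{\mcl{C}} c)\cong v\ot^{\mcl{D}} L(c)$ together with preservation of pushouts to get $i\,\square\,L(\bar\jmath)\cong L(i\,\square\,\bar\jmath)$, and then push the (acyclic) cofibration $i\,\square\,\bar\jmath$ forward via the left Quillen functor $L$. The paper's approach is a bit slicker because it uses only the defining property of right transfer and avoids any reduction to generators; your approach is equally valid but relies on cofibrant generation, and the citation of \cite[Corollary 4.2.5]{hovey2007model} strictly speaking concerns monoidal model categories rather than the two-variable $\mcl{V}$-module situation, so one should note that the same closure-under-saturation argument goes through here. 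As for the unit axiom: your first attempt via $R$ and tensors is not quite right ($R$ commutes with cotensors, not tensors), but your fallback observation that $1_{\mcl{V}}$ is cofibrant in the applications is exactly what the paper relies on implicitly, since its proof does not address the unit axiom either.
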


\begin{proof}
Let $[-, -]_{\mcl{C}}$ and $[-, -]_{\mcl{D}}$ denote the cotensoring in $\mcl{C}$ and $\mcl{D}$, respectively. By \cite[Remark A.3.1.6]{lurie2009higher} it is sufficient to check that for cofibration $v_{1} \lra v_{2}$ in $\mcl{V}$ and fibration $d_{1} \lra d_{2}$ in $\mcl{D}$ the induced morphism 
\begin{equation}\label{eq 222}
    [v_{2}, d_{1}]_{\mcl{D}} \lra [v_{1}, d_{1}]_{\mcl{D}} \times_{[v_{1},d_{2}]_{\mcl{D}}} [v_{2},d_{2}]_{\mcl{D}}
\end{equation}
is a fibration and is an acyclic fibration if, in addition, either $v_{1} \lra v_{2}$ or $d_{1} \lra d_{2}$ is a weak equivalence. Note that $R$ is right adjoint, so it preserves pullbacks, and by assumption $R$ commutes with cotensoring. Hence, $(\ref{eq 222})$ is equivalent to requiring that 
\begin{equation*}
    [v_{2}, R(d_{1})]_{\mcl{C}} \lra [v_{1}, R(d_{1})]_{\mcl{C}} \times_{[v_{1},R(d_{2})]_{\mcl{C}}} [v_{2}, R(d_{2})]_{\mcl{C}}
\end{equation*}
is a (acyclic) fibration. The result follows by recalling that $d_{1} \lra d_{2}$ is an (acyclic) fibration if and only if $R(d_{1} \lra d_{2})$ is an (acyclic) fibration. 
\end{proof}

\begin{corollary}
   With the model structure of second kind, $A$-Mod is a dg model category.  
\end{corollary}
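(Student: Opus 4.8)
The strategy is to deduce the corollary directly from Lemma~\ref{Tranferlem}. Recall that, by construction, the model structure of the second kind on $A-Mod$ (Theorem~\ref{theorem GL}) is the right transfer, along the adjunction $G \dashv F$ of \cite[Theorem 3.10]{guan2021koszul}, of the coderived model structure on $\widehat{B}A-Comod$ (Theorem~\ref{comod model structure}); here $F: A-Mod \to \widehat{B}A-Comod$ is the right adjoint and $G: \widehat{B}A-Comod \to A-Mod$ the left adjoint, whose underlying functor sends a $\widehat{B}A$-comodule $X$ to the graded $A$-module $X \ot_k A$ equipped with the $\tau$-twisted differential. Since $\widehat{B}A$ is a dg coalgebra, Proposition~\ref{comod is dg} tells us that $\widehat{B}A-Comod$ with this model structure is a $k-Mod$-model category. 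Thus, taking $\mcl{C} = \widehat{B}A-Comod$, $\mcl{D} = A-Mod$, $\mcl{V} = k-Mod$, $L = G$ and $R = F$ in Lemma~\ref{Tranferlem}, it remains only to check that $A-Mod$ is tensored and cotensored over $k-Mod$ and that $G \dashv F$ is a dg adjunction.

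For the first point, given $V \in k-Mod$ and a right dg $A$-module $M$, I would take the tensor $V \ot M$ to be $V \ot_k M$ with $A$ acting on the right-hand factor, and the cotensor $[V, M]$ to be $\uhom_k(V, M)$ with $(f \cdot a)(v) = f(v)a$; the defining natural isomorphisms $\uhom_A(V \ot M, N) \cong \uhom_k(V, \uhom_A(M, N)) \cong \uhom_A(M, [V, N])$ are the usual tensor--hom adjunctions. For the second point, by Proposition~\ref{enriching prop} it suffices to verify condition (iii): that $G$ is a $k-Mod$-functor and that there is a natural isomorphism $G(V \ot X) \cong V \ot G(X)$. That $G$ is a dg functor is clear, as it is additive and acts on hom-complexes by $\phi \mapsto \phi \ot 1_A$, a chain map. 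For the compatibility with the tensoring, recall that the tensoring on $\widehat{B}A-Comod$ leaves $V$ inert (the coaction on $V \ot X$ is $v \ot x \mapsto v \ot \rho(x)$), and that the $\tau$-twisted differential on $W \ot^\tau A$ is assembled from $d_W$, $d_A$ and the composite of the coaction of $W$ with the twisting cochain. Taking $W = V \ot X$, none of these terms involves $V$ nontrivially, so the evident identification of graded $A$-modules $(V \ot X) \ot_k A \cong V \ot_k (X \ot_k A)$ intertwines the twisted differentials and gives the required isomorphism $G(V \ot X) \cong V \ot G(X)$, natural in $V$ and $X$.

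With both hypotheses verified, Lemma~\ref{Tranferlem} applies and yields that $A-Mod$ with the model structure of the second kind is a $k-Mod$-model category, that is, a dg model category, as claimed. I do not expect a genuine obstacle here: the substantive work has already been carried out in Lemma~\ref{Tranferlem} and Propositions~\ref{comod is dg} and~\ref{enriching prop}, and the only point requiring care is the compatibility of the twisted tensor product functor $G$ with the $k-Mod$-tensoring, which reduces to the observation that the tensor factor $V$ interacts with neither the twisting cochain nor the coalgebra coaction and so is carried along formally.
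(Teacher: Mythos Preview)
Your proposal is correct and follows exactly the same approach as the paper's proof, which simply cites Lemma~\ref{Tranferlem} and Proposition~\ref{comod is dg} together with the observation that $G \dashv F$ is a dg adjunction. You have merely filled in the details the paper leaves implicit: the explicit tensoring and cotensoring on $A-Mod$, and the verification via Proposition~\ref{enriching prop}(iii) that $G$ preserves tensors.
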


\begin{proof}
    This follows from Lemma \ref{Tranferlem}, Proposition \ref{comod is dg} together with noting that the adjunction $G \dashv F$ is a dg adjunction. 
\end{proof}

\subsection{Model Structures on Presheaves}
Let $X$ be a topological space assumed to be  connected, paracompact and Hausdorff. Let $\scr{R}$ be a presheaf of $k$-algebras on $X$ and let $PMod(\scr{R})$ denote the category of presheaves of dg $\scr{R}$-modules.

\begin{theorem}
There exists a cofibrantly generated model structure on the category $PMod(\scr{R})$. Let $f: \scr{F} \lra \scr{G}$ be a morphism of presheaves. Then,

\begin{enumerate}[label=(\roman*)]
\item $f$ is a weak equivalence if and only if $f(U): \scr{F}(U) \lra \scr{G}(U)$ is a quasi isomorphism for all open $U \subset X$,
\item $f$ is a fibration if and only if $f(U): \scr{F}(U) \lra \scr{G}(U)$ is a surjection for all open $U \subset X$,
\item $f$ is a cofibration if  and only if it is a degree-wise split injection with cofibrant cokernel.  
\end{enumerate}

\end{theorem}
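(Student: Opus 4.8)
The plan is to obtain the model structure by right transfer (Theorem~\ref{right transfer}) along the evaluation functors, to read off the fibrations and weak equivalences directly from the transfer, and then to identify the cofibrations by a cell‑complex argument together with the $k\text{-}Mod$‑enrichment supplied by Lemma~\ref{Tranferlem}. For each open $U\subseteq X$ let $\mathrm{ev}_U\colon PMod(\scr{R})\to\scr{R}(U)\text{-}Mod$ denote evaluation at $U$, where $\scr{R}(U)\text{-}Mod$ is the category of complexes of $\scr{R}(U)$‑modules with the projective model structure (weak equivalences the quasi‑isomorphisms, fibrations the degree‑wise surjections, generating cofibrations $S^{n-1}_{\scr{R}(U)}\to D^{n}_{\scr{R}(U)}$ and generating acyclic cofibrations $0\to D^{n}_{\scr{R}(U)}$). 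Each $\mathrm{ev}_U$ has a left adjoint $L_U$, namely $L_U(M)=M\ot_{\scr{R}(U)}\scr{R}_U$ where $\scr{R}_U$ is the presheaf $V\mapsto\scr{R}(V)$ for $V\subseteq U$ and $0$ otherwise; by Yoneda $\uhom\bigl(L_U(M),\scr{F}\bigr)\cong\uhom_{\scr{R}(U)}\bigl(M,\scr{F}(U)\bigr)$. Assembling these gives an adjunction
\begin{equation*}
L=\textstyle\coprod_{U}L_U\ \colon\ \textstyle\prod_{U}\scr{R}(U)\text{-}Mod\ \rightleftarrows\ PMod(\scr{R})\ \colon\ R=(\mathrm{ev}_U)_U ,
\end{equation*}
whose source is a cofibrantly generated model category, being a (set‑indexed) product of cofibrantly generated model categories, with generating (acyclic) cofibrations the coordinate copies of those of the factors.

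I would then check the hypotheses of Theorem~\ref{right transfer} for this adjunction. Condition~(i) is immediate: $PMod(\scr{R})$ is locally presentable, so all objects are small, and in any case $R$ preserves filtered colimits because colimits in $PMod(\scr{R})$ are computed objectwise. For condition~(ii): fibrations are objectwise surjections and $0$ is terminal, so every presheaf is fibrant and functorial fibrant replacement is the identity; and a functorial path object for an arbitrary $\scr{F}$ is obtained objectwise from the standard functorial path object on complexes, which is natural in the restriction maps, factors the diagonal, and is objectwise a quasi‑isomorphism followed by a degree‑wise surjection. Theorem~\ref{right transfer} then produces a cofibrantly generated model structure on $PMod(\scr{R})$ in which $f$ is a fibration, respectively weak equivalence, precisely when $R(f)$ is, that is, precisely when $f(U)$ is a surjection, respectively a quasi‑isomorphism, for every open $U\subseteq X$; this establishes (i) and (ii) of the statement. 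Its generating (acyclic) cofibrations are the maps $L_U\bigl(S^{n-1}_{\scr{R}(U)}\to D^{n}_{\scr{R}(U)}\bigr)$, respectively $L_U\bigl(0\to D^{n}_{\scr{R}(U)}\bigr)$.

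Before treating (iii) I would record that $PMod(\scr{R})$ is a $k\text{-}Mod$‑model category: it is tensored and cotensored over $k\text{-}Mod$ by the same objectwise construction used for $C$‑comodules (cf.\ Proposition~\ref{comod is dg}), the functors $L$ and $R$ are $k\text{-}Mod$‑linear and commute with the tensoring so that $L\dashv R$ is a $k\text{-}Mod$‑adjunction by Proposition~\ref{enriching prop}, and $\prod_U\scr{R}(U)\text{-}Mod$ is a $k\text{-}Mod$‑model category; Lemma~\ref{Tranferlem} then applies. Now for (iii). Each generating cofibration $L_U\bigl(S^{n-1}_{\scr{R}(U)}\to D^{n}_{\scr{R}(U)}\bigr)$ is a degree‑wise split injection whose cokernel is $L_U\bigl(S^{n}_{\scr{R}(U)}\bigr)$, cofibrant because $L_U$ is left Quillen and $S^{n}_{\scr{R}(U)}$ is cofibrant. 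Degree‑wise split injections are closed under pushout (a pushout of a split monomorphism is a split monomorphism with the same cokernel), under transfinite composition (filtered colimits commute with the finite biproducts witnessing the splittings), and under retracts; hence every cofibration, being a retract of a relative $L(\bar I)$‑cell complex, is a degree‑wise split injection, and its cokernel is cofibrant because it is the pushout of the cofibration along the map to $0$. Conversely, if $f\colon\scr{F}\to\scr{G}$ is a degree‑wise split injection with cofibrant cokernel $\scr{K}$, the degree‑wise split short exact sequence $0\to\scr{F}\to\scr{G}\to\scr{K}\to0$ is classified by a chain map $g\colon\scr{K}[-1]\to\scr{F}$, and $f$ is then the cobase change of the inclusion $\scr{K}[-1]=\scr{K}[-1]\ot_{k}S^{0}\hookrightarrow\scr{K}[-1]\ot_{k}D^{0}$ along $g$; the latter is the pushout–product of $\emptyset\to\scr{K}[-1]$ with the cofibration $S^{0}\hookrightarrow D^{0}$ of $k\text{-}Mod$ (exhibiting $D^0$ as the cone on $S^0=k$), hence a cofibration since $\scr{K}[-1]$ is cofibrant, so $f$ is a cofibration. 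This reproduces, inside $PMod(\scr{R})$, the characterization of cofibrations in $R\text{-}Mod$ of \cite[Theorem~2.3.11]{hovey2007model}.

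The main obstacle is the converse half of (iii) just given — that a degree‑wise split injection with cofibrant cokernel is a cofibration — since this is the only part that does not follow formally from the transfer principle; it requires the $k\text{-}Mod$‑model structure (equivalently, the realization of $f$ as a cobase change of $\scr{K}[-1]\hookrightarrow\scr{K}[-1]\ot_k D^0$). The remaining points — the existence of the transfer, the identification of the weak equivalences and fibrations, and the implication that cofibrations are degree‑wise split injections with cofibrant cokernel — are routine once the adjunction $L\dashv R$ and the objectwise computation of (co)limits in $PMod(\scr{R})$ are set up.
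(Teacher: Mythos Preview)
Your argument is correct. The paper's own proof is a single-line citation to Hirschhorn's Theorem~11.6.1 (the projective model structure on diagram categories), so you are supplying a self-contained proof where the paper defers to the literature. The transfer you set up along $L\dashv R=(\mathrm{ev}_U)_U$ is precisely the engine behind Hirschhorn's theorem, so on parts (i) and (ii) the two approaches coincide in substance. Where you genuinely add something is in (iii): Hirschhorn's result only identifies cofibrations as retracts of relative cell complexes, and the explicit description as degree-wise split injections with cofibrant cokernel (the presheaf analogue of \cite[Theorem~2.3.11]{hovey2007model}) is extra content that the paper simply asserts. Your route to the converse half of (iii), realizing $f$ as a cobase change of $\scr{K}[-1]\hookrightarrow\scr{K}[-1]\ot_k D^0$ and invoking the pushout-product axiom furnished by Lemma~\ref{Tranferlem}, is clean and makes essential use of the $k\text{-}Mod$-enrichment; this is arguably tidier than an ad hoc cell-by-cell construction and explains why you took the detour through Lemma~\ref{Tranferlem} before attacking (iii). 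One small remark: your justification of transfer hypothesis (i) via ``$R$ preserves filtered colimits'' is the right idea (it guarantees smallness of the domains of $L(\bar I)$, $L(\bar J)$), even though the paper's statement of Theorem~\ref{right transfer} phrases the condition slightly differently.
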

\begin{proof}
    This follows from \cite[Theorem 11.6.1]{hirschhorn2003model}.
\end{proof}
We will refer to this model structure as the global projective model structure.

\begin{theorem}\label{Local model Strucutre}\cite[Theorem 5.7.]{choudhury2019homotopy}
There exists a cofibrantly generated model structure on the category of presheaves $PMod(\scr{R})$. Let $f: \scr{F} \lra \scr{G}$ be a morphism of presheaves. Then, 
\begin{enumerate}[label=(\roman*)]
\item $f$ is a weak equivalence if and only if the induced map after sheafification $f^{+}: \scr{F}^{+} \lra \scr{G}^{+}$ is a quasi isomorphism,
\item $f$ is a fibration if and only if $f(U): \scr{F}(U) \lra \scr{G}(U)$ is a surjection for all open $U \subset X$ and the kernel of $f$ is a hypersheaf,
\item is a cofibration if it is a cofibration in the global projective model structure.
\end{enumerate}
 
\end{theorem}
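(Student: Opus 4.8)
The plan is to realise the stated model structure as the left Bousfield localization of the global projective model structure of the previous theorem, which is in essence the content of \cite[Theorem 5.7.]{choudhury2019homotopy}; I would organise the argument in three steps.

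\emph{Step 1: existence of the localization.} Since $k$ is a field, every complex of vector spaces is both cofibrant and fibrant, so the global projective model structure on $PMod(\scr{R})$ is combinatorial and left proper (its cofibrations are degree-wise split monomorphisms); hence it admits a left Bousfield localization at any \emph{set} of maps. For an open $U\subseteq X$ write $\scr{R}_U$ for the presheaf with $\scr{R}_U(V)=\scr{R}(V)$ if $V\subseteq U$ and $\scr{R}_U(V)=0$ otherwise, so that $\scr{R}_U$ is globally cofibrant and $\uhom(\scr{R}_U,-)$ is evaluation at $U$. For each $U$ and each hypercover $\mathcal{U}_\bullet\to U$ there is a simplicial presheaf $\check{C}(\mathcal{U})_\bullet$ built from coproducts of the various $\scr{R}_V$ together with an augmentation to $\scr{R}_U$; let $S$ be the set of induced morphisms $\operatorname*{hocolim}_{\Delta^{op}}\check{C}(\mathcal{U})_\bullet\to\scr{R}_U$, with the a priori proper class of hypercovers cut down to a set by the bounded-cofibration argument of Dugger--Hollander--Isaksen. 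Let $L_S PMod(\scr{R})$ be the localization.

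\emph{Step 2: cofibrations, fibrant objects, and fibrations.} Left Bousfield localization does not change the cofibrations, giving (iii) at once. Every presheaf is globally fibrant, so $W$ is $S$-local precisely when, for all $U$ and all hypercovers $\mathcal{U}_\bullet\to U$, the map $W(U)\simeq\uhom(\scr{R}_U,W)\lra\holim_\Delta W(\mathcal{U}_\bullet)$ is a quasi-isomorphism --- i.e.\ exactly when $W$ satisfies hyperdescent, that is, is a hypersheaf. Because $PMod(\scr{R})$ is additive, this identification of the local objects promotes to the description (ii): a map is an $S$-local fibration iff it is a sectionwise surjection whose kernel (equivalently, homotopy fibre) is a hypersheaf.

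\emph{Step 3: weak equivalences.} It remains to identify the $S$-local equivalences with the maps $f$ whose sheafification $f^+$ is a quasi-isomorphism of complexes of sheaves. One inclusion is formal: every map in $S$ becomes an equivalence after sheafification (the local triviality of a hypercover), and the class of maps inducing an isomorphism on cohomology sheaves is closed under two-out-of-three, cobase change along cofibrations, transfinite composition and retracts, which are the operations generating the $S$-local equivalences from $S$; hence every $S$-local equivalence sheafifies to a quasi-isomorphism. For the converse, if $f^+$ is a quasi-isomorphism then $f$ becomes invertible in the homotopy category of hypersheaves: for a hypersheaf $W$ the derived mapping complex $\rdh(-,W)$ factors through hypersheafification, equivalences of hypersheaves are detected on cohomology sheaves, and --- here the topology of the paracompact Hausdorff space $X$ enters --- the cohomology sheaves of the hypersheafification of a presheaf of complexes coincide with the sheafifications of its presheaf cohomology (a hypercohomology/descent spectral sequence argument). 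So $\rdh(f,W)$ is a quasi-isomorphism for every hypersheaf $W$, i.e.\ $f$ is an $S$-local equivalence. This last comparison between hyperdescent-local equivalences and associated-sheaf cohomology is the step I expect to be the main obstacle; the existence of the localization and the descriptions of the cofibrations and fibrations are formal consequences of left Bousfield localization together with the additive structure of $PMod(\scr{R})$.
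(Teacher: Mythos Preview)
The paper does not prove this theorem; it is quoted from \cite{choudhury2019homotopy}, and the Remark immediately following it confirms that the construction is the left Bousfield localization of the global projective model structure at the hypercovers---precisely your Step~1. So your overall strategy matches what the paper (and its reference) do.

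Two small corrections to your sketch. In Step~2, the passage from ``local objects are hypersheaves'' to the explicit description (ii) of the fibrations needs more than bare additivity. The general fact is that in a left Bousfield localization $L_S\mcl{M}$ a map $f$ is a fibration iff it is a fibration in $\mcl{M}$ and the square comparing $f$ to its $S$-localization is a homotopy pullback; in the stable situation this collapses to the condition that the homotopy fibre of $f$ be $S$-local, which here is exactly ``$\ker(f)$ is a hypersheaf''. You should invoke stability (not just additivity) and this pullback characterization explicitly. In Step~3, the identification of $S$-local equivalences with maps whose sheafification is a quasi-isomorphism does \emph{not} use the paracompact Hausdorff hypothesis. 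The site of open subsets of any topological space has enough points (the stalk functors at points of $X$), and for sites with enough points the hypercover-local weak equivalences are exactly the stalkwise weak equivalences; this is the Dugger--Hollander--Isaksen/Jardine argument you allude to, and it gives both inclusions at once. The paracompactness assumption in the paper is used later (e.g.\ for Godement resolutions), not here.
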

We will refer to this model structure as the \textit{local model structure} and we will often refer to weak equivalences in this model structure as \textit{local weak equivalences}. In order to distinguish which model structure we are working with we will use the notation $PMod^{G}(R)$ for the global projective model structure and $PMod(\scr{R})$ for the local model structure. 
\begin{remark}
 The local model structure can constructed by taking the left Bousfield localisation of the global projective model structure at all "\textit{hypercovers}." The prototypical example of a hypercover is the \u{C}ech  nerve of an ordinary open cover of a topological space. Since we do not make use of hypercovers we will not introduce this terminology. However we will make some simple observations, see \cite{dugger2004hypercovers} for details regarding hypercovers and a simplicial analogue of Theorem \ref{Local model Strucutre}.  

Firstly, since the local model structure is a left Bousfield localisation by \cite[Theorem3.3.4]{hirschhorn2003model} the identity, $Id:PMod^{G}(\scr{R}) \lra PMod(\scr{R})$ is left Quillen.  Secondly, the fibrant objects in the local model structure are precisely the "hypersheaves". We say that a presheaf $\scr{F} $ is a hypersheaf if for any open hypercover $V_{\bullet} \lra U$ of an open subset $U \subset X$ the map 
\begin{equation}\label{decent}
    F(U) \lra  \holim F(V_{\bullet})
\end{equation}
a weak equivalence.  More concretely, (\ref{decent} ) is equivalent to requiring that 

\begin{equation*}
    \scr{F}(U) \lra \check{C}(V_{\bullet}, \scr{F})
\end{equation*}
is a quasi-isomorphism, where $\check{C}(V_{\bullet}, \scr{F})$ denotes the \u{C}ech complex of the hypercover $V_{\bullet} \lra U $ defined analogously to the usual \u{C}ech complex of an ordinary open cover. Since the weak equivalences in the local model structure are maps which are quasi-isomorphisms after sheafification the homotopy category $Ho(PMod(\scr{R}))$ is equivalent to the usual derived category of sheaves of dg $\scr{R}^{+}$-modules.

\end{remark}

Let $\scr{A}$ be a presheaf of non-negatively graded dg $\scr{R}$-algebras and $PMod(\scr{A})$ denote the category of presheaves of $\scr{A}$-modules. Then the map $\scr{R} \lra \scr{A}$ induces an adjunction $- \otimes_{\scr{R}} \scr{A}: PMod(\scr{R}) \rightleftarrows PMod(\scr{A}): J $  where $J$ is the forgetful functor.  The global projective model structure can be transferred via the adjunction $- \otimes_{\scr{R}} \scr{A}: PMod^{G}(\scr{R}) \rightleftarrows PMod(\scr{A}): J $  to the category $PMod(\scr{A})$. Then the category $PMod(\scr{A})$ can be endowed with the local model structure defined in the same way as the local model structure on $PMod(\scr{R})$.  We denote the local model structure and global projective model structure on presheaves of dg $\scr{A}$-modules by $PMod(\scr{A})$ and $ PMod^{G}(\scr{A})$, respectively.

The dg category $PMod(\scr{R})$ is tensored and cotensored over $k-Mod$.  The tensor $V \ot \scr{F}$ is defined to be the presheaf $U \longmapsto V \ot \scr{F}(U)$, where the tensor product $ V \ot \scr{F}(U)$ is the tensor product of $V$ and $F(U)$ as dg vector spaces with action induced by the action of $\scr{R}(U)$ on $\scr{F}(U)$. The cotensor is given by the presheaf $U \longmapsto Hom(V, \scr{F}(U))$, where $Hom(V, \scr{F}(U))$ is defined to be the usual homomorphism complex of the underlying dg vector spaces with action induced by the action of $\scr{R}(U)$ on $\scr{F}(U)$. Analogously, the dg category $PMod(\scr{A})$ is tensored and cotensored over $k-Mod$.

\begin{proposition}
With the tensoring and cotensoring defined as above the model structures are dg model categories:

\begin{enumerate}[label=(\roman*)]
\item $PMod^{G}(\scr{R})$,
\item $PMod(\scr{R})$,
\item $PMod^{G}(\scr{A})$,
\item $PMod(\scr{A})$.

\end{enumerate}
\end{proposition}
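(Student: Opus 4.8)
The plan is, in each of the four cases, to verify the two axioms of a $k-Mod$-model category. The first is immediate: the unit $S^{0}$ of the projective model structure on $k-Mod$ is cofibrant (as already used in the proof of Proposition \ref{comod is dg}), so we may take $1_{k-Mod}^{cof}=1_{k-Mod}$ and the comparison map is an identity. Hence all the work is in the pushout product axiom, and by \cite[Corollary 4.2.5]{hovey2007model} it suffices to check it on generators: writing $I'=\{S^{n-1}\to D^{n}\}$ and $J'=\{0\to D^{n}\}$ for the generating, respectively generating acyclic, cofibrations of $k-Mod$, and $\bar{I},\bar{J}$ for those of the presheaf category at hand, one must show that $\bar{I}\square I'$ consists of cofibrations and that $\bar{J}\square I'$ and $\bar{I}\square J'$ consist of acyclic cofibrations.

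\textbf{The global projective cases (i) and (iii).} Here weak equivalences, fibrations, and the tensoring $V\ot(-)$ are all computed section by section. The key point is that the generating cofibrations of $PMod^{G}(\scr{R})$ may be taken of the form $i''\ot\scr{P}$ with $i''\in I'$ and $\scr{P}$ a ``free'' presheaf $j_{U!}(\scr{R}|_{U})$, $U\subseteq X$ open, and $\bar{J}$ of the form $j''\ot\scr{P}$ with $j''\in J'$. Since the endofunctor $(-)\ot\scr{P}$ preserves colimits one gets $i'\square(i''\ot\scr{P})=(i'\square i'')\ot\scr{P}$, and $(-)\ot j_{U!}(\scr{R}|_{U})\colon k-Mod\to PMod^{G}(\scr{R})$ is left Quillen (its composite with evaluation at $U$ is, up to the free--forgetful adjunction for $\scr{R}(U)$-modules, the identity), hence carries cofibrations to cofibrations and acyclic cofibrations to acyclic cofibrations. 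So the pushout product axiom for $PMod^{G}(\scr{R})$ reduces to the one in $k-Mod$. For $PMod^{G}(\scr{A})$ I would instead invoke Lemma \ref{Tranferlem}: its model structure is right-transferred along $-\ot_{\scr{R}}\scr{A}\colon PMod^{G}(\scr{R})\rightleftarrows PMod^{G}(\scr{A})\colon J$, and this is a dg adjunction because the forgetful functor $J$ is a dg functor commuting with cotensors (both cotensors are the sectionwise hom-complex $U\mapsto\underline{Hom}(V,\scr{F}(U))$), so condition (ii) of Proposition \ref{enriching prop} applies; Lemma \ref{Tranferlem} together with case (i) then gives the claim. (Alternatively one repeats the argument above with $\scr{A}$ in place of $\scr{R}$.)

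\textbf{The local cases (ii) and (iv).} The local model structure is the left Bousfield localization of the global projective one at the hypercovers; in particular it has the same cofibrations and the same tensoring. Thus $\bar{I}\square I'$ consists of cofibrations by cases (i)/(iii), and it remains only to show that $\bar{J}\square I'$ and $\bar{I}\square J'$ consist of \emph{local} weak equivalences. A map of presheaves is a local weak equivalence exactly when it is a stalkwise quasi-isomorphism, and the stalk functor $(-)_{x}$ is exact and commutes with pushouts and with the sectionwise tensor $V\ot(-)$ (it is a filtered colimit of exact functors over the field $k$); so for any (acyclic) cofibration $j$ of the local structure one has $(i'\square j)_{x}=i'\square j_{x}$, computed in $k-Mod$. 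If $i'=(0\to D^{n})\in J'$, the source and target of $i'\square j_{x}$ are $D^{n}\ot(-)$ of complexes of $k$-vector spaces, hence acyclic, so $i'\square j$ is a local weak equivalence (and a cofibration by case (i)): thus $\bar{I}\square J'$ consists of acyclic cofibrations. If $i'=(S^{n-1}\to D^{n})\in I'$ and $j\in\bar{J}$, then $j_{x}$ is a quasi-isomorphism of $k$-complexes, and I would finish with the elementary fact that $(S^{n-1}\to D^{n})\square g$ is a quasi-isomorphism whenever $g\colon M\to N$ is one: indeed $S^{n-1}\hookrightarrow D^{n}$ is a degreewise split monomorphism with cokernel $S^{n}$, so $(S^{n-1}\to D^{n})\square g$ fits into a morphism of short exact sequences of complexes which is the identity on the common sub-object $S^{n-1}\ot N$ and a shifted copy of $g$ on the quotients $S^{n}\ot M\to S^{n}\ot N$, whence the conclusion by the long exact cohomology sequence. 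Hence $(i'\square j)_{x}$ is a quasi-isomorphism for every $x$, $i'\square j$ is a local weak equivalence, and it is a cofibration by case (i); so $\bar{J}\square I'$ consists of acyclic cofibrations. Case (iv) is handled identically over $PMod^{G}(\scr{A})$: stalks are complexes of $\scr{A}^{+}_{x}$-modules, hence of $k$-vector spaces, and the tensoring is still the sectionwise $\ot_{k}$.

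\textbf{Main obstacle.} The only non-formal step is in the local cases: one must show that pushing out by a cofibration and tensoring preserves \emph{local} weak equivalences, which cannot be read off from case (i) because local weak equivalences are not detected section by section. Passing to stalks is exactly what makes this tractable, and after that reduction the whole matter rests on the small lemma about $(S^{n-1}\to D^{n})\square g$ above; the rest is bookkeeping with the reduction to generators and with the enriched adjunctions.
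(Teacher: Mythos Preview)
The paper's own ``proof'' of this proposition is simply a one-line citation, \emph{See \cite{choudhury2019homotopy}}, so there is no in-paper argument to compare against. Your direct verification is correct: you reduce to generators via \cite[Corollary 4.2.5]{hovey2007model}, settle the global projective case for $PMod^{G}(\scr{R})$ by identifying the generating cofibrations as $i''\ot\scr{R}_{U}$ and using that $(-)\ot\scr{R}_{U}\colon k\text{-}Mod\to PMod^{G}(\scr{R})$ is left Quillen, handle $PMod^{G}(\scr{A})$ via Lemma \ref{Tranferlem} applied to the dg adjunction $-\ot_{\scr{R}}\scr{A}\dashv J$ (exactly the mechanism the paper itself uses elsewhere), and treat the local cases by observing that cofibrations are unchanged under left Bousfield localization and that local weak equivalences are detected on stalks, where the pushout product computation reduces to the elementary short-exact-sequence argument for $(S^{n-1}\to D^{n})\square g$ in $k$-complexes. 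This is essentially the strategy of the cited reference; the trade-off is that your write-up is self-contained within the paper at the cost of length, whereas the paper's citation outsources the argument.
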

\begin{proof}
    See \cite{choudhury2019homotopy}.
\end{proof}

As noted in Remark \ref{derived hom} the local and global projective model structures come equipped with right derived hom functors. We denote the right derived hom functor for the local model structure by $\rdh(-,-)$  and we denote the right derived hom functor in the global projective model structure by $\rdh^{G}(-,-)$.  From from this point onwards we will assume that the morphism $\scr{R} \lra \scr{A}$ is a local weak equivalence. Since $\scr{R} \lra \scr{A}$ is a local weak equivalence, the adjunction $- \otimes_{\scr{R}} \scr{A} \dashv J $ induces an equivalence between the derived category of sheaves of dg $\scr{R}^{+}$-modules and the derived category of sheaves of dg $\scr{A}^{+}$-modules.

\begin{proposition}\label{unlocal qeq}
Let $\scr{R} \lra \scr{A}$ be a stalk-wise quasi-isomorphism. Then the adjunction $- \otimes_{\scr{R}} \scr{A}: PMod(\scr{R}) \rightleftarrows PMod(\scr{A}): J $ is a Quillen equivalence of dg model categories. 
\end{proposition}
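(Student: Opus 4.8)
The plan is to verify the two standard ingredients for a Quillen adjunction to be a Quillen equivalence: first that $(- \otimes_{\scr{R}} \scr{A}) \dashv J$ is a Quillen adjunction for the local model structures, and second that the derived unit and counit are weak equivalences. The Quillen adjunction half is essentially bookkeeping: $J$ is the forgetful functor, so it clearly preserves degree-wise surjections, and it preserves the ``kernel is a hypersheaf'' condition because the underlying presheaf of $\ker(f)$ is unchanged by $J$ and hypersheaf-ness is a condition on the underlying presheaf of dg $k$-modules; hence $J$ is right Quillen. (Alternatively, one invokes that the local structure on $PMod(\scr A)$ was \emph{defined} by transfer from the global one along this adjunction and then localized compatibly, so the adjunction is Quillen by construction; I would state it the first way to be self-contained.) Since $\scr R \lra \scr A$ is assumed to be a stalk-wise quasi-isomorphism and the local weak equivalences are precisely the stalk-wise (equivalently, post-sheafification) quasi-isomorphisms, it is harmless to phrase everything in terms of stalks.

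For the Quillen-equivalence criterion I would use the ``half'' version: it suffices to show that $J$ reflects weak equivalences between fibrant objects and that for every cofibrant $\scr F \in PMod(\scr R)$ the derived unit $\scr F \lra J\big((\scr F \otimes_{\scr R} \scr A)^{fib}\big)$ is a local weak equivalence. The first point is immediate: $J$ does not change underlying presheaves of $k$-modules, and a map of presheaves of $\scr A$-modules is a local weak equivalence iff its underlying map of presheaves of $k$-modules is (sheafification and taking cohomology are computed on underlying abelian presheaves), so $J$ both preserves and reflects local weak equivalences outright — in fact between \emph{all} objects, not just fibrant ones. Because $J$ reflects all weak equivalences, the derived-unit condition reduces to checking that the ordinary unit $\scr F \lra J(\scr F \otimes_{\scr R} \scr A)$, i.e. the map $\scr F \lra \scr F \otimes_{\scr R} \scr A$, is a local weak equivalence for cofibrant $\scr F$ (the fibrant-replacement step is then absorbed, since $J$ applied to an acyclic cofibration $\scr F\otimes_{\scr R}\scr A \lra (\scr F\otimes_{\scr R}\scr A)^{fib}$ is a local weak equivalence).

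So the crux is: for $\scr F$ cofibrant in the local (equivalently global projective, since the cofibrations agree) model structure, the canonical map $\scr F \lra \scr F \otimes_{\scr R} \scr A$ is a stalk-wise quasi-isomorphism. I would argue this stalk-wise: at $x \in X$ the map becomes $\scr F_x \lra \scr F_x \otimes_{\scr R_x} \scr A_x$, and since $\scr R_x \lra \scr A_x$ is a quasi-isomorphism of dg algebras it suffices to know that $\scr F_x$ is a complex of $\scr R_x$-modules flat enough (e.g. $K$-flat / built from summands of free modules) that $- \otimes_{\scr R_x} \scr A_x$ preserves quasi-isomorphisms and that the natural map is a quasi-isomorphism. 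Cofibrant objects in $PMod^{G}(\scr R)$ are retracts of transfinite composites of pushouts of the generating cofibrations $S^{n-1}\otimes\scr R_U \lra D^{n}\otimes\scr R_U$ (with $\scr R_U$ the presheaf $j_!j^*\scr R$ for $U$ open), and these have stalks that are retracts of bounded-below-filtered complexes of free $\scr R_x$-modules — in particular $K$-flat — so base change along the quasi-isomorphism $\scr R_x \lra \scr A_x$ is an isomorphism on cohomology. This is the step I expect to be the main obstacle, since it requires a genuine (if routine) analysis of the cellular structure of cofibrant objects and a $K$-flatness argument at the stalk level; everything else is formal manipulation of the definitions. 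I would either spell this out or cite the analogous statement from \cite{choudhury2019homotopy}, and then conclude that the derived unit is a local weak equivalence, which together with $J$ reflecting weak equivalences gives the Quillen equivalence; the dg-model-category enrichment statement then follows from Proposition~\ref{enriching prop} (the functor $- \otimes_{\scr R} \scr A$ visibly commutes with the $k$-Mod tensoring).
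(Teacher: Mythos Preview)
The paper states this proposition without proof (it is followed immediately by an example), so there is no argument to compare against; your proposal is the only proof on the table. Your outline is correct and is the standard route: $J$ preserves fibrations in the local structure since it does not alter the underlying presheaf of $k$-modules, $J$ preserves and reflects local weak equivalences for the same reason, and the substantive point is that the unit $\scr F \to \scr F \otimes_{\scr R}\scr A$ is a stalk-wise quasi-isomorphism for cofibrant $\scr F$. Your stalk-level argument is sound: stalks commute with the presheaf tensor product (filtered colimits commute with $\otimes$), and the stalk at $x$ of a cellular object built from the generating cofibrations $S^{n-1}\otimes\scr R_U \to D^n\otimes\scr R_U$ is a cellular (hence semi-free, hence $K$-flat) $\scr R_x$-module, so tensoring with the quasi-isomorphism $\scr R_x \to \scr A_x$ preserves quasi-isomorphisms. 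The only thing I would tighten is to say explicitly that taking stalks commutes with the transfinite composites and pushouts used to build cellular objects (stalks are left adjoints, hence preserve all colimits), so that the stalk of a cofibrant object really is a retract of a semi-free complex; you gesture at this but it is worth one sentence. The enrichment claim via Proposition~\ref{enriching prop} is fine. If you prefer not to unpack the cellular/$K$-flat argument yourself, citing \cite{choudhury2019homotopy} for this step (as the paper does elsewhere for the presheaf model structures) would be in keeping with the surrounding exposition.
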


\begin{example}
  Let $\ubb$ denote the constant presheaf on a smooth connected manifold $X$. Let $\Omega $ denote the dg sheaf of de Rham algebras on $X$. By the Poincar\'e Lemma $\Omega $ is a soft resolution of the constant sheaf $\ubb^{+}$, in particular a local weak equivalence.  By  Proposition  \ref{unlocal qeq}, and regarding $\Omega$ as a presheaf we can recover the well known equivalence of derived categories $D(\ubb^{+}) \simeq D(\Omega)$. 
\end{example}

\subsection{Localising dg model categories} 

In this subsection we recall some technical machinery relating to Bousfield localisation of model categories which we specialise to the setting of stable dg model categories.  Most notably, we show that under mild assumptions right Bousfield localisation of a stable dg model category is once again a stable dg model category. Let $\mathbf{sSet}$ denote the category of simplicial sets. We endow $\mathbf{sSet}$ with the Kan-Quillen model structure \cite[Section II.3]{quillen2006homotopical}.

\begin{remark}\label{homotopy function complex remark}
For $X, Y \in \mcl{M}$ there exists a simplicial set $Map_{\mcl{M}}(X, Y) $ called the \textit{homotopy function complex} (see, e.g., \cite[Section 5.4]{hovey2007model}). By \cite[Theorem 5.4.9]{hovey2007model}, the homotopy function complex provides an enrichment of $Ho(\mcl{M})$ over the $Ho(\mathbf{sSet})$, in fact, $Ho(\mcl{M})$ is naturally tensored and cotensored over $Ho(\mathbf{sSet}) $, as well as enriched over it. In particular, if $F$ is a left Quillen functor between model categories with right adjoint $U$, we have 

\begin{equation*}
    Map(\mathbb{L}F(X), Y  ) \cong Map (X, \mathbb{R}U(Y))
\end{equation*}
in $Ho(\mathbf{sSet}) $. 
\end{remark}

\begin{definition}
Let $\mathcal{M}$ be a model category and $K$ a set of objects. We say that a morphism $f: A \longrightarrow B $ is a $K$\textit{-coequivalence} if 
\begin{equation*}
    Map_{\mathcal{M}}(X, f) : Map_{\mathcal{M}}(X,A) \longrightarrow Map_{\mathcal{M}}(X,B) 
\end{equation*}
is a weak equivalence in $\mathbf{sSet}$ for all $X\in K$.

We say that an object $Y$ in $\mcl{M}$ is $K$\textit{-colocal}, if  
\begin{equation*}
    Map_{\mathcal{M}}(Z, f) : Map_{\mathcal{M}}(Z,A) \longrightarrow Map_{\mathcal{M}}(Z,B) 
\end{equation*}
is a weak equivalence in $\mathbf{sSet}$, for any $K$-coequivalence $f$.  
\end{definition}

\begin{definition}\label{proper}
We say that a model category $\mathcal{M}$ is \textit{right proper,} if weak equivalences are preserved by pullbacks along fibrations. That is, for each weak equivalence $f: A \longrightarrow B$ and any fibration $h:C \longrightarrow B $, the pullback $\hat{f}: A \times_{C} B \longrightarrow C$ of $f$ is also a weak equivalence. Dually, we say that a model category is \textit{left proper,} if weak equivalences are preserved by pushouts along cofibrations. A model category is said to be \textit{proper} if it is both left and right proper.  

\end{definition}

\begin{example}
Let $\mcl{M}$ be a model category. If every object in $\mcl{M}$ is fibrant then $\mcl{M}$ is a right proper model category  (see \cite[Corollary 13.1.3]{hirschhorn2003model}). In particular, the global projective model structure on presheaves is right proper. In fact, it is known that the projective model structure and local model are proper see, \cite[Corollary 3.1 and Theorem 5.7]{choudhury2019homotopy}. 
\end{example}

\begin{definition}\label{combinatorial def}
We say that a cofibrantly generated model category is \textit{combinatorial} if it is locally presentable as a category.  
\end{definition}

\begin{proposition}\label{existance}
Let $\mcl{M}$ be a combinatorial, right proper model category and $K$ be any set of objects. Then $R_{K}\mcl{M}$ exists and is right proper and combinatorial. Moreover, the identity functor $Id_{\mcl{M}} : \mcl{M} \longrightarrow R_{K} \mcl{M}$ is a right Quillen functor.
\end{proposition}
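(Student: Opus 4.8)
The plan is to invoke the existence theorem for right Bousfield localization in the combinatorial setting and then verify the two hypotheses — combinatoriality and right properness — are preserved. I would structure the argument as follows.

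First I would recall the general existence result for right Bousfield localizations: for a right proper, cellular (or combinatorial) model category $\mcl{M}$ and a set $K$ of objects, the right Bousfield localization $R_{K}\mcl{M}$ exists, with the same underlying category, the same cofibrations, weak equivalences the $K$-coequivalences, and fibrant objects the $K$-colocal fibrant objects; this is \cite[Theorem 5.1.1]{hirschhorn2003model} in the cellular case, and the combinatorial analogue is due to Barwick \cite[Theorem 4.7]{barwick2010left} (or can be cited from Lurie). Since $\mcl{M}$ is assumed combinatorial and right proper, one of these applies directly and gives the existence of $R_{K}\mcl{M}$ together with the statement that $Id_{\mcl{M}}: \mcl{M} \to R_{K}\mcl{M}$ is right Quillen (it preserves fibrations and trivial fibrations, since $R_{K}\mcl{M}$ has the same cofibrations and the cofibrations-and-weak-equivalences of $\mcl{M}$ are among those of $R_{K}\mcl{M}$).

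Next I would check that $R_{K}\mcl{M}$ is again combinatorial. Combinatoriality means cofibrantly generated plus locally presentable as a category; the underlying category of $R_{K}\mcl{M}$ is that of $\mcl{M}$, hence still locally presentable, and cofibrant generation of the localization is part of the conclusion of the existence theorem (the generating cofibrations are unchanged, and a generating set of trivial cofibrations is produced by the small object / accessibility argument). Then I would check right properness: since $R_{K}\mcl{M}$ has the same fibrations as $\mcl{M}$ and fewer weak equivalences is not automatic — rather it has \emph{more} weak equivalences — one argues that a pullback of a $K$-coequivalence along a fibration is again a $K$-coequivalence. The cleanest route is to use that $R_{K}\mcl{M}$ has the same fibrations and the same cofibrations as $\mcl{M}$ together with a known criterion: a localization of a right proper model category at a right Bousfield localization is right proper whenever the original one is, because right properness can be detected on the level of homotopy pullback squares and the class of fibrations is unchanged; this is \cite[Theorem 13.1.2 and Proposition 13.1.5]{hirschhorn2003model} adapted to the colocal setting, or one can cite Barwick directly, who records that $R_{K}$ of a right proper combinatorial model category is right proper combinatorial.

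The main obstacle is the right properness of $R_{K}\mcl{M}$: unlike left Bousfield localization, where left properness can fail and genuine work is needed, here the fibrations are unchanged so the pullback appearing in the definition of right properness is the same diagram; the only thing to verify is that $K$-coequivalences are stable under such pullbacks, which follows from the characterization of $K$-coequivalences via homotopy function complexes $Map_{\mcl{M}}(X,-)$ for $X \in K$ together with the fact that, in a right proper model category, a pullback along a fibration is a homotopy pullback and $Map_{\mcl{M}}(X,-)$ sends homotopy pullbacks to homotopy pullbacks of simplicial sets. I would therefore spell this step out, citing \cite[Theorem 5.1.1]{hirschhorn2003model} for existence and cellularity-style conclusions and Barwick's combinatorial version for the combinatoriality and right properness of the result, and leave the remaining bookkeeping (that $Id_{\mcl{M}}$ is right Quillen) to the immediate formal observation above.
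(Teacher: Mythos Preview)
Your overall strategy---invoke Barwick's existence theorem for right Bousfield localization in the combinatorial setting---is exactly what the paper does; its entire proof is the single line ``See \cite[Section 5]{barwick2010left}.'' So on the level of approach you are aligned.

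However, your explanatory details contain a recurring error: in right Bousfield localization it is the \emph{fibrations} that are unchanged, not the cofibrations. You write that $R_{K}\mcl{M}$ has ``the same cofibrations'' and that ``the generating cofibrations are unchanged''; both are false in general. The cofibrations of $R_{K}\mcl{M}$ are determined by lifting against the new acyclic fibrations and form a strictly smaller class. This mistake propagates: your argument that $Id_{\mcl{M}}: \mcl{M} \to R_{K}\mcl{M}$ is right Quillen via preservation of cofibrations is the wrong direction; the correct (and simpler) argument is that fibrations are literally identical, and since every weak equivalence of $\mcl{M}$ is a $K$-coequivalence, every acyclic fibration of $\mcl{M}$ is an acyclic fibration of $R_{K}\mcl{M}$. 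Similarly, your sketch for combinatoriality (``generating cofibrations are unchanged'') does not go through as stated; one needs Barwick's argument producing new generating sets. Your right-properness sketch, by contrast, is on firmer ground precisely because there the fibrations being unchanged is what matters, and you use that correctly.

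In short: the citation is right and sufficient, but if you keep the surrounding explanation, correct the fibration/cofibration confusion throughout.
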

\begin{proof}
See \cite[Section 5]{barwick2010left}. 
\end{proof}

\begin{remark}
In the existence theorems of \cite[Chapter 4 and 5]{hirschhorn2003model} for Bousfield localisations, Hirschhorn requires that the model category is \textit{cellular} (see, \cite[Definition 12.1.1.]{hirschhorn2003model}) in the place of combinatorial. These are distinctly different properties, for example  the model structure on the category of sets given in \cite[Example 12.1.7]{hirschhorn2003model} is combinatorial but not cellular. On the other hand the category of topological spaces is cellular, see \cite[Proposition 12.1.4. ]{hirschhorn2003model}, but not every topological space is small (see, e.g. \cite[page 49]{hovey2007model}).

Dugger's Theorem \cite[Theorem 1.1]{dugger2001combinatorial} states every combinatorial model category, $\mcl{M}$  is Quillen equivalent to a left Bousfield localisation of the global projective model category on simplicial presheaves over some small category. Since the category of simplicial presheaves is cellular the right Bousfield localisation exists and Bousfield localisation of simplicial presheaves can be lifted back to  $\mcl{M}$. We also note that, by \cite[Proposition 2.3]{dugger2001combinatorial}, combinatorial model categories have functorial fibrant and cofibrant replacement. 

\end{remark}

\begin{definition}
Let $\mcl{M}$ be a pointed model category and $*$ denote the zero object. For $X$ cofibrant, we define the suspension object $\Sigma X$ of $X$ to be the pushout of the following diagram:

\[
\begin{tikzcd}
*   
& X \coprod X \arrow[r, "i"] \arrow[l]
& Cyl(X).
\end{tikzcd}
\]
For $Y$ fibrant, we define the loop object $\Omega Y $ of $Y$ to be the pullback of the following diagram:

\[
\begin{tikzcd}
 P(Y)  &  X \times X \arrow [r, "p"] \arrow[l]  &*.
\end{tikzcd}
\]

\end{definition}

\begin{proposition}\cite[Proposition 3.1.7]{barnes2020foundations}
Let $\mcl{M}$ be a pointed model category. Then the loop and suspension constructions define a pair of adjoint functors $\Sigma : Ho(\mcl{M}) \leftrightarrows Ho(\mcl{M}):\Omega $.
\end{proposition}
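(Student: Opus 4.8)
The plan is to reduce the statement to a computation inside the homotopy function complex and then read off the adjunction from a natural bijection on hom-sets. Fix a cofibrant object $X$ and a fibrant object $Y$; since every object of $Ho(\mcl{M})$ is isomorphic to one of this form, it suffices to produce a bijection $Ho(\mcl{M})(\Sigma X, Y) \cong Ho(\mcl{M})(X, \Omega Y)$ natural in $X$ and $Y$, once one knows that $\Sigma$ and $\Omega$ are genuine functors on $Ho(\mcl{M})$. The functoriality is routine model-category bookkeeping: a cylinder object may be chosen functorially (or one invokes the gluing lemma), so $\Sigma$ preserves weak equivalences between cofibrant objects and descends to $Ho(\mcl{M})$, and dually $\Omega$ descends using path objects and the co-gluing lemma.

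For the bijection I would compute both sides with the homotopy function complex $Map_{\mcl{M}}(-,-)$ of Remark \ref{homotopy function complex remark}. The square defining $\Sigma X$, namely the pushout of $* \leftarrow X \amalg X \to Cyl(X)$ with $X \amalg X \to Cyl(X)$ a cofibration between cofibrant objects, is a homotopy pushout, so $Map_{\mcl{M}}(-, Y)$ carries it to a homotopy pullback square of simplicial sets. Using $Map_{\mcl{M}}(*, Y) \simeq *$ (this is where the pointedness of $\mcl{M}$ enters, as $*$ is the zero object and hence initial), $Map_{\mcl{M}}(X \amalg X, Y) \simeq Map_{\mcl{M}}(X,Y) \times Map_{\mcl{M}}(X,Y)$, and $Map_{\mcl{M}}(Cyl(X), Y) \simeq Map_{\mcl{M}}(X,Y)$ (as $X \to Cyl(X)$ is a weak equivalence), this homotopy pullback identifies $Map_{\mcl{M}}(\Sigma X, Y)$ with the homotopy fibre of the diagonal $Map_{\mcl{M}}(X,Y) \to Map_{\mcl{M}}(X,Y) \times Map_{\mcl{M}}(X,Y)$ over the basepoint, i.e. with $\Omega\, Map_{\mcl{M}}(X,Y)$. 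The formally dual argument applied to $\Omega Y$, the pullback of $* \to Y \times Y \leftarrow P(Y)$ with $P(Y) \to Y \times Y$ a fibration (now using that $Map_{\mcl{M}}(X,-)$ preserves homotopy pullbacks, $Map_{\mcl{M}}(X, *) \simeq *$, and that $P(Y) \to Y$ is a weak equivalence), identifies $Map_{\mcl{M}}(X, \Omega Y)$ as likewise weakly equivalent to $\Omega\, Map_{\mcl{M}}(X,Y)$. Passing to $\pi_{0}$ and recalling $\pi_{0} Map_{\mcl{M}}(A,B) \cong Ho(\mcl{M})(A,B)$ gives
\begin{equation*}
Ho(\mcl{M})(\Sigma X, Y) \;\cong\; \pi_{1} Map_{\mcl{M}}(X,Y) \;\cong\; Ho(\mcl{M})(X, \Omega Y),
\end{equation*}
and every step is natural in $X$ and $Y$, so $(\Sigma, \Omega)$ is an adjoint pair on $Ho(\mcl{M})$.

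The main obstacle is justifying the exactness properties of the homotopy function complex used above: that $Map_{\mcl{M}}(-, Y)$ and $Map_{\mcl{M}}(X, -)$ convert the specific homotopy pushout and homotopy pullback squares defining $\Sigma X$ and $\Omega Y$ into homotopy pullback and homotopy pushout squares of simplicial sets, and that they are homotopy invariant. Homotopy invariance is built into the definition of $Map_{\mcl{M}}$, but the (co)continuity up to homotopy requires an argument: one may either use an explicit cosimplicial/simplicial frame model for $Map_{\mcl{M}}$, or deduce it from the fact (Remark \ref{homotopy function complex remark}) that $Ho(\mcl{M})$ is tensored and cotensored over $Ho(\mathbf{sSet})$, which in the pointed case refines to a tensoring and cotensoring over pointed simplicial sets under which $\Sigma X$ is the derived smash $X \wedge S^{1}$ and $\Omega Y$ the derived cotensor $Y^{S^{1}}$, so that the asserted adjunction becomes a special case of the smash-cotensor adjunction. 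A frame-free alternative avoids $Map_{\mcl{M}}$ entirely: working directly with cylinder and path objects, one identifies $Ho(\mcl{M})(\Sigma X, Y)$ with homotopy classes of left homotopies between the two zero maps $X \rightrightarrows Y$ and $Ho(\mcl{M})(X, \Omega Y)$ with homotopy classes of right homotopies between the same maps, and checks that these agree naturally; this is essentially Quillen's original argument, where establishing the "fundamental group" structures and verifying naturality is the real content.
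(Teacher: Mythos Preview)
The paper does not give its own proof of this proposition: it is simply cited from \cite[Proposition 3.1.7]{barnes2020foundations} and used as a black box. There is therefore nothing in the paper to compare your argument against.

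That said, your proposal is a correct and standard route to the result. The homotopy-function-complex computation you sketch is the modern packaging, and you correctly flag the one genuine point of content, namely that $Map_{\mcl{M}}(-,Y)$ and $Map_{\mcl{M}}(X,-)$ send the defining homotopy pushout/pullback squares to homotopy pullback squares of simplicial sets. Your two suggested resolutions are both valid: the first (via framings, or equivalently via the $Ho(\mathbf{sSet}_*)$-tensor/cotensor structure on $Ho(\mcl{M})$ that identifies $\Sigma X \simeq S^1 \wedge^{\mathbb{L}} X$ and $\Omega Y \simeq \mathbb{R}Y^{S^1}$) is essentially how Hovey and Barnes--Roitzheim organise the argument, while the second (identifying both sides with homotopy classes of null-homotopies of the zero map $X \to Y$) is Quillen's original, more elementary, proof. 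Either would be acceptable; the first gives you the enriched adjunction essentially for free, while the second avoids the machinery of framings at the cost of more hands-on verification of naturality.
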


\begin{definition}
We say that a pointed model category is \textit{stable} if the adjunction $\Sigma : Ho(\mcl{M}) \leftrightarrows Ho(\mcl{M}): \Omega $ is an equivalence of categories. 
\end{definition}

 Let $\mcl{M}$ be a pointed model category then by Remark \ref{homotopy function complex remark} we have the following weak equivalences of simplicial sets
\begin{align*}
    Map_{M}(\Sigma X, Y ) \cong Map_{M}( X, \Omega Y ).
\end{align*}
Additionally,
\begin{equation*}
 Map_{M}( X, \Omega Y )  \cong \Omega Map_{M}(X,  Y ) 
\end{equation*}

by \cite[Chapter 6 ]{hovey2007model}. Let $K$ a class of objects in $M$. Then $K$ is usually called \textit{stable} if the class of $K$-colocal objects is closed under $\Omega$. However, under the assumption that $M$ is stable this is equivalent to the requirement that the class of $K$-coequivalences is closed under $\Sigma$ or equivalently $K$ is closed under $\Omega$. Since we exclusively work with stable model categories we make the following definition.

\begin{definition}\label{stable def}
  Let $\mcl{M}$ be a stable model category and $K$ a class of objects, then $K$ is called \textit{stable} if it is closed under $\Omega$.   
\end{definition}

\begin{remark}\label{stability}

The local and global projective model structures on $PMod(\scr{A})$ and $PMod(\scr{R})$ are stable and the loop and suspension constructions are simply given by shifting the complex to the left or right by one degree. 

We will check this for the suspension object associated to a cofibrant presheaf in $PMod(\scr{R})$, the other cases are similar. Let $(\scr{F}, d) $ be a cofibrant presheaf. A cylinder object for $\scr{F}$ is given by the complex given with $\scr{F}^{n} \oplus \scr{F}^{n+1} \oplus \scr{F}^{n}$ in degree $n$ and differential represented by the matrix 
\begin{equation*}
\begin{bmatrix}
d^{n} & Id & 0\\
0 & -d^{n+1} & 0\\
0 & -Id& d^{n}
\end{bmatrix}.
\end{equation*}
Then the fold map $\scr{F} \oplus \scr{F} \lra \scr{F}$ factors as the composition $ p \circ i$ where $i$ is given by the matrix 
\begin{equation*}
\begin{bmatrix}
Id & 0 \\
0 & 0\\
0 & Id
\end{bmatrix}
\end{equation*}
and $p$ is given by the matrix $[Id, 0, Id]$. Let $q: \scr{F} \lra Cyl(\scr{F})$ be given by
\begin{equation*}
\begin{bmatrix}
Id \\
0 \\
0 
\end{bmatrix}.
\end{equation*} 
Then $p \circ q = Id $ and $q \circ p $ is homotopy equivalent to the identity on $Cyl(\scr{F})$ via the map 
\begin{equation*}
\begin{bmatrix}
0 & 0 & 0\\
0 & 0 & -Id\\
0 & 0 & 0
\end{bmatrix}.
\end{equation*} 
Hence, $p$ is a weak equivalence and the cokernel of $p$ is given by $\scr{F}$ shifted to the right by one degree, which is cofibrant by assumption. Furthermore, the suspension object $\Sigma\scr{F}$ is defined to be the pushout of $i$ and the zero map which is precisely the cokernel of $i$, hence we recover the usual shift functor for chain complexes. 
\end{remark}

Let $\cal{M}$ be a pointed model category and $X,Y$ be objects in $\cal{M}$. Let $[X, Y]$ denote the set $Hom_{Ho(\cal{M})}(X,Y)$ of morphisms from $X$ to $Y$ in the homotopy category and

\begin{equation*}
    [X,Y]_{n} := \begin{cases}
    [\Sigma^{n}X , Y ] \ \ if \  n\geq 0 \\
    [X, \Sigma^{-n}Y ] \ \ if \ n<0.
    \end{cases}
\end{equation*}

\begin{proposition} \label{stab}
Let $\mcl{M}$ be a pointed model category.

\begin{enumerate}[label=(\roman*)]
    \item The sets $[\Sigma^{n} X, Y ] $ and $[X, \Omega^{n} Y]$ have a group structure for $n \geq 1$ and an abelian group structure for $n \geq 2$. Moreover,  the adjunction isomorphism is an isomorphism of groups (\cite[Lemma 3.2.8]{barnes2020foundations}). 
             \item $\pi_{n} (Map_{\mcl{M}}(X,Y)) \cong [X, Y]_{n} \ for \ n \geq 0 $ (\cite[Lemma 6.9.19]{barnes2020foundations}).
                    \item If $\mcl{M}$ is stable then $Ho(\mcl{M})$ is an additive category. In fact, $Ho(\mcl{M})$ is triangulated (\cite[Proposition 3.2.9, Theorem 4.2.1]{barnes2020foundations}). 
                               \item Suppose $L : \mcl{M} \rightleftarrows \mcl{N} : R$ is a Quillen adjunction between stable model categories then the functors $\mathbb{L}: Ho(\mcl{M}) \lra Ho(\mcl{N})$ and $\mathbb{R}:Ho(\mcl{N}) \lra Ho(\mcl{M}) $ are exact as functors between triangulated categories (\cite[Theorem 4.5.2]{barnes2014stable}).
     
\end{enumerate}

\end{proposition}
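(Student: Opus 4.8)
The plan is to handle the four clauses in sequence; each is a recorded fact in the cited references, so the goal is to isolate the mechanism behind it rather than to rebuild the homotopical foundations.

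\textbf{Part (i).} I would begin by exhibiting $\Sigma X$ as a cogroup object in $Ho(\mcl{M})$. The cylinder $Cyl(X)$ carries a pinch map obtained by collapsing its ``middle'', and together with the map to the zero object and the cylinder flip this endows $\Sigma X$ with a comultiplication, counit and co-inverse after passing to the homotopy category. Applying $[-,Y]$ transports this to a group structure on $[\Sigma X, Y]$, and inductively on $[\Sigma^{n} X, Y]$ for $n \geq 1$. For $n \geq 2$, $\Sigma^{n} X$ carries two compatible cogroup structures (one from the outermost suspension, one from an inner one), and the Eckmann--Hilton argument forces them to coincide and to be commutative, so $[\Sigma^{n} X, Y]$ is abelian. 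The loop object $\Omega Y$ is dually a group object via the path object $P(Y)$, yielding the (abelian for $n \geq 2$) structure on $[X, \Omega^{n} Y]$. Finally the adjunction bijection $[\Sigma^{n} X, Y] \cong [X, \Omega^{n} Y]$ is a homomorphism because the cogroup structure on $\Sigma$ and the group structure on $\Omega$ are mutually adjoint, both being assembled from the same cylinder/path-object data.

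\textbf{Parts (ii) and (iv).} For (ii) the one input I need beyond (i) is the standard identification $\pi_{0} Map_{\mcl{M}}(X,Y) \cong Hom_{Ho(\mcl{M})}(X,Y) = [X,Y]$ from \cite[Theorem 5.4.9]{hovey2007model}. Iterating the weak equivalence $\Omega Map_{\mcl{M}}(X,Y) \cong Map_{\mcl{M}}(X,\Omega Y)$ noted just before Definition \ref{stable def} gives $\pi_{n} Map_{\mcl{M}}(X,Y) \cong \pi_{0} Map_{\mcl{M}}(X,\Omega^{n} Y) \cong [X, \Omega^{n} Y] \cong [\Sigma^{n} X, Y] = [X,Y]_{n}$ for $n \geq 0$, the last adjunction being (i); this is \cite[Lemma 6.9.19]{barnes2020foundations}. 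For (iv), a left Quillen functor $L$ preserves cofibrations, cofibrant objects and pushouts, hence cylinder objects and cofiber sequences, so $\mathbb{L}\Sigma \cong \Sigma \mathbb{L}$ naturally and $\mathbb{L}$ carries distinguished triangles to distinguished triangles; dually $R$ preserves fibrations, fibrant objects, pullbacks, path objects and fiber sequences, so $\mathbb{R}\Omega \cong \Omega \mathbb{R}$, and since in a stable model category the fiber- and cofiber-sequence triangulations agree up to sign, $\mathbb{R}$ is exact too. This is \cite[Theorem 4.5.2]{barnes2014stable}.

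\textbf{Part (iii) and the main obstacle.} Additivity of $Ho(\mcl{M})$ in the stable case is a consequence of (i): stability gives $X \cong \Sigma^{2}\Omega^{2} X$ in $Ho(\mcl{M})$, so every $[X,Y] \cong [\Sigma^{2}\Omega^{2} X, Y]$ is abelian, one checks composition is bilinear, and the canonical map $X \vee Y \to X \times Y$ becomes an isomorphism, providing biproducts. The triangulated structure is where the real work lies: one declares the distinguished triangles to be those isomorphic to cofiber sequences $X \to Y \to C(f) \to \Sigma X$ (with $f$ first replaced by a cofibration), and then must verify the rotation, existence, and octahedral axioms. Rotation reduces to identifying the cofiber of $Y \to C(f)$ with $\Sigma X$ up to the sign $-1$, and existence is completion of a morphism to such a sequence; the octahedral axiom, however, requires comparing several iterated homotopy pushout squares and is the step that genuinely resists a one-line formal argument. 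I therefore expect the octahedral axiom --- and more broadly the coherent comparison of the cofiber and fiber triangulations needed to make (iii) and (iv) fit together --- to be the main obstacle; the details are carried out in \cite[Proposition 3.2.9, Theorem 4.2.1]{barnes2020foundations}.
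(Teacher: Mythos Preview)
Your sketches are correct, but note that the paper does not actually prove this proposition: it is stated as a collection of standard facts, each part carrying an inline citation to the relevant result in \cite{barnes2020foundations} or \cite{barnes2014stable}, and no proof environment follows. In that sense your proposal goes strictly beyond the paper's treatment. The arguments you outline---cogroup structure on $\Sigma X$ via the pinch map and Eckmann--Hilton for (i), the loop-shift on mapping spaces for (ii), stability forcing $[X,Y]\cong[\Sigma^{2}\Omega^{2}X,Y]$ for additivity and cofiber sequences for the triangulation in (iii), and preservation of (co)fiber sequences by Quillen functors for (iv)---are the standard ones and match what one finds in the cited references. Your identification of the octahedral axiom as the genuine technical hurdle in (iii) is accurate.
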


\begin{lemma}\label{weak eq loc}
    Let $\mathcal{M}$ be a stable dg model category and let  $K$ be a stable set of objects in $\mcl{M}$. Then the map $f: X \longrightarrow Y$ is a $K$-coequivalence if and only if the induced map $\mathbb{R}\underline{Hom}_{\mcl{M}}(L, f): \mathbb{R}\underline{Hom}_{\mcl{M}}(L, X) \longrightarrow \mathbb{R}\underline{Hom}_{\mcl{M}}(L, Y)$ is a quasi-isomorphism for all $L \in K$. 
\end{lemma}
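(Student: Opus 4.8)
The plan is to translate the statement about homotopy function complexes $Map_{\mcl{M}}(L,-)$ into a statement about the derived enriched hom $\rdh_{\mcl{M}}(L,-)$ using the comparison between the simplicial and dg enrichments of $Ho(\mcl{M})$. First I would recall from Remark \ref{homotopy function complex remark} and Proposition \ref{stab}(ii) that for any $X,Y \in \mcl{M}$ one has $\pi_n Map_{\mcl{M}}(X,Y) \cong [X,Y]_n$ for $n \geq 0$, while Remark \ref{derived hom} gives $\underline{Hom}_{Ho(\mcl{M})}(X,Y) \cong H^{0}(\rdh_{\mcl{M}}(X,Y))$. Since $\mcl{M}$ is stable and $L \in K$ with $K$ stable, we may shift freely: $[L,Y]_n = [\Sigma^n L, Y] = H^0(\rdh_{\mcl{M}}(\Sigma^n L, Y)) \cong H^{-n}(\rdh_{\mcl{M}}(L,Y))$, using that $\Sigma$ is the degree shift (Remark \ref{stability}) and that $\rdh$ is a dg-lift of the hom in $Ho(\mcl{M})$. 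Thus $\pi_n Map_{\mcl{M}}(L,Y) \cong H^{-n}(\rdh_{\mcl{M}}(L,Y))$ for all $n \geq 0$.

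Next I would use this identification naturally in the second variable. A map $f: X \to Y$ is a $K$-coequivalence iff $Map_{\mcl{M}}(L,f)$ is a weak equivalence of simplicial sets for every $L \in K$, i.e.\ iff it induces isomorphisms on all $\pi_n$ (for $n \geq 0$, with respect to every basepoint; but since these are $H$-spaces — indeed infinite loop spaces, as $\mcl{M}$ is stable — it suffices to check at the canonical basepoint). By the naturality of the isomorphism $\pi_n Map_{\mcl{M}}(L,-) \cong H^{-n}(\rdh_{\mcl{M}}(L,-))$, this holds iff $H^{-n}(\rdh_{\mcl{M}}(L,f))$ is an isomorphism for all $n \geq 0$ and all $L \in K$. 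Because $K$ is stable under $\Omega$ (equivalently $\Sigma$), for any $m \in \mathbb{Z}$ we have $\Sigma^{-m} L \in K$ when $m \le 0$ and $\Omega^m L \in K$ otherwise, and $H^{-n}(\rdh_{\mcl{M}}(\Sigma^{j}L, f)) \cong H^{-n-j}(\rdh_{\mcl{M}}(L,f))$; running over all $L' \in K$ of the form $\Sigma^j L$ therefore lets us conclude that $H^{i}(\rdh_{\mcl{M}}(L,f))$ is an isomorphism for \emph{all} $i \in \mathbb{Z}$, i.e.\ $\rdh_{\mcl{M}}(L,f)$ is a quasi-isomorphism, for every $L \in K$. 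The converse direction is immediate: if $\rdh_{\mcl{M}}(L,f)$ is a quasi-isomorphism for all $L \in K$, then in particular $H^{-n}(\rdh_{\mcl{M}}(L,f))$ is an isomorphism for all $n \geq 0$, hence $Map_{\mcl{M}}(L,f)$ induces isomorphisms on all homotopy groups at the canonical basepoint, and by the $H$-space argument on all components, so it is a weak equivalence.

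The main obstacle I anticipate is the careful bookkeeping of basepoints and components in the passage between "$Map_{\mcl{M}}(L,f)$ is a weak equivalence of simplicial sets" and "isomorphism on $\pi_n$ at the canonical basepoint": a priori a weak equivalence of simplicial sets requires a $\pi_0$-bijection plus $\pi_n$-isomorphisms at all basepoints, whereas the dg statement only sees homotopy groups based at $0$. The clean resolution is to invoke stability of $\mcl{M}$: by Proposition \ref{stab} the space $Map_{\mcl{M}}(L,Y)$ is (weakly equivalent to) an infinite loop space — it is $\Omega^\infty$ of the mapping spectrum — so it is grouplike and all path components are weakly equivalent, reducing everything to the homotopy groups at the canonical basepoint and to $\pi_0 = [L,Y] = H^0(\rdh_{\mcl{M}}(L,Y))$. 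Once this reduction is in place the rest is the formal dictionary between $\pi_*$, the graded hom $[L,-]_*$, and cohomology of $\rdh_{\mcl{M}}(L,-)$, together with the stability of $K$ to upgrade "$H^{i}$ for $i \le 0$" to "$H^{i}$ for all $i$".
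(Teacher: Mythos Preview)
Your argument is correct and follows essentially the same route as the paper: identify $\pi_n Map_{\mcl{M}}(L,-)$ with $[L,-]_n$ via Proposition \ref{stab}(ii), translate this to $H^{-n}(\rdh_{\mcl{M}}(L,-))$ via Remark \ref{derived hom}, and then use stability of $K$ to cover all cohomological degrees. You are in fact more careful than the paper on the basepoint/components issue (invoking the infinite loop space structure), which the paper glosses over; one small slip is the parenthetical ``(equivalently $\Sigma$)''---closure under $\Omega$ is not literally equivalent to closure under $\Sigma$, but since your argument only actually uses $\Omega^m L \in K$ to reach the positive degrees, this does not affect the proof.
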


\begin{proof}
 By Proposition \ref{stab}, $f$ is a $K$-coequivalence if and only if 

\begin{equation*}
    [L,f]_{n}: [L, X]_{n} \longrightarrow [L , Y]_{n}
\end{equation*}
is an isomorphism of abelian groups for all non-negative $n$ and all $L$ in $K$. Now since $\mcl{M}$ is dg model category we have that  $H^{0}(\mathbb{R}\underline{Hom}_{\mcl{M}}(L, X )) \cong [L,X]$. Since $K$ is closed under taking loops and suspensions, we can calculate 

\begin{equation*}
    H^{n}(\mathbb{R}\underline{Hom}_{\mcl{M}}(L, X ))\cong \begin{cases} 
      [\Sigma^{n} L, X], \ \ n\geq 0 \\
        [\Omega^{-n} L,X], \ \ n <  0.
   \end{cases}
\end{equation*}
Hence, $f$ is a $K$-coequivalence if and only if $\mathbb{R}\underline{Hom}_{\mcl{M}}(L, f): \mathbb{R}\underline{Hom}_{\mcl{M}}(L, X) \longrightarrow \mathbb{R}\underline{Hom}_{\mcl{M}}(L, Y)$ is a quasi-isomorphism for all $L \in K$.

\end{proof}

Recall that  the projective model structure on $k-Mod$ is cofibrantly generated with the set of inclusions $I'=\{S^{n-1} \longrightarrow D^{n}\}$ generating cofibrations and $J'= \{0 \longrightarrow D^{n}\}$ generating acyclic cofibrations. Here $S^{n}$ denotes the complex with $k$ in degree $n$ and zero elsewhere. Similarly, $D^{n}$ denotes the complex with $k$ in degree $n$ and $n-1$ and $0$ elsewhere.

\begin{proposition}\label{enriched localisation}
     Let $\mathcal{M}$ be a combinatorial, right proper, stable dg model category and let  $K$ a set of objects in $\mcl{M}$ satisfying the following properties: 
\begin{enumerate}[label=(\roman*)]
\item$K$ is stable,
             \item $K$ is a set of cofibrant objects in $\mcl{M}$.
\end{enumerate}
Then, the right Bousfield  localisation $R_{K}\mathcal{M}$ a stable dg model category. 
\end{proposition}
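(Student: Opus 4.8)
The plan is to verify the two defining conditions of a dg model category for $R_K\mathcal{M}$, leaning on the fact that Proposition \ref{existance} already supplies the underlying model structure (combinatorial and right proper). First I would recall that $R_K\mathcal{M}$ has the same underlying category as $\mathcal{M}$, hence it inherits the $k\text{-}Mod$-enrichment together with the tensor and cotensor; so only the compatibility axioms of Definition of a $\mathcal{V}$-model category need checking. The unit axiom is immediate: a cofibrant replacement $1_{k\text{-}Mod}^{cof}\to 1_{k\text{-}Mod}$ is an isomorphism since $k$ itself is cofibrant in the projective model structure, so $1^{cof}_{k\text{-}Mod}\ot C\to C$ is already an isomorphism and in particular a weak equivalence in $R_K\mathcal{M}$ (the weak equivalences of $R_K\mathcal{M}$ contain those of $\mathcal{M}$).

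The substantive point is the pushout-product axiom. Here I would use the characterization via generating (acyclic) cofibrations, as in the proof of Proposition \ref{comod is dg}: by \cite[Corollary 4.2.5]{hovey2007model} it suffices to check the pushout product against the generating cofibrations $I'=\{S^{n-1}\to D^n\}$ and generating acyclic cofibrations $J'=\{0\to D^n\}$ of $k\text{-}Mod$ together with generating (acyclic) cofibrations of $R_K\mathcal{M}$. The cofibrations of $R_K\mathcal{M}$ agree with those of $\mathcal{M}$ (right Bousfield localization changes only fibrations and weak equivalences), and $\mathcal{M}$ is already a dg model category, so $i\,\square\,j$ is a cofibration whenever $i\in I'\cup J'$ and $j$ is a cofibration. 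The one thing that genuinely needs an argument is that $i\,\square\,j$ is moreover a \emph{weak equivalence in} $R_K\mathcal{M}$, i.e.\ a $K$-coequivalence, when $i$ or $j$ is a weak equivalence; this is the step where the hypotheses on $K$ (stability and cofibrancy) must be used.

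To handle that, I would argue as follows. Tensoring with $D^n$ is, up to a shift, the mapping cone construction on the identity, so $D^n\ot\mathcal{F}$ is contractible and $D^n\ot(-)$ sends any map to a map with contractible cone; more precisely $D^n\ot f$ has cone $D^n\ot\mathrm{cone}(f)$, which is contractible, hence $D^n\ot f$ is a chain homotopy equivalence and in particular a weak equivalence in $\mathcal{M}$, therefore a $K$-coequivalence. This disposes of the $J'$-case ($j\,\square\,f=D^n\ot f$) and of the acyclic-$I'$-case once one notes, as in Proposition \ref{comod is dg}, that the cokernel of $\alpha$ in the pushout-product square is $S^{n-1}\ot\mathrm{cone}(f')$-type and reduces to the same contractibility statement. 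For the remaining case, where $i\in I'$ is fixed and $j$ is an acyclic cofibration in $R_K\mathcal{M}$: since $S^{n-1}\ot(-)$ and $D^n\ot(-)$ are just degree shifts, which by Remark \ref{stability} realize powers of $\Sigma$ (and $\Sigma$ is an auto-equivalence of $Ho(\mathcal{M})$ by stability, preserving $K$-coequivalences because $K$ is stable, using Lemma \ref{weak eq loc}), the maps $S^{n-1}\ot j$ and $D^n\ot j$ are again $K$-coequivalences; then the gluing lemma for the pushout $P$ (valid because $\mathcal{M}$, hence $R_K\mathcal{M}$, is left proper along cofibrations — all maps in sight are cofibrations in the global/projective structure) shows $\alpha$ is a $K$-coequivalence, and two-out-of-three in the square forces $i\,\square\,j$ to be one as well.

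I expect the main obstacle to be precisely the bookkeeping in that last case: one must know that a pushout of a $K$-coequivalence along a cofibration is a $K$-coequivalence, which is the \emph{left} properness of $R_K\mathcal{M}$. This does not come for free from Proposition \ref{existance} (which only records right properness), so I would either invoke that the generating cofibrations here are degree-wise split injections — so the pushout squares are also pullbacks of the relevant short exact sequences and the cone of $\alpha$ is literally isomorphic to the cone of $S^{n-1}\ot f$ — thereby sidestepping properness entirely, or else cite that left Bousfield-type arguments are unnecessary because all objects of $\mathcal{M}$ are already cofibrant in the cases of interest. Reducing everything to the explicit cone computations of Proposition \ref{comod is dg}, together with Lemma \ref{weak eq loc} and the identification of shifts with $\Sigma$ from Remark \ref{stability}, is the cleanest route and avoids any delicate properness discussion.
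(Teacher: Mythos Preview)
Your approach has a genuine gap at its starting point: you write that ``the cofibrations of $R_K\mathcal{M}$ agree with those of $\mathcal{M}$ (right Bousfield localization changes only fibrations and weak equivalences)'', but this is precisely backwards. In right Bousfield localization it is the \emph{fibrations} that are unchanged; the weak equivalences become the larger class of $K$-coequivalences, and the cofibrations are then forced by lifting and form a \emph{strictly smaller} class (see Hirschhorn, Theorem 3.3.3). What does stay the same is the class of acyclic cofibrations (they are characterized by lifting against fibrations), so part of your argument survives, but the crucial step --- that $i\,\square\,j$ is a cofibration in $R_K\mathcal{M}$ for $i\in I'$ and $j$ a cofibration in $R_K\mathcal{M}$ --- does \emph{not} follow from the pushout-product axiom for $\mathcal{M}$: the $\mathcal{M}$-argument only yields an $\mathcal{M}$-cofibration, and you have no handle on the smaller class of $R_K\mathcal{M}$-cofibrations.

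This is exactly why the paper passes to the dual formulation via \cite[Remark A.3.1.6]{lurie2009higher}, working with the cotensor and checking that for a cofibration $v_1\to v_2$ in $k\text{-}Mod$ and a fibration $d_1\to d_2$ in $R_K\mathcal{M}$ the induced map into the pullback is an (acyclic) fibration. Because fibrations are literally unchanged under right Bousfield localization, two of the three cases are immediate from the $\mathcal{V}$-model structure on $\mathcal{M}$; only the case ``$f$ is an acyclic fibration in $R_K\mathcal{M}$'' requires an argument, and there the paper shows $[D^n,f]$ and $[S^{n-1},f]$ are $K$-coequivalences by passing across the tensor--cotensor adjunction to $\mathbb{R}\underline{Hom}(D^n\otimes k,-)$ with $k\in K$, using that $K$ is stable and consists of cofibrant objects (via Lemma \ref{weak eq loc}). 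Your explicit cone computations for $D^n\otimes(-)$ and $S^{n-1}\otimes(-)$ are morally the adjoint of this, but they cannot be organized into a proof of the pushout-product axiom without first knowing what the $R_K\mathcal{M}$-cofibrations are. Finally, you do not address stability of $R_K\mathcal{M}$; the paper dispatches this with a citation to \cite[Proposition 4.6]{barnes2014stable}, which is where the hypothesis that $K$ be stable (Definition \ref{stable def}) is used.
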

  \begin{proof}
 Firstly, note that $R_{K}\mcl{M}$ is stable by \cite[Proposition 4.6]{barnes2014stable}. To show that  $R_{K}\mcl{M}$ is a dg model category, we check the push out product axiom. Let $i: V \lra W$ be a morphism in $k-Mod$ and  $f:  X \lra Y $ be a morphism in $\mcl{M}$ and let $[-, -]$ denote the cotensor of $\mcl{M}$ over $k-Mod$.  We can form the pullback diagram: 
      
      \[
\begin{tikzcd}
\left[W, X \right] \arrow[drr, bend left] \arrow[ddr, bend right] \arrow[dr, "\alpha"] \\
&P \arrow[r] \arrow[d] & \left[ W, Y \right] \arrow[d]\\
& \left[V, X\right] \arrow[r] & \left[V, Y \right].
\end{tikzcd}
\]
The pushout product axiom is equivalent to checking the following conditions, by \cite[A.3.1.6]{lurie2009higher}: 
\begin{enumerate}[label=(\roman*)]
\item if $i$ is a cofibration and $f$  is a fibration     then              $\alpha$ is a fibration.
             \item If $i$  is an acyclic cofibration and $f$ is a fibration, then  $\alpha$ is an acyclic fibration in $R_{K}\mathcal{M}$.
             \item If $f$ is an acyclic fibration then $\alpha$ is an acyclic fibration in $R_{K}\mathcal{M}$. 
\end{enumerate} 
The first two follow immediately from the definition of right Bousfield localisation and for (iii) we only need to check that $\alpha$ is a weak equivalence in $R_{K}\mathcal{M}$.  Let $f$ be an acyclic fibration, then by \cite[Proposition 4.3.1]{hovey2007model} it is sufficient to assume that $i \in I'$. To see that $\alpha$ is a weak equivalence in  $R_{K}\mathcal{M}$ we show that $\widehat{f}: P \lra [D^{n}, Y]$ and $[D^{n}, f]: [D^{n}, X] \lra [D^{n}, Y]$  are weak equivalences and appeal to  the  2-out-of-3 axiom.   
 
For any $V$ in $k-Mod$ the functor  $[V,-]: \mathcal{M} \lra \mathcal{M} $ is right Quillen and since every acyclic fibration in $\mathcal{M}$ is an acyclic fibration in $R_{K}\mathcal{M}$ and fibrations are unchanged,  $[V,-]: \mathcal{M} \lra R_{K}\mathcal{M} $ is also right Quillen.

Note that $[D^{n}, f]$ is a weak equivalence if and only if the induced map

\begin{equation}\label{123}
    \rdh_{M}(k, [D^{n}, X] ) \lra \rdh_{M}(k, [D^{n},Y] ) 
\end{equation}
 is a quasi-isomorphism for all $k \in K $.  Let  $[D^{n}, X]^{fib}$  denote the fibrant replacement of $[D^{n}, X]$, then (\ref{123}) is a quasi-isomorphism if and only if 
\begin{equation*}
    \uhom_{M}(k, [D^{n}, X]^{fib} ) \lra \uhom_{M}(k, [D^{n},Y]^{fib} ) 
\end{equation*}
is quasi-isomorphism. This is equivalent to requiring that 

\begin{equation}\label{1234}
    \uhom_{M}(k, [D^{n}, X^{fib}] ) \lra \uhom_{M}(k, [D^{n},Y^{fib}] ) 
\end{equation}
is a quasi-isomorphism.  Since Both $D^{n}$ and $k$ are cofibrant and $\mcl{M}$ is a dg model category (\ref{1234}) is a quasi-isomorphism if and only if 
 \begin{equation}\label{12345}
    \rdh_{M}( D^{n} \ot k ,  X ) \lra \rdh_{M}(D^{n} \ot k,Y).
\end{equation}

is a quasi-isomorphism. 
\begin{equation}\label{123456}
    \rdh_{M}( D^{n} \ot k ,  X ) \lra \rdh_{M}(D^{n} \ot k,Y).
\end{equation}
It follows by the derived tensor cotensor adjunctions for $\mcl{M}$ that (\ref{123456}) is a quasi-isomorphism if and only if 

\begin{equation*}
    \rdh_{k}( D^{n} , \rdh_{\mcl{M}}(k, X) ) \lra \rdh_{k}(D^{n},\rdh_{\mcl{M}}(k, Y)) 
\end{equation*}
is a quasi-isomorphism. An analogous argument shows that $[S^{n}, f]$ is a weak equivalence and therefore an acyclic fibration. Since the pullback of an acyclic fibration is an acyclic fibration, $\widehat{f}$ is an acyclic fibration.

\end{proof}


\begin{remark}\label{triangle stuff}
We can make several observations regarding the Bousfield localization of stable model categories and the classical notion of Verdier quotients in triangulated categories. Let $\mathcal{M}$ be a stable model category and suppose the right Bousfield localisation of $\mathcal{M}$ by a set of objects $K$ exists and is also stable. 

By Proposition \ref{existance}, the identity functor on $\mcl{M}$ induces an adjunction between the homotopy categories which we denote by $\mathbb{L}: Ho(\mathcal{M}) \rightleftarrows Ho(R_{K}\mathcal{M}): \mathbb{R}  $. Moreover, by Proposition \ref{stab} (iii), the homotopy categories are triangulated and by Proposition \ref{stab}(iv),  Quillen functors are exact, thus $L= \ker (\mathbb{R}) $ is a thick subcategory. Then $\mathbb{R}I$ induces a map from Verdier Quotient $J: Ho(\mathcal{M})/L \lra Ho(R_{K}\mathcal{M})$. 
\

To see that this map is an equivalence of categories,  we will show that $R_{K}\mathcal{M} = \mathcal{M} \lra Ho(\mathcal{M} ) \lra Ho(\mathcal{M})/L $ satisfies the universal property  of $R_{K}\mathcal{M} \lra  Ho(R_{K}\mathcal{M}) $. Indeed, suppose $H: \mathcal{M} \lra \mathcal{N}$ is a functor which sends all weak equivalences in $R_{K}\mathcal{M}$ to isomorphisms. Then, since each weak equivalence in $R_{K}\mathcal{M}$ is a weak equivalence in $\mathcal{M}$, the functor $H$ factors through $\mathcal{M} \lra Ho(\mathcal{M}) $ in a unique way. Let $\bar{H} : Ho(\mathcal{M}) \lra N $ denote the functor induced by this factorisation. The Verdier quotient inverts all maps whose cone belong to $L$, and such a map may be represented as a zig-zag of weak equivalences in $R_{K}\mathcal{M}$. Now, since $H$ sends weak equivalences in $R_{K}\mathcal{M} $ to isomorphisms we must have that $\bar{H}$ factors through $Ho(\mathcal{M}) \lra Ho(\mathcal{M})/L$ in a unique way. Collecting this all together, we see that $H$ factors though $R_{K}\mathcal{M} \lra Ho(\mathcal{M})/L$. The uniqueness of this factorisation is ensured by the uniqueness of the previous two factorisations. 

Let $\widehat{l} = \mathbb{L} \circ \mathbb{R} : Ho(\mcl{M}) \lra Ho(\mcl{M}) $  and let $\epsilon $ denote the derived counit. It is easy to see that the pair $(\widehat{l}, \epsilon )$ define an exact colocalisation of triangulated categories.  By \cite[Proposition 4.12.1. ]{krause2008localization} this we can conclude that there exists an exact localisation of triangulated categories $l = \mathbb{L} \circ \mathbb{R} : Ho(\mcl{M}) \lra Ho(\mcl{M}) $ such that $Ho(\mathcal{M})/L \cong im(\widehat{l}) \cong \ker(l)$

\end{remark}
\begin{theorem}\label{triqot}
Let $\mcl{M}$ be a combinatorial, right proper, stable dg model category and $K$ be a stable set of cofibrant objects in $\mcl{M}$. Then there is an equivalence of triangulated categories between $Ho(R_{K}\mcl{M})$ and the Verdier Quotient $Ho(\mcl{M})/L$, where $L = K^{\perp} \subset Ho(\mcl{M})$ is the full subcategory of objects $l $ in $Ho(\mcl{M})$ such that $Hom_{Ho(\mcl{M})}(k, l) = 0 $ for all $K \in L$.

\end{theorem}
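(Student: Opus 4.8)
The plan is to combine the abstract colocalization argument sketched in Remark \ref{triangle stuff} with the characterization of $K$-coequivalences from Lemma \ref{weak eq loc}, checking carefully that the full subcategory $L$ appearing in the statement of the theorem coincides with the kernel of the derived right adjoint $\mathbb{R}\colon Ho(R_K\mcl{M}) \lra Ho(\mcl{M})$. First I would invoke Proposition \ref{enriched localisation} to guarantee that $R_K\mcl{M}$ exists and is again a combinatorial, right proper, stable dg model category, so that $Ho(R_K\mcl{M})$ is triangulated by Proposition \ref{stab}(iii) and the identity adjunction $\mathbb{L}\colon Ho(\mcl{M}) \rightleftarrows Ho(R_K\mcl{M}) : \mathbb{R}$ consists of exact functors by Proposition \ref{stab}(iv). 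Since $\mathbb{L}$ is the derived identity and fibrant replacement in $R_K\mcl{M}$ is a cofibrant-fibrant replacement computing $K$-colocalization, the composite $\widehat{l} = \mathbb{L}\circ\mathbb{R}$ is an exact colocalization with counit $\epsilon$, exactly as in Remark \ref{triangle stuff}.

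The heart of the argument is the identification of the kernel of $\mathbb{R}$ with $L = K^{\perp}$. An object $Y$ lies in $\ker(\mathbb{R})$ iff $Y$, viewed in $Ho(R_K\mcl{M})$, is isomorphic to $0$, which (since $R_K\mcl{M}$ has the same underlying category and the localization inverts precisely the $K$-coequivalences) happens iff the map $0 \lra Y$ is a $K$-coequivalence. By Lemma \ref{weak eq loc}, using that $K$ is a stable set of cofibrant objects, this is equivalent to $\mathbb{R}\uhom_{\mcl{M}}(k, Y)$ being acyclic for all $k \in K$; taking $H^0$ and using $H^0(\mathbb{R}\uhom_{\mcl{M}}(k,Y)) \cong \Hom_{Ho(\mcl{M})}(k,Y)$ together with the stability of $K$ under $\Omega$ to handle all cohomological degrees, this says precisely $\Hom_{Ho(\mcl{M})}(\Sigma^n k, Y) = 0$ for all $n$ and all $k \in K$, i.e. $Y \in K^\perp = L$. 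Thus $\ker(\mathbb{R}) = L$ as thick subcategories of $Ho(\mcl{M})$.

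Given this, I would finish exactly as in Remark \ref{triangle stuff}: the derived adjunction descends to a functor $J\colon Ho(\mcl{M})/L \lra Ho(R_K\mcl{M})$ from the Verdier quotient, and one checks that the composite $\mcl{M} = R_K\mcl{M} \lra Ho(\mcl{M}) \lra Ho(\mcl{M})/L$ satisfies the universal property of $R_K\mcl{M} \lra Ho(R_K\mcl{M})$ — any functor out of $R_K\mcl{M}$ inverting its weak equivalences factors through $Ho(\mcl{M})$ since those weak equivalences are in particular weak equivalences of $\mcl{M}$, and then through $Ho(\mcl{M})/L$ since the additional inverted maps are precisely those with cone in $L = \ker(\mathbb{R})$, each representable as a zig-zag of $R_K\mcl{M}$-weak equivalences. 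By uniqueness of the two intermediate factorizations, $J$ is an equivalence. Finally, invoking \cite[Proposition 4.12.1]{krause2008localization} as in Remark \ref{triangle stuff} confirms $Ho(\mcl{M})/L \cong \operatorname{im}(\widehat{l})$ is genuinely triangulated and $J$ is an exact equivalence.

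The main obstacle I anticipate is the kernel identification in the second paragraph: one must be scrupulous that ``$Y \cong 0$ in $Ho(R_K\mcl{M})$'' unwinds correctly through the right Bousfield localization (where the weak equivalences are the $K$-coequivalences and the colocal objects are the cofibrant-fibrant ones), and that Lemma \ref{weak eq loc} applies with $X = 0$ to yield the Hom-vanishing characterization — in particular that passing from acyclicity of $\mathbb{R}\uhom$ in all degrees to the stated $\Hom$-vanishing genuinely uses only the stability hypothesis on $K$ and not cofibrancy of $Y$ or any boundedness. Everything else is a transcription of the colocalization formalism already assembled in Remark \ref{triangle stuff} and Proposition \ref{stab}.
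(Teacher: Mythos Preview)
Your proposal is correct and follows essentially the same approach as the paper's own proof. The paper's argument is terser --- it invokes stability via \cite[Theorem 5.9]{barnes2014stable}, then immediately reduces (implicitly via Remark \ref{triangle stuff}) to the kernel identification $\ker(\mathbb{R}I) = K^{\perp}$, which it verifies by the same computation you outline: $l$ becomes trivial in $Ho(R_K\mcl{M})$ iff $\rdh(k,l)\simeq 0$ for all $k\in K$, and stability of $K$ under suspension collapses the vanishing of all $H^n(\rdh(k,l))$ to the vanishing of $H^0(\rdh(k,l))\cong \Hom_{Ho(\mcl{M})}(k,l)$ for all $k\in K$.
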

\begin{proof}
Firstly, note that $R_{K}\mcl{M}$ is stable by \cite[Theorem 5.9]{barnes2014stable}. It is enough to check that an object $l$ is in $ker(\mathbb{R}I) $ if and only if $Hom_{D(\scr{A})}(\scr{K}, \scr{L} ) = 0$ for all $k \in K $. An object $l$ becomes trivial after applying $\mathbb{R}I$ if and only if $\rdh(k, \mathbb{R}I l ) = \rdh(k, l) $ is quasi-isomorphic to zero for all $k \in K$. Note that 

\begin{equation*}
    H^{n} ((\rdh(k, l)) \cong H^{0} (\rdh(\Sigma^{n}k,  l ))
\end{equation*}
and since $K$ is closed under suspension we have that, 
\begin{equation*}
    H^{n} (\rdh(k, l)) \cong 0  
\end{equation*}
if and only if 
\begin{equation*}
    H^{0} (\rdh(k, l)) \cong 0  
\end{equation*}
for all $k \in K $. The result follows by recalling that 

\begin{equation*}
    H^{0} (\rdh(k, l)) \cong Hom_{Ho(\mcl{M})}(k, l)
\end{equation*}

\end{proof}

\section{Applications to Modules and Presheaves}

\subsection{Localising dg presheaves}
Let $X$ be a paracompact, Hausdorff topological space and let $*$ denote the one point topological space.  As in the previous sections we assume that $\scr{R}$ is a presheaf of dg $k$-algebras on $X$ and $\scr{A}$ is a presheaf of non-negatively graded dg $\scr{R}$-algebras. We assume there exists a stalk-wise quasi-isomorphism $\scr{R} \lra \scr{A}$.   Let $A$ denote the dg algebra $\scr{A}(X)$ and $p$ denote the map $p: (\scr{A}, X) \lra (A, *)$ of dg ringed spaces. Then the usual inverse image and direct image functors form a dg adjunction $p^{*}: A-Mod \leftrightarrows PMod(\scr{A}): p_{*}$. In this case $p_{*}$ is simply the global sections functor and $p^{*}$ takes a dg $A$-module $M$ to the presheaf $U \mapsto M \ot_{A} \scr{A}(U)$.   The category of dg $A$-modules is assumed to be endowed with the model structure of second kind. 

\begin{proposition}\label{og}
  The adjunction  $p^{*}: A-Mod \leftrightarrows PMod^{G}(\scr{A}): p_{*}$ is Quillen.
\end{proposition}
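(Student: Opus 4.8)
The plan is to verify that $p^{*}$ is a left Quillen functor by checking it preserves generating cofibrations and generating acyclic cofibrations, using the cofibrant generation of the model structure of second kind established in Theorem \ref{theorem GL}. Recall that the model structure of second kind on $A\text{-}Mod$ is transferred along $G \dashv F$ from the coderived model structure on $\widehat{B}A\text{-}Comod$; consequently its generating cofibrations are $F'(I)$ and its generating acyclic cofibrations are $F'(J)$, where $F' = -\ot^{\tau} A$ and $I, J$ are the generating (acyclic) cofibrations for $\widehat{B}A\text{-}Comod$ described in Theorem \ref{comod model structure} — namely injective maps between finite-dimensional comodules, with absolutely acyclic cokernel in the acyclic case. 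So it suffices to show that $p^{*}$ sends each such map to a cofibration (resp. acyclic cofibration) in $PMod^{G}(\scr{A})$.

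First I would unwind what $p^{*}$ does to a twisted module. A generating cofibration of the second kind is, up to the equivalence, a morphism of finitely generated twisted $A$-modules $(V\ot A)^{[x]} \lra (W\ot A)^{[y]}$ which is a split injection of underlying graded $A$-modules; applying $p^{*} = -\ot_{A}\scr{A}$ gives the presheaf morphism $U \mapsto \big((V\ot\scr{A}(U))^{[x]} \lra (W\ot\scr{A}(U))^{[y]}\big)$, which is still a degree-wise split injection of graded $\scr{A}(U)$-modules (splitting is preserved by base change), with cokernel the presheaf $U \mapsto (W/V)\ot\scr{A}(U)$, a twisted $\scr{A}$-module presheaf. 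To conclude this is a cofibration in $PMod^{G}(\scr{A})$ I need the cokernel to be cofibrant in the global projective model structure; this holds because twisted $\scr{A}$-module presheaves of the form $Z \ot \scr{A}$ are built from the generating cofibrations of the transferred global projective structure on $PMod(\scr{A})$ (tensoring the free presheaf $\scr{A}$-modules with the cofibrant complexes $S^{n-1}\to D^{n}$) and a twisting differential does not affect cofibrancy, since cofibrancy of a twisted module depends only on the underlying graded module. For the generating acyclic cofibrations, the additional input is that the cokernel $(Z\ot\scr{A})$ — where $Z$ now has an absolutely acyclic twisting — becomes, section-wise, an absolutely acyclic (hence acyclic) complex of $\scr{A}(U)$-modules after tensoring, because $\scr{A}(U)$ is flat over $k$ and absolute acyclicity is generated by totalizations of short exact sequences, which base change preserves; thus $p^{*}$ of a generating acyclic cofibration is a degree-wise split injection with section-wise acyclic cokernel, i.e. an acyclic cofibration in $PMod^{G}(\scr{A})$, whose weak equivalences and fibrations are defined section-wise.

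The main obstacle I anticipate is the bookkeeping around the transferred generating sets: one must be careful that "cofibrant cokernel in $PMod^{G}(\scr{A})$" really does follow for the image of an arbitrary generating cofibration of the second kind, since those generators are maps of comodules pushed through $F'$, not literally maps of twisted modules, so I would either argue directly that $F'$ applied to an injection of finite-dimensional comodules yields a split injection of twisted modules with twisted-module cokernel (using that over a field every injection of graded vector spaces splits and $F' = -\ot^{\tau}A$ produces free graded $A$-modules), and then transport the cofibrancy argument above, or alternatively invoke that $p^{*}$, being a composite of the left adjoints $-\ot_{A}\scr{A}$ with the identity $PMod(\scr{A})\to PMod^{G}(\scr{A})$ run backwards, and appeal to Theorem \ref{right transfer} to identify the generators explicitly. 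A cleaner route, which I would present as the actual proof, is to observe directly that $p_{*}$ (global sections) sends section-wise surjections to surjections and section-wise quasi-isomorphisms to quasi-isomorphisms — but the weak equivalences of the second kind on $A\text{-}Mod$ are \emph{not} quasi-isomorphisms, so this naive adjoint-side check fails, which is precisely why the generating-cofibration approach on the left is the right one; the subtlety is therefore genuine and lies in handling the second-kind generators, not in any hard homotopical input.
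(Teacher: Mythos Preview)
Your approach via generating (acyclic) cofibrations is workable, but the paper takes precisely the ``naive adjoint-side'' route you dismiss, and your dismissal rests on a misunderstanding. To show $p_{*}$ is right Quillen one only needs that it preserves fibrations and \emph{acyclic} fibrations; one does not need $p_{*}$ to send section-wise quasi-isomorphisms to second-kind weak equivalences directly. The paper observes that $p_{*}$ clearly preserves surjections, and then for an acyclic fibration $f$ in $PMod^{G}(\scr{A})$ uses the adjunction isomorphism
\[
\uhom_{A}\big(T,\,p_{*}(f)\big)\;\cong\;\uhom_{\scr{A}}\big(p^{*}(T),\,f\big),\qquad T\in TW_{fg}(A).
\]
Since $V$ is finite-dimensional and $\scr{A}$ is non-negatively graded, $p^{*}(T)\cong(\underline{V}\ot_{\underline{k}}\scr{A})^{[x]}$ is bounded below and hence cofibrant in the global projective model structure; as every presheaf is fibrant, the right-hand side is a quasi-isomorphism, which is exactly the second-kind weak-equivalence criterion for $p_{*}(f)$. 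So the only nontrivial input is the cofibrancy of $p^{*}(T)$ for finitely generated twisted $T$ --- a single fact you also need in your argument --- and nothing about the transferred generating sets, absolute acyclicity, or base change of split exact sequences is required.

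Your route does go through once the bookkeeping is done (the key points being that the transferred generators are split injections of finitely generated twisted modules, that their images under $p^{*}$ are bounded below and hence cofibrant, and that absolute acyclicity is preserved section-wise by $-\ot^{\tau}\scr{A}(U)$ since totalizations of short exact sequences over a field stay exact after base change), but it is strictly more laborious. The moral is that when the weak equivalences on the target are \emph{detected} by homming out of a set of objects $T$, checking the right adjoint on acyclic fibrations reduces by adjunction to checking cofibrancy of the images $p^{*}(T)$ --- a much lighter task than tracking generating cofibrations through two transfers.
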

\begin{proof}
It is immediate that $p_{*}$ preserves fibrations. Assume that $f: \scr{F} \lra \scr{G}$ is an acyclic fibration.   Since $T$ is finitely generated,  $p^{*}(T)$ is a bounded below complex of the form $ \underline{V} \ot_{\underline{k}}\scr{A}$ for some finite dimensional vector space V, hence $p^{*}(T)$ is cofibrant (see  \cite[Section 3.2]{choudhury2019homotopy}). In particular,  $\rdh^{G}(p^{*}(T), f)$ is a quasi-isomorphism and since all preshevaes are fibrant, we have that $\uhom(p^{*}(T), f) $ is a quasi-isomorphism for all $T \in TW_{fg}(A)$ and therefore $p^{*}(f)$ is a weak equivalence in $A-Mod$. 
    
\end{proof}
 Let $ K= \{ p^{*}(T) | T\in TW_{fg}(A)\}$ and note that this is a stable set of cofibrant objects in the local and global projective model structure on presheaves of dg $\scr{A}$-modules, hence by Proposition \ref{enriched localisation} we obtain the following results. 

\begin{proposition}\label{localisation of global}
  The right Bousfield localisation of  $ PMod^{G}(\scr{A})$ at $K$ exists and $R_{K}PMod^{G}(\scr{A})$ is a right proper, combinatorial, stable, dg model category.  Let  $f: \scr{F} \lra \scr{G}$ be a morphism of presheaves. Then, 
    \begin{enumerate}[label=(\roman*)]
    \item $f$ is a weak equivalence if and only if the induced map $\uhom_{\scr{A}}(\scr{K}, f)$ is a quasi-isomorphism for all $\scr{K} \in K$, 
    \item $f$ is a fibration if and only if $f(U): \scr{F}(U) \lra \scr{G}(U)$ is surjective for each open set $U \subset X$.
    \end{enumerate}
\end{proposition}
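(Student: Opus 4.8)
\emph{Proof proposal.} The strategy is to check the hypotheses of Proposition~\ref{existance} and Proposition~\ref{enriched localisation} for the pair $(PMod^{G}(\scr{A}),K)$, and then to read off the descriptions of the weak equivalences and fibrations from the formalism of right Bousfield localisation together with Lemma~\ref{weak eq loc}. First I would record that $PMod^{G}(\scr{A})$ is combinatorial, right proper, stable, and a dg model category. Cofibrant generation is part of the transferred structure along $-\ot_{\scr{R}}\scr{A}\dashv J$; the underlying category is locally presentable, being the category of presheaves of dg modules over a presheaf of dg algebras on the small poset of open subsets of $X$, so the model structure is combinatorial in the sense of Definition~\ref{combinatorial def}; right properness holds because every object is fibrant in the global projective structure; and stability together with the dg structure were established in Remark~\ref{stability} and the proposition preceding it.

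Next I would verify that $K=\{p^{*}(T)\mid T\in TW_{fg}(A)\}$ is a stable set of cofibrant objects. It is (equivalent to) a set because $TW_{fg}(A)$ has only a set of isomorphism classes: by Remark~\ref{twist}, a finitely generated twisted $A$-module is determined up to isomorphism by a finite-dimensional graded vector space $V$ and a Maurer--Cartan element of $\underline{End}(V)\ot A$. Each $p^{*}(T)$ is cofibrant in $PMod^{G}(\scr{A})$: just as in the proof of Proposition~\ref{og}, $p^{*}(V\ot A)$ is a bounded-below presheaf of the form $\underline{V}\ot_{\underline{k}}\scr{A}$ with a twisted differential, and such presheaves are cofibrant in the global projective structure by \cite[Section~3.2]{choudhury2019homotopy}. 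Finally $K$ is stable in the sense of Definition~\ref{stable def}: by Remark~\ref{stability} the loop functor on $Ho(PMod^{G}(\scr{A}))$ is a one-step shift, the functor $p^{*}$ is a dg functor and hence commutes with shifts (Proposition~\ref{enriching prop}), and $TW_{fg}(A)$ is closed under shifts, so $\Omega\, p^{*}(T)\cong p^{*}(T[-1])$ again lies in $K$.

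Given this, Proposition~\ref{existance} produces $R_{K}PMod^{G}(\scr{A})$ and records that it is right proper and combinatorial, while Proposition~\ref{enriched localisation} promotes it to a stable dg model category; this proves the first assertion. For part~(i), the weak equivalences of the right Bousfield localisation are the $K$-coequivalences, and by Lemma~\ref{weak eq loc} a map $f$ is a $K$-coequivalence exactly when $\rdh^{G}(\scr{K},f)$ is a quasi-isomorphism for every $\scr{K}\in K$; since each $\scr{K}\in K$ is cofibrant and every presheaf is fibrant in the global projective structure, Remark~\ref{derived hom} identifies $\rdh^{G}(\scr{K},\scr{F})$ with $\uhom_{\scr{A}}(\scr{K},\scr{F})$ up to quasi-isomorphism, which is precisely the stated condition. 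For part~(ii), right Bousfield localisation leaves the class of fibrations unchanged, and the fibrations of $PMod^{G}(\scr{A})$ are exactly the maps $f$ with $f(U)\colon\scr{F}(U)\lra\scr{G}(U)$ surjective for every open $U\subseteq X$.

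The only points that require real care --- and the closest thing to an obstacle --- are (a) that the derived hom detecting the localised weak equivalences coincides with the plain $\uhom_{\scr{A}}$, which is forced by \emph{every} object of the global projective structure being fibrant and by $K$ being a set of cofibrant objects, so that no fibrant or cofibrant replacement intervenes on either variable; and (b) that $p^{*}$ genuinely lands $TW_{fg}(A)$ in cofibrant presheaves and commutes with the loop functor, so that $K$ is simultaneously cofibrant and stable, as Proposition~\ref{enriched localisation} demands. Both are routine here, but they are exactly where the argument would break if one replaced the global projective structure by the \emph{local} one, in which not every object is fibrant.
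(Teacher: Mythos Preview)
Your proposal is correct and follows essentially the same approach as the paper: the paper simply observes that $K$ is a stable set of cofibrant objects in $PMod^{G}(\scr{A})$ and then invokes Proposition~\ref{enriched localisation} (together with the general machinery of right Bousfield localisation) to deduce the result, leaving the verifications implicit. You have spelled out exactly those implicit verifications --- combinatoriality and right properness of $PMod^{G}(\scr{A})$, cofibrancy and stability of $K$, the identification of $K$-coequivalences via Lemma~\ref{weak eq loc}, and the underived computation of $\rdh^{G}$ in the global projective structure --- so your argument is a faithful expansion of the paper's.
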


\begin{proposition}\label{localisation of local}
   The right Bousfield localisation of  $ PMod(\scr{A})$ at $K$ exists and  $R_{K} PMod(\scr{A})$ is a right proper, combinatorial, stable, dg model category. Let $f: \scr{F} \lra \scr{G}$: a morphism of presheaves. Then,
    \begin{enumerate}[label=(\roman*)]
    \item $f$ is a weak equivalence if and only if the induced map $\rdh_{\scr{A}}(\scr{K}, f)$ is a quasi-isomorphism for all $\scr{K} \in K$, 
    \item $f$ is a fibration if and only if $f(U): \scr{F}(U) \lra \scr{G}(U)$ is surjective for each open set $U \subset X$ and the kernel of $f$ is a hypersheaf.
    \end{enumerate}
\end{proposition}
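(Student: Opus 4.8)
The plan is to run the proof of Proposition \ref{localisation of global} with the local model structure of Theorem \ref{Local model Strucutre} in place of the global projective one, and then feed the outcome into Proposition \ref{enriched localisation}. First I would check that $PMod(\scr{A})$ with its local model structure satisfies the hypotheses of Proposition \ref{enriched localisation}. It is cofibrantly generated by Theorem \ref{Local model Strucutre} and locally presentable as a category of presheaves of modules, hence combinatorial; it is right proper (indeed proper) by \cite{choudhury2019homotopy}; it is stable, with loop and suspension given by degree shift, by Remark \ref{stability}; and it is a dg model category by the earlier proposition (proved in \cite{choudhury2019homotopy}) listing $PMod(\scr{A})$ among the dg model categories.

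Next I would verify that $K=\{p^{*}(T)\mid T\in TW_{fg}(A)\}$ is a stable set of cofibrant objects of $PMod(\scr{A})$. Cofibrancy is immediate: by Theorem \ref{Local model Strucutre}(iii) the cofibrations of the local structure are exactly those of $PMod^{G}(\scr{A})$, and $p^{*}(T)\cong\underline{V}\ot_{\underline{k}}\scr{A}$ with $V$ finite dimensional is cofibrant there, as already used in the proof of Proposition \ref{og}. Stability means $K$ is closed under $\Omega$, i.e. under degree shift. Since $p^{*}$ is a dg functor it commutes with tensoring over $k-Mod$ (Proposition \ref{enriching prop}), hence with shifts, and the shift of a finitely generated twisted $A$-module is again one; therefore $\Omega p^{*}(T)\cong p^{*}(T[-1])\in K$, and likewise $\Sigma p^{*}(T)\in K$. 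Applying Proposition \ref{enriched localisation} now yields that $R_{K}PMod(\scr{A})$ exists and is a right proper, combinatorial, stable dg model category.

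It remains to identify the weak equivalences and fibrations. The weak equivalences of $R_{K}PMod(\scr{A})$ are by construction the $K$-coequivalences, and since $PMod(\scr{A})$ is a stable dg model category and $K$ is stable, Lemma \ref{weak eq loc} identifies these with the morphisms $f$ such that $\rdh_{\scr{A}}(\scr{K},f)$ is a quasi-isomorphism for all $\scr{K}\in K$, where $\rdh_{\scr{A}}$ denotes the right derived hom of the local model structure; this is (i). For (ii), a right Bousfield localisation leaves the class of fibrations unchanged --- the identity $PMod(\scr{A})\to R_{K}PMod(\scr{A})$ is right Quillen by Proposition \ref{existance}, and $R_{K}$ is defined so as to retain the fibrations of the ambient category --- so the fibrations of $R_{K}PMod(\scr{A})$ are precisely those of the local model structure, namely the maps surjective on each open set whose kernel is a hypersheaf, by Theorem \ref{Local model Strucutre}(ii).

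I do not anticipate a genuine obstacle: the argument is formally the same as in the global case, and the only points needing a little care are that passing to the local model structure does not disturb the cofibrancy of the objects of $K$ (local and global cofibrations agree) nor the dg and stability hypotheses of Proposition \ref{enriched localisation} (both hold for the local structure by the cited results), and that the fibrations are identified using the standard fact that right Bousfield localisation preserves fibrations. The slightly nonformal input is therefore the description of the fibrations; everything else is bookkeeping.
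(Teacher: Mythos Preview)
Your proposal is correct and follows exactly the paper's approach: the paper simply records that $K$ is a stable set of cofibrant objects in the local model structure and then invokes Proposition~\ref{enriched localisation} (together with Proposition~\ref{existance} for existence, right properness, and combinatoriality), with the description of weak equivalences coming from Lemma~\ref{weak eq loc} and fibrations unchanged by right Bousfield localisation. You have merely spelled out the verifications that the paper leaves implicit.
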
 

\begin{proposition}\label{localisation of proj}
Let $L = \{ J ( (p^{*}(T))^{fib})^{cof}| T \in TW_{fg}(A)\}$. Then the right Bousfield localisation of  $ PMod(\scr{R})$ at $L$ exists and  $R_{L} PMod(\scr{R})$ is a right proper, combinatorial, stable, dg model category. Let  $f: \scr{F} \lra \scr{G}$ be a morphism of presheaves. Then, 
    \begin{enumerate}[label=(\roman*)]
    \item $f$ is a weak equivalence if and only if the induced map $\rdh_{\scr{R}}(\scr{L}, f)$ is a quasi-isomorphism for all $\scr{L} \in L$,
    \item $f$ is a fibration if and only if $f(U): \scr{F}(U) \lra \scr{G}(U)$ is surjective for each open set $U \subset X$ and kernel of $f$ is a hypersheaf. 
\end{enumerate}
\end{proposition}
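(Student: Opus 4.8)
The plan is to run the same argument used for Propositions \ref{localisation of global} and \ref{localisation of local}: exhibit $L$ as a stable set of cofibrant objects in the local model structure on $PMod(\scr{R})$ and then invoke Propositions \ref{existance} and \ref{enriched localisation}. Recall that $PMod(\scr{R})$ with the local model structure is combinatorial (it is cofibrantly generated by Theorem \ref{Local model Strucutre} on a locally presentable category), right proper, stable (Remark \ref{stability}), and a dg model category. Hence Proposition \ref{existance} already yields that $R_{L}PMod(\scr{R})$ exists and is right proper and combinatorial for \emph{any} set $L$, with the identity functor right Quillen; and once we know $L$ is a stable set of cofibrant objects, Proposition \ref{enriched localisation} upgrades $R_{L}PMod(\scr{R})$ to a stable dg model category.

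So the real content is checking that $L=\{J((p^{*}(T))^{fib})^{cof}\mid T\in TW_{fg}(A)\}$ is a stable set of cofibrant objects. Cofibrancy is immediate, since each element is by construction a cofibrant replacement in $PMod(\scr{R})$. For stability I would argue as follows. By Remark \ref{stability} the functors $\Omega$ and $\Sigma$ on presheaf categories are just the degree shifts; the forgetful functor $J$ is the identity on underlying graded presheaves, so it commutes with shifts strictly, and shifts preserve hypersheaves, fibrant replacements, cofibrant objects, and cofibrant replacements. Combining this with the observation already used for $K$ before Proposition \ref{localisation of global} — namely that $p^{*}$ commutes with shift and that $TW_{fg}(A)$ is closed under shift (the shift of $V\otimes A$ being $V[1]\otimes A$) — one sees that for $T\in TW_{fg}(A)$ the object $\Omega^{n}\big(J((p^{*}(T))^{fib})^{cof}\big)$ is weakly equivalent to $J((p^{*}(T'))^{fib})^{cof}$ for a suitable shift $T'$ of $T$ with $T'\in TW_{fg}(A)$, and likewise for $\Sigma^{n}$. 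Thus the closure $\widetilde{L}$ of $L$ under $\Omega$ and $\Sigma$ is again a set of cofibrant objects, each of which is weakly equivalent to an element of $L$. Since right Bousfield localisation, and the associated notions of $K$-coequivalence and $K$-colocal object, depend on the localising set only through its weak equivalence classes, we have $R_{L}PMod(\scr{R})=R_{\widetilde{L}}PMod(\scr{R})$, so we may apply Proposition \ref{enriched localisation} to the genuinely stable set $\widetilde{L}$ and conclude that $R_{L}PMod(\scr{R})$ is a right proper, combinatorial, stable, dg model category.

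It then remains to identify the weak equivalences and fibrations, exactly as in Propositions \ref{localisation of global} and \ref{localisation of local}. The fibrations of $R_{L}PMod(\scr{R})$ coincide with those of the local model structure, which by Theorem \ref{Local model Strucutre} are precisely the morphisms $f$ such that $f(U)$ is surjective for every open $U\subset X$ and $\ker f$ is a hypersheaf; this gives (ii). For (i), the weak equivalences of the localisation are the $L$-coequivalences; since $\widetilde{L}$ is a stable set of objects in the stable dg model category $PMod(\scr{R})$, Lemma \ref{weak eq loc} identifies the $\widetilde{L}$-coequivalences with the morphisms $f$ for which $\rdh_{\scr{R}}(\scr{L},f)$ is a quasi-isomorphism for all $\scr{L}\in\widetilde{L}$, and since every object of $\widetilde{L}$ is weakly equivalent to one of $L$ this is in turn equivalent to the same condition for all $\scr{L}\in L$.

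The step I expect to be the main obstacle is establishing the stability of $L$: because its definition bakes in specific fibrant and cofibrant replacements, $L$ is not literally closed under the loop functor, so one cannot directly quote Definition \ref{stable def}. The fix is precisely the weak-equivalence-invariance of right Bousfield localisation together with the fact that $J$, the inverse image $p^{*}$, and the class $TW_{fg}(A)$ all interact strictly (or up to canonical weak equivalence) with shifts; after that, the identification of weak equivalences and fibrations is a routine transcription of the $PMod(\scr{A})$ arguments.
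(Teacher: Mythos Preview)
Your proposal is correct and follows exactly the route the paper takes: the paper's proof of this proposition is a one-line appeal to Proposition~\ref{enriched localisation}, ``analogously to'' Propositions~\ref{localisation of global} and~\ref{localisation of local}. You are in fact more careful than the paper, which silently assumes $L$ is a stable set of cofibrant objects without addressing the replacement issue you flag; your fix via the weak-equivalence-invariance of $L$-coequivalences and the compatibility of $J$, $p^{*}$, and $TW_{fg}(A)$ with shifts is the right way to fill that gap.
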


 \begin{proof}
     Analogously to Proposition \ref{localisation of global} and Proposition \ref{localisation of local},this result follows from Proposition \ref{enriched localisation}. 
 \end{proof}

\begin{proposition}\label{proposition gen eq}
    The adjunction $- \otimes_{\scr{R}} \scr{A}:R_{L}PMod(\scr{R}) \leftrightarrows R_{K}PMod(\scr{A}): J  $ is a Quillen equivalence of dg model categories. 
\end{proposition}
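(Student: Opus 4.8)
The plan is to transport the Quillen equivalence of Proposition~\ref{unlocal qeq} across the two right Bousfield localizations and then verify the criterion of \cite[Corollary~1.3.16]{hovey2007model}. Write $F = - \otimes_{\scr{R}} \scr{A}$ for the left adjoint, and recall from the paragraph preceding Proposition~\ref{localisation of proj} that each $\scr{L}\in L$ has the form $\scr{L} = \big(J(\scr{K}^{fib})\big)^{cof}$ for a unique $\scr{K} = p^{*}(T)\in K$; thus $\scr{L}$ is cofibrant in $PMod(\scr{R})$ and represents $\mathbb{R}J(\scr{K})$ in $Ho(PMod(\scr{R}))$. The key input, which I would establish first, is that for every morphism $f$ of presheaves of $\scr{A}$-modules there is a natural quasi-isomorphism
\begin{equation*}
\rdh_{\scr{R}}(\scr{L}, Jf)\;\simeq\;\rdh_{\scr{A}}(\mathbb{L}F\,\scr{L}, f)\;\simeq\;\rdh_{\scr{A}}(\scr{K}, f).
\end{equation*}
The first isomorphism comes from the dg adjunction $F\dashv J$, using that $J$ (the forgetful functor, which does not change underlying complexes) preserves hypersheaves and local weak equivalences, so that $\mathbb{R}Jf$ is represented by $Jf$; the second isomorphism uses that the derived counit $\mathbb{L}F\,\scr{L} = \mathbb{L}F\,\mathbb{R}J\,\scr{K}\to\scr{K}$ is an isomorphism in $Ho(PMod(\scr{A}))$, since Proposition~\ref{unlocal qeq} is a Quillen equivalence and $\scr{K}$ is cofibrant (cf.\ Proposition~\ref{og}).

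Granting this, $F\dashv J$ is a Quillen adjunction for the localized structures: fibrations are unchanged by both localizations (Propositions~\ref{localisation of local},~\ref{localisation of proj}) and are preserved by $J$, and by the displayed quasi-isomorphism together with the descriptions of weak equivalences in Propositions~\ref{localisation of local}(i) and~\ref{localisation of proj}(i), $J$ carries weak equivalences to weak equivalences, hence in particular trivial fibrations to trivial fibrations. To upgrade this to a Quillen equivalence I would verify the two conditions of \cite[Corollary~1.3.16]{hovey2007model}. The first, that $J$ reflects weak equivalences between fibrant objects, is immediate from the displayed quasi-isomorphism and Proposition~\ref{localisation of local}(i). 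For the second, let $M$ be cofibrant in $R_L PMod(\scr{R})$; since $\mathrm{Id}\colon PMod(\scr{R})\to R_L PMod(\scr{R})$ is right Quillen (Proposition~\ref{existance}), its left adjoint shows every cofibration of $R_L PMod(\scr{R})$ is one of $PMod(\scr{R})$, so $M$ is cofibrant there as well; moreover the fibrant objects, a fibrant replacement functor, and the functor $J$ all agree for $PMod(\scr{A})$ and $R_K PMod(\scr{A})$. Hence the map $M\to J\big((FM)^{fib}\big)$ is literally the derived unit of the \emph{unlocalized} adjunction, which is a local weak equivalence by Proposition~\ref{unlocal qeq} and \cite[Corollary~1.3.16]{hovey2007model}, and therefore a weak equivalence in $R_L PMod(\scr{R})$. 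Thus $F\dashv J$ is a Quillen equivalence; it is one of dg model categories because the adjunction is a dg adjunction and both localizations are dg model categories. One could alternatively deduce the statement from Theorem~\ref{triqot}: $\mathbb{L}F$ is a triangulated equivalence carrying $L$ to $K$, hence $L^{\perp}$ to $K^{\perp}$, so it descends to an equivalence $Ho(PMod(\scr{R}))/L^{\perp}\xrightarrow{\ \sim\ }Ho(PMod(\scr{A}))/K^{\perp}$, which one identifies with the induced functor $Ho(R_L PMod(\scr{R}))\to Ho(R_K PMod(\scr{A}))$.

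The only genuine content is the displayed natural quasi-isomorphism; the rest is formal manipulation of Bousfield localizations and the criteria of \cite[Section~1.3]{hovey2007model}. The points requiring care there are that $J$ preserves hypersheaves and local weak equivalences (so $\rdh$ is computed compatibly on both sides and $\mathbb{R}Jf\simeq Jf$), that $\scr{K}=p^{*}(T)$ is cofibrant in $PMod(\scr{A})$ as already used in Proposition~\ref{og}, and that the derived counit of the Quillen equivalence of Proposition~\ref{unlocal qeq} is invertible; once these are secured, the dg adjunction $F\dashv J$ and the $\rdh$-descriptions of the localized weak equivalences close the argument.
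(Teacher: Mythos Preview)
Your proposal is correct and follows essentially the same route as the paper: both arguments verify the Quillen adjunction and then apply \cite[Corollary~1.3.16]{hovey2007model}, deducing the derived-unit condition and the reflection condition from the unlocalized Quillen equivalence of Proposition~\ref{unlocal qeq} together with the adjunction identity $\rdh_{\scr{R}}(\scr{L},J(-))\simeq\rdh_{\scr{A}}(\scr{K},-)$. The only noteworthy difference is in establishing the Quillen adjunction: the paper checks that the \emph{left} adjoint preserves cofibrations between cofibrant objects via \cite[Corollary~A.2]{dugger2001replacing} and \cite[Proposition~3.3.16]{hirschhorn2003model}, whereas you bypass this by showing directly that $J$ preserves all $K$-coequivalences (hence trivial fibrations) using your displayed quasi-isomorphism --- a slightly more economical packaging of the same ingredients.
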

\begin{proof}
It is clear that $J$ preserves fibrations. By \cite[Corollary A.2 ]{dugger2001replacing}, it is sufficient to check that $- \otimes_{\scr{R}} \scr{A}$ preserves cofibrations between cofibrant presheaves. In order to avoid confusion we refer to cofibrations in $ R_{L}PMod(\scr{R}) $ as  $L$-colocal cofibrations and cofibrations in $PMod(\scr{R}) $ as cofibrations.  In particular, we must show that $- \otimes_{\scr{R}} \scr{A}$ preserves $L$-colocal cofibrations between cofibrant $L$-coclocal presheaves. However, by \cite[Proposition 3.3.16 ]{hirschhorn2003model} $L$-colocal cofibrations between cofibrant $L$-coclocal presheaves coincide with cofibrations in $PMod(\scr{R}) $ which are preserved by $- \otimes_{\scr{R}} \scr{A}$.

Let $\scr{H}$ be a cofibrant $L$-colocal presheaf, then the derived unit $\scr{H} \lra \mathbb{R}J(\scr{H} \ot_{\scr{R}} \scr{A})$ is a local weak equivalence. In particular it is a weak equivalence in $R_{L}PMod(\scr{R})$, since the adjunction

\begin{equation}\label{prop 4.5 eq}
    - \otimes_{\scr{R}} \scr{A}:PMod(\scr{R}) \leftrightarrows PMod(\scr{A}): J  
\end{equation}

is a Quillen equivalence.

By \cite[Corollary 1.3.16. ]{hovey2007model}, to conclude the proof it is enough to check that $J$ reflects weak equivalences between fibrant presheaves. Suppose that $f: \scr{F} \lra \scr{G}$ is a morphism between fibrant presheaves such that $J(f)$ is a weak equivalence. Then for all $\scr{K} \in K$ the induced map 

\begin{equation*}
    \rdh_{\scr{R}}(J(\scr{K}^{fib})^{cof}, J(\scr{F})) \lra  \rdh_{\scr{R}}(J((\scr{K})^{fib})^{cof}, J(\scr{G}))
\end{equation*}
is a quasi-isomorphism. Let $\mathbb{L}$ denote the left derived functor of   $- \otimes_{\scr{R}} \scr{A}:PMod(\scr{R}) \lra PMod(\scr{A})$. Since $\scr{F}$  and $\scr{G}$ are fibrant, passing across the adjunction we obtain a quasi-isomorphism

\begin{equation*}
    \rdh_{\scr{A}}(\mathbb{L}(\mathbb{R}J(\scr{K}), \scr{F})) \lra  \rdh_{\scr{A}}(\mathbb{L} (\mathbb{R}J(\scr{K})), \scr{G}),
\end{equation*}
which is a quasi-isomorphism since $\scr{F}$  and $\scr{G}$ are fibrant. The result follows from the fact that the derived counit of the adjunction (\ref{prop 4.5 eq}) is a local weak equivalence.

\end{proof}

\begin{remark}\label{Godement remark}
    For each $\scr{F}$  in $PMod(\scr{A})$ we can functorially assign a coaugmented cosimplicial presheaf $G^{\bullet}(\scr{F})$ which is defined by taking the Godement resolution  of $\scr{F}$ (see, e.g. \cite[Section 2.1]{demailly1997complex}). More explicitly, let $X_{disc}$ denote the underlying set of $X$ with the discrete topology and let $a: X_{disc} \lra X $ denote the  the continuous map induced by identity. The cosimplicial presheaf $ G^{\bullet}(\scr{F})$ is defined by setting $G^{n}(\scr{F})= (a_{*}a^{*})^{n}(\scr{F})$, where $a_{*}$ and $a^{*}$ denote the inverse direct image functors induced by $a$. Let $G(\scr{F})$ denote the homotopy limit $\mathbb{R}Lim_{\triangle} (G^{\bullet}(\scr{F}))$.  It is shown in \cite[Theorem 6.3]{choudhury2019homotopy} that $G(\scr{F})$ is a hypersheaf and consequently fibrant in the local model structure.

If the category of open sets on $X$ forms a "site of finite type" \cite[Definition 1.31]{morel19991} it is shown  that the morphism $\scr{F} \lra G(\scr{F})$ is a local weak equivalence \cite[Theorem 6.3]{choudhury2019homotopy}. In particular,  the functor $G$ defines a functorial fibrant replacement in the local model structure.  If $X$ has finite topological dimension then this finiteness condition is satisfied for instance if $X$ is a topological manifold of finite dimension (\cite[Section 5.2]{rodriguez2015godement}).  
\end{remark}

\begin{lemma}\label{silly lem}
 Let $\scr{Q}$ be a cofibrant presheaf in $PMod^{G}(\scr{A})$. Then the morphism $\scr{F} \lra G(\scr{F})$ induces a quasi-isomorphism 
 
 \begin{equation*}
     \rdh^{G}(\scr{A}, \scr{F}) \lra \rdh^{G}(\scr{Q}, G(\scr{F}))
 \end{equation*}

\end{lemma}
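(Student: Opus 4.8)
The plan is to reduce the claimed quasi-isomorphism $\rdh^{G}(\scr{A}, \scr{F}) \lra \rdh^{G}(\scr{Q}, G(\scr{F}))$ to two separate observations: first, that replacing $\scr{A}$ by a general cofibrant $\scr{Q}$ in the source does no harm because we are computing global-projective derived Homs out of a cofibrant object; and second, that the map $\scr{F} \lra G(\scr{F})$ is a \emph{global} weak equivalence when tested against cofibrant presheaves, even though a priori it is only a \emph{local} weak equivalence. Wait — that last claim is false in general, so the real content must be that $\rdh^{G}(\scr{Q}, -)$ already only sees the local homotopy type when $\scr{Q}$ lies in the relevant class; more precisely, the Godement resolution $\scr{F}\to G(\scr{F})$ is built from the stalk-level data via $a_*a^*$, and $\rdh^{G}(\scr{A},-)$ is computed by global sections, so one should exploit that $\scr{Q}$ is, up to the global projective model structure, glued from free pieces $\underline{V}\ot_{\underline{k}}\scr{A}$. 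So first I would record that for any cofibrant $\scr{Q}$ in $PMod^{G}(\scr{A})$ and any fibrant (hence arbitrary, since all presheaves are fibrant globally) $\scr{H}$, we have $\rdh^{G}(\scr{Q},\scr{H}) \simeq \uhom(\scr{Q},\scr{H})$.

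Next I would handle the source. Since $\scr{A}$ is itself cofibrant in $PMod^{G}(\scr{A})$ (it is the free module on $\underline{k}$, cf. the argument in Proposition~\ref{og}), and $\scr{Q}$ is cofibrant, $\rdh^{G}(\scr{A},\scr{F}) \cong \uhom(\scr{A},\scr{F}) \cong \scr{F}(X)$, i.e. the source computes global sections. Similarly $\rdh^{G}(\scr{Q}, G(\scr{F})) \cong \uhom(\scr{Q}, G(\scr{F}))$. The map in the statement is then the composite $\uhom(\scr{A},\scr{F}) \to \uhom(\scr{A}, G(\scr{F})) \xleftarrow{\sim} \uhom(\scr{Q}, G(\scr{F}))$, where the second map should be an isomorphism on cohomology for the following reason: the natural map $\scr{A}\to\scr{Q}$ — or rather a comparison map between $\scr{A}$ and $\scr{Q}$ — is a local weak equivalence (here one uses whatever hypothesis on $\scr{Q}$ places it, up to local equivalence, in the span of the free modules; in the intended application $\scr{Q} = (p^*(T))^{cof}$ or similar, which is stalkwise quasi-isomorphic to $\scr{A}^{\oplus}$), and $G(\scr{F})$ is fibrant in the \emph{local} model structure by Remark~\ref{Godement remark}, so $\rdh_{\scr{A}}(-,G(\scr{F})) = \rdh^{G}(-,G(\scr{F}))$ sends local weak equivalences to quasi-isomorphisms. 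Thus I would carry this out in the order: (1) identify both sides with underived enriched Homs using cofibrancy; (2) factor the map through $\uhom(\scr{A},G(\scr{F}))$; (3) show the leg $\scr{F}\to G(\scr{F})$ gives a quasi-isomorphism on $\uhom(\scr{A},-)$ because $G$ is a functorial fibrant replacement in the local structure and $\scr{A}$ is a \emph{local}-cofibrant object with $\rdh^G(\scr A,-)=\rdh(\scr A,-)$ on hypersheaves; (4) show the leg $\scr{Q}\to\scr{A}$ gives a quasi-isomorphism on $\uhom(-,G(\scr{F}))$ by local fibrancy of $G(\scr{F})$.

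The main obstacle, and the step I would spend the most care on, is step~(3)–(4): reconciling the \emph{global} projective derived Hom $\rdh^{G}$ appearing in the statement with the \emph{local} homotopy type that $G(\scr{F})$ represents. The cleanest route is to prove the intermediate fact that for a presheaf $\scr{H}$ which is fibrant in the local model structure (a hypersheaf) and a presheaf $\scr{Q}$ that is cofibrant in the global projective model structure, the canonical map $\rdh(\scr{Q},\scr{H}) \lra \rdh^{G}(\scr{Q},\scr{H})$ is an isomorphism in $D(k)$ — this holds because $\scr{Q}$ being globally cofibrant is in particular locally cofibrant (Theorem~\ref{Local model Strucutre}(iii)), and $\scr H$ is fibrant in both structures, so both sides equal $\uhom(\scr Q,\scr H)$. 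Granting this, the argument closes: $\rdh^{G}(\scr{A},\scr{F}) = \scr F(X)$, and $\rdh^G(\scr Q, G(\scr F)) = \rdh(\scr Q, G(\scr F)) \cong \rdh(\scr A, G(\scr F)) \cong \rdh(\scr A,\scr F)$, the first isomorphism because $\scr Q \simeq \scr A$ locally (the hypothesis on $\scr Q$) and the last because $\scr F\to G(\scr F)$ is a local weak equivalence while $\scr A$ is cofibrant — and $\rdh(\scr A,\scr F)$ in turn agrees with $\rdh^G(\scr A,\scr F)=\scr F(X)$ again by the intermediate fact once one notes $\scr F$ may be replaced by its own hypersheafification without changing either side. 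I would be careful to state exactly which property of $\scr Q$ is being used (namely: globally cofibrant, and locally weakly equivalent to a coproduct of shifts of $\scr A$), since that is what makes the reduction to $\scr A$ legitimate.
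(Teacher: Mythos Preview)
The statement you were given contains a typo: the source of the displayed map should read $\rdh^{G}(\scr{Q},\scr{F})$, not $\rdh^{G}(\scr{A},\scr{F})$. The paper's own proof makes this clear, since it concludes with the quasi-isomorphism
\[
\rdh^{G}_{\scr{A}}(\scr{Q},\scr{F}) \lra \rdh^{G}_{\scr{A}}(\scr{Q},G(\scr{F})),
\]
and nowhere compares $\scr{A}$ with $\scr{Q}$ in the first variable. The intended content of the lemma is simply that for a globally cofibrant $\scr{Q}$, the Godement map $\scr{F}\to G(\scr{F})$ is seen as a quasi-isomorphism by $\rdh^{G}(\scr{Q},-)$.

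Your entire strategy is built around bridging the (nonexistent) gap between $\scr{A}$ and $\scr{Q}$, which forces you to smuggle in the extra hypothesis that $\scr{Q}$ is locally weakly equivalent to a sum of shifts of $\scr{A}$. That hypothesis is not part of the lemma, and without it the statement you are trying to prove is false: take $\scr{Q}=\scr{A}_{U}$ for a proper open $U\subset X$, so that the source is $\scr{F}(X)$ while the target is $G(\scr{F})(U)$. You correctly sensed something was off (``that last claim is false in general''), but the fix is to correct the typo, not to strengthen the hypotheses.

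Once the typo is corrected, the paper's argument is short and quite different from yours. It does not pass through the local model structure or hypersheaf fibrancy at all. Instead it uses the description $G(\scr{F})=\mathbb{R}\mathrm{Lim}_{\triangle}\,G^{\bullet}(\scr{F})$ from Remark~\ref{Godement remark}: since $\rdh^{G}(\scr{Q},-)$ commutes with homotopy limits, one is reduced to showing that each coaugmentation $\scr{F}\to G^{n}(\scr{F})=(a_{*}a^{*})^{n}\scr{F}$ induces a quasi-isomorphism on $\rdh^{G}(\scr{Q},-)$. This is handled levelwise using the cited fact that $a_{*}a^{*}$ preserves stalkwise quasi-isomorphisms; taking the homotopy limit over $\triangle$ then finishes. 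Your route via ``$\rdh^{G}=\rdh$ on hypersheaves'' would also recover the corrected statement, but it is more indirect and uses more machinery than the cosimplicial argument the paper actually gives.
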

\begin{proof}
  
    Since $G(\scr{F}) = \mathbb{R}Lim_{\triangle} (G^{\bullet}(\scr{F}))$ is a homotopy limit,  it commutes with the right derived hom functor 
    
        \begin{equation*}
        \rdh_{\scr{A}}^{G}(\scr{Q}, G(\scr{F})) \cong \mathbb{R}Lim_{\triangle}( \rdh_{\scr{A}}^{G}(\scr{Q}, G^{n}(\scr{F})). 
    \end{equation*}

 Let $a_{*}$ and $a^{*}$ be as in Remark\ref{Godement remark}. Note that $a_{*}a^{*}$ commutes with stalk-wise quasi-isomorphisms \cite[Theorem 6.3]{choudhury2019homotopy}. Hence, the canonical morphism $\scr{F} \lra G^{n}(\scr{F})$ induces a  quasi-isomorphism

\begin{equation*}
    \rdh_{\scr{A}}^{G}(\scr{Q}, \scr{F}) \lra \rdh_{\scr{A}}^{G}(\scr{Q}, G^{n}(\scr{F})),
\end{equation*}
for all $n$ and any cofibrant presheaf $\scr{Q}$. Consequently,  after taking the homotopy limit of the right hand side, the morphism
 
\begin{equation*}
    \rdh_{\scr{A}}^{G}(\scr{Q}, \scr{F}) \lra \rdh_{\scr{A}}^{G}(\scr{Q}, G(\scr{F}))
\end{equation*} 

is a quasi-isomorphism. 
\end{proof}

\begin{proposition}\label{silly prop}
    Let $Q$ be a set of presheaves in $PMod^{G}(\scr{A})$ satisfying the conditions of  Proposition \ref{enriched localisation}. Then $ R_{Q}PMod^{G}(\scr{A})$  and $R_{Q}PMod(\scr{A})$  are Quillen equivalent dg model categories. 
\end{proposition}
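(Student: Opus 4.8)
The plan is to exhibit the identity functors as a Quillen equivalence $\mathrm{Id}: R_{Q}PMod^{G}(\scr{A}) \leftrightarrows R_{Q}PMod(\scr{A}): \mathrm{Id}$. Since the local model structure is a left Bousfield localisation of the global projective one, the identity is already a Quillen adjunction $PMod^{G}(\scr{A}) \leftrightarrows PMod(\scr{A})$, and both sides are combinatorial, right proper, stable dg model categories, so by Proposition \ref{enriched localisation} the right Bousfield localisations at $Q$ exist on both. The essential observation is that, after localising at $Q$, the global and local weak equivalences coincide; once this is established, the identity adjunction between two model structures on the same underlying category with the same weak equivalences is automatically a Quillen equivalence.

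To see the coincidence of weak equivalences, first note, by the argument of Propositions \ref{localisation of global} and \ref{localisation of local} (i.e. Lemma \ref{weak eq loc} together with the facts that all presheaves are fibrant in $PMod^{G}(\scr{A})$ and that $Q$ consists of cofibrant objects), that $f$ is a weak equivalence in $R_{Q}PMod^{G}(\scr{A})$ iff $\uhom_{\scr{A}}(\scr{Q}, f)$ is a quasi-isomorphism for all $\scr{Q} \in Q$, and a weak equivalence in $R_{Q}PMod(\scr{A})$ iff $\rdh_{\scr{A}}(\scr{Q}, f)$ is a quasi-isomorphism for all $\scr{Q} \in Q$. Cofibrant objects agree in the global projective and local structures, and the Godement functor $G$ of Remark \ref{Godement remark} is a functorial local fibrant replacement, so $\rdh_{\scr{A}}(\scr{Q}, \scr{F}) \cong \uhom_{\scr{A}}(\scr{Q}, G(\scr{F})) \cong \rdh^{G}_{\scr{A}}(\scr{Q}, G(\scr{F}))$ and $\rdh^{G}_{\scr{A}}(\scr{Q}, \scr{F}) \cong \uhom_{\scr{A}}(\scr{Q}, \scr{F})$. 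By Lemma \ref{silly lem} the canonical map $\scr{F} \to G(\scr{F})$ induces a natural quasi-isomorphism $\rdh^{G}_{\scr{A}}(\scr{Q}, \scr{F}) \to \rdh^{G}_{\scr{A}}(\scr{Q}, G(\scr{F}))$, hence a natural quasi-isomorphism $\uhom_{\scr{A}}(\scr{Q}, \scr{F}) \simeq \rdh_{\scr{A}}(\scr{Q}, \scr{F})$ in $\scr{F}$. Functoriality of $G$ and the two-out-of-three axiom then give that $\uhom_{\scr{A}}(\scr{Q}, f)$ is a quasi-isomorphism iff $\rdh_{\scr{A}}(\scr{Q}, f)$ is, for every $\scr{Q} \in Q$; so $R_{Q}PMod^{G}(\scr{A})$ and $R_{Q}PMod(\scr{A})$ have the same class of weak equivalences.

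It remains to check that $\mathrm{Id} \dashv \mathrm{Id}$ stays Quillen: right Bousfield localisation leaves the fibrations unchanged, so a fibration of $R_{Q}PMod(\scr{A})$ is a degreewise surjection with hypersheaf kernel, in particular a degreewise surjection, i.e. a fibration of $R_{Q}PMod^{G}(\scr{A})$; hence the right adjoint $\mathrm{Id}$ preserves fibrations, and since the weak equivalences coincide it preserves acyclic fibrations too. Finally, for a Quillen adjunction between model structures on the same underlying category with the same weak equivalences, a map $\scr{X} \to \scr{Y}$ with $\scr{X}$ cofibrant and $\scr{Y}$ fibrant is a weak equivalence in one structure exactly when it is in the other, which is precisely the criterion for a Quillen equivalence. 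The only genuine work is the weak-equivalence identification in the middle step, whose substance is Lemma \ref{silly lem}; I expect the care needed is in checking that the cofibrant and fibrant replacements on the two sides can be chosen compatibly so that that lemma applies directly.
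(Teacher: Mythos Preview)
Your proposal is correct and follows essentially the same route as the paper: exhibit the identity as a Quillen adjunction, then use Lemma \ref{silly lem} together with the Godement resolution $G$ to show that the $Q$-colocal weak equivalences in the global and local structures coincide, and conclude via the standard criterion for a Quillen equivalence. The paper phrases the middle step as ``$R$ preserves and reflects weak equivalences'' and invokes \cite[Lemma 3.3]{erdal2019model} (the derived unit is the identity on cofibrant objects) rather than directly comparing weak equivalences, but the content is identical.
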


\begin{proof}
  
    Let $L:PMod^{G}(\scr{A}) \lra  PMod(\scr{A}) $ denote the left Quillen identity functor which and let $R: PMod(\scr{A})  \lra PMod^{G}(\scr{A})$ denote the right adjoint. It is immediate that 
    
    \begin{equation*}
        R: R_{Q}PMod(\scr{A}) \lra R_{Q}PMod^{G}(\scr{A})
    \end{equation*}

preserves fibrations. Let $\mathbb{R}$ denote the right derived functor of $R$. Let $\scr{Q} \in Q$ and $f : \scr{F} \lra \scr{G}$ be a morphism of presheaves. Since $\scr{Q}$ is cofibrant,

\begin{equation*}
    \rdh^{G}(\scr{Q}, \scr{F}) \lra \rdh^{G}(\scr{Q}, \scr{G})
\end{equation*}

is a quasi-isomorphism if and only if

\begin{equation*}
    \rdh^{G}(\scr{Q}, G(\scr{F})) \lra \rdh^{G}(\scr{Q}, G( \scr{G}))
\end{equation*}

is a quasi-isomorphism. By Lemma \ref{silly lem}, $ \scr{F}$ is weakly equivalent to $G(\scr{F})$ in $R_{Q}PMod^{G}(\scr{A})$. In particular,

\begin{equation*}
    \rdh^{G}(\scr{Q}, \scr{F}) \lra \rdh^{G}(\scr{Q}, \scr{G})
\end{equation*}

is a quasi-isomorphism if and only if 

\begin{equation*}
    \rdh(\scr{Q}, \scr{F}) \lra \rdh(\scr{Q}, \scr{G})
\end{equation*}

is a quasi-isomorphism. Therefore $$R: R_{Q}PMod(\scr{R}) \lra R_{Q}PMod(\scr{R}) $$ preserves and reflects weak equivalences.  Let $\scr{F}$ be a cofibrant presheaf then unit of the adjunction $F \lra R(L(\scr{F}))$  is the identity. It follows from   \cite[Lemma 3.3]{erdal2019model}, that the adjunction is a Quillen equivalence. 
\end{proof} 

    \begin{proposition}\label{quillen eq 1}
  The adjunction  $p^{*}: A-Mod \leftrightarrows R_{K}PMod(\scr{A}): p_{*}$ is a Quillen equivalence. 
\end{proposition}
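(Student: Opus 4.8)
The plan is to verify the hypotheses of the standard criterion for a Quillen adjunction between two cofibrantly generated (here combinatorial) model categories to be a Quillen equivalence, namely \cite[Corollary 1.3.16]{hovey2007model}: it suffices to show that $p^*$ reflects weak equivalences between cofibrant objects and that for every cofibrant $M$ the derived unit $M \to p_* (p^*M)^{\mathrm{fib}}$ is a weak equivalence in $A\text{-}Mod$. First I would record that the adjunction is Quillen: since $R_K PMod(\scr{A})$ is the right Bousfield localization of $PMod(\scr{A})$, its fibrations and its weak equivalences between fibrant objects agree with those of $PMod(\scr{A})$ at the level we need, and by Proposition~\ref{og} together with the fact that $p_*$ is the global sections functor (hence sends degreewise surjections with hypersheaf kernel to surjections, and in particular preserves fibrations of the localized structure), the pair $p^* \dashv p_*$ is a Quillen adjunction; alternatively one composes the identity left Quillen functor $PMod^G(\scr{A}) \to R_K PMod(\scr{A})$ with the Quillen pair of Proposition~\ref{og} after noting $K$ is the localizing set on both sides.

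Next I would establish that $p^*$ reflects weak equivalences between cofibrant objects. A cofibrant $A$-module $M$ in the model structure of second kind is (a retract of) a twisted module, and more importantly the weak equivalences in $A\text{-}Mod$ are detected by $\uhom((V\ot A)^{[x]}, -)$ for finitely generated twisted modules, by Theorem~\ref{theorem GL}. The key computation is the adjunction isomorphism $\uhom_{\scr{A}}(p^*T, p^*M) \simeq \uhom_A(T, p_* p^* M)$ together with the observation that $p_* p^* M = M \ot_A \scr{A}(X) = M$ since $\scr{A}(X) = A$ — so in fact $\uhom_{\scr{A}}(p^*T, p^* M) \simeq \uhom_A(T, M)$ for $T$ a finitely generated twisted module, and these are precisely the complexes computing whether a map out of $p^*M$ is a $K$-coequivalence (via Lemma~\ref{weak eq loc}, recalling $K = \{p^*(T)\}$). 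Hence $p^*f$ being a weak equivalence in $R_K PMod(\scr{A})$ is equivalent, after passing to fibrant replacements and using that $\rdh$ computes the derived mapping complexes, to $\uhom_A(T, f)$ being a quasi-isomorphism for all finitely generated twisted $T$, which by Theorem~\ref{theorem GL} is exactly $f$ being a weak equivalence in $A\text{-}Mod$. This simultaneously gives reflection \emph{and} will feed into the derived-unit check.

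For the derived unit: given cofibrant $M$, I must show $M \to p_*\big((p^*M)^{\mathrm{fib}}\big)$ is a weak equivalence of $A$-modules, where the fibrant replacement is taken in $R_K PMod(\scr{A})$. Using the adjunction and the Godement-type functorial fibrant replacement $G$ from Remark~\ref{Godement remark} (which is a hypersheaf, hence fibrant in the local structure, and one checks it remains a model for the fibrant replacement in the localized structure via Lemma~\ref{silly lem} and Proposition~\ref{silly prop}), the map on $\uhom_A(T,-)$ becomes the comparison $\uhom_{\scr{A}}(p^*T, p^*M) \to \uhom_{\scr{A}}(p^*T, G(p^*M))$, i.e. $\rdh_{\scr{A}}(p^*T, p^*M) \to \rdh_{\scr{A}}(p^*T, (p^*M)^{\mathrm{fib}})$, which is a quasi-isomorphism by definition of the derived hom and the fact that fibrant replacement in $PMod(\scr{A})$ versus $R_K PMod(\scr{A})$ does not change the derived mapping complex out of objects of $K$ (this is essentially the content of how the localized weak equivalences were described in Proposition~\ref{localisation of local}). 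Since weak equivalences in $A\text{-}Mod$ are detected by these $\uhom_A(T,-)$, the derived unit is a weak equivalence.

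\textbf{Main obstacle.} The routine parts are the adjunction manipulations; the genuinely delicate point is controlling the fibrant replacement $(p^*M)^{\mathrm{fib}}$ in the \emph{localized} model structure $R_K PMod(\scr{A})$ and showing that it does not disturb the derived mapping complexes $\rdh_{\scr{A}}(p^*T, -)$ — equivalently, that a fibrant replacement in $PMod(\scr{A})$ (e.g. the Godement resolution $G$) already serves, up to $K$-coequivalence, as a fibrant replacement in $R_K PMod(\scr{A})$. This requires knowing that the $K$-coequivalences out of $p^*T$ are exactly the local weak equivalences as seen by $\rdh_{\scr{A}}(p^*T,-)$, which is where Lemma~\ref{weak eq loc}, Lemma~\ref{silly lem}, and the explicit description of fibrations in Proposition~\ref{localisation of local} all have to be combined carefully; I would expect to spend most of the write-up making that identification precise and invoking \cite[Corollary 1.3.16]{hovey2007model} cleanly at the end.
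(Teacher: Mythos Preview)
Your approach is correct but substantially longer than necessary, and the ``main obstacle'' you identify is not actually an obstacle at all.

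The paper's proof is three lines. The crucial observation you already make in passing---that $p_*p^*M \cong M$ strictly, since $p^*(M)(X) = M\otimes_A \scr{A}(X) = M\otimes_A A = M$---does all the work. Combined with the adjunction isomorphism $\uhom_{\scr{A}}(p^*T,\scr{F}) \cong \uhom_A(T, p_*\scr{F})$, the characterization of $K$-coequivalences in $R_K PMod^G(\scr{A})$ (Proposition~\ref{localisation of global}) is literally the same data as the characterization of weak equivalences in $A\text{-}Mod$ (Theorem~\ref{theorem GL}). Hence $p_*$ \emph{creates} weak equivalences: it both preserves and reflects them, for arbitrary morphisms, not just between fibrant objects. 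Since $p_*$ also preserves fibrations and the unit is an isomorphism on the nose, the paper invokes \cite[Lemma 3.3]{erdal2019model} and is done. (The passage from $R_K PMod^G(\scr{A})$ to $R_K PMod(\scr{A})$ is handled separately by Proposition~\ref{silly prop} in the zig-zag of Theorem~\ref{manin theorem final}.)

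What this buys over your route: because $p_*$ preserves all weak equivalences, the derived unit $M \to p_*((p^*M)^{\mathrm{fib}})$ factors as the isomorphism $M \cong p_*p^*M$ followed by $p_*$ applied to the fibrant-replacement map $p^*M \to (p^*M)^{\mathrm{fib}}$, which is a weak equivalence. So there is no need to analyse the fibrant replacement, no need for Godement resolutions, and no need to invoke Lemma~\ref{silly lem} or worry about comparing $\rdh$ in the localized versus unlocalized structures. Your argument would go through, but you are working much harder than required; the lesson is that when the right adjoint happens to preserve \emph{all} weak equivalences and the underived unit is already an isomorphism, Hovey's Corollary 1.3.16 is overkill and the simpler ``right adjoint creates weak equivalences'' criterion applies directly.
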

\begin{proof}
    Let $ f: \scr{F} \lra \scr{G} $ be a morphism of presheaves and $T \in TW_{fg}(A)$ then  $f$ is a weak equivalence in $R_{K}PMod(\scr{A})$ if and only if $p_{*}(f)$ is a weak equivalence in $A-Mod$. Clearly, $p_{*}$ preserves fibrations and $p_{*}(p^{*}(M)) \cong M $ thus by \cite[Lemma 3.3]{erdal2019model}  the adjunction is a Quillen equivalence. 
\end{proof}

    \begin{theorem}\label{manin theorem final}
  Let $X$ be a connected, paracompact, Hausdorff topological space and suppose that there exists is  a stalk-wise quasi-isomorphism $\scr{R} \lra \scr{A}.$ Then the adjunctions

   \[
\begin{tikzcd}[row sep=2em]
A-Mod \arrow[r, shift left, "p^{*}"]  &  R_{K} PMod(\scr{A}) \arrow[l, shift left, "p_{*}" ]  \arrow[r, shift right, swap, "J" ] & R_{L}PMod(\scr{R}) \arrow[l, shift right,swap, "-\ot_{\scr{R}} \scr{A}" ]
\end{tikzcd}
\]

  form a zig-zag of Quillen equivalences of dg model categories.   
    \end{theorem}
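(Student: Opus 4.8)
The plan is to recognize this theorem as the concatenation of two Quillen equivalences that have already been established, so that the remaining work consists only in checking that the $k-Mod$-enrichment is carried along. First I would assemble the dg model structures: $A-Mod$ with the model structure of the second kind is a dg model category by the Corollary to Lemma~\ref{Tranferlem}; $R_{K}PMod(\scr{A})$ is a (right proper, combinatorial, stable) dg model category by Proposition~\ref{localisation of local}; and $R_{L}PMod(\scr{R})$ is a dg model category by Proposition~\ref{localisation of proj}. Next I would observe that each of the two adjunctions in the zig-zag is a dg adjunction: $p^{*}\dashv p_{*}$ is the inverse-image/direct-image adjunction of the map of dg ringed spaces $p:(\scr{A},X)\to(A,*)$, which is $k-Mod$-enriched as noted before Proposition~\ref{og}; and $-\ot_{\scr{R}}\scr{A}\dashv J$ is evidently $k$-linear on hom-complexes, with $k$-linear unit and counit, hence a dg adjunction.

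With those ingredients in place the theorem is immediate: Proposition~\ref{quillen eq 1} gives that $p^{*}:A-Mod\leftrightarrows R_{K}PMod(\scr{A}):p_{*}$ is a Quillen equivalence, and Proposition~\ref{proposition gen eq} gives that $-\ot_{\scr{R}}\scr{A}:R_{L}PMod(\scr{R})\leftrightarrows R_{K}PMod(\scr{A}):J$ is a Quillen equivalence of dg model categories; splicing these along $R_{K}PMod(\scr{A})$ yields exactly the displayed diagram. The standing hypotheses on $X$ (connected, paracompact, Hausdorff) and on the morphism $\scr{R}\to\scr{A}$ (a stalk-wise quasi-isomorphism) are precisely those under which Propositions~\ref{quillen eq 1} and~\ref{proposition gen eq} were proved, so nothing further needs to be assumed.

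I do not expect a genuine obstacle at this stage, since all the substance has been discharged in the preceding propositions. If forced to identify where the real difficulty lies, it is inside Proposition~\ref{proposition gen eq}: verifying that $-\ot_{\scr{R}}\scr{A}$ sends cofibrations between cofibrant objects of $R_{L}PMod(\scr{R})$ to cofibrations (using that colocal cofibrations between cofibrant colocal objects coincide with ordinary cofibrations, via \cite[Proposition~3.3.16]{hirschhorn2003model}) and that $J$ reflects weak equivalences between fibrant presheaves, the latter leaning on the Godement/hypersheaf fibrant replacement of Remark~\ref{Godement remark}; together with the identification, in Proposition~\ref{quillen eq 1}, of weak equivalences in $R_{K}PMod(\scr{A})$ with second-kind weak equivalences after applying $p_{*}$. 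None of this needs to be redone here; the present theorem is a bookkeeping consolidation of those results.
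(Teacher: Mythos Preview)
Your proposal is correct in spirit and matches the paper's approach: the theorem is indeed a consolidation of the propositions already proved, and all the substantive work lives in those earlier results. The one structural difference is that the paper's own proof factors through \emph{three} Quillen equivalences rather than two. In the paper, the chain is
\[
A\text{-}Mod \;\leftrightarrows\; R_{K}PMod^{G}(\scr{A}) \;\leftrightarrows\; R_{K}PMod(\scr{A}) \;\leftrightarrows\; R_{L}PMod(\scr{R}),
\]
with the middle identity adjunction supplied by Proposition~\ref{silly prop} (the comparison of the right Bousfield localisations of the global projective and local model structures). This is because the argument for Proposition~\ref{quillen eq 1} actually identifies weak equivalences via the underived $\uhom(\scr{K},-)$, i.e.\ it is most naturally read against the localisation of the \emph{global} projective structure, whereas Proposition~\ref{proposition gen eq} is stated for the localisation of the \emph{local} structure; Proposition~\ref{silly prop} bridges the two. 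Your two-step version collapses this bridge into the citation of Proposition~\ref{quillen eq 1}, which is fine provided one is content that the statement of Proposition~\ref{quillen eq 1} already refers to the local localisation. Either way there is no genuine gap, only a question of how explicitly one records the passage from global to local.
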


    \begin{proof}

We have  the following sequence of adjunctions:

     \[
\begin{tikzcd}[row sep=2em]
A-Mod \arrow[r, shift left, "p^{*}"]  &  R_{K} PMod(\scr{A}) \arrow[l, shift left, "p_{*}" ] \arrow[r, shift left, "L"] &  R_{K} PMod(\scr{A}) \arrow[l, shift left, "R"] \arrow[r, shift right, swap, "J" ] & R_{L}PMod(\scr{R}) \arrow[l, shift right,swap, "-\ot_{\scr{R}} \scr{A}" ]
\end{tikzcd}
\]

The first is a Quillen equivalence by Proposition \ref{quillen eq 1};  the second is a Quillen equivalence by Proposition \ref{silly prop} and the final is a Quillen equivalence by Propositon \ref{proposition gen eq}.

    \end{proof}

In the subsequent sections we apply this result to two specific cases; namely the de Rham algebra and the Dolbeault algebra.  Let us first recall some definitions and results regarding dg sheaves from \cite{chuang2021maurer}. Let $\scr{R}$ be a sheaf of dg $k$-algebras on $X$ and  $\scr{A}$ be a sheaf of dg $\scr{R}$-algebras. Let $\scr{R}- Mod$   denote the category of sheaves of dg $\scr{R}$-modules and  $Mod(\scr{A})$ sheaves of dg $\scr{A}$-modules and let $A= \scr{A}(X)$.  The morphism of ringed spaces $q:(\scr{A}, X) \lra (*, A) $ induces an adjunction 
\begin{equation}\label{sheaf inverse}
    q^{*}: A-Mod \leftrightarrows Mod(\scr{A}): q_{*},
\end{equation}
where $q^{*}$ is the inverse image functor and $q_{*}$ is the direct image functor. We also have an adjunction between the restriction and extension of scalars functors which we denote by  
\begin{equation}\label{sheaf rest}
  - \otimes_{\scr{R}} \scr{A}:Mod(\scr{R}) \leftrightarrows Mod(\scr{A}): U.  
\end{equation}
 
\begin{definition}
We say that $\scr{F} \in \scr{R^{+}}$-Mod is \textit{strictly perfect} if it is bounded and a direct summand of a free sheaf of finite rank in each degree. A dg $\scr{R}$-module $\scr{G}$ is \textit{perfect}
if for every $x \in X$ there is a neighbourhood $U$ such that $\scr{G}_{U}$ is quasi-isomorphic to
a strictly perfect dg sheaf. We say that a perfect sheaf is \textit{globally bounded} if there exist integers $a, b$ and $N$ such that there exists an open cover $\{ U_{i}\}$ such that $\scr{G}_{U_{i}}$ is quasi-isomorphic to a strictly perfect sheaf concentrated in degrees $[a, b]$ and has, at most $N$ generators. We use the notation $D_{perf}(\scr{R})$ and $D^{B}_{perf}(\scr{R})$ to denote the subcategories of $D(\scr{R})$ consisting of perfect and bounded perfect dg sheaves of $\scr{R}$-modules, respectively. 
\end{definition}

\begin{definition}\label{clc def}
    Let $\underline{k}^{+}$ denote the sheafification of the constant presheaf $\underline{k}$ on $X$. We say that a sheaf of dg $\underline{k}^{+}$-vector spaces is \textit{cohomologically locally constant (clc)} if it is locally quasi-isomorphic to a free sheaf of dg $\underline{k}^{+}$-vector spaces. 
\end{definition}

\begin{remark}
Note that the cohomology of clc sheaf of dg $\underline{k}^{+}$-vector spaces is locally constant.  Moreover,  under the assumption that $X$ is locally contractible, the converse also holds true. In particular, for a locally contractible space the derived category of locally free dg $\underline{k}^{+}$-vector spaces is equivalent to the derived category of clc sheaves, see \cite[Lemma 7.12]{chuang2021maurer}. 
\end{remark}

\subsection{The de Rham algebra} \label{de Rham}

Let $X$ be a connected smooth manifold. We take the field $k$ to be $\mathbb{R}$, and denote by $\scr{R} = \ubb $ will denote the constant presheaf on $X$. Let $(\scr{A}, d)=(\Omega, d)$ denote sheaf of de Rham algebras on $X$. By the Poincar\'e Lemma, $\underline{\mathbb{R}}^{+} \lra \Omega $ is a quasi-isomorphism, in particular the assumptions of Theorem \ref{manin theorem final} are satisfied.  We endow the category of dg $\Omega(X)$-modules over the de Rham algebra $\Omega(X)$ with the model structure of second kind. The category of presheaves of dg $\Omega$-modules is endowed with the local model structure, where the sheaf de Rham algebras is treated as a presheaf. Let $K$ denote the set 

\begin{equation*}
    \{ p^{*}(T)| T \in TW_{fg}(\Omega(X)) \}
\end{equation*}
and let $L$ denote the set 

\begin{equation*}
    \{J ((\scr{K})^{fib})^{cof}| \scr{K} \in K\}.
\end{equation*}

The right Bousfield localisation of  $PMod(\Omega)$  at $K$ is denoted by $R_{K}PMod(\Omega)$. Denote the right Bousfield localisation of  $ PMod(\underline{\mathbb{R}})$  at $L$ by $R_{L}PMod(\underline{{\mathbb{R}}})$.  Then by Theorem \ref{manin theorem final} we have the following theorem.

\begin{theorem}\label{zig-zag de Rham}
    Let $X$ be a smooth connected manifold. Then the adjunctions

   \[
\begin{tikzcd}[column sep=large]
\Omega(X)-Mod \arrow[r, shift left, "p^{*}"]  &  R_{K} PMod(\Omega) \arrow[l, shift left, "p_{*}" ]  \arrow[r, shift right, swap, "J" ] & R_{L}PMod(\ubb) \arrow[l, shift right,swap, "-\ot_{\ubb} \Omega" ]
\end{tikzcd}
\]   
form a zig-zag of dg Quillen equivalences.

\end{theorem}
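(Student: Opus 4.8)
The plan is to deduce the statement directly from Theorem~\ref{manin theorem final}, applied with $\scr{R} = \ubb$, $\scr{A} = \Omega$ and $A = \Omega(X)$; essentially all that is required is to check the hypotheses of that theorem in the present setting. First I would record the standing assumptions on the base: a smooth manifold is Hausdorff and paracompact, and $X$ is connected by hypothesis, so $X$ meets the standing requirements on the base space. Next, $\Omega$ is manifestly a presheaf of non-negatively graded dg $\ubb$-algebras, being concentrated in degrees $0, 1, \dots, \dim X$, and $\Omega(X)$ admits an augmentation --- for instance evaluation of functions at a chosen point of $X$, extended by zero on forms of positive degree, which is readily checked to be a morphism of dg algebras splitting the unit --- so that the model structure of the second kind from Theorem~\ref{theorem GL} is genuinely defined on $\Omega(X)$-$Mod$, as implicitly assumed in the statement.

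The only point of substance is the stalk-wise quasi-isomorphism $\ubb \lra \Omega$. For this I would invoke the Poincar\'e Lemma: on any star-shaped open subset $U \subseteq X$ the augmented complex $\mathbb{R} \lra \Omega^{\bullet}(U)$ is acyclic, so taking the stalk at a point $x \in X$ --- a filtered colimit over a neighbourhood basis of such $U$ --- exhibits $\ubb_{x} \cong \mathbb{R}$ as quasi-isomorphic to $\Omega_{x}$. Equivalently, $\ubb^{+} \lra \Omega$ is a quasi-isomorphism of sheaves, since sheafification preserves stalks; this is the classical fact that $\Omega$ is a soft resolution of $\ubb^{+}$, already invoked in the Example following Proposition~\ref{unlocal qeq}.

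With the hypotheses in place, and observing that the sets $K$ and $L$ in the statement are by construction exactly those appearing in Theorem~\ref{manin theorem final} for this choice of $\scr{R}$ and $\scr{A}$, the asserted zig-zag of dg Quillen equivalences is precisely the conclusion of that theorem. I do not anticipate any genuine obstacle: the analytic input is confined to the Poincar\'e Lemma, while the entire homotopy-theoretic apparatus --- existence of the right Bousfield localisations $R_{K}PMod(\Omega)$ and $R_{L}PMod(\ubb)$, and the Quillen equivalences $p^{*} \dashv p_{*}$, the identity adjunction between global and local structures, and $- \ot_{\ubb} \Omega \dashv J$ --- has already been assembled in the proof of Theorem~\ref{manin theorem final} and its supporting propositions.
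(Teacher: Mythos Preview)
Your proposal is correct and follows exactly the paper's approach: the theorem is stated immediately after verifying, via the Poincar\'e Lemma, that the map $\ubb \to \Omega$ is a stalk-wise quasi-isomorphism, so that the hypotheses of Theorem~\ref{manin theorem final} are met for $\scr{R} = \ubb$ and $\scr{A} = \Omega$. Your additional care in recording the paracompactness and Hausdorffness of $X$ and exhibiting an explicit augmentation on $\Omega(X)$ is welcome, since the paper leaves these points implicit.
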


Using the same notation as in equations (\ref{sheaf inverse})  and (\ref{sheaf rest}), define
\begin{equation*}
    F=  U\circ q^{*}: TW_{perf}(\Omega(X)) \lra Mod(\ubb^{+}). 
\end{equation*}
According to \cite[Tehorem 8.1]{chuang2021maurer},  the functor $F$ induces an equivalence
\begin{equation*}
   H^{0}(TW_{perf}(\Omega(X))) \lra D_{perf}(\underline{\mathbb{R}}^{+}). 
\end{equation*}
As highlighted in  \cite[Remark 8.1]{chuang2021maurer}, perfect twisted modules over the de Rham algebra correspond to dg vector bundles equipped with flat  $\mathbb{Z}$-graded connections. Furthermore, perfect dg $\ubb^{+}$-modules are clc sheaves whose cohomology sheaves are locally constant and of finite rank. Consequently, this equivalence can be seen as a natural derived analogue of the classical Riemann-Hilbert correspondence, which relates flat vector bundles to locally constant sheaves. 
 Furthermore, the the functor  $H^{0}(TW_{perf}(X)) \lra D_{perf}(\underline{\mathbb{R}})$ can be represented as the following composition:

 \[
\begin{tikzcd}
H^{0}(TW_{perf}(X)) \arrow[r, "q^{*}"] &  H^{0}(\Omega-Mod) \arrow[r, "Q"] & D(\Omega) \arrow[r, "\mathbb{R}U"] &  D_{perf}(\ubb)
\end{tikzcd}
\]

where $Q$ is the quotient by quasi-isomorphisms. In particular, up to local weak equivalence, we can identify $L$ with the image of $TW_{fg}(\Omega(X))$  under $F$.  Let $\mathfrak{L} $ denote the triangulated subcategory of $D(\underline{\mathbb{R}})$ given by $ ^{\perp}(L^{\perp})$.  Note that this is a thick localising subcategory which contains $D_{perf}(\underline{\mathbb{R}})$.

\begin{corollary}
\label{quotient cor}
The following triangulated categories are equivalent: 
    \begin{enumerate}[label=(\roman*)]
        \item  $D^{II}(\Omega(X))$,
        \item $Ho(R_{L}Mod(\ubb))$,
        \item $\mathfrak{L}$.
    \end{enumerate}  
    Furthermore, the category $\mathfrak{L}$ is the minimal triangulated subcategory containing $L$.
\end{corollary}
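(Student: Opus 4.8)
The strategy is to chain the equivalences already established and then invoke standard facts about (co)localisations of triangulated categories; essentially no new computation is required. \emph{(i) $\simeq$ (ii).} By Theorem~\ref{zig-zag de Rham} there is a zig-zag of Quillen equivalences of dg model categories between $\Omega(X)-Mod$, equipped with the model structure of the second kind, and $R_{L}PMod(\ubb)$; passing to homotopy categories yields an equivalence of categories, and by Theorem~\ref{theorem GL} the homotopy category of $\Omega(X)-Mod$ with this model structure is $D^{II}(\Omega(X))$, so $D^{II}(\Omega(X)) \simeq Ho(R_{L}PMod(\ubb))$. Each step of the zig-zag is a Quillen adjunction between stable dg model categories, hence induces exact functors by Proposition~\ref{stab}(iv), so this is an equivalence of triangulated categories.

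\emph{(ii) $\simeq$ (iii).} Apply Theorem~\ref{triqot} to $\mcl{M} = PMod(\ubb)$ with the local model structure — which is combinatorial, right proper and stable — and to the stable set of cofibrant objects $L$ of Proposition~\ref{localisation of proj}. Since the homotopy category of $PMod(\ubb)$ in the local model structure is $D(\ubb)$, this gives $Ho(R_{L}PMod(\ubb)) \simeq D(\ubb)/L^{\perp}$, the Verdier quotient by the thick triangulated subcategory $L^{\perp}$ of objects $l$ with $\rdh(\scr{L},l) \simeq 0$ for all $\scr{L} \in L$ (equivalently $\Hom_{D(\ubb)}(\scr{L},l)=0$, since $L$ is stable). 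It remains to identify this quotient with $\mathfrak{L} = {}^{\perp}(L^{\perp})$. The existence of the model category $R_{L}PMod(\ubb)$ (Proposition~\ref{localisation of proj}) provides, exactly as in Remark~\ref{triangle stuff}, an exact colocalisation $\widehat{l}$ of $D(\ubb)$: a functorial triangle $\widehat{l}X \lra X \lra \Lambda X$ for each $X$, with $\widehat{l}X \in {}^{\perp}(L^{\perp})$ and $\Lambda X \in L^{\perp}$. Since ${}^{\perp}(L^{\perp})$ and $L^{\perp}$ are orthogonal by construction, these triangles exhibit $(\mathfrak{L},L^{\perp})$ as a semiorthogonal decomposition of $D(\ubb)$, whence the composite $\mathfrak{L} \hookrightarrow D(\ubb) \lra D(\ubb)/L^{\perp}$ is an equivalence; this is the content of \cite[Proposition 4.12.1]{krause2008localization} invoked in Remark~\ref{triangle stuff}. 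Composing with the first step gives (ii) $\simeq$ (iii) and the threefold equivalence.

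\emph{Minimality.} For $X \in \mathfrak{L}$ the map $X \lra \Lambda X$ in the triangle above is zero, because $X \in {}^{\perp}(L^{\perp})$, $\Lambda X \in L^{\perp}$ and $L^{\perp}$ is closed under shifts; hence $\widehat{l}X \lra X$ splits, and the complementary summand, lying in $\mathfrak{L} \cap L^{\perp} = 0$, vanishes, so $X \cong \widehat{l}X$ lies in the essential image of $\widehat{l}$ — a subcategory generated from $L$ under shifts, triangles and coproducts. Thus $\mathfrak{L}$ is contained in the localising subcategory generated by $L$, and since $L^{\perp}$ is closed under shifts, triangles and coproducts the reverse inclusion is immediate; therefore $\mathfrak{L}$ is the smallest localising (equivalently, triangulated and coproduct-closed) subcategory of $D(\ubb)$ containing $L$, which is the asserted minimality.

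The step I expect to be the main obstacle is the identification in \emph{(ii) $\simeq$ (iii)}: upgrading the formal Verdier-quotient description of $Ho(R_{L}PMod(\ubb))$ furnished by Theorem~\ref{triqot} to the explicit orthogonal-complement description ${}^{\perp}(L^{\perp})$. The essential input is that the Bousfield colocalisation of $D(\ubb)$ at $L$ genuinely exists — which is exactly what Proposition~\ref{localisation of proj} secures — so that the recollement-type triangle $\widehat{l}X \to X \to \Lambda X$ is available and the standard semiorthogonal-decomposition argument applies. Everything else is bookkeeping: matching the set called $L$ in the corollary with the one in Proposition~\ref{localisation of proj}, identifying the relevant homotopy categories with the expected stable derived categories, and checking that the comparison functors are exact.
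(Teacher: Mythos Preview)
Your proposal is correct and follows essentially the same route as the paper: both use Theorem~\ref{zig-zag de Rham} for (i)$\simeq$(ii), then pass through the Verdier-quotient description $Ho(R_{L}PMod(\ubb))\simeq D(\ubb)/L^{\perp}$ (Remark~\ref{triangle stuff}/Theorem~\ref{triqot}) and identify this with ${}^{\perp}(L^{\perp})$ via Krause's localisation survey --- the paper cites Propositions~4.9.1 and~4.10.1 there directly, while you unpack the semiorthogonal decomposition explicitly, but this is the same argument.

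One small caution on your minimality paragraph: the assertion that the essential image of $\widehat{l}$ is ``a subcategory generated from $L$ under shifts, triangles and coproducts'' is circular as written, since $\operatorname{im}\widehat{l}=\mathfrak{L}$ is exactly what you are trying to identify with $\langle L\rangle$. The clean fix (implicit in both your account and the paper's terse invocation of Krause~4.10.1) is to note that $L^{\perp}=\langle L\rangle^{\perp}$, and that in a well-generated triangulated category the set-generated localising subcategory $\langle L\rangle$ is coreflective, hence ${}^{\perp}(\langle L\rangle^{\perp})=\langle L\rangle$; combining gives $\mathfrak{L}={}^{\perp}(L^{\perp})=\langle L\rangle$.
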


\begin{proof}
The first two are equivalent by Theorem \ref{zig-zag de Rham}. By Remark \ref{triangle stuff} , 

\begin{align*}
    Ho(R_{L}Mod(\ubb))  \cong  D(\ubb)/L^{\perp}
\end{align*}
and the quotient functor $D(\ubb) \lra D(\ubb)/L^{\perp}$ is given by the right derived functor of the identity $\mathbb{R}I$. Thus by \cite[Proposition 4.9.1.]{krause2008localization} the composition with the inclusion 

\[
\begin{tikzcd}
^{\perp}(L^{\perp}) \arrow[r, "i"]   
& D(\ubb) \arrow[r]
&D(\ubb)/L^{\perp} .
\end{tikzcd}
\]
is an equivalence.
By Remark \ref{triangle stuff}, we can conclude that  $\mathfrak{L} $ is equivalent to the kernel of an exact localisation functor. By \cite[Proposition 4.10.1.  ]{krause2008localization} $^{\perp}(\mathfrak{L}^{\perp}) \cong \mathfrak{L}$, and it follows that  $\mathfrak{L}$ is the minimal triangulated subcategory containing $L$.

\end{proof}

\subsection{The Dolbeault algebra}\label{dol}

In this subsection, we set $k =\mathbb{C}$ and let $X$ be a holomorphic manifold equipped with the sheaf of holomorphic functions $\scr{O}$. Let $A^{0 *}(X)$ denote the Dolbeault algebra over $X$ and, $\scr{A}^{0*}$ denote the sheaf of holomorphic forms. Analogous to the previous subsection, we equip $A^{0 *}(X)-Mod$ with the model structure of second kind, while  $PMod(\scr{A}^{0*})$ and $PMod(\scr{O})$  are endowed the local model structure. Recall, that $\scr{A}^{0*}$ provides a fine resolution of the sheaf of holomorphic functions $\scr{O}$, as such $\scr{O} \lra \scr{A}^{0*}$ is a local weak equivalence. As in the previous subsection, let $K$ denote the set

\begin{equation*}
    \{ p^{*}(T)| T \in TW_{fg}(A^{0*}(X)) \},
\end{equation*}
 and $L$ denote the set

\begin{equation*}
    \{J ((\scr{K})^{fib})^{cof}| \scr{K} \in K\}.
\end{equation*}

\begin{theorem}\label{zig-zag dol}
   Let $X$ be a holomorphic manifold. Then the adjunctions

\[
\begin{tikzcd}[column sep=large]
A^{0*}(X)-Mod \arrow[r, shift left, "p^{*}"]  &  R_{K} PMod(\scr{A}^{*0}) \arrow[l, shift left, "p_{*}" ]  \arrow[r, shift right, swap, "J" ] & R_{L}PMod(\scr{O}) \arrow[l, shift right,swap, "-\ot_{\scr{O}} \scr{A}^{*0}" ]
\end{tikzcd}
\]

form a zig-zag of dg Quillen equivalences.

\end{theorem}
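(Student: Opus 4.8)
The plan is to deduce this directly from Theorem \ref{manin theorem final}, applied with $\scr{R} = \scr{O}$ and $\scr{A} = \scr{A}^{0*}$, in exactly the way Theorem \ref{zig-zag de Rham} is obtained in the de Rham case. So essentially all of the work is to check that the Dolbeault data fits the hypotheses of Theorem \ref{manin theorem final}: that $\scr{A}^{0*}$ is a presheaf (indeed sheaf) of non-negatively graded dg $\scr{O}$-algebras, that the canonical map $\scr{O} \lra \scr{A}^{0*}$ is a stalk-wise quasi-isomorphism, that $A := \scr{A}^{0*}(X) = A^{0*}(X)$ is an augmented dg algebra so that the model structure of the second kind (Theorem \ref{theorem GL}) is available on $A^{0*}(X)-Mod$, and that the sets $K$, $L$ and the model structures appearing in the statement are literally those produced by Theorem \ref{manin theorem final}.

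First I would record the structural facts. The Dolbeault sheaf $\scr{A}^{0*} = \bigoplus_{q \geq 0}\scr{A}^{0,q}$ is the sheaf of $(0,q)$-forms, placed in cohomological degree $q$, with differential $\bar\partial$ and wedge product as multiplication; it is concentrated in degrees $0,\dots,\dim_{\mathbb{C}}X$, hence non-negatively graded, and since $\scr{A}^{0,0}$ is the sheaf of complex-valued smooth functions, which contains $\scr{O}$, it is a sheaf of dg $\scr{O}$-algebras. The stalk-wise quasi-isomorphism is the Dolbeault--Grothendieck ($\bar\partial$-Poincar\'e) lemma: on a polydisc the augmented complex $0 \lra \scr{O} \lra \scr{A}^{0,0} \lra \scr{A}^{0,1} \lra \cdots$ (with maps $\bar\partial$) is exact, and since polydiscs form a neighbourhood basis this says precisely that $\scr{O} \lra \scr{A}^{0*}$ is a stalk-wise quasi-isomorphism; equivalently $\scr{A}^{0*}$ is a fine resolution of $\scr{O}$, fineness coming from smooth partitions of unity, which is the Dolbeault analogue of the role of $\Omega$ in Section \ref{de Rham}. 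For the augmentation, evaluation at any chosen basepoint $x_{0}\in X$ — sending $(0,q)$-forms with $q>0$ to $0$ and $(0,0)$-forms to their value at $x_{0}$ — is a morphism of dg algebras $A^{0*}(X)\lra\mathbb{C}$, since $\mathbb{C}$ carries the zero differential and this map kills the image $\bar\partial A^{0,0}\subset A^{0,1}$ of the differential and is plainly multiplicative and unital; thus $A^{0*}(X)$ is augmented and Theorem \ref{theorem GL} applies.

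With these checks in hand, $K = \{p^{*}(T)\mid T\in TW_{fg}(A^{0*}(X))\}$ is, as in Proposition \ref{localisation of global} and Proposition \ref{localisation of local} (compare also Proposition \ref{og}), a stable set of cofibrant objects in both the global projective and the local model structures on $PMod(\scr{A}^{0*})$ — here $p^{*}T$ is a bounded-below complex of the form $\underline{V}\ot\scr{A}^{0*}$ with $V$ finite-dimensional, hence cofibrant — and $L = \{J((\scr{K})^{fib})^{cof}\mid\scr{K}\in K\}$ is the corresponding set in $PMod(\scr{O})$. Endowing $A^{0*}(X)-Mod$ with the model structure of the second kind and $PMod(\scr{A}^{0*})$, $PMod(\scr{O})$ with their local model structures, localised at $K$ and $L$ respectively, every hypothesis of Theorem \ref{manin theorem final} is now satisfied, and that theorem delivers exactly the asserted zig-zag of dg Quillen equivalences.

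The hard part here is not homotopy-theoretic: all of the substance lives in Theorem \ref{manin theorem final}, and this result is a specialization. The only input that genuinely uses complex geometry is the stalk-wise quasi-isomorphism $\scr{O}\lra\scr{A}^{0*}$, i.e.\ the $\bar\partial$-Poincar\'e lemma; the remaining care is bookkeeping — confirming the augmentation and non-negative grading of $A^{0*}(X)$ so that Theorem \ref{theorem GL} (and hence Theorem \ref{manin theorem final}) is applicable, and confirming the standing point-set hypotheses (connected, paracompact, Hausdorff) under which the complex manifold $X$ is being considered, since these are needed both to invoke Theorem \ref{manin theorem final} and to have partitions of unity available for the fineness of $\scr{A}^{0*}$.
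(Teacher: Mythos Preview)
Your proposal is correct and takes essentially the same approach as the paper: both simply invoke Theorem \ref{manin theorem final} once the Dolbeault data is seen to satisfy its hypotheses. Your writeup is actually more thorough than the paper's one-line proof, spelling out the $\bar\partial$-Poincar\'e lemma, the augmentation, and the non-negative grading that the paper leaves implicit.
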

\begin{proof}
    This follows from Theorem \ref{manin theorem final}.
\end{proof}

In \cite[Theorem 8.3]{chuang2021maurer} it is shown that the composition $U \circ q^{*}: A^{0 *}(X)-Mod \lra  Mod(\scr{O})$ induces an equivalence of  categories $TW_{perf}(A^{0 *}(X)-Mod) \cong D^{b}_{perf}(\scr{O}).$ Let $ \mathfrak{Q} $ denote localising triangulated subcategory of $ D(\scr{O})$ defined by $^{\perp}(L^{\perp})$. By the same reasoning as Corollary \ref{quotient cor} we obtain the following result. 

\begin{corollary}\label{main 2}
    Let $\mathfrak{Q}$ denote the closure of $D^{b}_{perf}(\scr{O})$ with respect to all direct sums. The following triangulated categories are equivalent: 
    \begin{enumerate}[label=(\roman*)]
        \item  $D^{II}(\scr{O})$,
        \item $Ho(R_{L}PMod(\scr{O}))$,
        \item $\mathfrak{Q}$.
    \end{enumerate}
    Furthermore, the category $\mathfrak{Q}$ is the minimal triangulated subcategory containing $L$.
\end{corollary}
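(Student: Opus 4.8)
The plan is to re-run the proof of Corollary~\ref{quotient cor} with the Dolbeault data in place of the de~Rham data, and then add the identification of the colocalising subcategory. First, the equivalence of (i) and (ii) is immediate from Theorem~\ref{zig-zag dol}: a zig-zag of Quillen equivalences of dg model categories induces an equivalence of homotopy categories, which are triangulated by Proposition~\ref{stab}(iii). For the equivalence of (ii) with $\mathfrak{Q} := {}^{\perp}(L^{\perp}) \subseteq D(\scr{O})$, I would invoke Remark~\ref{triangle stuff}: since $R_{L}PMod(\scr{O})$ is the right Bousfield localisation of the stable, combinatorial, right proper dg model category $PMod(\scr{O})$ at the stable set of cofibrant objects $L$ (Proposition~\ref{localisation of proj}), we get $Ho(R_{L}PMod(\scr{O})) \cong D(\scr{O})/L^{\perp}$ with localisation functor the right derived identity $\mathbb{R}I$; by \cite[Proposition 4.9.1]{krause2008localization} the composite ${}^{\perp}(L^{\perp}) \hookrightarrow D(\scr{O}) \lra D(\scr{O})/L^{\perp}$ is an equivalence, which gives (ii)$\,\simeq\,\mathfrak{Q}$. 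That $\mathfrak{Q}$ is minimal among triangulated (indeed localising) subcategories containing $L$ then follows exactly as in Corollary~\ref{quotient cor}: by Remark~\ref{triangle stuff} $\mathfrak{Q}$ is the kernel of the exact localisation functor $\mathbb{L}\circ\mathbb{R}$, and by \cite[Proposition 4.10.1]{krause2008localization} ${}^{\perp}(\mathfrak{Q}^{\perp}) \cong \mathfrak{Q}$.

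It remains to identify $\mathfrak{Q}$ with the closure of $D^{b}_{perf}(\scr{O})$ under arbitrary direct sums. Here I would use \cite[Theorem 8.3]{chuang2021maurer}, which says $F = U \circ q^{*}$ induces an equivalence $H^{0}(TW_{perf}(A^{0*}(X))) \simeq D^{b}_{perf}(\scr{O})$, together with the factorisation of $F$ through $H^{0}(A^{0*}(X)-Mod) \lra D(A^{0*}(X)) \lra D(\scr{O})$ (the last functor being $\mathbb{R}U$) and the tracking of the generating set along the zig-zag of Theorem~\ref{zig-zag dol}: this shows that, up to local weak equivalence, $L$ is the image under $F$ of $TW_{fg}(A^{0*}(X))$. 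Since $TW_{perf}(A^{0*}(X))$ is by definition the closure of $TW_{fg}(A^{0*}(X))$ under homotopy retracts (and it is closed under shifts and cones), the thick subcategory of $D(\scr{O})$ generated by $L$ is precisely $D^{b}_{perf}(\scr{O})$. Because $\mathfrak{Q}$ is a localising subcategory that is minimal among those containing $L$, it is the localising subcategory generated by $L$, equivalently by $D^{b}_{perf}(\scr{O})$; using that the objects of $D^{b}_{perf}(\scr{O})$ are compact generators of the compactly generated category $D(\scr{O})$ (so that no further retract- or coproduct-completion beyond direct sums is needed), this localising subcategory coincides with the direct-sum closure of $D^{b}_{perf}(\scr{O})$. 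This also re-proves that $D^{b}_{perf}(\scr{O}) \subseteq \mathfrak{Q}$.

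The \textbf{main obstacle} I anticipate is the bookkeeping in the second paragraph rather than anything conceptual: one must verify that the abstractly defined set $L = \{\, J(((p^{*}(T))^{fib})^{cof}) \mid T \in TW_{fg}(A^{0*}(X))\,\}$, assembled from fibrant–cofibrant replacements transported across the Quillen equivalences of Theorem~\ref{zig-zag dol}, really does match the image of $TW_{fg}(A^{0*}(X))$ under $F = U\circ q^{*}$ in $D(\scr{O})$ up to local weak equivalence; this requires carefully comparing the presheaf-level inverse image $p^{*}$ with the sheaf-level $q^{*}$ and the restriction of scalars $U$, using that $\scr{A}^{0*}$ is a fine resolution of $\scr{O}$ (so that $PMod(\scr{O})$ and $PMod(\scr{A}^{0*})$ have the same local homotopy theory). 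The secondary delicate point is confirming that the localising subcategory generated by $D^{b}_{perf}(\scr{O})$ is literally its closure under direct sums; everything else is formal and identical to the de~Rham case treated in Corollary~\ref{quotient cor}.
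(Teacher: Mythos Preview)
Your first paragraph is exactly the paper's argument: the paper simply writes ``By the same reasoning as Corollary~\ref{quotient cor}'' and leaves it at that, so re-running the (i)$\Leftrightarrow$(ii) step via Theorem~\ref{zig-zag dol} and the (ii)$\Leftrightarrow\mathfrak{Q}$ step via Remark~\ref{triangle stuff} together with \cite[Propositions~4.9.1 and~4.10.1]{krause2008localization} is precisely what is intended.

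Your second paragraph actually goes further than the paper. In the text preceding the corollary the paper defines $\mathfrak{Q}:={}^{\perp}(L^{\perp})$, while in the corollary statement it rephrases $\mathfrak{Q}$ as the closure of $D^{b}_{perf}(\scr{O})$ under direct sums; the paper never explicitly bridges these two descriptions. Your argument---identifying $L$ up to local weak equivalence with $F(TW_{fg}(A^{0*}(X)))$ via the factorisation of $F=U\circ q^{*}$, deducing that the thick closure of $L$ is $D^{b}_{perf}(\scr{O})$, and then passing to the localising closure---is the natural way to fill this in and is correct. The one point to be careful with is your appeal to ``compact generators of the compactly generated category $D(\scr{O})$'': what you actually need, and what you have, is only that the objects of $L$ (equivalently of $D^{b}_{perf}(\scr{O})$) are compact in $\mathfrak{Q}$ itself. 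This follows because $\mathfrak{Q}\simeq D^{II}(A^{0*}(X))$ is compactly generated by $TW_{fg}(A^{0*}(X))$ and the equivalence carries these to $L$; you do not need to invoke compact generation of the ambient $D(\scr{O})$, which is not established in the paper for an arbitrary complex manifold.
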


 \section{The Singular Cochain Algebra}

In this section, we develop singular analogues of the theorems presented in subsections  \ref{de Rham} and \ref{dol}. We assume that $X$ is a connected, locally contactable topological space and that $k$ is a field of characteristic zero.  The normalized singular cochains on $X$  with coefficients in $k$ will be denoted by $C^{*}(X)$. 
Since we treat $C^{*}(X)$ as a pseudo-compact dg algebra, we begin by recalling the relevant information about pseudo-compact algebras and their duality with coalgebras.

\subsection{Contramodules}

A pseudo-compact (dg) $k$-vector space is a cofiltered limit of finite-dimensional (dg) $k$-vector spaces, equipped with the inverse limit topology.  Morphism between pseudo-compact vector spaces are assumed to be continuous with respect to the inverse limit topology. Moreover, the category of pseudo-compact vector spaces possesses the structure of a monoidal category with respect to the completed tensor product. Specifically, given $ V = \varprojlim V_{\alpha}$ and $U= \varprojlim U_{\beta}$ where $V_{\alpha}$ and $U_{\beta}$ are finite-dimensional (dg) vector spaces, the completed tensor product is defined as: 

\begin{equation*}
    V \wot U : = \varprojlim V_{\alpha} \ot U_{\beta}
\end{equation*}
A pseudo-compact (dg) algebra is defined as an associative monoid in the monoidal category of pseudo-compact (dg) vector spaces. Equivalently, a pseudo-compact (dg) algebra can be understood as a pro-object in the category of finite dimensional (dg) algebras. The fundamental theorem of coalgebras allows us to identify the category of (dg) coalgebras with the category of ind-objects in the category of finite dimensional (dg) coalgebras. The anti-equivalence between the category of finite dimensional (dg) coalebras and finite dimensional (dg) algebras given by taking the linear continuous dual induces an equivalence between (dg) coalgebras and the opposite category of pseudo-compact (dg) algebras.

Additionally, we need to define the tensor product of a pseudo-compact vector space $ V = \varprojlim V_{\alpha}$ and a discrete $U$ vector space. This tensor product is given by
\begin{equation*}
    V \wot U : = \varprojlim V_{\alpha} \ot U.
\end{equation*}
A more detailed treatment of pseudo-compact dg algebras can be found in \cite{bergh2010calabi}.

\begin{definition}
    Let $A$ be a pseudo-compact algebra. A \textit{right $A$-contramodule} is a discrete $k$-vector space equipped, $Q$ with a \textit{contraaction map} $\pi_{Q}:Q \widehat{\ot} A \lra Q$ satisfying the usual associativity and unitality conditions.

\end{definition}
Equivalently,  a contramodule over a $k$-coalgebra $C$ can be defined as a vector space with a contraaction map $\pi_{Q}:Hom_{k}(C, Q) \lra Q$ satisfying associativity and unitalitly conditions. This is equivalent to the definition above after dualising $A: = C^{*}$ and noting that $Hom_{k}(C, Q) \cong Q \wot C^{*} $.

Since we do not require the contraaction map $\pi_{Q}:Q \widehat{\ot} A \lra Q$ to be continuous, a contramodule cannot be viewed as a module over a monoid in the symmetric monoidal category of pseudo-compact vector spaces. Since we take the completed tensor product $Q \widehat{\ot} A$ a contramodule cannot be interpreted as a module over a monoid in the symmetric monoidal category of discrete vector spaces. 

Naturally, this definition extends to the dg setting. A dg contramodule, Q over a pseudo-compact dg algebra A is defined as a graded contramodule with a differential $d:Q \lra Q $ of degree $1$, where $d^{2}= 0$ and the contraaction map is a morphism of complexes.  Unless otherwise specified we will always work with right (dg) contramodules.

\begin{definition}
Let $T$ be an $A$-contramodule. We say that $T$ is \textit{free} if $T$ is isomorphic to a contramodule of the form $V \wot A$, where $V$ is a $k$-vector space and the contraction on $V \wot A$ given by the action of $A$ on its self. 
\end{definition}

Free $A$-contramodules are those that arise from the image of the free functor which takes a vector space $V$ to $V \wot A $. For any $A$-contramodule, $Q$ there exists natural isomorphism of $k$-vector spaces:

\begin{equation*}
    \uhom_{A}(V \wot A, Q) \cong \uhom_{k}(V, Q).
\end{equation*}
Additionally, an $A$-contramodule is projective as an object in the category of $A$-contramodules if and only if it is the direct summand of a free contramodule. The category of $A$-contramodules admits both products and direct sums, where products are exact. However, direct sums and cofiltered limits are not exact. Consequently, the category of $A$-contramodules is not Grothendieck but it is a locally presentable abelian category with enough projective objects (see, \cite[Section 1.2]{positselski2021contramodules}).

\begin{definition}
Let $A$ be a pseudo-compact dg algebra. We define a right dg $A$-contramodule, $Q$ to be a $\mathbb{Z}$-graded $A$-contramodule equipped with a degree $1$ derivation, $d: Q \lra Q$ with $d^{2}= 0$ such that the contraaction map $\pi_{Q}: Q \widehat{\ot} A \lra Q$ is a morphism of complexes. 
\end{definition}

From this point onwards we will always work with dg contramodules over a dg pseudo-compact algebra $A$. We denote the category of right dg $A$-modules by $A-Ctmod$, and we denote the full subcategory of $A-Ctmod$ consisting of dg $A$-contramodules that are projective as graded contramodules after forgetting the differential by $A-Ctmod_{proj}$. We will call contramodules of this form \textit{graded projective} contramodules.

\begin{remark}

let $f : A \lra B $ be a morphism of pseudo-compact dg $k$-algebras. Define the \textit{restriction of scalars} functor $R_{f}: B-Ctmod \lra A-Ctmod $ to be the functor which takes a $B$-contramodule $Q$ to the same underlying dg vector space as $Q$ with $A$-contraaction  induced by $f$. The restriction of functor is right adjoint to \textit{extension of scalars} functor which takes an $A$-contramodule $R$ to the coequaliser of the pair of morphisms 

    \begin{equation*}
        R \widehat{\ot} A \widehat{\ot} B \rightrightarrows R \widehat{\ot} B.
    \end{equation*}
One morphism comes from the contraaction of $A$ on $R$ and the other comes from the regarding $B$ as a left $A$-contramodule by restricting along $f$. The $B$-contraaction on $ R\ot_{A} B$ is inherited from regarding $B$ as a contramodule over its self. 

\end{remark}

\begin{definition}

A dg $A$-contramodule is called \textit{contraacyclic}  if it belongs to the minimal
triangulated subcategory of the homotopy category of dg $A$-contramodules containing the total dg $A$-contramodules of exact triples of dg $A$-contamodules and closed under infinite products. 
\end{definition}

\begin{remark}\label{contra remark}
 The contraderived category of dg $A$-contramodules which we will denote by $D^{ct}(A)$ is defined to be the Verdier quotient of the cochain homotopy category by the thick subcategory of contraacyclic dg $A$-contramodules. Let $K(A-Ctmod)$  denote the cochain homotopy category and let $Ac^{ct}(A-Ctmod) \subset K(A-Ctmod) $ denote the thick subcategory of contraacyclic  contramodules. Then the Verdier quotient 

\begin{equation*}
    D^{ct}(A) : =  K(A-Ctmod)/Ac^{ct}(A-Ctmod)
\end{equation*}
is called the contraderived category of dg $A$-contramodules.  Furthermore, there is a triangulated equivalence between the cochain homotopy category of graded projective contramodules and the contraderived category, that is, 

\begin{equation*}
    K( A-Ctmod_{proj})\cong D^{ct}(A). 
\end{equation*}

In addition to above description the contraderived category can be realised the homotopy category of the following model category structure.

We also note that the contraderived category is compactly generated. To see this we recall that the coderived category of dg comodules over a coalgebra is compactly generated by the set of finite dimensional comodules. Then the derived co-contra correspondence implies that the contraderived category of contramodules over a coalgebra is also compactly generated  (See \cite[Proof of section 2.4]{positselski2011two}).

\begin{theorem}\cite[Theorem 8.2 (b)]{positselski2011two}
 There exists a cofibrantly generated model category structure on the category $A-Ctmod$, where a morphism $f: Q \lra R$ is defined to be:

\begin{enumerate}[label=(\roman*)]
\item a weak equivalence if its cone is a contraacyclic $A$-contramodule,
\item a fibration if it is surjective, 
\item a cofibration if it is injective and its cokernel belongs to $A-Ctmod_{proj}$.
 \end{enumerate}
 Moreover, all object in this model structure are fibrant and the cofibrant objects are precisely the objects which belong to $A-Ctmod_{proj}$. 
\end{theorem}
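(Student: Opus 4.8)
Since this is Positselski's theorem, one could simply invoke \cite{positselski2011two}; but the natural self-contained route, and the one dual to the proof of Theorem \ref{comod model structure} on the comodule side, is to exhibit the asserted structure as an abelian (Becker-type) model structure arising from two complete cotorsion pairs, via Hovey's correspondence between abelian model structures and cotorsion pairs. The one genuinely new feature compared with the comodule case is that $A-Ctmod$ is \emph{not} a Grothendieck category --- products are exact but coproducts and cofiltered limits are not --- which is exactly why contraacyclicity is defined using \emph{products} rather than coproducts, and which forces every argument to be run on the projective side rather than the injective side.

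Here is the plan. First I would collect the facts the paper has already assembled: $A-Ctmod$ is a locally presentable abelian category with enough projectives, the free contramodules $V \wot A$ forming a set of projective generators, the underlying category of graded $A$-contramodules also has enough projectives, and the categorical projective objects of $A-Ctmod$ are precisely the contractible graded-projective dg contramodules. Second --- the homological core --- writing $\mcl{W}$ for the class of contraacyclic dg contramodules and $\mcl{GP} := A-Ctmod_{proj}$, I would prove that $(\mcl{GP}, \mcl{W})$ and $(\mcl{GP} \cap \mcl{W}, A-Ctmod)$ are complete cotorsion pairs; completeness of each is obtained by a small-object argument, the pairs being cogenerated by an explicit set of free contramodules $V \wot A$ and maps between them (the contramodule analogues of the generating (acyclic) cofibrations of $k-Mod$), while the perp-equalities amount to the statements that a graded-projective dg contramodule has vanishing $\mathrm{Ext}^{1}$ into every contraacyclic one and conversely, and that a graded-projective contraacyclic dg contramodule is contractible. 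Third I would check that $\mcl{W}$ is a thick subcategory of $A-Ctmod$: closure under retracts is immediate, and the two-out-of-three property reduces to the lemma that in a short exact sequence of dg contramodules with two terms contraacyclic the third is contraacyclic, which is proved using closure of contraacyclics under products and the fact that the totalization of an exact sequence of contraacyclic dg contramodules is contraacyclic. Hovey's theorem then produces a cofibrantly generated model structure whose weak equivalences are the maps with cone in $\mcl{W}$, whose cofibrations are the monomorphisms with cokernel in $\mcl{GP}$, and whose fibrations are the epimorphisms; reading the (trivially) fibrant and cofibrant objects off the two cotorsion pairs yields that every object is fibrant and that the cofibrant objects are exactly those in $A-Ctmod_{proj}$.

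The hard part will be the second step: showing that $(\mcl{GP},\mcl{W})$ is a \emph{complete} cotorsion pair and, inseparably from it, that $\mcl{GP} = {}^{\perp}\mcl{W}$. In the Grothendieck (comodule) setting the analogous statement is handled by the standard ``small cogenerator plus transfinite filtration'' machinery, which leans on exactness of directed colimits; that exactness fails for contramodules, so one must instead build completeness from the exactness of products together with the supply of projective generators, checking by hand that the transfinite constructions stay inside $\mcl{GP}$. The thickness of $\mcl{W}$ in the third step is the other place that needs genuine homological work, but it is the dual of the corresponding fact about coacyclic comodules underlying Theorem \ref{comod model structure}. Everything else --- the lifting and factorization axioms, cofibrant generation, and the identification of the fibrant and cofibrant objects --- is then formal from Hovey's machinery.
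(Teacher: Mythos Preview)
The paper does not give a proof of this theorem: it is stated with the citation to \cite[Theorem 8.2 (b)]{positselski2011two} and nothing more. So there is no ``paper's own proof'' to compare against beyond the bare invocation of Positselski.

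Your self-contained sketch via Hovey's correspondence and cotorsion pairs is a legitimate route and is essentially Becker's later repackaging of Positselski's model structures; the Hovey triple $(\mcl{GP}, \mcl{W}, A\text{-}Ctmod)$ is the right one, and your identification of the hard step --- completeness of $(\mcl{GP},\mcl{W})$ in the absence of exact filtered colimits --- is accurate. It is worth noting, though, that Positselski's original argument in \cite{positselski2011two} does \emph{not} go through cotorsion pairs: he builds the factorizations directly (via explicit free/cofree resolutions coming from the bar construction) and verifies the lifting axioms by hand, with the key homological input being the orthogonality between graded-projective dg contramodules and contraacyclic ones. Your approach is cleaner conceptually and makes cofibrant generation fall out of the machinery, whereas Positselski's is more elementary but requires separate bookkeeping for cofibrant generation. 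One small caution: when you say the cotorsion pairs are ``cogenerated by an explicit set of free contramodules $V\wot A$ and maps between them,'' you are mixing the language of cotorsion pairs (which are generated by sets of \emph{objects}) with that of cofibrant generation (sets of \emph{maps}); keep these separate, and be explicit that local presentability is what makes the small-object argument go through despite the failure of exact directed colimits.
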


\end{remark}
\begin{lemma}\label{monadic lemma 1}
Let $( A-Ctmod)_{fr}$ denote the full subcategory of  $A-Ctmod$ that are free as dg contramodules.   Let $\mcl{C}$ and $\mcl{D}$ be categories  Let $i:( A-Ctmod)_{fr} \lra A-Ctmod$ denote the inclusion of free contramodules into the category of contramodules then any diagram of the form

\[
\begin{tikzcd}
A-Ctmod_{fr} \arrow{r}{F} \arrow{d}{i} & \arrow{d}{H} \mcl{C} \\
A-Ctmod \arrow{r}{G}& \mcl{D}
\end{tikzcd}
\]
admits a lift $A- Ctmod \lra C$.
\end{lemma}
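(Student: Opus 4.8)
The plan is to recognize this as the standard ``canonical presentation'' statement from monad theory. Dualizing $A=C^{*}$ for a dg coalgebra $C$, a dg $A$-contramodule is, essentially by definition, an algebra for the monad $\mathbb{T} = -\,\widehat{\ot}\, A \cong \Hom_{k}(C,-)$ on the category of dg vector spaces (the associativity and unitality conditions on the contraaction are exactly the $\mathbb{T}$-algebra axioms), and the free dg contramodules $V\widehat{\ot}A$ --- the objects of $(A-Ctmod)_{fr}$ --- are exactly the free $\mathbb{T}$-algebras. So $i$ is, up to equivalence, the comparison functor from the Kleisli category of $\mathbb{T}$ into its Eilenberg--Moore category $A-Ctmod$, and the content of the lemma is that every $\mathbb{T}$-algebra is a canonical reflexive coequalizer of free ones. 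I read the abbreviated hypotheses on $\mcl{C}$ and $\mcl{D}$ as asking, at a minimum, that $\mcl{C}$ admit reflexive coequalizers and that $G$ and $H$ preserve them; this is harmless, since in every application $\mcl{C}$ and $\mcl{D}$ are categories of (pre)sheaves or modules and the functors in play are colimit-preserving.

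First I would record the bar presentation: for $Q\in A-Ctmod$ with contraaction $\pi_{Q}$, the diagram
\begin{equation*}
\mathbb{T}\mathbb{T}Q \;\rightrightarrows\; \mathbb{T}Q \;\xrightarrow{\ \pi_{Q}\ }\; Q ,
\end{equation*}
whose parallel pair consists of the monad multiplication $\mu_{Q}$ and $\mathbb{T}\pi_{Q}$, is a coequalizer in $A-Ctmod$; it is reflexive with common section $\mathbb{T}\eta_{Q}$, it is split after the forgetful functor to dg vector spaces, and it is strictly functorial in $Q$. Both $\mathbb{T}\mathbb{T}Q$ and $\mathbb{T}Q$ are free, hence lie in the image of $i$. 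Restricting attention to reflexive coequalizers here is the crux: infinite direct sums and filtered colimits of contramodules are not exact, so one wants to remain inside a class of colimits that behaves well and is preserved by the relevant functors, and reflexive coequalizers are exactly such a class.

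I would then define the lift by
\begin{equation*}
L(Q) \;:=\; \mathrm{coeq}\bigl(\, F(\mathbb{T}\mathbb{T}Q) \rightrightarrows F(\mathbb{T}Q)\,\bigr)\in\mcl{C},
\end{equation*}
the action on morphisms being forced by functoriality of the bar presentation together with the universal property of coequalizers; this yields a functor $L\colon A-Ctmod\lra\mcl{C}$. For the upper triangle, note that when $Q=\mathbb{T}V$ is free its bar presentation carries an extra degeneracy built from the unit of $\mathbb{T}$, hence is a split --- so absolute --- coequalizer, preserved by the arbitrary functor $F$; therefore $L\circ i\cong F$. For the lower triangle, apply $H$ to the defining coequalizer: since $H$ preserves reflexive coequalizers this computes $\mathrm{coeq}\bigl(HF(\mathbb{T}\mathbb{T}Q)\rightrightarrows HF(\mathbb{T}Q)\bigr)$; commutativity of the square identifies $HF$ with $G$ on these (free) terms, and since $G$ also preserves reflexive coequalizers the result is $G(Q)$. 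Hence $H\circ L\cong G$, all naturally in $Q$.

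The main obstacle is not the construction itself but being honest about what $\mcl{C}$, $\mcl{D}$, $G$ and $H$ must satisfy --- precisely, the existence and preservation of reflexive coequalizers --- and verifying that the bar presentation never leaves that class, so that the pathological colimits of contramodules play no role. A secondary point worth spelling out is that the two natural isomorphisms $L\circ i\cong F$ and $H\circ L\cong G$ can be chosen coherently; this follows from the bar presentation being functorial in $Q$ as a whole parallel-pair diagram of degreewise-free contramodules, not merely objectwise.
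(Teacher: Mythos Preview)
Your approach is essentially the paper's: both use monadicity of the free--forgetful adjunction to write an arbitrary contramodule $Q$ as the bar coequalizer $T^{2}Q\rightrightarrows TQ\to Q$ of free objects, and define the lift by pushing this presentation through $F$ and taking the coequalizer in $\mcl{C}$.

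The difference is one of precision. The paper asserts that this coequalizer is \emph{split}, hence absolute, and so needs no hypotheses on $\mcl{C}$, $\mcl{D}$, $G$, $H$ at all. You correctly observe that the splitting section $\eta_{Q}\colon Q\to TQ$ is only a map of underlying dg vector spaces, not a contramodule morphism; in $A\text{-}Ctmod$ the bar fork is merely a \emph{reflexive} coequalizer. You therefore add the mild hypotheses that $\mcl{C}$ admits reflexive coequalizers and that $G$, $H$ preserve them, and reserve the genuinely split (absolute) argument for the upper triangle, where $Q$ is already free and the bar resolution does acquire an extra degeneracy inside $A\text{-}Ctmod$. Your version is the rigorous one; the paper's claim of absoluteness is not literally correct, though the conclusion holds in every application because the functors in play (extension of scalars, inverse image, forgetful functors) preserve all colimits.
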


\begin{proof}
Firstly, note that the free-forgetful adjunction $-\widehat{\ot} A: k-Mod \leftrightarrows A-Ctmod: U $ is monadic and let $T= 
  - \widehat{\ot} A \circ U$.  In particular, Beck's monadicity theorem implies any contramodule, $Q$, can obtained as a split coequaliser 
\begin{equation*}
     Q \cong coeq[T^{2} Q \rightrightarrows T Q  ]. 
 \end{equation*}
Let $Q^{*}$ denote this coequaliser diagram, which can be regarded as a diagram in the category of free contamodules, hence $Q =  \colim  i(Q^{*})$.  Since split coequalisers are absolute colimits any contramodule can be obtained as an absolute colimit of free contramodules.  We now show that $\bar{G}(Q) : = \colim  FQ^{*}$ is the desired lift. Since the colimit is absolute $H(\bar{G}(Q)) \cong  \colim  H (F (Q^{*})) \cong \colim  G(i (Q^{*}) ) \cong G(Q)$ and $F(Q) = \colim  F( i(Q^{*}))  \cong \bar{G}(Q)$.
\end{proof}

\begin{lemma} \label{monadic lemma 2}
Let $\mcl{C}$ be a category and $F, G: A-Ctmod \lra \mcl{C}$ be two functors. If there is a natural transformation $\eta: F \Rightarrow G$ which is an isomorphism when restricted to free contramodules then $\eta$ is an isomorphism for all contramodules. 
\end{lemma}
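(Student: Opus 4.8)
The plan is to reuse the split-coequaliser presentation of a contramodule from the proof of Lemma~\ref{monadic lemma 1}. Recall that the free--forgetful adjunction $-\wot A : k-Mod \leftrightarrows A-Ctmod : U$ is monadic, with monad $T = (-\wot A)\circ U$, so Beck's monadicity theorem presents every dg $A$-contramodule $Q$ as a \emph{split} coequaliser
\begin{equation*}
T^{2}Q \rightrightarrows TQ \lra Q,
\end{equation*}
in which the two parallel arrows --- the monad multiplication $\mu_{Q}$ and the image $T\beta$ of the contraaction $\beta : TQ \lra Q$ --- are morphisms between the free contramodules $T^{2}Q$ and $TQ$. Write $Q^{\ast}$ for this parallel pair, regarded as a diagram in $(A-Ctmod)_{fr}$; its colimit in $A-Ctmod$ is $Q$. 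The key observation is that a split coequaliser is an \emph{absolute} colimit, hence is preserved by every functor out of $A-Ctmod$; in particular no cocompleteness hypothesis on $\mcl{C}$ and no cocontinuity hypothesis on $F$ or $G$ is required.

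First I would apply $F$ and $G$ to $Q^{\ast}$, obtaining coequaliser diagrams $F(T^{2}Q)\rightrightarrows F(TQ)$ with colimit $F(Q)$, and $G(T^{2}Q)\rightrightarrows G(TQ)$ with colimit $G(Q)$. Since $\eta \colon F \Rightarrow G$ is natural on all of $A-Ctmod$ --- in particular with respect to $\mu_{Q}$, $T\beta$ and $\beta$ --- its components fit into a commuting ladder whose top and bottom rows are these two coequaliser diagrams and whose vertical arrows are $\eta_{T^{2}Q}$, $\eta_{TQ}$ and $\eta_{Q}$. By hypothesis $\eta_{TQ}$ and $\eta_{T^{2}Q}$ are isomorphisms, because $TQ$ and $T^{2}Q$ are free. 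Transporting the bottom parallel pair along these isomorphisms exhibits $G(Q)$ as a coequaliser of $F(T^{2}Q)\rightrightarrows F(TQ)$; by uniqueness of coequalisers the resulting comparison map $F(Q)\lra G(Q)$ is an isomorphism, and naturality of $\eta$ identifies this comparison map with $\eta_{Q}$. As $Q$ was arbitrary, $\eta$ is an isomorphism.

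I do not anticipate a genuine obstacle: the one point that deserves care is that the \emph{absoluteness} of the split coequaliser is exactly what lets us dispense with any control over $F$ and $G$ on non-free contramodules, and that we never need $\eta$ to be an isomorphism on the splitting data $Q \lra TQ \lra T^{2}Q$, since those arrows play no role in the universal property of the coequaliser. Equivalently, one can package the argument as in the proof of Lemma~\ref{monadic lemma 1}: the assignments $Q \mapsto \colim F(Q^{\ast})$ and $Q \mapsto \colim G(Q^{\ast})$ compute $F(Q)$ and $G(Q)$ respectively, $\eta$ restricts to a natural isomorphism between the diagrams $F(Q^{\ast})$ and $G(Q^{\ast})$ of free contramodules, and passing to colimits yields the claim.
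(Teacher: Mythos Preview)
Your proof is correct and follows essentially the same approach as the paper: both use the split coequaliser presentation $Q^{*}$ of a contramodule from Lemma~\ref{monadic lemma 1}, apply the natural transformation $\eta$ levelwise to this diagram of free contramodules where it is an isomorphism by hypothesis, and then use absoluteness of split coequalisers to conclude that the induced map on colimits is $\eta_{Q}$ and is an isomorphism. Your write-up is simply more detailed than the paper's two-sentence version.
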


\begin{proof}

  With the notation of the previous proof, we have a natural transformation  $\eta i :  F i\Rightarrow G i $ which gives a natural transformation of diagrams $ F(i(Q^{*}) \lra G(i(Q^{*}))$, the colimit of which is $\eta_{Q}$. Since $\eta i$ is an isomorphism $\eta_{Q}$ must also be an isomorphism. 

\end{proof}

\begin{proposition}
    The category $A-Ctmod$ is tensored and cotensored over the category of dg $k$-vector spaces. 
\end{proposition}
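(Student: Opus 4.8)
The plan is to define the tensor and cotensor on $A\text{-}Ctmod$ directly, mirroring the construction already carried out for $C\text{-}Comod$, and then to verify the defining adjunction isomorphisms by reducing to the case of free contramodules via Lemmas \ref{monadic lemma 1} and \ref{monadic lemma 2}. For the tensor: given a dg $k$-vector space $V$ and a dg $A$-contramodule $Q$, I would first treat the case where $V$ is finite-dimensional, setting $V\ot Q$ to be the discrete dg vector space $V\ot_k Q$ with contraaction induced from that of $Q$ through the canonical isomorphism $(V\ot_k Q)\wot A\cong V\ot_k(Q\wot A)$ (valid because $V$ is finite-dimensional, so the completed tensor commutes with $V\ot_k-$). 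For general $V=\colim_U U$ over its finite-dimensional subspaces, I would set $V\ot Q:=\colim_U(U\ot Q)$, using that $A\text{-}Ctmod$ has direct sums and hence all small colimits. For the cotensor, dually, I would set $[V,Q]:=\uhom_k(V,Q)$ with contraaction built from $\pi_Q$; here one must check that $\uhom_k(V,Q)$ genuinely carries an $A$-contraaction, which is where the contramodule (as opposed to module) formalism is essential — the natural map $\uhom_k(V,Q)\wot A\to \uhom_k(V,Q\wot A)\to\uhom_k(V,Q)$ exists without finiteness hypotheses on $V$, unlike the comodule situation where one had to pass to a limit over finite-dimensional subspaces.

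The key steps, in order, would be: (1) write down the tensor and cotensor functors as above and check they land in $A\text{-}Ctmod$ (i.e.\ the putative contraactions satisfy unitality and associativity); (2) establish the adjunction isomorphism
\[
\uhom_{A}(V\ot Q,\,R)\;\cong\;\uhom_{k}\bigl(V,\,\uhom_{A}(Q,R)\bigr)\;\cong\;\uhom_{A}\bigl(Q,\,[V,R]\bigr)
\]
natural in all three variables; (3) verify these are isomorphisms of dg vector spaces, not merely degree-zero isomorphisms, by tracking the differentials. For step (2), the strategy is to verify the isomorphism first when $Q=W\wot A$ is free, where $\uhom_A(W\wot A,-)\cong\uhom_k(W,-)$ reduces everything to the standard tensor-hom adjunction for $k$-vector spaces, and then to bootstrap to arbitrary $Q$: both sides of the desired isomorphism are functors $A\text{-}Ctmod\to k\text{-}Mod$ (contravariant in $Q$), they agree on free contramodules, and the comparison map is natural, so Lemma \ref{monadic lemma 2} (applied to the opposite category, or to the functors in the covariant variable) forces the isomorphism for all $Q$. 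Alternatively Lemma \ref{monadic lemma 1} lets one \emph{define} the tensor $V\ot Q$ on all of $A\text{-}Ctmod$ from its restriction to free contramodules as an absolute colimit, guaranteeing compatibility automatically.

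The main obstacle I anticipate is step (1) together with the infinite-dimensional subtleties flagged in the text: because the contraaction $\pi_Q\colon Q\wod A\to Q$ is not required to be continuous and the completed tensor product does not commute with arbitrary colimits, one must be careful that $\colim_U(U\ot Q)$ really receives a well-defined contraaction and that this agrees with the naive formula — the issue is that $\bigl(\colim_U(U\ot_k Q)\bigr)\wot A$ is not obviously $\colim_U\bigl((U\ot_k Q)\wot A\bigr)$. The clean way around this is precisely to define the tensor via Lemma \ref{monadic lemma 1}: express $Q$ as the split (hence absolute) coequalizer of free contramodules $T^2Q\rightrightarrows TQ$, declare $V\ot Q:=\colim\,(V\ot T^\bullet Q)$ where on free contramodules $V\ot(W\wot A):=(V\ot_k W)\wot A$, and note that absoluteness of the colimit makes the underlying-vector-space functor, the differential, and the adjunction isomorphism all pass through. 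Once the tensor is pinned down this way, cotensoredness follows because $[V,-]$ is its pointwise right adjoint, constructed by the adjoint functor theorem (using local presentability of $A\text{-}Ctmod$, cf.\ \cite[Section 1.2]{positselski2021contramodules}) or explicitly as $\uhom_k(V,-)$; a final short check confirms the two enriched adjunction isomorphisms of the definition hold.
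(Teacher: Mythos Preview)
Your approach is essentially the same as the paper's: both define the tensor and cotensor explicitly and then verify the adjunction isomorphisms by reducing to free contramodules via Lemmas \ref{monadic lemma 1} and \ref{monadic lemma 2}. The only substantive difference is in the definition of the tensor. The paper simply declares $V\ot^{ct}Q$ to be the ordinary tensor product $V\ot_k Q$ of the underlying dg vector spaces ``with contraaction induced by the right contraaction of $A$ on $Q$'', and then computes on free $Q=W\wot A$ as if $V\ot^{ct}(W\wot A)=(V\ot_k W)\wot A$. Your caution here is warranted: for infinite-dimensional $V$ there is no natural contraaction $(V\ot_k Q)\wot A\to V\ot_k Q$ built from $\pi_Q$, because $V\ot_k-$ does not commute with the completed tensor $-\wot A$ (equivalently, the canonical map $V\ot_k\Hom_k(C,Q)\to\Hom_k(C,V\ot_k Q)$ is not an isomorphism), and indeed $(V\ot_k W)\wot A$ and $V\ot_k(W\wot A)$ differ in general. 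Your remedy---setting $V\ot(W\wot A):=(V\ot_k W)\wot A$ on free contramodules and extending via the absolute split coequaliser, or equivalently taking $\colim_U(U\ot Q)$ in $A\text{-}Ctmod$ over finite-dimensional $U\subset V$---produces the correct representing object and agrees with the paper's formula when $V$ is finite-dimensional (the only case actually needed downstream, via $S^n$ and $D^n$). For the cotensor both you and the paper use the explicit $\uhom_k(V,Q)$, which does carry a contraaction without any finiteness hypothesis, so no extra work is required there.
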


\begin{proof}

Define the tensor $V \ot^{ct} Q $ to be the tensor product of the underlying dg vector spaces with contraaction induced by the right contraaction of $A$ on $Q$. Define the cotensor $[V, Q]_{ct}$ to be the dg vector space $\uhom_{k}(V, Q)$ with contraaction defined as follows. Let $f \in \uhom_{k}(V, Q), \  v \in V$ and $a \in A$ then the contraaction  $\uhom_{k}(V, Q) \widehat{\ot} A \lra \ \uhom_{k}(V, Q)$ is defined by $f(v) \ot a = f(v)a $.  The tensor and cotensor should satisfy the following adjunctions:

\begin{align*}
    \uhom_{A}(V \ot^{ct} Q, R) &\cong \uhom_{k}(V, \uhom_{A}(Q, R)) \\
                            &\cong \uhom_{A}(Q, [V, R]_{ct}). 
\end{align*}
By Lemma \ref{monadic lemma 1} and Lemma \ref{monadic lemma 2} it is enough to show that the adjunctions hold for free contramodules, let assume that $Q = W \wot A $ and $R = Z \wot A$ are free contramodules. Then 
  
\begin{equation*}
    \uhom_{A}(V \ot^{ct} Q, R ) \cong eq[V \ot_{k} W \widehat{\ot } A \widehat{\ot} A \rightrightarrows Z \wot A]
\end{equation*}
computed in dg vector spaces. Then the usual tensor hom adjunction for dg vector spaces implies that 
\begin{align*}
     eq[V \ot_{k} W \wot A \widehat{\ot} A \rightrightarrows Z \wot A ] &\cong eq[ W \wot A\widehat{\ot} A \rightrightarrows \uhom_{k}(V , Z \wot A)]\\
                                                       &\cong \uhom_{A}(Q, [V, R]_{ct}).   
\end{align*}
On the other hand 
\begin{align*}
    \uhom_{k}(V, \uhom_{A}(Q, R)) &\cong eq[\uhom_{k}(V, W \wot A \wot  A) \rightrightarrows \uhom_{k}(V, Z \wot A)]\\
                                   &\cong eq[\uhom_{k}(V, W \wot A \wot  A) \rightrightarrows \uhom_{k}(V, Z \wot A)]\\
                                   &\cong \uhom_{k}(V, \uhom_{A}(W \wot A, R))\\
                                   &\cong \uhom_{k}(V, \uhom_{k}(W, U R))\\
                                   & \cong \uhom_{k}(V \ot W, UR))\\
                                   &\cong \uhom_{A}(V \ot W \wot A, R)\\
                                   & \cong \uhom_{A}(V \ot^{ct} Q, R).
\end{align*}
\end{proof}

\begin{lemma}\label{basechangecontra}
    The restriction and extension of scalars are adjoint and the adjunction is a dg enriched adjunction. 
\end{lemma}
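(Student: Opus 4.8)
I write $E_{f}$ for the extension of scalars functor and recall that $R_{f}$ denotes restriction of scalars along $f\colon A\lra B$. The plan is to establish the adjunction $E_{f}\dashv R_{f}$ directly at the level of the dg Hom-complexes, so that the adjunction and its $k-Mod$-enrichment are produced simultaneously. Fix an $A$-contramodule $R$ and a $B$-contramodule $P$. By definition $E_{f}(R)$ is the coequaliser of the pair $R\wot A\wot B\rightrightarrows R\wot B$, in which the first map is built from the $A$-contraaction $\pi_{R}\colon R\wot A\to R$ and the second from $A\wot B\to B$, $a\ot b\mapsto f(a)b$. Since $\uhom_{B}(-,P)$ carries colimits to limits, this presents $\uhom_{B}(E_{f}(R),P)$ as the equaliser of the induced pair
\begin{equation*}
\uhom_{B}(R\wot B,P)\rightrightarrows\uhom_{B}(R\wot A\wot B,P).
\end{equation*}

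Next I would apply the free--forgetful dg-adjunction $-\wot B\dashv U$ for $B$-contramodules, which gives $\uhom_{B}(V\wot B,P)\cong\uhom_{k}(V,P)$ exactly as recorded for $A$ in the excerpt, noting that $R\wot B$ and $R\wot A\wot B$ are the free $B$-contramodules on $R$ and on $R\wot A$. This rewrites the display as the equaliser of a pair $\uhom_{k}(R,P)\rightrightarrows\uhom_{k}(R\wot A,P)$. A short diagram chase identifies the two maps: one sends $g$ to $g\circ\pi_{R}$, and the other sends $g$ to $\pi_{R_{f}(P)}\circ(g\wot A)$, where $\pi_{R_{f}(P)}\colon P\wot A\to P$ is the $A$-contraaction obtained by restricting $\pi_{P}$ along $f$. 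By the very definition of an $A$-contramodule morphism, the equaliser of this pair is the subcomplex $\uhom_{A}(R,R_{f}(P))\subset\uhom_{k}(R,P)$. Stringing the identifications together yields a natural isomorphism $\uhom_{B}(E_{f}(R),P)\cong\uhom_{A}(R,R_{f}(P))$ of dg $k$-vector spaces; this \emph{is} a $k-Mod$-enriched adjunction, and passing to degree-zero cocycles recovers the underlying ordinary adjunction already asserted in the preceding remark.

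An equivalent way to package the enrichment, once the underlying adjunction is in hand, is via Proposition \ref{enriching prop}: $R_{f}$ is plainly a dg functor, and it preserves the cotensor, $R_{f}([V,P]_{ct})\cong[V,R_{f}(P)]_{ct}$, since both sides have underlying complex $\uhom_{k}(V,P)$ with contraaction restricted along $f$; the proposition then yields condition (i). One may also, if preferred, first verify $E_{f}(V\wot A)\cong V\wot B$ on free contramodules and extend to all contramodules through Lemma \ref{monadic lemma 1} and Lemma \ref{monadic lemma 2}, exactly as in the proof of the tensor--cotensor proposition above.

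The step I expect to be the main obstacle is the diagram chase of the second paragraph: tracking the two parallel maps through the free--forgetful identification requires care with the completed tensor products $R\wot A\wot B$ and with the fact that contraactions are not assumed continuous, so $\wot$ cannot be manipulated like an ordinary tensor product. Pinning down that the resulting equaliser is \emph{exactly} $\uhom_{A}(R,R_{f}(P))$, and not an a priori larger or smaller subcomplex, is precisely where the definitions of the two structure maps in the coequaliser presentation of $E_{f}$ must be used, together with the compatibility of all the identifications with the differentials.
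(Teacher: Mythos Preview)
Your proposal is correct. Your primary argument—presenting $E_{f}(R)$ as a coequaliser, turning it into an equaliser under $\uhom_{B}(-,P)$, applying the free--forgetful identification, and recognising the resulting equaliser as $\uhom_{A}(R,R_{f}(P))$—does the job and produces the enriched adjunction directly. The diagram chase you flag is indeed the only place requiring care, but it goes through exactly as you describe.

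The paper takes a shorter route. It grants the underlying adjunction from the remark preceding the lemma and then invokes Proposition~\ref{enriching prop}, but via condition~(iii) rather than~(ii): it checks that the \emph{left} adjoint $E_{f}$ preserves the tensor, by observing that both $(V\ot^{ct}_{A}Q)\ot_{A}B$ and $V\ot^{ct}_{B}(Q\ot_{A}B)$ have underlying dg vector space $V\ot_{k}Q\ot_{A}B$ with the $B$-contraaction coming from $B$ acting on itself. This is a one-line verification once the tensor has been unpacked. Your alternative paragraph comes close to this, but you check condition~(ii) (the right adjoint preserves cotensors) instead; both are valid, and by the proposition they are equivalent. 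What your direct approach buys is that it establishes the ordinary and enriched adjunctions in a single stroke, without relying on the underlying adjunction as a separate input; what the paper's approach buys is brevity, since the tensor is visibly preserved once one writes down the formula.
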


\begin{proof}
By proposition \ref{enriching prop},  it suffices to verify that the functors give rise to an adjunction between the underlying categories and the left adjoint which preserves tensors. Let $-\ot^{ct}_{A}-$ be the  on $A-Ctmod$  and $-\ot^{ct}_{B}-$ be the tensor on $B-Ctmod$. Let $V$ be a dg vector space and $Q$ an $A$-contramodule. Then $(V \ot^{ct}_{A} Q) \ot_{A} B $ is precisely the dg vector space $ V \ot_{k} Q \ot_{A} B$ with contraaction given by the contraaction of $B$ on its self. Similarly, $V \ot^{ct}_{B} (Q \ot_{A} B) $ is precisely the dg vector space $ V \ot_{k} Q \ot_{A} B$ with contraaction given by the contraaction of $B$ hence the extension of scalars functor preserve tensors. 
\end{proof}

\begin{proposition}
    With the model structure above $A-Ctmod$ is a stable dg model category. 
    
\end{proposition}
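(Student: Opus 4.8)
The plan is to verify the two defining conditions of a dg model category for $A-Ctmod$ equipped with Positselski's model structure of the second kind, and to confirm stability. Stability is the easiest: by Remark \ref{stability} and the general fact (e.g.\ \cite{barnes2014stable}) that any abelian model category on a category of dg objects in which the suspension is the degree shift is stable, $A-Ctmod$ is stable. Concretely, for a graded-projective (hence cofibrant) contramodule $Q$ one builds a cylinder object exactly as in Remark \ref{stability} — the complex with $Q^n \oplus Q^{n+1} \oplus Q^n$ in degree $n$, which is again graded projective since $A-Ctmod_{proj}$ is closed under finite direct sums and shifts — and reads off that $\Sigma Q$ is the shift $Q[1]$; dually the loop functor is $Q[-1]$, and the adjunction $\Sigma \dashv \Omega$ is an equivalence on $Ho(A-Ctmod) = D^{ct}(A)$ since shifting is invertible.

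The substantive point is the pushout-product axiom. The unit of $k-Mod$ is cofibrant, so only the two-variable condition needs checking, and by \cite[Corollary 4.2.5]{hovey2007model} it suffices to test on the generating (acyclic) cofibrations $I' = \{S^{n-1} \to D^n\}$, $J' = \{0 \to D^n\}$ of $k-Mod$ against the generating (acyclic) cofibrations of $A-Ctmod$. I would proceed analogously to Proposition \ref{comod is dg}. For a cofibration $f: Q \to R$ in $A-Ctmod$ — an injection with graded-projective cokernel — and $i: S^{n-1}\to D^n$, the pushout product $i \,\square\, f$ is again an injection (pushouts preserve monomorphisms in the abelian category $A-Ctmod$), and its cokernel is $D^n \ot^{ct} \coker(f)$, which is graded projective because $- \ot^{ct} -$ with a finite-dimensional dg vector space sends graded projectives to graded projectives (a free contramodule $V \wot A$ tensored with $D^n$ is $(D^n\ot V)\wot A$, still free, and direct summands are preserved). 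Hence $i\,\square\,f$ is a cofibration. When $f$ is moreover an acyclic cofibration — cokernel contraacyclic — one uses that $D^n \ot^{ct} (-)$ and the pushout construction preserve the property of having contraacyclic cokernel (the cokernel of the relevant map is $S^{n-1}\ot^{ct}\coker(f)$, resp.\ $D^n\ot^{ct}\coker(f)$; tensoring a contraacyclic contramodule with a contractible complex yields a contractible, hence contraacyclic, contramodule, and short exact sequences of contraacyclics have contraacyclic middle term), so by two-out-of-three $i\,\square\,f$ is a weak equivalence. For $j: 0 \to D^n$ in $J'$, the pushout product is $D^n \ot^{ct} f$, an injection with cokernel $D^n \ot^{ct}\coker(f)$, which is contractible as a complex and therefore contraacyclic, so this is an acyclic cofibration.

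Alternatively — and this may be the cleaner route — one can invoke Lemma \ref{Tranferlem}: Positselski's model structure on $A-Ctmod$ is obtained by transfer along a Quillen equivalence from the coderived model structure on comodules over the dual coalgebra $C = A^\vee$ (the co-contra correspondence, as recalled in Remark \ref{contra remark}), the comodule side is a dg model category by Proposition \ref{comod is dg}, and the transferring adjunction is dg-enriched, so Lemma \ref{Tranferlem} applies once one checks the enrichment hypothesis of Proposition \ref{enriching prop} for that adjunction — essentially the same computation as in Lemma \ref{basechangecontra}. The main obstacle either way is bookkeeping around the completed tensor product $\wot$ and the fact that $A-Ctmod$ is not Grothendieck: one must be careful that the tensor $V\ot^{ct} Q$ with $V$ \emph{finite-dimensional} behaves well (colimits/cokernels commute with it, projectives are preserved), which is exactly why the reduction to the generating sets $I', J'$ — where every dg vector space in sight is finite-dimensional — is essential; I would make that reduction explicit before doing any diagram chase.
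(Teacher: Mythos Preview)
Your proposal follows essentially the same route as the paper: verify stability via the explicit cylinder/shift computation of Remark~\ref{stability}, and check the pushout-product axiom directly against the generating sets $I',J'$ of $k\text{-Mod}$, treating the three cases $(i\in I',\,f$ a cofibration$)$, $(i\in I',\,f$ acyclic$)$, $(j\in J',\,f$ a cofibration$)$ separately. That is exactly the paper's argument.

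One slip worth flagging: the cokernel of $i\,\square\,f$ for $i:S^{n-1}\to D^n$ is \emph{not} $D^n\ot^{ct}\coker(f)$ but $S^n\ot^{ct}\coker(f)$, since in general $\coker(i\,\square\,f)\cong\coker(i)\ot\coker(f)$ and $\coker(i)=S^n$. Your conclusion survives (a shift of a graded-projective, resp.\ contraacyclic, object is still graded-projective, resp.\ contraacyclic), but the paper inserts a small $3\times 3$ diagram precisely to pin this cokernel down correctly, and your phrasing of the acyclic case (``tensoring with a contractible complex yields a contractible contramodule'') is aimed at the wrong object. The two-out-of-three argument you sketch is the paper's actual mechanism for case (ii): the pushout $\alpha:D^n\ot^{ct}Q\to P$ of the acyclic cofibration $S^{n-1}\ot^{ct}f$ is acyclic, and $D^n\ot^{ct}f$ has contractible cone, so $i\,\square\,f$ is a weak equivalence.

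Your alternative route via Lemma~\ref{Tranferlem} and the co-contra correspondence is not what the paper does, and would need more care than you indicate: Positselski's contramodule model structure in \cite[Theorem~8.2(b)]{positselski2011two} is constructed directly, not by right transfer from the comodule side, so the hypotheses of Lemma~\ref{Tranferlem} (which requires the target model structure to be \emph{defined} via Theorem~\ref{right transfer}) are not immediately available. You would first have to verify that the contramodule model structure coincides with the transferred one --- in particular that the right adjoint in the co-contra correspondence creates fibrations and weak equivalences --- which is not obvious from the description of fibrations on the two sides. The direct verification is both shorter and what the paper chooses.
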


\begin{proof}
  An analogous argument to Remark \ref{stability} shows that the suspension object associated to a cofibrant object will be the usual shift to the right by one degree, and similarly, the loop object associated to a fibrant object will be the usual shift to the left by one degree.
  
To see that $A-Ctmod$ is a dg model category, we need to verify the push out product axiom. Let $I'$ and $J'$ be as in Proposition \ref{comod is dg}. Consider a morphism of contramodules $f: Q \lra R$. We will check the following:

\begin{enumerate}[label=(\roman*)]
    \item if $i \in I'$ and $f$ is a cofibration then $f \square i$ is a cofibration.
          \item if $i \in I'$ and $f$ is an acyclic cofibration then $f \square i$ is an acyclic cofibration.
                \item if $j \in J' $ and $f$ is a cofibration then $f \square i $ is an acyclic cofibration.
\end{enumerate}

For (iii) we have $j \square f : D^{n} \ot^{ct} Q \lra D^{n} \ot^{ct} R $, which is injective and has cokernel $D^{n} \ot^{ct} coker(f)$, which is cofibrant since the cokernel of $f$ is cofibrant. To see that $j \square f$ is a weak equivalence, we will show that $Cone(j \square f)$ is contractible and therefore contraacyclic. Firstly, note that for any contramodule, $P$ the complex $D^{n} \ot^{ct} P $ is given by mapping cone of $Id_{P}$ with a degree $n$ shift. Since the mapping cone of $Id_{P}$ is contractible, so is $D^{n} \ot^{ct} P $. Since $Cone(j \square f) = D^{n} \ot^{ct} Cone(f)$ the result follows.

For (i) let $\alpha$ denote the pushout of the map $S^{n-1} \ot^{ct} f$ and let $\beta$ denote the pushout of the map $i \ot^{ct} Q$. It follows that $\alpha$ is a cofibration since pushouts of cofibrations are cofibrations and $S^{n-1} \ot^{ct} f$ a cofibration. We can also conclude that $\beta$ is an injection since the pushouts of injections are injections.  Let $P$ denote the pushout then it follows that the pushout product map $i \square f: P \lra D^{n} \ot^{ct} R$ is an injection.  It remains to check that the cokernel of $i \square f$ is cofibrant, consider the following diagram:

 \[
\begin{tikzcd}
 & 0  \arrow[d] & 0  \arrow[d] & 0 \arrow[d] \\
0 \arrow[r] & S^{n-1} \ot^{ct} R \arrow[r, "\beta"] \arrow[d] &
P \arrow[d] \arrow[r] & coker(\beta) \arrow[d, "\gamma"]  \arrow[r] & 0 \\
0 \arrow[r] & S^{n-1} \ot^{ct} R \arrow[r, "i \ot^{ct} R"] \arrow[d] & D^{n}\ot^{ct} R \arrow[r] \arrow[d] & coker(i \ot^{ct} R) \arrow[r] \arrow[d] & 0\\
0 \arrow[r] & 0 \arrow[r] \arrow[d] & coker(i \square f) \arrow[r] \arrow[d] & coker(\gamma)  \arrow[d] &\\
 & 0 & 0 & 0. 
\end{tikzcd}
\]
Since pushouts preserve cokernels, the cokernel of $\beta$  is isomorphic to $ coker (i \ot^{ct} Q)  \cong S^{n} \ot^{ct} Q$ and $coker(i \ot^{ct} R) \cong S^{n} \ot^{ct} R$ and $\gamma$  is given by $S^{n} \ot^{ct} f$.  By the 3x3 Lemma, the bottom row is exact and $coker(i \square f) \cong coker(\gamma)$, which is cofibrant.

Since the pushout of an acyclic cofibration is an acyclic cofibration and $D^{n} \ot^{ct} f$ is a weak equivalence, (ii) follows from the two out of three axiom.

\end{proof}

\begin{proposition}\label{we contra}
   Let $\mcl{C}$ denote the set of compact generators in the contraderived category $D^{ct}(A)$. The A morphism $f: Q \lra R$ of $A$-contramodules is a weak equivalence if and only if the induced map $\rdh_{A}(C, f)$ is a quasi-isomorphism for all $C \in \mcl{C}$. 
\end{proposition}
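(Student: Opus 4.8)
The strategy is to unwind the definition of weak equivalence in the model structure of Remark~\ref{contra remark}: a map $f\colon Q\lra R$ is a weak equivalence exactly when its cone is contraacyclic, which is the same as saying that $Cone(f)$ becomes a zero object in the contraderived category $D^{ct}(A)$, the homotopy category of $A-Ctmod$. Since $\mcl{C}$ is a set of compact generators of $D^{ct}(A)$ (Remark~\ref{contra remark}), an object $M$ of $D^{ct}(A)$ is zero if and only if $\Hom_{D^{ct}(A)}(C,\Sigma^{n}M)=0$ for every $C\in\mcl{C}$ and every $n\in\mathbb{Z}$. So the proposition will follow once these morphism groups are identified with the cohomology of $\rdh_{A}(C,-)$ and the passage to cones is handled; the argument will closely parallel that of Lemma~\ref{weak eq loc}.

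First I would record the homological input. Fix $C\in\mcl{C}$; replacing $C$ by a graded-projective representative (possible since $D^{ct}(A)\cong K(A-Ctmod_{proj})$) we may assume $C$ is cofibrant. The tensor--cotensor--hom adjunctions exhibit $-\ot^{ct}C\colon k-Mod\lra A-Ctmod$ as left Quillen: apply the pushout product axiom (valid since $A-Ctmod$ is a dg model category) to the cofibration $0\lra C$. Hence the right adjoint $\uhom_{A}(C,-)\colon A-Ctmod\lra k-Mod$ is right Quillen, and since every dg $A$-contramodule is fibrant we have $\rdh_{A}(C,M)\cong\uhom_{A}(C,M)$. By Proposition~\ref{stab}(iv) the induced functor $\rdh_{A}(C,-)\colon D^{ct}(A)\lra D(k)$ is exact. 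Combining Remark~\ref{derived hom} with the fact that $A-Ctmod$ is a \emph{stable} dg model category whose suspension is the degree shift, we obtain for every dg $A$-contramodule $M$ and every $n\in\mathbb{Z}$ a natural isomorphism
\begin{equation*}
H^{n}\bigl(\rdh_{A}(C,M)\bigr)\;\cong\;\Hom_{D^{ct}(A)}\bigl(C,\Sigma^{n}M\bigr).
\end{equation*}

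It then remains to assemble the chain of equivalences. The map $f$ is a weak equivalence iff $Cone(f)$ is contraacyclic, iff $Cone(f)\cong 0$ in $D^{ct}(A)$, iff $\Hom_{D^{ct}(A)}(C,\Sigma^{n}Cone(f))=0$ for all $C\in\mcl{C}$ and all $n\in\mathbb{Z}$ (by compact generation), iff $\rdh_{A}(C,Cone(f))$ is acyclic for all $C\in\mcl{C}$ (by the displayed isomorphism). Applying the exact functor $\rdh_{A}(C,-)$ to the distinguished triangle $Q\lra R\lra Cone(f)\lra \Sigma Q$ identifies $\rdh_{A}(C,Cone(f))$ with the cone of $\rdh_{A}(C,f)$, which is acyclic precisely when $\rdh_{A}(C,f)$ is a quasi-isomorphism. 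Therefore $f$ is a weak equivalence if and only if $\rdh_{A}(C,f)$ is a quasi-isomorphism for every $C\in\mcl{C}$.

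The only delicate step is the second paragraph: checking that the dg-enriched derived hom $\rdh_{A}(C,-)$ is a genuine triangulated functor and that it computes $\Hom_{D^{ct}(A)}(C,-)$ in cohomological degree zero, together with the minor care that $\rdh_{A}$ is a priori defined through cofibrant-and-fibrant replacement --- here it reduces to a cofibrant replacement of $C$ alone, since all objects are fibrant. No new facts about contramodules are needed beyond the compact generation of $D^{ct}(A)$ recorded in Remark~\ref{contra remark}; everything else is a formal consequence of $A-Ctmod$ being a stable dg model category.
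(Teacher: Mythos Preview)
Your proof is correct and follows essentially the same route as the paper: both reduce the statement to showing that a contramodule $S$ (the cone of $f$) is contraacyclic if and only if $\rdh_{A}(C,S)$ is acyclic for all $C\in\mcl{C}$, then use the identification $H^{*}(\rdh_{A}(C,S))\cong\uhom_{D^{ct}(A)}(C,S)$ together with compact generation of $D^{ct}(A)$. The paper's version is terser, citing \cite[Lemma~2.2.1]{schwede2003stable} for the ``Homs from generators vanish $\Rightarrow$ object is zero'' direction, whereas you spell out the exactness of $\rdh_{A}(C,-)$ and the cofibrant/fibrant replacement details explicitly.
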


\begin{proof}
Let $S$ be a contramodule. It is sufficient to show that a $S$ is contraacyclic if and only if $\rdh_{A}(C, S)$ is a quasi-isomorphic to $0$  for all $C \in \mcl{C}$.  If $S$ is contraacyclic, then the graded abelian groups $\underline{Hom}_{D^{ct}(A)}(C, S)$ are trivial, and the result follows from $H^{*}(\rdh(C, S)) \cong \underline{Hom}_{D^{ct}(A)}(C, S) $. The converse follows from \cite[Lemma 2.2.1]{schwede2003stable}. 

\end{proof}

\subsection{Presheaves of Contramodules}

\begin{definition}
Let $\scr{A}$ be a presheaf of pseudo-compact dg $k$-algebras on $X$. A presheaf of dg $\scr{A}$-contramodules is a presheaf of dg $k$-vector spaces $\scr{F}$ such that for any open set $U \subset X$,  $\scr{F}(U)$ is a $\scr{A}(U)$-contramodule and the restriction maps are compatible with the contramodule structure.  We denote the category of presheaves of dg $\scr{A}$-contramodules by $PCtmod(\scr{A})$. 
\end{definition}

\begin{remark}\label{tensor and cotensor for ctpsh}
It is clear that the category  $PCtmod(\scr{A})$ is a dg category. Furthermore,  $PCtmod(\scr{A})$ is tensored and cotensored over dg $k$-vector spaces. Let 

\begin{align*}
    - \ot^{p} - : k-Mod \times PCtmod(\scr{A}) \lra PCtmod(\scr{A})                   
\end{align*}

denote the tensor of $PCtmod(\scr{A})$ over $k-Mod$. The tensor is defined to be the functor which sends $(V, \scr{F}) $  in  $k-Mod \times PCtmod(\scr{A})$ to the presheaf defined by 

\begin{equation*}
    U \longmapsto V \ot_{ct} \scr{F}(U),
\end{equation*}
for each open $U \subset X$,  were $ V \ot_{ct} \scr{F}(U)$   denotes the tensor on contramodules.  Let

\begin{align*}
    [-, - ]_{p}: k-Mod^{op} \times PCtmod(\scr{A}) \lra PCtmod(\scr{A})                   
\end{align*}

denote the cotensor.  For an open set $U  \subset X$, the cotensor is given by 
\begin{equation*}
    U \longmapsto \uhom_{\underline{k}}(\underline{V}_{U}, \scr{F}_{U}). 
\end{equation*}
For $W \subset U $ the contraaction map,

\begin{equation*}
    \uhom_{\underline{k}}(\underline{V}_{U}(W), \scr{F}_{U}(V)) \wot \scr{A}(W) \lra \uhom_{\underline{k}}(\underline{V}_{U}(W), \scr{F}_{U}(V)),
\end{equation*}
is defined on elements by $f(v) \wot a = f(v)a$.

\end{remark}

\begin{lemma}\label{base change for pct}
   Let $\scr{A}$ and $\scr{B}$ be presheaves of pseudo-compact dg $k$-algebras on $X$ and $p: \scr{A} \lra \scr{B}$ a morphism of presheaves of pseudo-compact dg $k$-algebras. Then $p$ induces a dg adjunction $
- \ot_{\scr{A}}\scr{B}:PCtmod(\scr{A}) \rightleftarrows  PCtmod(\scr{B}): R_{p} $.

\end{lemma}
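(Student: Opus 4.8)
The plan is to define both functors objectwise and to deduce every property from the corresponding statement for contramodules over a single pseudo-compact dg algebra, recalled earlier in the paper. First I would define the restriction of scalars: for $\scr{Q} \in PCtmod(\scr{B})$ set $(R_{p}\scr{Q})(U) := R_{p(U)}(\scr{Q}(U))$, the $\scr{A}(U)$-contramodule obtained from $\scr{Q}(U)$ by restricting the contraaction along $p(U)\colon \scr{A}(U) \lra \scr{B}(U)$. For an inclusion $W \subset U$ the restriction map $\scr{Q}(U) \lra \scr{Q}(W)$ is $\scr{B}(U)$-contralinear, hence --- because $p$ is a morphism of presheaves, i.e.\ the square relating $p(U)$, $p(W)$ and the algebra restrictions commutes --- it is also $\scr{A}(U)$-contralinear. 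So $R_{p}\scr{Q}$ is a well-defined presheaf of $\scr{A}$-contramodules and $R_{p}$ is functorial.

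Next, extension of scalars: for $\scr{R} \in PCtmod(\scr{A})$ put $(\scr{R} \ot_{\scr{A}}\scr{B})(U) := \scr{R}(U) \ot_{\scr{A}(U)} \scr{B}(U)$, the coequaliser of $\scr{R}(U) \wot \scr{A}(U) \wot \scr{B}(U) \rightrightarrows \scr{R}(U) \wot \scr{B}(U)$ described in the excerpt. The point needing care is that this assembles into an honest presheaf: a restriction $W \subset U$ induces a morphism of the two parallel pairs of completed tensor products, using the presheaf structures of $\scr{R}$, $\scr{A}$, $\scr{B}$ together with $p(W)\circ(\scr{A}(U)\to\scr{A}(W)) = (\scr{B}(U)\to\scr{B}(W))\circ p(U)$; passing to coequalisers yields the restriction map, and functoriality of coequalisers yields both the presheaf identities and functoriality of $- \ot_{\scr{A}}\scr{B}$ in $\scr{R}$.

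Then I would establish the ordinary adjunction between the underlying categories. A morphism in $PCtmod(\scr{A})$ is a family of contralinear maps compatible with restrictions, so $\Hom_{PCtmod(\scr{A})}(\scr{R}, R_{p}\scr{Q})$ is the equaliser, inside $\prod_{U}\Hom_{\scr{A}(U)}(\scr{R}(U),(R_{p}\scr{Q})(U))$, of the two maps given by composing with restrictions; likewise $\Hom_{PCtmod(\scr{B})}(\scr{R}\ot_{\scr{A}}\scr{B},\scr{Q})$ is the corresponding equaliser. The objectwise adjunction isomorphisms $\Hom_{\scr{B}(U)}(\scr{R}(U)\ot_{\scr{A}(U)}\scr{B}(U),\scr{Q}(U)) \cong \Hom_{\scr{A}(U)}(\scr{R}(U), R_{p(U)}\scr{Q}(U))$ (the restriction/extension adjunction recalled in the excerpt) are natural in $U$, hence they identify the two equalisers and give the adjunction $- \ot_{\scr{A}}\scr{B} \dashv R_{p}$. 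To promote it to a dg adjunction I would invoke Proposition \ref{enriching prop}(iii): it suffices that $- \ot_{\scr{A}}\scr{B}$ is a dg functor and that there is a natural isomorphism $(V \ot^{p}\scr{R}) \ot_{\scr{A}}\scr{B} \cong V \ot^{p}(\scr{R}\ot_{\scr{A}}\scr{B})$ for every dg $k$-vector space $V$, where $\ot^{p}$ is the tensoring of Remark \ref{tensor and cotensor for ctpsh}. Both are checked objectwise: on each open $U$ the required isomorphism is precisely the one proved in Lemma \ref{basechangecontra} for $\scr{A}(U) \lra \scr{B}(U)$, namely $(V \ot^{ct}\scr{R}(U))\ot_{\scr{A}(U)}\scr{B}(U) \cong V \ot^{ct}(\scr{R}(U)\ot_{\scr{A}(U)}\scr{B}(U))$, and it is natural in $U$ since every structure map in sight is.

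The main obstacle I expect is the bookkeeping in the second step: checking that the objectwise coequaliser defining extension of scalars really is a presheaf with the correct restriction maps, because the coequaliser is formed with respect to the completed tensor product $\wot$ of a \emph{discrete} contramodule with \emph{pseudo-compact} algebras, so one must verify that the relevant diagrams of $\wot$'s are functorial in $p$ and in the restriction maps. Everything else is a formal consequence of the single-algebra results already in the excerpt --- Lemma \ref{monadic lemma 1}, Lemma \ref{monadic lemma 2} and Lemma \ref{basechangecontra} --- together with the fact that (co)limits and Hom-sets in presheaf categories are computed objectwise.
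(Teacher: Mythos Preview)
Your proposal is correct and follows essentially the same approach as the paper: the paper's proof simply states that the underlying adjunction follows from the objectwise restriction/extension adjunction for contramodules, that the left adjoint preserves tensors immediately from the definitions, and then invokes Proposition~\ref{enriching prop}. Your write-up is a careful unpacking of exactly these steps, including the objectwise bookkeeping that the paper leaves implicit.
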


\begin{proof}
The adjunction between the underlying categories follows from the restriction and extension adjunction for contramodules. It is immediate from the definitions that the left adjoint preserves tensors. Therefore, the adjunction is a dg adjunction by Proposition \ref{enriching prop} . 
     
\end{proof}

Let $p: X \lra Y$ be a continuous map of topological spaces. Then as usual we define the pushforward of a presheaf, $\scr{F}$  on $X$ valued in any category,  to be the  presheaf  which takes an open set $V \subset Y $ to $\scr{F}(p^{-1}(V))$ and we denote the pushforward of $\scr{F} $ along $p$ by $p_{*}(\scr{F})$. Let $\scr{A}_{X}$ a presheaf of pseudo-compact dg $k$-algebras on $X$, then the pushforward functor defines a functor $p_{*}:PCtmod(\scr{A}) \lra PCtmod(p_{*}\scr{A})$. 

Let $\scr{B}_{Y}$ be a presheaf of pseudo-compact dg $k$-algebras on $Y$. Define the pullback of a presheaf $\scr{G}$ along $p$ to be the presheaf determined by the rule 
\begin{equation*}
    p^{\#} \scr{G}(U) := \colim_{V} \scr{G}(V)
\end{equation*}
where the colimit runs over all open $V \subset Y$ containing $p(U)$. The pullback determines a functor $p^{\#}: \scr{B}_{Y}-PCtmod \lra p^{\#}\scr{B}_{Y}-PCtmod$. 
\begin{definition}\label{p map def}
    Let $p : X \lra Y $ be continuous map between topological spaces and $\scr{A}_{X}$ be a presheaf of pseudo-compact dg $k$-algebras on $X$ and $\scr{B}_{Y}$ be a presheaf of pseudo-compact dg $k$-algebras on $Y$.  We say that $\widehat{p}: \scr{B}_{Y}  \lra \scr{A}_{X}$ is a $p$\textit{-map} if there exists a collection of maps $\widehat{p}_{V}: \scr{B}_{Y}(V) \lra   \scr{A}_{X} (p^{-1}(V)) $  indexed by the open sets $V \subset Y $ that  are compatible with the restriction maps.

\end{definition}
With notation as above we define the \textit{direct image functor }  $p_{*}: \scr{A}_{X}-PCtmod \lra \scr{B}_{Y}-PCtmod$ to be the functor which takes $\scr{F}$ to the presheaf  $p_{*}(\scr{F})$ with contramodule structure given by the restriction via $\widehat{p}: \scr{B}_{Y}  \lra \scr{A}_{X}$.  We define the \textit{inverse image functor} $p^{*}: PCtmod(\scr{B}_{Y}) \lra PCtmod(\scr{A}_{X})$ to be the functor which which sends a presheaf $ \scr{G}$ 

\begin{equation*}
   \scr{G} \mapsto p^{\#}\scr{G} \ot_{p_{\#}\scr{B}_{Y}} \scr{A}_{X}
\end{equation*}

\begin{proposition}\label{direct inverse image}
    Let $p: X \lra Y $  and  $\widehat{p}: \scr{B}_{Y}  \lra \scr{A}_{X}$ be as in Definition \ref{p map def} then  the functor $p^{*}$ is left adjoint to the functor $p_{*}$  and the adjunction is a dg adjunction. 
\end{proposition}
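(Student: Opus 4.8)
The plan is to realise $p^{*}$ as a composite of two dg left adjoints whose right adjoints compose to $p_{*}$. First note that the $p$-map $\widehat{p}$ is the same data as a morphism $\phi \colon p^{\#}\scr{B}_{Y} \lra \scr{A}_{X}$ of presheaves of pseudo-compact dg $k$-algebras on $X$ (this is the classical pullback--pushforward adjunction applied to presheaves of algebras): its component at an open $U \subset X$ is the map $\colim_{V \supseteq p(U)} \scr{B}_{Y}(V) \lra \scr{A}_{X}(U)$ whose $V$-th term is $\scr{B}_{Y}(V) \xrightarrow{\widehat{p}_{V}} \scr{A}_{X}(p^{-1}(V)) \lra \scr{A}_{X}(U)$. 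With this identification, by definition $p^{*}$ factors as
\[
PCtmod(\scr{B}_{Y}) \xrightarrow{\ p^{\#}\ } PCtmod(p^{\#}\scr{B}_{Y}) \xrightarrow{\ -\ot_{p^{\#}\scr{B}_{Y}}\scr{A}_{X}\ } PCtmod(\scr{A}_{X}),
\]
the first arrow being the pullback of presheaves of contramodules and the second extension of scalars along $\phi$.

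I then treat the two pieces separately. For the pullback--pushforward pair, the bijection between morphisms $p^{\#}\scr{G} \lra \scr{F}$ and morphisms $\scr{G} \lra p_{*}\scr{F}$ is the standard one for presheaves valued in a cocomplete category: by the universal property of the defining colimit, a morphism out of $p^{\#}\scr{G}$ is a compatible family of maps $\scr{G}(V) \lra \scr{F}(U)$ indexed by pairs $U \subseteq p^{-1}(V)$, which is precisely a morphism $\scr{G} \lra p_{*}\scr{F}$; the bijection is degreewise, hence an isomorphism of hom-complexes, and it respects all the contramodule structures since every map in sight is a contramodule morphism. To upgrade this to a dg adjunction I apply Proposition \ref{enriching prop}, for which it suffices that $p^{\#}$ preserves tensors, i.e. $p^{\#}(V \ot^{p} \scr{G}) \cong V \ot^{p} p^{\#}\scr{G}$; this holds because $V \ot_{ct} -$ commutes with the filtered colimits defining $p^{\#}$. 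The second functor, extension of scalars along $\phi$, is a dg left adjoint to restriction of scalars $R_{\phi}$ by Lemma \ref{base change for pct}.

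Finally I compose. The right adjoint of $p^{*}$ is then $p_{*} \circ R_{\phi}$, and it remains to check that this is the functor $p_{*}$ of the statement. Unwinding: $R_{\phi}\scr{F}$ is $\scr{F}$ with $p^{\#}\scr{B}_{Y}$-contraaction pulled back along $\phi$, and $p_{*}$ of it sends $V$ to $\scr{F}(p^{-1}(V))$ with $\scr{B}_{Y}(V)$-contraaction obtained by restricting along the composite $\scr{B}_{Y}(V) \lra (p_{*}p^{\#}\scr{B}_{Y})(V) \xrightarrow{\phi} \scr{A}_{X}(p^{-1}(V))$; by the description of $\phi$ above this composite is exactly $\widehat{p}_{V}$, so $p_{*}\circ R_{\phi}$ coincides with $p_{*}$. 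Since a composite of dg adjunctions is a dg adjunction, $p^{*} \dashv p_{*}$ is a dg adjunction.

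The step requiring the most care is the pullback--pushforward adjunction for presheaves of contramodules: one must be slightly cautious because $A-Ctmod$ is not a Grothendieck category and its forgetful functor to dg vector spaces does not preserve arbitrary colimits, so $p^{\#}\scr{G}(U)$ is not simply the colimit of the underlying vector spaces. However only filtered colimits enter, these are well behaved, and in any case the hom-set bijection follows directly from the universal property without an explicit description of $p^{\#}\scr{G}$; the identification of $p_{*}\circ R_{\phi}$ with the stated $p_{*}$ is then routine bookkeeping.
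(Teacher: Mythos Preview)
Your proof is correct and follows essentially the same approach as the paper: factor $p^{*}$ as $p^{\#}$ followed by extension of scalars along the induced map $p^{\#}\scr{B}_{Y} \lra \scr{A}_{X}$, invoke Lemma \ref{base change for pct} for the second factor, and conclude the adjunction is dg via Proposition \ref{enriching prop} since $p^{*}$ preserves tensors. You are more explicit than the paper (which simply cites ``standard arguments'' for the pullback--pushforward step and asserts tensor-preservation without further comment), and your closing remark about filtered colimits in contramodule categories is a worthwhile point of care that the paper glosses over.
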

\begin{proof}
Let $\scr{G}$ be a presheaf of  $\scr{B}_{Y}$-contramodules and $\scr{F}$ a presheaf of $\scr{A}_{X}$-contramodules. Then $p^{\#}\scr{G}$ is a presheaf of $p^{\#}\scr{B}_{Y}$-contramodules.  Standard arguments regarding inverse direct image functors (see, e.g. \cite[Section 6.9]{bosch2013algebraic}) give a natural isomorphism  
    \begin{equation*}
    Hom_{p^{\#}\scr{B}_{Y}} (p^{\#}\scr{G}, \scr{F}) \cong Hom_{\scr{B}_{Y}} (\scr{G}, p_{*}\scr{F}). 
\end{equation*}

Then by  Lemma \ref{base change for pct}  the map the morphism  $\widehat{p}$ induces a natural isomorphism 

\begin{equation*}
    Hom_{p^{\#}\scr{B}_{Y}} (p^{\#}\scr{G}, \scr{F}) \cong Hom_{\scr{A}_{X}} (p^{\#}\scr{G} \ot_{p^{\#}\scr{B}} \scr{A}_{X},\scr{F}). 
\end{equation*}

From the definitions, it is clear that $p^{*}$ preserves tensors, and we can conclude that the adjunction is a dg adjunction. 

\end{proof}

\begin{remark}\label{compact remark}
Recall that the category of presheaves of dg vector spaces $PMod(\underline{k})$ is locally finitely presentable, with a set of compact generators given by the presheaves $\underline{k}_{U}$ for $U \subset X$ open. Let $S$ denote the set compact generators. The set of all compact objects is given by the closure of this set with respect to all finite colimits; denote this set by $\overline{S}$.

\end{remark}

 Let  $\scr{A}$ be a presheaf of pseudo-compact dg $k$-algebras and assume there exists a stalk-wise quasi-isomorphism $\underline{k} \lra \scr{A}$. By  Lemma \ref{base change for pct} we have an adjunction $-\ot_{\underline{k}} \scr{A} : PMod(\underline{k}) \rightleftarrows PCtmod(\scr{A}): J$.

\begin{lemma} \label{compact objects}
    The functor $-\ot_{\underline{k}} \scr{A} : PMod(\underline{k}) \lra PCtmod(\scr{A})$ preserves compact objects.  
\end{lemma}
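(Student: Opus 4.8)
The plan is to reduce the statement to the generating compact objects of $PMod(\underline k)$, to identify their images under $-\ot_{\underline k}\scr A$ explicitly, and then to check the required smallness on those. First, by Lemma \ref{base change for pct} the functor $-\ot_{\underline k}\scr A$ is a left adjoint, so it preserves all colimits, in particular finite colimits. By Remark \ref{compact remark} every compact object of $PMod(\underline k)$ is a finite colimit of the representables $\underline k_U$, $U\subseteq X$ open, and a finite colimit of compact objects is again compact (a $\uhom$ out of a finite colimit is a finite limit of $\uhom$'s, and finite limits commute with filtered colimits of complexes). Hence it suffices to show that $\underline k_U\ot_{\underline k}\scr A$ is compact in $PCtmod(\scr A)$ for each open $U\subseteq X$.

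Second, I would identify this object. Extension of scalars for presheaves of contramodules is computed sectionwise, and extension of scalars along the unit $k\to\scr A(V)$ is the free-contramodule functor $W\mapsto W\wot\scr A(V)$; hence $\underline k_U\ot_{\underline k}\scr A$ is the presheaf $\scr A_U$ given by $\scr A_U(V)=\scr A(V)$, regarded as the free rank-one $\scr A(V)$-contramodule, for $V\subseteq U$, and $\scr A_U(V)=0$ otherwise, with restriction maps inherited from $\scr A$. Applying the adjunction $-\ot_{\underline k}\scr A\dashv J$ together with $\uhom_{PMod(\underline k)}(\underline k_U,-)\cong(-)(U)$ (or checking directly, using that each $\scr A(V)$ is free of rank one over itself and that the restriction maps of $\scr A$ preserve units), one obtains a natural isomorphism of complexes
\begin{equation*}
\uhom_{PCtmod(\scr A)}(\scr A_U,\scr F)\;\cong\;\scr F(U).
\end{equation*}

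Finally I would deduce compactness of $\scr A_U$. For a filtered diagram $(\scr F_i)$ in $PCtmod(\scr A)$ the displayed isomorphism identifies the canonical comparison map $\colim_i\uhom_{PCtmod(\scr A)}(\scr A_U,\scr F_i)\to\uhom_{PCtmod(\scr A)}(\scr A_U,\colim_i\scr F_i)$ with $\colim_i\scr F_i(U)\to(\colim_i\scr F_i)(U)$; equivalently, what is at stake is that $J$ commutes with filtered colimits at the level of sections over $U$. I expect this to be the main obstacle, because filtered colimits of contramodules are \emph{not} computed on underlying vector spaces — the forgetful functor $\scr A(U)-Ctmod\to k-Mod$ does not preserve arbitrary filtered colimits. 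The way around it is to use that $\scr A$ is a presheaf of pseudo-compact dg algebras, so that filtered colimits in $PCtmod(\scr A)$ are created sectionwise in the locally presentable categories $\scr A(V)-Ctmod$, and that the free rank-one contramodules $\scr A(V)$ — hence $\scr A_U$ — are small in the relevant sense (that is, $\kappa$-presentable for the presentability rank $\kappa$ of these contramodule categories), which is the notion of compactness needed for the small-object arguments in which this lemma is applied. Pinning down this cardinality bound, and the description of filtered colimits in $PCtmod(\scr A)$, is the technical heart of the argument.
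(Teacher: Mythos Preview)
Your approach is precisely the paper's: reduce to the generators $\underline{k}_U$, identify $\underline{k}_U\ot_{\underline{k}}\scr{A}\cong\scr{A}_U$, and claim that $\scr{A}_U$ is compact. The paper's proof does exactly this and simply asserts ``$\scr{A}_U$ is compact'' with no further argument.

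You are right to flag the last step. Since $\uhom_{PCtmod(\scr{A})}(\scr{A}_U,\scr{F})\cong\scr{F}(U)$, compactness of $\scr{A}_U$ in the $\aleph_0$-sense would require the forgetful functor $\scr{A}(U)\text{-}Ctmod\to k\text{-}Mod$ to preserve filtered colimits, i.e.\ that the monad $V\mapsto\Hom_k(C,V)$ be finitary. For infinite-dimensional $C$ this fails, so $\scr{A}_U$ is generally \emph{not} $\aleph_0$-compact, and the paper's assertion is imprecise as stated. Your proposed repair is the correct one: contramodule categories are locally $\kappa$-presentable for a suitable $\kappa$ (roughly the successor of $|C|$, since $\Hom_k(C,-)$ does commute with $\kappa$-filtered colimits), the free rank-one contramodules are $\kappa$-presentable, and hence so are the $\scr{A}_U$ and their finite colimits. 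This is exactly the smallness needed in the only place the lemma is used, namely to run the small object argument in the transfer principle (Theorem~\ref{right transfer}); that argument never required $\aleph_0$-compactness, only $\kappa$-smallness for some $\kappa$. So your proposal is more careful than the paper, and your diagnosis of the ``main obstacle'' is accurate rather than a gap in your own reasoning.
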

    \begin{proof}
Let $\scr{S}$  be a compact object in   $   PMod(\underline{k}) $. By Remark \ref{compact remark}  $\scr{S} $ is of the form $ \colim_{I} \underline{k}_{U}^{i}$ for some finite indexing category $I$.  Since  $- \ot_{\underline{k}} \scr{A}$ preserves colimits and $\underline{k}_{U} \ot_{\underline{k}} \scr{A} \cong \scr{A}_{U}$  is compact we can conclude that $\scr{P} \ot_{\underline{k}} \scr{A}$  is isomorphic to a finite colimit of compact objects. 
    \end{proof}

\begin{proposition}
\label{pshctmodmodel}
    There exists a right proper, combinatorial, stable, dg model category structure on the category of presheaves of dg $\scr{A}$-contramodules. Let $f: \scr{F} \lra \scr{G}$ be a morphism in $PCtmod(\scr{A})$. Then 
    
    \begin{enumerate}[label=(\roman*)]
    \item  $f$ is a weak equivalence if and only if $f(U): \scr{F}(U) \lra \scr{G}(U)$ is a quasi-isomorphism for all open $U \subset X $,
           \item $f$ a fibration if and only if $f(U): \scr{F}(U) \lra \scr{G}(U)$ is surjective for all open $U \subset X $.         
\end{enumerate}
We will refer this model structure as the global projective model structure. 
\end{proposition}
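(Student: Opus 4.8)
The plan is to obtain this model structure by right-transferring the model structure of second kind on $A$-$Ctmod$ along a suitable adjunction, or more directly, by transferring the projective model structure on $k$-$Mod$ sectionwise, and then verifying the auxiliary properties (combinatoriality, right properness, dg-enrichment, stability) separately. Concretely, I would first observe that for each open $U \subset X$ the evaluation functor $ev_U : PCtmod(\scr{A}) \lra \scr{A}(U)\text{-}Ctmod$ has a left adjoint, and the total evaluation $\prod_U ev_U$ lands in a product of module-of-the-second-kind categories; however, since the weak equivalences and fibrations we want are defined sectionwise as quasi-isomorphisms and surjections (not the finer second-kind notions), it is cleaner to transfer directly from the projective model structure on complexes of $k$-vector spaces. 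So the first step is to set up the free-forgetful adjunction between $PCtmod(\scr{A})$ and presheaves of graded $k$-vector spaces (or iterate over sections), identify the generating (acyclic) cofibrations as the images of $I'=\{S^{n-1}\to D^n\}$ and $J'=\{0\to D^n\}$ tensored with the representable presheaves $\scr{A}_U$, and apply the Transfer Principle (Theorem \ref{right transfer}). The smallness/compactness hypotheses are handled by noting $PCtmod(\scr{A})$ is locally presentable (each $\scr{A}(U)$-$Ctmod$ is locally presentable by the discussion following the definition of contramodules, and a category of presheaves valued in locally presentable categories is locally presentable), which also gives combinatoriality; functorial fibrant replacement is trivial since every object will turn out to be fibrant, and functorial path objects come from the cotensor $[D^0 \oplus D^1 \to \text{id}]$-style construction available because $PCtmod(\scr{A})$ is cotensored over $k$-$Mod$ (Remark \ref{tensor and cotensor for ctpsh}).

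Having transferred the model structure, the next step is to identify fibrations and weak equivalences explicitly: a map is a weak equivalence iff $R(f)$ is, where $R$ is the forgetful functor to presheaves of complexes, i.e. iff $f(U)$ is a quasi-isomorphism for each $U$; similarly for fibrations being sectionwise surjections. This is immediate from the definition of transfer. Then I would check the remaining structural properties. Right properness follows from the observation that every object is fibrant (every object of $k$-$Mod$ with the projective structure is fibrant, and fibrations, weak equivalences are detected sectionwise), together with \cite[Corollary 13.1.3]{hirschhorn2003model}, exactly as in the earlier Example. Stability: the suspension and loop functors are the degree shift, by the same matrix computation as in Remark \ref{stability} — the cylinder and path objects are built sectionwise and the contramodule structure is carried along formally, so $\Sigma \dashv \Omega$ is the shift adjunction and hence an equivalence on $Ho$. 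The dg model category structure follows from Lemma \ref{Tranferlem}: $PCtmod(\scr{A})$ is tensored and cotensored over $k$-$Mod$ by Remark \ref{tensor and cotensor for ctpsh}, the transfer adjunction $R$ commutes with the cotensor $[-,-]_p$ (since the cotensor is computed sectionwise as $\uhom_k(V,-)$, which is exactly how the cotensor on the base is defined), so the hypotheses of Lemma \ref{Tranferlem} are met with $\mcl{V} = k\text{-}Mod$ and $\mcl{C}$ the projective model structure on presheaves of complexes (which is a dg model category — this is the content of the earlier Proposition on $PMod^G(\scr{R})$, applied with trivial algebra, or proved directly).

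The main obstacle I anticipate is verifying hypothesis (i) of the Transfer Principle — that the forgetful functor $R$ preserves compact objects (equivalently, that the transferred generating acyclic cofibrations permit the small object argument and that acyclic cofibrations are genuine weak equivalences). The subtlety is specific to contramodules: infinite direct sums and cofiltered limits are \emph{not} exact in $A$-$Ctmod$, so one must be careful that pushouts of the generating acyclic cofibrations along arbitrary maps remain weak equivalences. The way around this is that the generating acyclic cofibrations have the form $0 \to D^n \ot^{p}\scr{A}_U$, whose cokernels are contractible presheaves of contramodules (as in case (iii) of the proof that $A$-$Ctmod$ is a dg model category, $D^n \ot^{ct} P$ is the cone on $\mathrm{id}_P$, hence contractible), and pushing out along any map preserves the property of having contractible — in particular sectionwise-acyclic — cokernel, using that the relevant pushouts are along split monomorphisms in each degree so they behave well. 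Once this is in place the transfer goes through; the remaining verifications are routine and parallel to arguments already appearing in the paper for $C$-$Comod$, $A$-$Mod$, and $PMod(\scr{R})$.
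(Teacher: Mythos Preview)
Your proposal is correct and follows essentially the same route as the paper: both transfer the global projective model structure from $PMod^{G}(\underline{k})$ along the adjunction $-\ot_{\underline{k}}\scr{A}\dashv J$, verify the hypotheses of Theorem~\ref{right transfer} by noting that the left adjoint preserves compact objects (this is Lemma~\ref{compact objects}) and that path objects exist because every object is fibrant, and then invoke Lemma~\ref{Tranferlem} for the dg enrichment. The paper writes out the path object explicitly as $\scr{F}\ot\scr{I}$ with matrix formulas, which is exactly the cotensor construction you describe; your discussion of right properness, combinatoriality and stability is actually more detailed than what appears in the paper's proof.
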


\begin{proof}
To prove existence we appeal to Theorem \ref{right transfer}. By Lemma \ref{compact objects}, the functor $-\ot_{\underline{k}} \scr{A} $ preserves compact objects and, $PCtmod(\scr{A})$ has functorial path objects for fibrant objects since all objects are fibrant.  Let $(\scr{F}, d)$ be a presheaf of contramodules and let $\scr{I}$ be the complex with $ \scr{A}  \oplus \scr{A}  $ in degree zero and $\scr{A}$  in degree one with differential represented by the matrix 

\begin{equation*}
\begin{bmatrix}
- Id & Id \\ 
\end{bmatrix}.
\end{equation*}

We claim that $ \scr{F} \ot \scr{I}$ defines a functorial path object for $\scr{F}$. Indeed, consider the factorisation 

\[
\begin{tikzcd}
 \scr{F} \arrow[r, "e"] &  \scr{F} \ot \scr{I} \arrow[r,"p"]  & \scr{F} \oplus \scr{F}.
\end{tikzcd}
\]
where 
\begin{equation*}
e=\begin{bmatrix}
Id \\
0 \\
Id 
\end{bmatrix},
\end{equation*}
and
\begin{equation*}
p=\begin{bmatrix}
Id & 0 & 0\\
0 & 0 & 0  \\
0 & 0 & Id 
\end{bmatrix}.
\end{equation*}
To conclude we show that $e$ is a weak equivalence. Let $ r: \scr{F} \ot \scr{I} \lra \scr{F} $ be the map defined by the matrix 
\begin{equation*}
r=\begin{bmatrix}
Id & 0 & 0\\ 
\end{bmatrix}.
\end{equation*}

Then $e$ and $r$ exhibit $\scr{F}$ as a retract of $\scr{F} \ot \scr{I}$, implying that $\scr{F} \ot \scr{I} \cong \scr{F} \oplus \coker (e)$. The pushout of $e$ and the zero map is exactly the mapping cone of the identity, which is acyclic, thus $e$ is the a quasi-isomorphism. By Lemma \ref{Tranferlem}, we can conclude that $PCtmod(\scr{A})$ is a dg model category.

\end{proof}

\begin{proposition}
\label{local on pshct}
    There exists a right proper, combinatorial, stable, dg model category structure on the category of presheaves of dg $\scr{A}$-contramodules.  Let $f: \scr{F} \lra \scr{G}$ be a morphism in $PCtmod(\scr{A})$. Then 
    
    \begin{enumerate}[label=(\roman*)]
    \item  $f$ is a weak equivalence if and only if $f(U): \scr{F}(U) \lra \scr{G}(U)$ is a quasi-isomorphism for all open $U \subset X $,
           \item $f$ a fibration if and only if $f(U): \scr{F}(U) \lra \scr{G}(U)$ is surjective for all open $U \subset X $ and the kernel of $f$ is a hypersheaf .            
\end{enumerate}
We will refer to this model structure as the local model structure. 
\end{proposition}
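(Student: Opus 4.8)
The plan is to obtain this structure exactly as the global projective structure was obtained in Proposition~\ref{pshctmodmodel}, namely by transferring a model structure on $PMod(\underline{k})$ along the dg adjunction $-\ot_{\underline{k}}\scr{A}\colon PMod(\underline{k})\rightleftarrows PCtmod(\scr{A})\colon J$ of Lemma~\ref{base change for pct}, but now starting from the \emph{local} model structure of Theorem~\ref{Local model Strucutre} on the source rather than the global projective one. By the transfer principle (Theorem~\ref{right transfer}) it suffices to check that (a) the left adjoint $-\ot_{\underline{k}}\scr{A}$ preserves compact objects, and (b) $PCtmod(\scr{A})$ admits a functorial fibrant replacement and functorial path objects for fibrant objects. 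Item (a) is Lemma~\ref{compact objects}; for the path objects one cotensors with a fixed contractible interval object of $k-Mod$ as in Remark~\ref{tensor and cotensor for ctpsh}, the point being that for a hypersheaf $\scr{F}$ the resulting map to $\scr{F}\oplus\scr{F}$ is a level-wise surjection with hypersheaf kernel, hence a local fibration. Once (b) is in hand the transferred structure exists, is cofibrantly generated, hence combinatorial since $PCtmod(\scr{A})$ is locally presentable, is a dg model category by Lemma~\ref{Tranferlem}, is stable by the shift argument of Remark~\ref{stability}, and is right proper because both its fibrations and its weak equivalences are detected by the right adjoint $J$ — which preserves the pullbacks along which right properness is tested — and $PMod(\underline{k})$ with the local structure is right proper. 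The descriptions (i) and (ii) are then read off from the transfer: $f$ is a fibration, resp.\ a weak equivalence, precisely when $J(f)$ is so in the local structure on $PMod(\underline{k})$, i.e.\ a level-wise surjection with hypersheaf kernel, resp.\ a local weak equivalence of the underlying presheaves.

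The one genuinely new ingredient is a functorial fibrant replacement. I would produce it by a Godement-type resolution, adapting Remark~\ref{Godement remark} to presheaves of contramodules: writing $a\colon X_{disc}\lra X$ for the continuous map induced by the identity and using the inverse- and direct-image functors of Definition~\ref{p map def} for the map $a$, set $G^{n}(\scr{F})=(a_{*}a^{*})^{n}(\scr{F})$ and $G(\scr{F})=\holim_{\triangle}G^{\bullet}(\scr{F})$. The presheaf $G^{1}(\scr{F})$ assigns to an open set $U$ a product indexed by the points of $U$, so its underlying presheaf of dg vector spaces is flasque and in particular a hypersheaf; since $J$ is a right adjoint it preserves limits, and hence the homotopy limit $G(\scr{F})$, so $J(G(\scr{F}))$ is a homotopy limit of hypersheaves and is therefore a hypersheaf. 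Thus $G(\scr{F})$ is fibrant in the transferred structure, and it remains to prove that the coaugmentation $\scr{F}\lra G(\scr{F})$ is a local weak equivalence, i.e.\ that $J(\scr{F})\lra J(G(\scr{F}))$ becomes a quasi-isomorphism after sheafification.

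This last step is where I expect the main difficulty, and it is exactly where the contramodule setting departs from the module setting: one cannot simply transport \cite[Theorem~6.3]{choudhury2019homotopy} across $J$, because $J$ does not commute with filtered colimits — the monad $-\wot A$ is not finitary — so $J(a^{*}\scr{F})$ is not the stalk of $J(\scr{F})$ and $J(G^{\bullet}(\scr{F}))$ is not the ordinary Godement resolution of $J(\scr{F})$. I would instead argue directly: since each $J(G^{n}(\scr{F}))$ is flasque, hence acyclic for sheaf cohomology, it is enough to check that the augmented complex of \emph{sheaves} obtained from $J(\scr{F})\lra J(G^{\bullet}(\scr{F}))$ by sheafification and totalisation is exact; this may be verified stalkwise, where it reduces to the contractibility of the Godement complex of the stalk of $\scr{F}$ computed in contramodules, using the extra degeneracy that $a_{*}a^{*}$ acquires after evaluation at a point. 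An alternative route that avoids fibrant replacement altogether is to realise the local structure as the left Bousfield localisation of the global projective structure of Proposition~\ref{pshctmodmodel} at the set of \u{C}ech hypercovers; for that one would instead have to establish that the global projective structure on $PCtmod(\scr{A})$ is left proper, which follows from the fact that its cofibrations split level-wise as graded contramodules — so that $J$ preserves pushouts along them — together with left properness of $PMod^{G}(\underline{k})$.
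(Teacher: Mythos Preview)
Your approach is exactly what the paper intends: its entire proof is the single sentence ``The proof of Proposition~\ref{local on pshct} is similar to the proof of Proposition~\ref{pshctmodmodel}'', so transferring along $-\ot_{\underline{k}}\scr{A}\dashv J$ from the \emph{local} structure on $PMod(\underline{k})$ is precisely the indicated route, and you have supplied considerably more detail than the paper does.

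Two remarks. First, you are right that the transfer produces weak equivalences as those $f$ with $J(f)$ a \emph{local} weak equivalence (stalkwise quasi-isomorphism after sheafification); the statement as printed in the paper says section-wise quasi-isomorphisms, identical to the global structure, which is incompatible with having strictly fewer fibrations and is evidently a copy--paste slip from Proposition~\ref{pshctmodmodel}. Second, you have correctly isolated the one place where ``similar to'' hides real work, namely the functorial fibrant replacement, and your diagnosis that the Godement argument does not transport naively---because $J$ is not finitary and hence does not commute with stalks---is on point. Of your two routes, the left Bousfield localisation of the already-constructed global projective structure is the cleaner one and mirrors how the local structure on $PMod(\scr{R})$ is built in \cite{choudhury2019homotopy}; the left properness you need follows as you outline, since the generating cofibrations $\scr{A}_{U}\ot S^{n-1}\hookrightarrow\scr{A}_{U}\ot D^{n}$ are section-wise split as graded contramodule maps, so $J$ preserves pushouts along cofibrations and left properness descends from $PMod^{G}(\underline{k})$.
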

The proof of  Proposition \ref{local on pshct} is similar to the proof of Proposition \ref{pshctmodmodel}. We use the notation $PCtmod^{G}(\scr{A})$ for the global projective model to distinguish  this model structure from the local model structure which we will denote by $PCtmod(\scr{A})$.

\subsection{Quillen Equivalences}

Let us now recall our assumptions and introduce some notation. We assume that $X$ is a connected, locally contractible topological space, and that $k$ is a field of characteristic zero. We denote the pseudo-compact dg algebra of normalised singular cochains on $X$ with coefficients in $k$ by $C^{*}(X)$, and we denote the presheaf of normalised singular cochains on $X$ by $\scr{C}^{*}$.  Since $X$ is locally contractible, the map  $\underline{k} \lra \scr{C}^{*}$ is a quasi-isomorphism at the level of stalks. Let $p: X \lra *$ denote the continuous map from $X$ to the one-point space. Then by Proposition \ref{direct inverse image},  we obtain a dg adjunction:

\begin{equation*}
    p^{*}: C^{*}(X)-Ctmod \rightleftarrows PCtmod(\scr{C^{*}}): p_{*}.
\end{equation*}

Moreover, by  Lemma \ref{base change for pct} we have a dg adjunction

\begin{equation*}
    - \ot_{\underline{k}} \scr{C}^{*}: PMod(\underline{k}) \rightleftarrows PCtmod(\scr{C^{*}}): J.
\end{equation*}

\begin{proposition}\label{sing quill}
    The adjunction $p^{*}: C^{*}(X)-Ctmod \rightleftarrows PCtmod^{G}(\scr{C^{*}}): p_{*}$ is Quillen.
\end{proposition}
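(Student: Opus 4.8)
The plan is to verify that $p_{*}$ is right Quillen for the global projective model structures, i.e.\ that it preserves fibrations and acyclic fibrations. Both classes on $PCtmod^{G}(\scr{C}^{*})$ are detected sectionwise, and both classes on $C^{*}(X)-Ctmod$ are, by Positselski's theorem recalled in Remark~\ref{contra remark}, closely related to surjectivity: fibrations are exactly the surjections, and since every object is fibrant, acyclic fibrations are the surjections whose kernel is contraacyclic. So the proof reduces to understanding what $p_{*}$ does on underlying objects and to a surjectivity/kernel bookkeeping.

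\textbf{Key steps.} First I would unwind the functor $p_{*}$ for the map $p:X\to *$. Here $p^{-1}$ of the unique nonempty open set of $*$ is all of $X$, so $p_{*}(\scr{F})=\scr{F}(X)$ as a $C^{*}(X)$-contramodule (the $C^{*}(X)$-contraaction being restriction along the $p$-map $\widehat p: C^{*}(X)\to \scr{C}^{*}(X)$, which is the identity). Thus $p_{*}$ is just the global sections functor, and in particular it is exact on underlying complexes and it commutes with kernels. Second, for fibrations: if $f:\scr{F}\to\scr{G}$ is a fibration in $PCtmod^{G}(\scr{C}^{*})$ then $f(U)$ is surjective for every open $U$; in particular $f(X)=p_{*}(f)$ is surjective, hence a fibration in $C^{*}(X)-Ctmod$ by Positselski's description. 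Third, for acyclic fibrations: if $f$ is in addition a weak equivalence in $PCtmod^{G}(\scr{C}^{*})$, then each $f(U)$ is a quasi-isomorphism, so $\ker f(U)$ is acyclic; taking $U=X$, the complex $\ker(p_{*}f)=\ker(f(X))$ is an acyclic complex of $C^{*}(X)$-contramodules that sits in a short exact sequence $0\to\ker f(X)\to\scr{F}(X)\to\scr{G}(X)\to 0$. I would then argue that this kernel is contraacyclic: an exact (acyclic) complex is the totalization of the short exact sequences of its stupid truncations, hence lies in the triangulated subcategory generated by totalizations of exact triples and closed under products, which is exactly the definition of contraacyclic. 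Therefore $p_{*}(f)$ is a surjection with contraacyclic kernel, i.e.\ an acyclic fibration in $C^{*}(X)-Ctmod$. Since $p_{*}$ preserves both fibrations and acyclic fibrations, the dg adjunction $p^{*}\dashv p_{*}$ of Proposition~\ref{direct inverse image} is a Quillen adjunction.

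\textbf{Main obstacle.} The one non-formal point is the claim that an acyclic complex of $A$-contramodules is contraacyclic — equivalently, that weak equivalences in the global model structure (sectionwise quasi-isomorphisms) really do map under $p_{*}$ to weak equivalences in the contramodule model structure. This is the statement that an exact complex, viewed as a totalization, belongs to the minimal triangulated subcategory closed under infinite \emph{products} generated by totalizations of short exact sequences; it follows from the standard d\'evissage argument (write the acyclic complex as an inverse limit / product-telescope of the brutal truncations and identify the relevant cones), but it is where the characteristic-zero and pseudo-compactness hypotheses, and Positselski's framework, are genuinely used. Everything else is the routine observation that $p_{*}$ is global sections and hence sectionwise-exact and surjectivity-preserving.
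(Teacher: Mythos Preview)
Your proof has a genuine gap at the step you yourself flagged as the ``main obstacle'': the claim that an acyclic complex of $C^{*}(X)$-contramodules is contraacyclic is \emph{false} in general. This is precisely the phenomenon that distinguishes derived categories of the second kind from ordinary derived categories. If every exact complex were contraacyclic, the contraderived category $D^{ct}(A)$ would coincide with the usual derived category $D(A)$, and the entire apparatus of Positselski's model structures of the second kind would be vacuous. Your d\'evissage sketch via brutal truncations and a product-telescope does not assemble an arbitrary acyclic complex as a product of totalizations of short exact sequences of contramodules in the required sense; the short exact sequences $0\to Z^{n}\to K^{n}\to Z^{n+1}\to 0$ need not split in the contramodule category, and the inverse-limit reconstruction does not place $K$ in the minimal triangulated subcategory closed under products generated by such totalizations. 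Neither characteristic zero nor pseudo-compactness rescues this.

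The paper avoids this trap by not trying to verify that $p_{*}(f)$ is a weak equivalence at all. Instead, it characterizes acyclic fibrations via the right lifting property against cofibrations and checks that directly: a cofibration $i:Q\to R$ in $C^{*}(X)-Ctmod$ has graded-projective cokernel, hence is a degree-wise split monomorphism, and a surjective quasi-isomorphism of complexes has the RLP against any such map by the standard argument of \cite[Proposition~2.3.9]{hovey2007model}. So from the same input you obtained --- $p_{*}(f)$ is a surjective quasi-isomorphism --- the paper concludes via lifting rather than via the (incorrect) identification of its kernel as contraacyclic. The moral is that in second-kind model structures one should not try to test ``acyclic fibration'' through the weak-equivalence axis; the RLP characterization is the robust route.
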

\begin{proof}
It is straightforward to see that $p_{*}$ preserves fibrations, so we only need to show that $p_{*}$ preserves acyclic fibrations. First, note that the functor $p_{*}$ takes weak equivalences to quasi-isomorphisms, as $p_{*}$ is exact. Let $f: \scr{F} \lra \scr{G}$ be an acyclic fibration, then $p_{*}(f)$ is a surjective quasi-isomorphism.

Let $i: Q \lra R$ be a cofibration. The cokernel of $i$ is degree-wise projective and in particular, $i$ is a degree-wise split injection. To conclude, we note that an analogous argument to the proof of  \cite[Proposition 2.3.9.]{hovey2007model}  shows that $p_{*}(f)$ has the right lifting property with respect to all cofibrations $i: Q \lra R$.

\end{proof}

Let $\mcl{C}$ denote a set of compact generators for $D^{ct}(C^{*}(X))$. Additionally, we assume that the contramodules in $\mcl{C}$ have been cofibrantly replaced where necessary. In particular, a morphism of contramodules $f: Q \lra R $ is a weak equivalence if and if the induced map $\uhom (C, Q) \lra \uhom(C, R)$ is a quasi-isomorphism for all $C \in \mcl{C}$, by Proposition  \ref{we contra}.  By Proposition \ref{sing quill} and Proposition \ref{enriched localisation} we immediately obtain the following results.

\begin{proposition}\label{pctgl}
 Let $K= \{p^{*}(C) : C \in \mcl{C} \}    $. Then the right Bousfield localisation of $PCtmod^{G}(\scr{C^{*}})$ at $K$ exists and  $R_{K}PCtmod(\scr{C^{*}})$ is a right proper, combinatorial, stable, dg model category.  Let $f: \scr{F} \lra \scr{G}$ be a morphism of presheaves. Then,  
 
 \begin{enumerate}[label=(\roman*)]
    \item $f$ is a weak equivalence if and only if the induced map $\uhom(\scr{K}, f)$ is a quasi-isomorphism for all $\scr{K} \in K$,
    \item $f$ is a fibration if and only if $f(U): \scr{F}(U) \lra \scr{G}(U)$ is surjective for each open set $U \subset X$.
\end{enumerate}
    
\end{proposition}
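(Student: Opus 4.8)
The plan is to obtain Proposition~\ref{pctgl} as a formal consequence of Proposition~\ref{enriched localisation} and Proposition~\ref{existance}, with Proposition~\ref{sing quill} supplying only the cofibrancy needed to verify their hypotheses. The first step is to record that $PCtmod^{G}(\scr{C^{*}})$ already satisfies the standing hypotheses of Proposition~\ref{enriched localisation}: it is a combinatorial, right proper, stable dg model category by Proposition~\ref{pshctmodmodel}. The second step is to check that $K=\{p^{*}(C):C\in\mcl{C}\}$ is a stable set of cofibrant objects. Cofibrancy is immediate: we have arranged, as in the discussion preceding the statement, that each $C\in\mcl{C}$ is cofibrant, and $p^{*}$ is left Quillen by Proposition~\ref{sing quill}, so each $p^{*}(C)$ is cofibrant in $PCtmod^{G}(\scr{C^{*}})$.

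For stability of $K$ I would argue as follows. The loop functor $\Omega$ on $PCtmod^{G}(\scr{C^{*}})$ is simply the degree shift, by the argument of Remark~\ref{stability} as adapted in the proof that the category of dg contramodules is stable; since $p^{*}$ is a dg functor it commutes with shifts, so $\Omega p^{*}(C)\cong p^{*}(\Omega C)$. It therefore suffices to know that $\mcl{C}$ is closed under the shift functor on $D^{ct}(C^{*}(X))$, which we may assume without loss of generality: if $\mcl{C}$ is a set of compact generators then so is $\{\Sigma^{n}C:C\in\mcl{C},\ n\in\mathbb{Z}\}$. Hence $K$ is closed under $\Omega$, i.e.\ $K$ is stable in the sense of Definition~\ref{stable def}.

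The third step applies the two localization theorems. Proposition~\ref{enriched localisation} gives that $R_{K}PCtmod^{G}(\scr{C^{*}})$ exists and is a stable dg model category, and Proposition~\ref{existance} gives that it is right proper and combinatorial with the identity functor right Quillen. It remains to read off (i) and (ii). Part (ii) is immediate, since the fibrations of a right Bousfield localization coincide with those of the original model category, which by Proposition~\ref{pshctmodmodel} are exactly the objectwise surjections. For (i), the weak equivalences of $R_{K}PCtmod^{G}(\scr{C^{*}})$ are by definition the $K$-coequivalences; since every object of $PCtmod^{G}(\scr{C^{*}})$ is fibrant and each $\scr{K}\in K$ is cofibrant, $\rdh(\scr{K},f)$ is represented by the strict complex $\uhom(\scr{K},f)$, so Lemma~\ref{weak eq loc} identifies the $K$-coequivalences with those $f$ for which $\uhom(\scr{K},f)$ is a quasi-isomorphism for every $\scr{K}\in K$.

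I expect the only point requiring genuine care to be the verification that $K$ is stable, i.e.\ that $p^{*}$ respects the loop functor and that $\mcl{C}$ may be taken closed under shifts, together with the observation that all objects of $PCtmod^{G}(\scr{C^{*}})$ are fibrant, which is precisely what allows $\rdh(\scr{K},-)$ to collapse to $\uhom(\scr{K},-)$ in part (i). Everything else is a direct invocation of results already established.
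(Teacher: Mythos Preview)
Your proposal is correct and follows essentially the same approach as the paper, which dispatches this proposition in one line by invoking Proposition~\ref{sing quill} and Proposition~\ref{enriched localisation}. You have simply made explicit the verifications (stability of $K$, cofibrancy via $p^{*}$ being left Quillen, the use of Lemma~\ref{weak eq loc} and fibrancy of all objects to pass from $\rdh$ to $\uhom$) that the paper leaves to the reader.
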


\begin{proposition}\label{pctloc}
 Let $K= \{p^{*}(C) : C \in \mcl{C} \}    $. Then the right Bousfield localisation of $PCtmod(\scr{C^{*}})$ at $K$ exists and  $R_{K}PCtmod(\scr{C^{*}})$ is a right proper, combinatorial, stable, dg model category.  Let $f: \scr{F} \lra \scr{G}$ be a morphism of presheaves. Then,  
 
 \begin{enumerate}[label=(\roman*)]
    \item $f$ is a weak equivalence if and only if the induced map $\rdh(\scr{K}, f)$ is a quasi-isomorphism for all $\scr{K} \in K$,
    \item $f$ is a fibration if and only if $f(U): \scr{F}(U) \lra \scr{G}(U)$ is surjective for each open set $U \subset X$ and kernel of $f$ is a hypersheaf. 
\end{enumerate}

\end{proposition}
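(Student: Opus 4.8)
The plan is to deduce the statement from Proposition \ref{enriched localisation} in exactly the way Proposition \ref{pctgl} is deduced, but using the local model structure of Proposition \ref{local on pshct} in place of the global projective one. The first step is to record that $PCtmod(\scr{C^{*}})$, equipped with the local model structure, is a right proper, combinatorial, stable, dg model category; this is the content of Proposition \ref{local on pshct}, and it is precisely the list of hypotheses imposed on $\mcl{M}$ in Proposition \ref{enriched localisation}.

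The second step is to verify that $K = \{ p^{*}(C) : C \in \mcl{C} \}$ is a stable set of cofibrant objects in this model category. For cofibrancy, recall that $p^{*} \colon C^{*}(X)-Ctmod \lra PCtmod^{G}(\scr{C^{*}})$ is left Quillen by Proposition \ref{sing quill} and that each $C \in \mcl{C}$ is cofibrant (the generators have been cofibrantly replaced where necessary), so $p^{*}(C)$ is cofibrant in $PCtmod^{G}(\scr{C^{*}})$; since the local model structure is built as in Theorem \ref{Local model Strucutre}, its cofibrations coincide with those of the global projective structure, hence $p^{*}(C)$ is also cofibrant in $PCtmod(\scr{C^{*}})$. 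For stability one may enlarge $\mcl{C}$ so that it is closed under the shift functor — this keeps it a set and still a set of compact generators of $D^{ct}(C^{*}(X))$ — and since $p^{*}$ is a dg functor it commutes with shifts, so $K$ is closed under $\Omega$ in the sense of Definition \ref{stable def}.

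Given these two points, Proposition \ref{enriched localisation} yields that $R_{K}PCtmod(\scr{C^{*}})$ exists and is a right proper, combinatorial, stable, dg model category. It then remains to read off the fibrations and weak equivalences. A right Bousfield localisation has the same fibrations as the ambient model category, so by Proposition \ref{local on pshct} a morphism $f \colon \scr{F} \lra \scr{G}$ is a fibration in $R_{K}PCtmod(\scr{C^{*}})$ iff each $f(U)$ is surjective and $\ker f$ is a hypersheaf, which is (ii). The weak equivalences of $R_{K}PCtmod(\scr{C^{*}})$ are, by construction, the $K$-coequivalences; since $PCtmod(\scr{C^{*}})$ is a stable dg model category and $K$ is a stable set, Lemma \ref{weak eq loc} identifies these with the maps $f$ for which $\rdh(\scr{K}, f)$ is a quasi-isomorphism for every $\scr{K} \in K$, where $\rdh$ is the right derived hom-functor of the local structure. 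That is (i).

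The one place that genuinely needs attention — and what I would flag as the main obstacle — is keeping straight that the $\rdh$ appearing in (i) is the derived mapping complex of the \emph{local} model structure (so computed with hypersheaf fibrant replacements) and that $K$ really lies among the cofibrant objects of the \emph{local} structure; both reduce to the single observation that passage from the global projective to the local model structure is a left Bousfield localisation, so cofibrations (hence cofibrant objects) are unchanged while fibrant replacement becomes hypersheafification. Granting the cited results, everything else is formal.
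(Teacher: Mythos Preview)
Your proposal is correct and follows the same route the paper takes: the paper simply states that both Proposition \ref{pctgl} and Proposition \ref{pctloc} ``immediately follow'' from Proposition \ref{sing quill} together with Proposition \ref{enriched localisation}, and you have spelled out exactly that deduction, including the check that $K$ is a stable set of cofibrant objects (via $p^{*}$ being left Quillen and the local cofibrations agreeing with the global projective ones) and the identification of weak equivalences via Lemma \ref{weak eq loc}. The only minor remark is that existence, right properness, and combinatoriality of the localisation strictly come from Proposition \ref{existance} rather than Proposition \ref{enriched localisation} itself, but this is a citation quibble and not a gap in the argument.
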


\begin{lemma}\label{silly lem 2}
    The model structures in Proposition \ref{pctgl}  and in Proposition \ref{pctloc} are Quillen equivalent and the identity functor $Id: R_{K} PCtmod^{G}(\scr{C}^{*}) \lra  R_{K}PCtmod(\scr{C}^{*}) $ is left Quillen.
\end{lemma}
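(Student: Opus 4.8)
The plan is to adapt the proof of Proposition \ref{silly prop} almost word for word. Write $L = Id$ for the identity functor $R_{K}PCtmod^{G}(\scr{C}^{*}) \to R_{K}PCtmod(\scr{C}^{*})$ and $R = Id$ for its right adjoint; one must show that $L$ is left Quillen and that $L \dashv R$ is a Quillen equivalence. The one structural simplification over the module case is that the Godement-resolution input of Lemma \ref{silly lem} is not needed here: by Propositions \ref{pshctmodmodel} and \ref{local on pshct} the global and local model structures on $PCtmod(\scr{C}^{*})$ already have the same class of weak equivalences (level-wise quasi-isomorphisms), the local one merely having fewer fibrations.

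First I would dispose of the easy points. Right Bousfield localisation leaves the fibrations unchanged (Proposition \ref{existance}), so a fibration of $R_{K}PCtmod(\scr{C}^{*})$ is a level-wise surjection with hypersheaf kernel, in particular a level-wise surjection, hence a fibration of $PCtmod^{G}(\scr{C}^{*})$ and so of $R_{K}PCtmod^{G}(\scr{C}^{*})$; thus $R$ preserves fibrations. Since the two unlocalised structures share their weak equivalences while the local one has fewer fibrations, every cofibration of $PCtmod^{G}(\scr{C}^{*})$ is a cofibration of $PCtmod(\scr{C}^{*})$; combining this with Proposition \ref{sing quill} and the standing assumption that the generators in $\mcl{C}$ are cofibrant, each $\scr{K}$ in $K = \{p^{*}(C) : C \in \mcl{C}\}$ is cofibrant in $PCtmod^{G}(\scr{C}^{*})$, and hence also in $PCtmod(\scr{C}^{*})$.

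The substantive step is to show that $R_{K}PCtmod^{G}(\scr{C}^{*})$ and $R_{K}PCtmod(\scr{C}^{*})$ have the same weak equivalences. By Propositions \ref{pctgl} and \ref{pctloc}, a map $f$ is a weak equivalence in the first exactly when $\uhom(\scr{K}, f)$ is a quasi-isomorphism for every $\scr{K} \in K$, and in the second exactly when $\rdh(\scr{K}, f)$ is a quasi-isomorphism for every $\scr{K} \in K$; so it suffices to produce a natural quasi-isomorphism $\uhom(\scr{K}, \scr{F}) \simeq \rdh(\scr{K}, \scr{F})$. Because $\scr{K}$ is cofibrant in $PCtmod^{G}(\scr{C}^{*})$ and every object there is fibrant, the dg functor $\uhom(\scr{K}, -)$ sends every level-wise quasi-isomorphism to a quasi-isomorphism; on the other hand a fibrant replacement $\scr{F} \to \scr{F}^{fib}$ in the local structure is a level-wise quasi-isomorphism (Proposition \ref{local on pshct}), and since $\scr{K}$ is cofibrant in $PCtmod(\scr{C}^{*})$ as well, $\rdh(\scr{K}, \scr{F})$ may be computed as $\uhom(\scr{K}, \scr{F}^{fib})$. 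Hence $\rdh(\scr{K}, \scr{F}) = \uhom(\scr{K}, \scr{F}^{fib}) \simeq \uhom(\scr{K}, \scr{F})$, which is what was needed.

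Granting the previous paragraph, $R = Id$ preserves fibrations and preserves and reflects weak equivalences, hence also acyclic fibrations, so $L$ is left Quillen; the unit of $L \dashv R$ is the identity natural transformation, so by \cite[Lemma 3.3]{erdal2019model} the adjunction is a Quillen equivalence, exactly as in the final step of Proposition \ref{silly prop}. The main obstacle is thus the comparison in the third paragraph: showing that the right Bousfield localisations at $K$ of the two model structures have the same weak equivalences, which amounts to checking that the derived enriched Hom out of the objects of $K$ is insensitive to the choice of global versus local structure. This plays the role that Lemma \ref{silly lem} plays in the module setting; everything else is formal.
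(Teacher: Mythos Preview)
Your proposal is correct and follows the paper's own approach, which simply says to argue analogously to Proposition \ref{silly prop}. Your observation that the Godement-resolution input of Lemma \ref{silly lem} is unnecessary here---because Propositions \ref{pshctmodmodel} and \ref{local on pshct} assign the two unlocalised structures on $PCtmod(\scr{C}^{*})$ the \emph{same} class of weak equivalences---is a legitimate shortcut within that same framework, and the remaining steps (preservation of fibrations, identity unit, appeal to \cite[Lemma 3.3]{erdal2019model}) match exactly.
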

\begin{proof}
This follows from an analogous argument to the proof of Proposition \ref{silly prop} . 

\end{proof}

\begin{proposition}\label{sing qeq 1}
    The adjunction 
 \begin{equation*}
     p^{*}: C^{*}(X)-Ctmod \rightleftarrows R_{K}PCtmod(\scr{C^{*}}):p_{*}
 \end{equation*}   
is a Quillen equivalence. 
\end{proposition}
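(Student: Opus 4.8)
The plan is to argue exactly as in Proposition~\ref{quillen eq 1}: I would show that $p_{*}$ is a right Quillen functor that both preserves and reflects weak equivalences and whose unit is a natural isomorphism, and then invoke \cite[Lemma 3.3]{erdal2019model}. Two features of the situation keep this clean. First, every object of $R_{K}PCtmod(\scr{C}^{*})$ is fibrant, since by Proposition~\ref{pctgl}(ii) the fibrations are precisely the open-wise surjections, so $\scr{F}\to 0$ is always a fibration; likewise every dg $C^{*}(X)$-contramodule is fibrant in the model structure of the second kind (\cite[Theorem 8.2(b)]{positselski2011two}, as recorded after Proposition~\ref{pshctmodmodel}). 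Hence there is no fibrant-replacement bookkeeping, and against a cofibrant test object the enriched hom-complex already computes the derived one. Second, unwinding the definition of $p^{*}$ for $p\colon X\to\ast$ gives $p^{*}(M)(U)=M\,\widehat{\ot}_{C^{*}(X)}\,\scr{C}^{*}(U)$, so $p_{*}p^{*}(M)=p^{*}(M)(X)=M\,\widehat{\ot}_{C^{*}(X)}\,C^{*}(X)\cong M$ naturally; this is the statement that extension of scalars along the identity is the identity, which follows from the monadicity used in Lemma~\ref{monadic lemma 1}. So the unit is a natural isomorphism.

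The substantive step is the two-sided comparison of weak equivalences. Given a morphism $f\colon\scr{F}\to\scr{G}$ in $PCtmod(\scr{C}^{*})$ and $C\in\mcl{C}$, the dg adjunction $p^{*}\dashv p_{*}$ of Proposition~\ref{direct inverse image} supplies a natural isomorphism of complexes $\uhom_{\scr{C}^{*}}(p^{*}(C),\scr{F})\cong\uhom_{C^{*}(X)}(C,p_{*}(\scr{F}))$; hence $\uhom_{\scr{C}^{*}}(p^{*}(C),f)$ is a quasi-isomorphism for all $C\in\mcl{C}$ if and only if $\uhom_{C^{*}(X)}(C,p_{*}(f))$ is a quasi-isomorphism for all $C\in\mcl{C}$. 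By Proposition~\ref{pctgl}(i), and since $K=\{p^{*}(C):C\in\mcl{C}\}$, the left-hand condition says precisely that $f$ is a weak equivalence in $R_{K}PCtmod(\scr{C}^{*})$. On the other side, each $C\in\mcl{C}$ is cofibrant and every contramodule is fibrant, so $\rdh_{C^{*}(X)}(C,-)=\uhom_{C^{*}(X)}(C,-)$, and by Proposition~\ref{we contra} the right-hand condition says precisely that $p_{*}(f)$ is a weak equivalence in $C^{*}(X)-Ctmod$. Therefore $p_{*}$ preserves and reflects weak equivalences.

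To finish I would note that $p_{*}$ preserves fibrations (an open-wise surjection $f$ has $p_{*}(f)=f(X)$ surjective, which is a fibration of the second kind), and since it also preserves weak equivalences it preserves acyclic fibrations; so $p^{*}\dashv p_{*}$ is a Quillen adjunction. Combined with the fact that $p_{*}$ reflects weak equivalences and the unit is an isomorphism, \cite[Lemma 3.3]{erdal2019model} then gives that it is a Quillen equivalence. If instead one reads $R_{K}PCtmod(\scr{C}^{*})$ as the localization of the local model structure of Proposition~\ref{pctloc}, the statement follows by composing the equivalence just obtained with the left Quillen equivalence of Lemma~\ref{silly lem 2}.

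The only point requiring real care — and the one I would double-check first — is the identification of derived with underived hom-complexes on both sides: that $p^{*}(C)$ may be fed directly into Proposition~\ref{pctgl}(i) (which is legitimate because that proposition already localizes at the cofibrant set $K$) and that $\rdh_{C^{*}(X)}(C,-)$ collapses to $\uhom_{C^{*}(X)}(C,-)$ for $C\in\mcl{C}$ (which uses cofibrancy of $\mcl{C}$ together with all contramodules being fibrant). Once these are in place, everything is a transcription of the module-theoretic argument behind Proposition~\ref{quillen eq 1}.
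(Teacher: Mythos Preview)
Your proposal is correct and follows essentially the same approach as the paper. The paper's proof first invokes Lemma~\ref{silly lem 2} to reduce to the global localisation $R_{K}PCtmod^{G}(\scr{C}^{*})$, then uses the dg adjunction isomorphism $\uhom_{\scr{C}^{*}}(p^{*}(C),-)\cong\uhom_{C^{*}(X)}(C,p_{*}(-))$ together with Proposition~\ref{we contra} to see that $p_{*}$ preserves and reflects weak equivalences, observes that $p_{*}$ preserves fibrations, notes $p_{*}p^{*}(Q)\cong Q$, and concludes via \cite[Lemma 3.3]{erdal2019model}; you do the same steps in a slightly different order (proving the global case first and then invoking Lemma~\ref{silly lem 2} at the end), with somewhat more explicit justification of why the underived hom-complexes already compute the derived ones.
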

\begin{proof}
By Lemma \ref{silly lem 2} it is sufficient to show that

\begin{equation*}
    p^{*}: C^{*}(X)-Ctmod \rightleftarrows R_{K}PCtmod^{G}(\scr{C^{*}}):p_{*} 
\end{equation*}
is a Quillen equivalence. Let $f: \scr{F} \lra \scr{G}$ be a morphism of presheaves.  Then $\uhom(\scr{K}, f) $ is a quasi-isomorphism for all $\scr{K} \in K $  if and only if $\uhom(C, p^{*}(f))$ is a quasi-isomorphism. Hence, $p_{*}$ all preserves weak equivalences and reflects weak equivalences. Since $p_{*}$ preserves fibrations, we can conclude that the adjunction is Quillen. 

The adjunction is a Quillen equivalence by \cite[Lemma 3.3]{erdal2019model} if, for all cofibrant contramodules $Q$, the unit of the adjunction $Q \lra p_{*}(p^{*}(Q))$ is a weak equivalence. Since $p_{*}(p^{*}(Q)) \cong Q$ the result follows.  
\end{proof}

\begin{proposition}
Let $L = \{(J(\scr{K})^{fib})^{cof} | \scr{K} \in K\}$. Then the right Bousfield localisation of  $ PMod(\underline{k})$ at $L$ exists and  $R_{L} PMod(\underline{k})$ is a right proper, combinatorial, stable, dg model category. Let  $f: \scr{F} \lra \scr{G}$ be a morphism of presheaves. Then, 
    \begin{enumerate}[label=(\roman*)]
    \item $f$ is a weak equivalence if and only if the induced map $\rdh(\scr{L}, f)$ is a quasi-isomorphism for all $\scr{L} \in L$,
    \item $f$ is a fibration if and only if $f(U): \scr{F}(U) \lra \scr{G}(U)$ is surjective for each open set $U \subset X$ and kernel of $f$ is a hypersheaf. 
\end{enumerate}

\end{proposition}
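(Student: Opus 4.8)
The plan is to invoke Proposition \ref{enriched localisation}, applied to $\mcl{M} = PMod(\underline{k})$ with the local model structure, in exactly the way Proposition \ref{localisation of proj} was proved; the only work is to check that the set $L$ satisfies the two hypotheses of that proposition. First I would recall that $PMod(\underline{k})$ with the local model structure (Theorem \ref{Local model Strucutre}) is combinatorial, right proper (see \cite[Theorem 5.7]{choudhury2019homotopy}), and a stable dg model category, where by Remark \ref{stability} the loop and suspension functors are the degree shifts. This puts us in the situation of Proposition \ref{enriched localisation}.

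Next I would verify that $L = \{(J(\scr{K})^{fib})^{cof} : \scr{K} \in K\}$ is a stable set of cofibrant objects of $PMod(\underline{k})$. Cofibrancy is immediate, as every element of $L$ is by construction a cofibrant replacement. For stability: the set $\mcl{C}$ of compact generators of $D^{ct}(C^{*}(X))$ may be chosen closed under the shift functor — shifts preserve both compactness and the property of being a generating set — and both $p^{*}$ and the forgetful functor $J$ are dg functors commuting with the shift, while shifts of cofibrant objects remain cofibrant by Remark \ref{stability}. Enlarging $L$ to its closure $\{\Omega^{n}\scr{L} : \scr{L} \in L,\ n \in \mathbb{Z}\}$ under loops and suspensions therefore again gives a set of cofibrant objects, and it does not change the class of $L$-coequivalences, since $Map_{\mcl{M}}(\Omega^{n}\scr{L}, f)$ differs from $Map_{\mcl{M}}(\scr{L}, f)$ by (de)looping and loops and suspensions of simplicial sets preserve weak equivalences; so we may and do assume $L$ is itself closed under $\Omega$, i.e.\ stable in the sense of Definition \ref{stable def}.

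With these checks done, Proposition \ref{enriched localisation} together with Proposition \ref{existance} shows that $R_{L}PMod(\underline{k})$ exists and is a right proper, combinatorial, stable, dg model category. The description of the fibrations and weak equivalences then follows formally: by the definition of right Bousfield localisation the fibrations are unchanged from the local model structure, hence by Theorem \ref{Local model Strucutre} are exactly the morphisms $f$ with $f(U)$ surjective for all open $U \subset X$ and with $\ker(f)$ a hypersheaf, giving (ii); and the weak equivalences are the $L$-coequivalences, which by Lemma \ref{weak eq loc} — applicable since $L$ is now a stable set of cofibrant objects in a stable dg model category — are precisely the morphisms $f$ with $\rdh(\scr{L}, f)$ a quasi-isomorphism for all $\scr{L} \in L$, giving (i).

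I expect the main (indeed only) obstacle to be the bookkeeping in the second paragraph: making sure the concretely defined set $L$, built from $\mcl{C}$ by successively applying $p^{*}$, $J$, fibrant replacement, and cofibrant replacement, can legitimately be arranged to be literally stable without affecting the resulting localisation. Once that is granted, the statement is a verbatim reprise of Proposition \ref{localisation of proj}.
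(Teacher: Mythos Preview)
Your proposal is correct and follows exactly the approach the paper takes: the paper does not write out a proof for this proposition, treating it as an obvious reprise of Proposition \ref{localisation of proj}, which in turn is proved by a one-line appeal to Proposition \ref{enriched localisation}. Your write-up simply makes explicit the verification of the hypotheses (cofibrancy and stability of $L$) that the paper leaves to the reader.
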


\begin{proposition}\label{sing qeq 2}
   The adjunction 

   \begin{equation*}
       - \ot_{\underline{k}} \scr{C}^{*} : R_{L}PMod(\underline{k}) \rightleftarrows R_{K}PCtmod(\scr{C}^{*}): J,
   \end{equation*}

   is a Quillen equivalence. 
\end{proposition}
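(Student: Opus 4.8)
The plan is to follow the proof of Proposition \ref{proposition gen eq} essentially verbatim, with $\scr{R}=\underline{k}$, with $\scr{A}$ replaced by the presheaf of pseudo-compact algebras $\scr{C}^{*}$, and with modules replaced by contramodules. The basic ingredient is that the non-localised adjunction $- \ot_{\underline{k}} \scr{C}^{*} : PMod(\underline{k}) \rightleftarrows PCtmod(\scr{C}^{*}) : J$ for the local model structures is itself a Quillen equivalence; this is proved exactly as Proposition \ref{unlocal qeq}, using that $J$ is exact and detects fibrations and weak equivalences of the local structures and that $\underline{k}\lra\scr{C}^{*}$ is a stalk-wise quasi-isomorphism, so that the derived unit on a cofibrant presheaf is a local weak equivalence. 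Granting this, I would first check that $- \ot_{\underline{k}} \scr{C}^{*} : R_{L}PMod(\underline{k}) \rightleftarrows R_{K}PCtmod(\scr{C}^{*}) : J$ is a Quillen adjunction: $J$ preserves fibrations since these are inherited from the local structures in both right Bousfield localisations, and by \cite[Corollary A.2]{dugger2001replacing} it then suffices to see that $- \ot_{\underline{k}} \scr{C}^{*}$ preserves cofibrations between cofibrant presheaves — but by \cite[Proposition 3.3.16]{hirschhorn2003model} the $L$-colocal cofibrations between cofibrant $L$-colocal presheaves are exactly the ordinary cofibrations of $PMod(\underline{k})$, which $- \ot_{\underline{k}} \scr{C}^{*}$ preserves by the previous sentence.

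To promote this to a Quillen equivalence I would invoke \cite[Corollary 1.3.16]{hovey2007model} and verify two things. First, for a cofibrant $\scr{F}$ in $R_{L}PMod(\underline{k})$ the derived unit $\scr{F}\lra\mathbb{R}J(\scr{F}\ot_{\underline{k}}\scr{C}^{*})$ is a local weak equivalence (because the non-localised adjunction is a Quillen equivalence), hence a weak equivalence in $R_{L}PMod(\underline{k})$, since a local weak equivalence induces a quasi-isomorphism after applying $\rdh_{\underline{k}}(\scr{L},-)$ for every $\scr{L}\in L$. Second, $J$ reflects weak equivalences between fibrant presheaves: given $f:\scr{F}\lra\scr{G}$ between fibrant presheaves with $J(f)$ a weak equivalence in $R_{L}PMod(\underline{k})$, the maps $\rdh_{\underline{k}}(\scr{L},J(f))$ are quasi-isomorphisms for all $\scr{L}\in L=\{(J(\scr{K})^{fib})^{cof}\mid\scr{K}\in K\}$; each $\scr{L}$ is locally weakly equivalent to $\mathbb{R}J(\scr{K})$, and passing across the derived base-change adjunction $\mathbb{L}(-\ot_{\underline{k}}\scr{C}^{*})\dashv\mathbb{R}J$, together with the fact that the derived counit of the non-localised Quillen equivalence identifies $\mathbb{L}(-\ot_{\underline{k}}\scr{C}^{*})(\mathbb{R}J(\scr{K}))$ with $\scr{K}$, yields that $\rdh_{\scr{C}^{*}}(\scr{K},f)$ is a quasi-isomorphism for all $\scr{K}\in K$. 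By Proposition \ref{pctloc} this is precisely the statement that $f$ is a weak equivalence in $R_{K}PCtmod(\scr{C}^{*})$.

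I expect the reflection step to be the main obstacle: one must keep careful track of fibrant and cofibrant replacements as one moves the colocality conditions through the derived adjunction, and the identification $\mathbb{L}(-\ot_{\underline{k}}\scr{C}^{*})(\mathbb{R}J(\scr{K}))\cong\scr{K}$ in $Ho(PCtmod(\scr{C}^{*}))$ is exactly the point where the hypothesis that the non-localised base change is a Quillen \emph{equivalence}, rather than merely a Quillen adjunction, enters. A subsidiary technical point, special to the contramodule setting, is to confirm that the weak equivalences and fibrations of the local model structure on $PCtmod(\scr{C}^{*})$ are detected on the underlying presheaves of dg $k$-vector spaces, so that $J$ is exact and the non-localised adjunction behaves as in Proposition \ref{unlocal qeq}, notwithstanding the failure of exactness of infinite direct sums in categories of contramodules.
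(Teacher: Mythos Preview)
Your proposal is correct and follows exactly the approach of the paper: first observe that the non-localised adjunction $-\ot_{\underline{k}}\scr{C}^{*}:PMod(\underline{k})\rightleftarrows PCtmod(\scr{C}^{*}):J$ is a Quillen equivalence because $\underline{k}\lra\scr{C}^{*}$ is a local weak equivalence, and then repeat the argument of Proposition~\ref{proposition gen eq} verbatim. The paper's proof is in fact terser than yours---it merely states the first point and then refers the reader back to Proposition~\ref{proposition gen eq}---so your detailed unpacking of the reflection step and the contramodule-specific caveats is a faithful expansion of what the paper leaves implicit.
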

\begin{proof}

Since $\underline{k} \lra \scr{C}^{*}$ is a local weak equivalence the adjunction between the adjunction between the local model structures

\begin{equation*}
     - \ot_{\underline{k}} \scr{C}^{*} : PMod(\underline{k}) \rightleftarrows PCtmod(\scr{C}^{*}): J
\end{equation*}
is a Quillen equivalence. The rest of the proof is similar to the proof of  Proposition \ref{proposition gen eq}.

\end{proof}
Combining Proposition  \ref{sing qeq 1} and Proposition  \ref{sing qeq 2} we obtain the following zig-zag of Quillen equivalences. 

\begin{theorem}\label{last theorem}
Let $X$ be a connected and locally contractible topological space and $k$ a field of characteristic $0$. Then the adjunctions 

 \[
\begin{tikzcd}[row sep=2em]
C^{*}(X)-Ctmod \arrow[r, shift left, "p^{*}"]  &  R_{K} PCtMod(\scr{C}^{*}) \arrow[l, shift left, "p_{*}" ]  \arrow[r, shift right, swap, "J" ] & R_{L}PCtMod(\underline{k}) \arrow[l, shift right,swap, "-\ot_{\underline{k}} \scr{C}^{*}" ]
\end{tikzcd}
\]   
 form a zig-zag dg Quillen equivalences between the dg model categories $C^{*}(X)-Ctmod$ and $R_{L}PMod(\underline{k})$. 
 
\end{theorem}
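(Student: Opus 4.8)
The plan is to obtain the zig-zag by splicing together the two Quillen equivalences that have already been isolated, so that the only things left to check are that the adjoint directions line up and that both adjunctions respect the dg enrichment. First I would invoke Proposition \ref{sing qeq 1}: the adjunction $p^{*}: C^{*}(X)-Ctmod \rightleftarrows R_{K}PCtmod(\scr{C}^{*}): p_{*}$ is a Quillen equivalence, where $C^{*}(X)-Ctmod$ carries Positselski's model structure of the second kind (with homotopy category $D^{ct}(C^{*}(X))$) and $K = \{p^{*}(C) : C \in \mcl{C}\}$ is the image of a set $\mcl{C}$ of compact generators of $D^{ct}(C^{*}(X))$. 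Then I would invoke Proposition \ref{sing qeq 2}: the adjunction $- \ot_{\underline{k}} \scr{C}^{*}: R_{L}PMod(\underline{k}) \rightleftarrows R_{K}PCtmod(\scr{C}^{*}): J$ is a Quillen equivalence, with $L = \{(J(\scr{K})^{fib})^{cof} \mid \scr{K} \in K\}$. Since the left adjoints $p^{*}$ and $- \ot_{\underline{k}} \scr{C}^{*}$ both point into $R_{K}PCtmod(\scr{C}^{*})$, the two legs form a cospan — that is, a zig-zag — of Quillen equivalences between $C^{*}(X)-Ctmod$ and $R_{L}PMod(\underline{k})$.

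Next I would check that this is a zig-zag of \emph{dg} Quillen equivalences. The adjunction $p^{*} \dashv p_{*}$ is a dg adjunction by Proposition \ref{direct inverse image}, and $- \ot_{\underline{k}} \scr{C}^{*} \dashv J$ is a dg adjunction by Lemma \ref{base change for pct}; every category appearing is a dg model category, namely $C^{*}(X)-Ctmod$ by the stable dg model structure of the previous subsection, the presheaf categories $PCtmod^{G}(\scr{C}^{*})$, $PCtmod(\scr{C}^{*})$ and $PMod(\underline{k})$ by Propositions \ref{pshctmodmodel} and \ref{local on pshct} together with the corresponding results for presheaves of dg $k$-modules, and the right Bousfield localisations $R_{K}PCtmod(\scr{C}^{*})$ and $R_{L}PMod(\underline{k})$ by Proposition \ref{enriched localisation}. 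Passing to homotopy categories, a zig-zag of Quillen equivalences induces an equivalence of categories, so we recover $D^{ct}(C^{*}(X)) \simeq Ho(R_{L}PMod(\underline{k}))$ as stated in the introduction; if one wants the triangulated refinement, Remark \ref{triangle stuff} and Theorem \ref{triqot} express the right-hand side as a Verdier quotient.

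The assembly itself is formal: once Propositions \ref{sing qeq 1} and \ref{sing qeq 2} are available, the theorem is just their juxtaposition. The substantive obstacle is entirely upstream — establishing that $PCtmod(\scr{C}^{*})$ admits the combinatorial, right proper, stable dg model structures required to feed Proposition \ref{enriched localisation}, that the right Bousfield localisation at $K$ exists and inherits these properties, that the local and global projective structures on $PCtmod(\scr{C}^{*})$ agree up to Quillen equivalence after localisation (Lemma \ref{silly lem 2}), and that the weak equivalences of the second kind on $C^{*}(X)-Ctmod$ are detected by $\rdh_{C^{*}(X)}(C,-)$ against $\mcl{C}$ (Proposition \ref{we contra}). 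For the present theorem, the only point that genuinely needs attention is that these inputs were all proved under exactly the standing hypotheses now in force — $X$ connected and locally contractible, $k$ of characteristic zero, and $\underline{k} \lra \scr{C}^{*}$ a stalk-wise quasi-isomorphism — so that the two legs apply verbatim; the trickiest bookkeeping is the compatibility of the localising sets $K$ and $L$ along $J$ used in Proposition \ref{sing qeq 2}, which is where the right-hand leg relies on the geometry of the situation rather than on pure formalism.
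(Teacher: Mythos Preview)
Your proposal is correct and matches the paper's approach exactly: the paper states the theorem immediately after the sentence ``Combining Proposition \ref{sing qeq 1} and Proposition \ref{sing qeq 2} we obtain the following zig-zag of Quillen equivalences'' and gives no further argument, so the assembly really is just the juxtaposition of those two propositions. Your additional bookkeeping on the dg enrichment (via Proposition \ref{direct inverse image}, Lemma \ref{base change for pct}, and Proposition \ref{enriched localisation}) is a reasonable elaboration of what the paper leaves implicit.
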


In light of Theorem \ref{last theorem},  it follows that the homotopy category of $R_{L}PMod(\underline{k})$ is equivalent to several categories, which can be thought of as forming a higher Riemann-Hilbert correspondence.  Let $D_{lf}(\underline{k}^{+})$ denote the derived category of clc sheaves over the constant sheaf $\underline{k}^{+}$, as defined in Definition \ref{clc def}.  The derived category of  clc sheaves are closely related to various notions of "$\infty$-local systems". Let $Sing(X)$ denote the singular simplicial complex of $X$. In \cite{holstein2015morita} the dg category of $\infty$-local systems is defined as  the dg category given by the cotensor action of the simplicial set $Sing(X)$ on the dg category $k-Mod$. Let $Loc_{\infty}(X)$ denote the dg category of $\infty$-local systems.  This is a pretriangulated dg category, and there exists an equivalence:

\begin{equation*}
        H^{0}(Loc_{\infty}(X)) \simeq D_{lf}(\underline{k}^{+}), 
\end{equation*}

(see, \cite[Theorem 12]{holstein2015moritaclc}).  In this sense, we can view the dg category of $\infty$-local systems as a dg enhancement of $D_{lf}(\underline{k}^{+})$.  The notion of an $\infty$-local systems was first defined in \cite{block2014higher} as a combinatorial refinement of the classical notion of a local system. As noted in \cite[Remark 3.7]{holstein2014properness} this is equivalent to the definition used in \cite{holstein2015morita}.  Alternatively, one can interpret  an $\infty$-local system as an $\infty$-representation of the $\infty$-groupoid $Sing(X)$, in particular, a higher analogue of a representation of the fundamental group.

More precisely, let $N_{dg}: dgCat_{k} \lra \mathbf{sSet}$ denote the dg nerve functor \cite[See Section 1.3.1 ]{lurie2009higher} and let $Fun$ denote the internal hom functor for the category of simplicial sets.  Then, we can define $\infty$-local systems as a quasi-category by 
\begin{equation}\label{quasi-infty eq}
     Fun(Sing(X),N_{dg}(k-Mod) ).
\end{equation}

It follows from \cite[section 4]{rivera2020colimit} that the dg nerve of the dg catgeory of $\infty$-local system as defined in \cite{holstein2015morita} is equivalent as a quasi-category to (\ref{quasi-infty eq}).

Now, assume that $X$ is path connected with base point $x$.  Let  $C_{*}(L_{x}(X))$ denote the dg algebra of singular chains on the Moore loop space of $X$ with base point  $x$. Then, 
\begin{equation*}
    H^{0}(Loc_{\infty}(X)) \simeq D(C_{*}(L_{x}(X))),
\end{equation*}
by \cite[Theorem 26]{holstein2015morita}. Let $C_{*}(X)$ denote the dg coalgebra of normalised the singular chains with values in $k-Mod$.  Let $\Omega $ denote the cobar construction. Then, 

\begin{align*}
 D^{co}(C_{*}(X))  &\simeq       D(\Omega (C_{*}(X))\\
                                          &\simeq D(C_{*}(L_{x}(X))),
\end{align*}

where first equivalence follows from Koszul duality \cite[Theorem 6.5 (a)]{positselski2011two} and, the second follows form \cite[Corollary 4.8]{chuang2021homotopy}. The second equivalence is a generalisation of the classical result by Adams, which states that for a simply connected space $\Omega (C_{*}(X)$ and $C_{*}(L_{x}(X))$  are quasi-isomorphic (see,  \cite{adams1956cobar}).

By the co-contra correspondence \cite[Threoem 5.2]{positselski2011two} the category of dg $C_{*}(X)$-comodules is Quillen equivalent to the category of dg $C_{*}(X)$-contramodules. Hence, by Theorem \ref{last theorem}

\begin{equation*}
    D^{co}(C_{*}(X))  \simeq  Ho(R_{L}PMod(\underline{k})).
\end{equation*}
We summarise   this discussion in the following corollary.

\begin{corollary}\label{last coro}
    Let $X$ be a path connected and locally contractible topological space with a base point $x$. Then, 
 \begin{enumerate}[label=(\roman*)]
\item $D_{lf}(\underline{k})$,
\item $D^{ct}(C^{*}(X))$,
\item $Ho(R_{L}PMod(\underline{k}))$,
\item $H^{0}(Loc_{\infty}(X))$,
\item $D(C_{*}(L_{x}(X)))$,
\end{enumerate}
are equivalent as categories. 
\end{corollary}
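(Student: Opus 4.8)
The plan is to assemble the corollary as a chain of equivalences, each of which has either been established in the body of the paper or recalled from the literature in the discussion immediately preceding it; the only genuinely new ingredient is Theorem~\ref{last theorem}, and the rest is bookkeeping. Concretely, I would exhibit a cycle of equivalences $(iii) \simeq (ii) \simeq (v) \simeq (iv) \simeq (i)$ and invoke transitivity.

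First I would record $(ii) \simeq (iii)$. By Theorem~\ref{last theorem} the zig-zag of Quillen equivalences induces an equivalence $Ho(R_{L}PMod(\underline{k})) \simeq Ho(C^{*}(X)\text{-}Ctmod)$, and by construction (Remark~\ref{contra remark}, applied to Positselski's model structure of the second kind on $C^{*}(X)\text{-}Ctmod$) the right-hand side is the contraderived category $D^{ct}(C^{*}(X))$. This is the heart of the corollary and needs no further argument once Theorem~\ref{last theorem} is in hand. Next I would connect $D^{ct}(C^{*}(X))$ to the loop-space side: identifying $C^{*}(X)$ with the continuous $k$-linear dual of the dg coalgebra $C_{*}(X)$ (so that $C^{*}(X)$-contramodules are exactly $C_{*}(X)$-contramodules in the coalgebra sense), the derived co-contra correspondence \cite[Theorem 5.2]{positselski2011two} gives $D^{ct}(C^{*}(X)) \simeq D^{co}(C_{*}(X))$; Koszul duality \cite[Theorem 6.5(a)]{positselski2011two} gives $D^{co}(C_{*}(X)) \simeq D(\Omega C_{*}(X))$; and the Adams-type comparison \cite[Corollary 4.8]{chuang2021homotopy}, valid because $X$ is path connected with basepoint $x$, gives $D(\Omega C_{*}(X)) \simeq D(C_{*}(L_{x}(X)))$. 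This chains $(ii)$ with $(v)$. Finally, \cite[Theorem 26]{holstein2015morita} identifies $D(C_{*}(L_{x}(X)))$ with $H^{0}(Loc_{\infty}(X))$, giving $(iv)$, and \cite[Theorem 12]{holstein2015moritaclc}, using local contractibility of $X$, identifies $H^{0}(Loc_{\infty}(X))$ with $D_{lf}(\underline{k}^{+})$, giving $(i)$. Composing these produces the stated five-fold equivalence.

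The main obstacle is not any single step but the hypothesis-tracking that glues them together. The co-contra correspondence and Koszul duality of \cite{positselski2011two} are formulated for coaugmented dg coalgebras (and the cobar-comparison essentially needs conilpotency), so I must check that the reduced normalized chain coalgebra on a connected, locally contractible, path-connected $X$ genuinely meets these hypotheses, and that the pseudo-compact dg algebra $C^{*}(X)$ used throughout really is the continuous dual of that coalgebra. I would also need to verify that passing between reduced and unreduced normalized (co)chains — harmless for path-connected $X$ — does not disturb any of the identifications, and that the basepoint used by \cite{chuang2021homotopy} and \cite{holstein2015morita} can be taken to be the same $x$. A secondary point is purely cosmetic: the corollary only claims equivalence \emph{as categories}, so I would not need to verify that every link is triangulated or $k$-linear; but the exactness statements of Proposition~\ref{stab}(iv) together with the cited results would upgrade the chain to an equivalence of triangulated categories if desired.

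Finally I would note that steps $(i)$–$(iii)$ only use that $X$ is connected and locally contractible, while $(iv)$ and $(v)$ additionally require path-connectedness and a choice of basepoint, which is exactly the hypothesis stated in the corollary; so no strengthening of assumptions is needed beyond what is already in place.
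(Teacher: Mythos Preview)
Your proposal is correct and follows essentially the same approach as the paper: the discussion preceding the corollary assembles exactly the chain $(iii)\simeq(ii)$ via Theorem~\ref{last theorem}, $(ii)\simeq D^{co}(C_{*}(X))\simeq(v)$ via the co-contra correspondence, Koszul duality, and the Adams-type comparison, and $(v)\simeq(iv)\simeq(i)$ via the results of Holstein you cite. Your additional hypothesis-tracking (coaugmentation, reduced versus unreduced chains, basepoint compatibility) is more careful than the paper's own discussion but does not diverge from it.
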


\cite{chuang2021maurer}
\bibliographystyle{plain}
\bibliography{mybib}

\end{document}